\documentclass[12pt, a4paper]{amsart}
\usepackage{amssymb}
\usepackage{url}
\usepackage[all]{xy}
\usepackage{enumerate}
\usepackage{fullpage}	
\usepackage{setspace}

\theoremstyle{plain}
\numberwithin{equation}{section}
\newtheorem{thm}{Theorem}[section]
\newtheorem{prop}[thm]{Proposition}
\newtheorem{cor}[thm]{Corollary}
\newtheorem{lem}[thm]{Lemma}

\theoremstyle{definition}
\newtheorem{dfn}[thm]{Definition}
\newtheorem{ex}[thm]{Example}
\newtheorem{rmk}[thm]{Remark}

\def\rank{\mathop{\mathrm{rank}}\nolimits}
\def\dim{\mathop{\mathrm{dim}}\nolimits}
\def\Im{\mathop{\mathrm{Im}}\nolimits}
\def\Ker{\mathop{\mathrm{Ker}}\nolimits}
\def\Cok{\mathop{\mathrm{Cok}}\nolimits}
\def\Hom{\mathop{\mathrm{Hom}}\nolimits}

\def\<{{\langle}}
\def\>{{\rangle}}

\def\+{\mathop{\oplus}\nolimits}

\def\Supp{\mathop{\mathrm{Supp}}\nolimits}

\def\1{\mathop{\mathrm{id}}\nolimits}
\def\Spec{\mathop{\mathrm{Spec}}\nolimits}

\newcommand{\gl}[2]{{\mathsf{gl}\left({#1},  {#2}\right)}}
\newcommand{\HNF}[3]{{
\xymatrix{
0	\ar[r]	&	{#2}_1	\ar[r]\ar[d]	&	{#2}_2	\ar[r]\ar[d]	&	\cdots\ar[r]	&	{#2}_{n-1}	\ar[r]\ar[d]	&	{#2}_n={#1}\ar[d]	\\
			&	{#3}_1\ar@{-->}[ul]	&	{#3}_2\ar@{-->}[ul] &					&	{#3}_{n-1}\ar@{-->}[ul]		&	{#3}_n\ar@{-->}[ul]	
}
}}

\newcommand{\Aut}[1]{{\mathrm{Aut}\,{#1}}}

\newcommand{\Stab}[1]{{\mathrm{Stab}\,{#1}}}

\newcommand{\Dom}[1]{{\mathrm{Dom}({#1}^{-1})}}

\newcommand{\mf}[1]{{\mathfrak{#1}}}
\newcommand{\mi}[1]{{\mathit{#1}}}
\newcommand{\bb}[1]{{\mathbb{#1}}}
\newcommand{\mca}[1]{{\mathcal{#1}}}
\newcommand{\mr}[1]{{\mathrm{#1}}}
\newcommand{\ms}[1]{{\mathsf{#1}}}
\newcommand{\mb}[1]{{\mathbf{#1}}}

\title{Stability conditions and infinitesimal deformation of curves}
\author{Kotaro Kawatani}
\date{\today}

\begin{document}

\begin{abstract}
Let $\mca X$ be an infinitesimal deformation of 
a smooth projective curve $X_0$ over a field. 
We study stability conditions under such deformations 
and show that the derived push-forward functor 
associated with the inclusion $X_0 \to \mca X$ 
induces an isomorphism between 
the space of stability conditions on $\mca X$ and 
that on $X_0$. 
This yields a direct comparison between the deformed 
and undeformed settings. 
As an application, we prove that 
the autoequivalence group $\Aut{\mb D^b(\mca X)}$ 
naturally acts on $\mb D^b(X_0)$, 
providing a link between derived symmetries 
and the deformation structure.
\end{abstract}
\maketitle

\section{Introduction}

Since the 2000s, the study of triangulated categories 
has developed rapidly. 
One of the key invariants in this area is 
the space of stability conditions $\Stab{\mb D}$ 
on a triangulated category $\mb D$ introduced by Bridgeland 
\cite{MR2373143}. 
A notable feature of this invariant is that it carries the structure
of a complex manifold, although it may be empty.

As one might expect, it is very difficult to 
give an explicit description of $\Stab{\mb D}$. 
For instance, when $\mb D = \mb D^b(X)$ is the bounded derived 
category of coherent sheaves on a scheme $X$, 
the space $\Stab{\mb D}$ is well understood only in special cases. 
If $X$ is a smooth projective curve over a field $\mb k$, 
the space $\Stab{\mb D}$ was described by 
Okada \cite{MR2219846} (genus $g=0$), 
Bridgeland \cite{MR2373143} ($g=1$), 
and Macr\'i \cite{MR2335991} ($g>1$). 
The case of 
an irreducible singular curve $X$ of 
arithmetic genus 1 was first studied 
by Burban and Krau\ss\,  \cite{MR2264663}. 
More general cases have been studied in 
Karube \cite{MR4830066} and  Liu \cite{Liu:2025aa}. 
If $X$ is a Noetherian affine scheme of positive dimension, 
then $\Stab{\mb D}$ is empty by \cite{Kawatani_2023}.

We note that, in the existing literature, 
mainly reduced schemes have been considered.
Therefore, in this paper, we turn our attention 
to the non-reduced case, focusing in particular 
on infinitesimal deformations.

Before discussing the main case of interest, 
we first consider a toy model of infinitesimal deformations. 
Namely, let $X$ be the 
affine scheme $\Spec R$ of 
an Artinian local $\mb k$-algebra $(R, \mf m)$ with the residue field $\mb k$. 
We note that $\Spec R$ is an infinitesimal deformation 
of the point $\Spec \mb k$.

\begin{prop}[=Proposition \ref{prop:key-prop}]\label{pre-prop}
Let $(R, \mf m)$ be an Artinian local ring with the residue field 
$\mb k = R/ \mf m$. 
Then the space
$\Stab{\mb D^b(\Spec R)}$ is non-empty and isomorphic to 
$\Stab{\mb D^b(\Spec \mb k)}$. 
\end{prop}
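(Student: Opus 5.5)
The plan is to exploit the fact that $\mb D^b(\Spec R) = \mb D^b(\mathrm{mod}\, R)$, where $\mathrm{mod}\,R$ is the category of finitely generated $R$-modules. Because $R$ is Artinian local, every such module has finite length, and its only simple object is $\mb k = R/\mf m$. The Grothendieck group $K_0(\mathrm{mod}\,R)$ is therefore $\bb Z$, generated by $[\mb k]$, with isomorphism given by the length. I expect both spaces to be identified with $\bb C$, and I would make the identification explicit through the push-forward $i_* \colon \mb D^b(\Spec \mb k) \to \mb D^b(\Spec R)$ along the closed point $i\colon \Spec\mb k \to \Spec R$. This functor induces an isomorphism on $K_0$.

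First I would treat $\Stab{\mb D^b(\Spec \mb k)}$, which is standard. Every object is a direct sum of shifts of $\mb k$. A stability condition is determined by a central charge $Z(\mb k) = m e^{i\pi\phi}$ with $m>0$ together with the phase $\phi$ of $\mb k$, so the space is $\bb C$ via $(m,\phi) \mapsto \log m + i\pi\phi$. Next I would show that the standard heart $\mathrm{mod}\,R$, with $Z(M) = -\mathrm{length}(M)$, is a stability condition on $\mb D^b(\Spec R)$. This holds because $\mathrm{mod}\,R$ is of finite length and every object is semistable of phase $1$. Acting by $\widetilde{GL}^+(2,\bb R)$, or really by its $\bb C$ subgroup, then produces a copy of $\bb C$ inside $\Stab{\mb D^b(\Spec R)}$. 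In particular this shows non-emptiness.

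The main step is to show that every stability condition $\sigma=(Z,\mca P)$ on $\mb D^b(\Spec R)$ lies in this orbit. Equivalently, $\mb k$ must be $\sigma$-stable, and the heart $\mca P((\phi-1,\phi])$, with $\phi$ the phase of $\mb k$, must equal $\mathrm{mod}\,R[k]$ for a suitable shift. The first thing I would try is to show that $\mb k$ is semistable. Take the HN filtration of $\mb k$ and its first factor $A$ (maximal phase) and last factor $B$ (minimal phase). Since $\Hom(A,\mb k)\neq 0$ and $\Hom(\mb k,B)\ne0$, the cohomology modules of $A$ and $B$ are controlled. Every nonzero object $E$ of $\mb D^b(\mathrm{mod}\,R)$ admits a nonzero morphism $E \to \mb k[-a]$, where $a$ is the top cohomological degree, since the top cohomology surjects to $\mb k$. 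Likewise $\mb k[-b] \to E$ is nonzero for the lowest degree $b$, because the socle of the bottom cohomology is nonzero. Applying this to the semistable factors and comparing phases of $\mb k[j]$ with those of the factors should force every factor to have the same phase, so $\mb k$ is semistable.

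Next, every semistable object, and hence every object of $\mca P(\phi)$, has Jordan--Hölder factors that are shifts of $\mb k$, because $K_0 = \bb Z$ and $Z(\mb k) \neq 0$. From this, $\mca P(\phi)$ must be the extension closure of $\mb k$, which is $\mathrm{mod}\,R$. It then follows that $\mca P(\phi + n) = \mathrm{mod}\,R[n]$ and that all other slices are zero. This phase-comparison argument is the obstacle: one must carefully exclude factors of non-integer phase offset, for instance by using the morphisms above to show that the phases of the HN factors differ from $\phi$ by integers and then lie in a single slice.

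Once this is done, $\sigma$ is determined by $Z(\mb k)$ and $\phi$. Hence $\Stab{\mb D^b(\Spec R)} \cong \bb C$, and $i_*$, which sends $\mb k$ to $\mb k$, realizes the isomorphism with $\Stab{\mb D^b(\Spec\mb k)}$.
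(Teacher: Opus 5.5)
\textbf{There is a genuine gap, and it is exactly the step you flag: you never show that $\mb k$ is $\sigma$-semistable for an arbitrary $\sigma$.} The phase comparison you describe does not establish it. Let $A_1,\dots,A_n$ be the HN factors of $\mb k$, with phases $\phi_1>\dots>\phi_n$. Let $a$ be the top cohomological degree of $A_1$ and $b$ the bottom degree of $A_n$. Your nonzero maps $A_1\to\mb k[-a]$ and $\mb k[-b]\to A_n$ only give $\phi_1\le\phi_1-a$ and $\phi_n-b\le\phi_n$. That is, $A_1\in\mb D^{\le 0}$ and $A_n\in\mb D^{\ge 0}$, and by themselves these inequalities do not rule out $n\ge 2$.

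Your suggested remedy also targets the wrong difficulty. Non-integer offsets are excluded for free: $Z(E)=\chi(E)Z(\mb k)$ for every $E$, where $\chi(E)=\sum_i(-1)^i\,\mathrm{length}\,H^i(E)$. Hence all semistable phases lie in $\theta+\bb Z$ for a fixed $\theta$. The real issue is HN factors at integer offsets, say in $\mca P(\theta)$ and $\mca P(\theta-1)$, and ``lie in a single slice'' is the desired conclusion, not an argument.

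Likewise, in your next step, $K_0\cong\bb Z$ says nothing about Jordan--H\"older factors being shifts of $\mb k$. The class of an object does not determine the object.

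\textbf{The missing idea is to compose the two HN maps.} Let $u\colon A_1\to\mb k$ and $v\colon\mb k\to A_n$ be the first and last maps of the HN filtration; both are nonzero, as in any HN filtration. Since $A_1\in\mb D^{\le 0}$ and $A_n\in\mb D^{\ge 0}$, these maps are detected on $H^0$. So $H^0(v)\circ H^0(u)\colon H^0(A_1)\twoheadrightarrow\mb k\hookrightarrow H^0(A_n)$ is nonzero. On the other hand, if $n\ge 2$ then $v\circ u\in\Hom(A_1,A_n)=0$. Hence $n=1$.

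Then $\ms{mod}\,R\subseteq\mca P(\phi)\subseteq\mca P((\phi-1,\phi])$, and nested hearts of bounded $t$-structures coincide. This gives your identification of the heart without any $K_0$ argument, and the rest of your plan then goes through.

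\textbf{Comparison with the paper.} With this repair your route differs from the paper's. The paper (Corollary \ref{cor:stable-objects-on-R}) instead classifies \emph{stable} objects:
\begin{enumerate}
\item Elements of $\mf m$ act nilpotently, hence by zero, on a stable object $S$, so its cohomology consists of $\mb k$-vector spaces.
\item $\Hom^{<0}(S,S)=0$ together with the spectral-sequence argument of Lemma \ref{lem:vanishing-for-HN} and Proposition \ref{prop:class-of-stable} forces $S\cong\mb k[j]$. This uses that $\ms{mod}\,\mb k$ is trivially almost hereditary in $\ms{mod}\,R$.
\item Local finiteness supplies a stable object, so $\mb k$ is stable.
\item The paper then concludes via injectivity (Proposition \ref{prop:injectivity}) and the explicit preimage $(W,\ms{mod}\,R)$.
\end{enumerate}
Your repaired argument is more elementary and computes both spaces as $\bb C$ directly. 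The paper's argument is set up so that the same machinery carries over to the curve case.
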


Proposition \ref{pre-prop} suggests 
that the space of stability conditions 
is invariant under infinitesimal deformations of a point.
In view of this, the case of curves is 
a natural next step to consider. 
Motivated by this observation, we state our main result as follows.

\begin{thm}[=Theorems \ref{thm:main1} and \ref{thm:main2}]
	\label{main-thm}
Let $(R, \mf m)$ be an Artinian local $\mb k$-algebra with 
residue field $\mb k$ and 
let $\pi \colon \mca X \to \Spec R$ be an 
infinitesimal deformation of a smooth projective curve $X_{0}$ 
with the closed embedding 
$j \colon X_{0} \hookrightarrow \mca X$. 
Then the derived functor $j_*$ induces an isomorphism 
\[
j_* ^{-1} \colon \Stab{\mb D^b(\mca X)} \cong \Stab{\mb D^b(X_0)}. 
\]
\end{thm}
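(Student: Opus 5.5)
Since the statement is an isomorphism $j_*^{-1}\colon \Stab{\mb D^b(\mca X)} \cong \Stab{\mb D^b(X_0)}$, I first have to say what $j_*^{-1}$ means and then prove it is bijective and continuous in both directions. The model is Proposition \ref{pre-prop}: one should use the fact that every coherent sheaf on $\mca X$ has a finite filtration (by powers of the nilpotent ideal $\mf m\mca O_{\mca X}$) whose subquotients are push-forwards $j_*F$ of coherent sheaves on $X_0$. Consequently the essential image of $j_*\colon \mb D^b(X_0)\to\mb D^b(\mca X)$ generates $\mb D^b(\mca X)$ under extensions and shifts. Moreover $j_*$ induces an isomorphism on numerical Grothendieck groups (via rank and degree, or via $K_0$ of the filtration), say $\mca N(\mca X)\cong\mca N(X_0)\cong\bb Z^2$.

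\textbf{Direction 1: from $\Stab{\mb D^b(\mca X)}$ to $\Stab{\mb D^b(X_0)}$.}

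1. Given $\sigma=(Z,\mca P)$ on $\mca X$, define $j_*^{-1}\sigma=(Z\circ j_*,\mca P_0)$ with $\mca P_0(\phi)=\{E : j_*E\in\mca P(\phi)\}$. This is the standard induction of stability conditions along a faithful exact functor, as in Macr\`i--Mehrotra--Stellari.

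2. Check that $\mca P_0$ is a slicing. Hom-vanishing is not immediate, because $j_*$ is not fully faithful. However, $\Hom(E,F)\hookrightarrow\Hom(j_*E,j_*F)$ holds on sheaves and in degree $0$, which is exactly what is needed.

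3. Show that Harder--Narasimhan filtrations of $j_*E$ come from $X_0$. This is the crux. I would show that each $\sigma$-semistable object of $\mb D^b(\mca X)$ is annihilated by $\mf m$, up to shift, so that it is a push-forward. The argument: a semistable object $A$ has a nilpotent endomorphism given by multiplication by $t\in\mf m$. Its kernel, image and cokernel, taken in the abelian category $\mca P(\phi)$, are again semistable of the same phase. Iterating along the filtration, the stable factors are stable objects $S$ with $\mf m\cdot\1_S=0$ (since $\mathrm{End}(S)$ is a division ring and $t$ acts nilpotently). A complex on $\mca X$ on which $\mf m$ acts by zero in the derived sense need not lie in the image of $j_*$. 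So instead I would argue via the classification: stable objects on $\mca X$ must have $\mathrm{End}$ a division ring, and one shows they are shifts of $j_*$ of stable sheaves. This uses that $\mca O_{\mca X}$-modules with a division-ring endomorphism ring, and no self-extensions in the negative range, are killed by $\mf m$.

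4. Semistable objects of a fixed phase are then extensions of such $j_*S$'s. The HN filtration of $j_*E$ uses these, and one must still show the factors lie in the image. I would bypass this by proving instead that the heart $\mca P((0,1])$ is the tilt of $\mathrm{Coh}\,\mca X$ coming from the one on $X_0$.

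\textbf{Direction 2: the inverse map.} Use Macr\`i's description: every stability condition on $X_0$ is, up to the $\widetilde{GL}^+(2,\bb R)$-action, the standard one with heart $\mathrm{Coh}\,X_0$ and $Z=-\deg+i\,\mathrm{rk}$ (for $g\ge1$; Okada's description handles $g=0$). On $\mca X$, define the analogous standard stability condition with heart $\mathrm{Coh}\,\mca X$ and $Z=-\deg+i\,\mathrm{rk}$, computed via the filtration. Its HN property follows from that on $X_0$ by the filtration argument. Then
\begin{itemize}
\item $\widetilde{GL}^+(2,\bb R)$-equivariance of $j_*^{-1}$ gives surjectivity;
\item injectivity follows because central charge plus heart determines $\sigma$, and the heart on $\mca X$ is recovered as the extension closure of $j_*$ of the heart on $X_0$.
\end{itemize}
Continuity follows since both maps are local homeomorphisms on central charges and $\mca N$ is identified.

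\textbf{Main obstacle.} Step 3 is the hard part: proving that every $\sigma$-stable object on $\mca X$ is a shifted push-forward $j_*S[n]$. I would try first to show that $\mca O_{\mca X}$-skyscrapers $j_*\mca O_x$ are stable for every $\sigma$, mimicking Macr\`i's curve argument via Serre duality and $\mathrm{Ext}^1$-dimension bounds, and then deduce the general statement from the structure of the heart.
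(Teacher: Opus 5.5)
There are two genuine gaps. \textbf{First}, your Step 3, which you rightly call the crux, has no working mechanism, and its opening claim is false. $\mca O_{\mca X}$ (or any lift of a line bundle) is $\sigma_{\mr{st}}$-semistable but not killed by $\mf m$, so only \emph{stable} objects can be expected to lie in $\Im j_*$.

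For a stable $S$, being killed by $\mf m$ is the easy part: $\mu_t$ is nilpotent in the division ring $\mr{End}(S)$, hence zero, so every $H^i(S)$ is an $\mca O_{X_0}$-module. Your substitute claim about division-ring endomorphisms only re-proves this. The real issue is the one you raise yourself: why $S$ is concentrated in a single degree. This is not formal, since $\mb D^b(\mca X)$ is far from hereditary ($\Hom^p_{\mca X}(j_*\mca O_x,j_*\mca O_x)\neq 0$ for all $p\ge 0$ when $R\neq\mb k$).

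The missing idea is the almost hereditary property of $\mr{Coh}(X_0)\subset\mr{Coh}(\mca X)$. Lift vector bundles to $\mca X$ (the obstructions lie in $H^2=0$) and tensor with the minimal $R$-free resolution of $\mb k$. This gives $\Hom^p_{\mca X}(j_*E_0,j_*F_0)\cong\Hom^0_{X_0}(E_0,F_0)^{\oplus d_p}\oplus\Hom^1_{X_0}(E_0,F_0)^{\oplus d_{p-1}}$ for indecomposable $E_0$. Hence vanishing of $\Hom^0$ and $\Hom^1$ forces vanishing in all degrees.

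Feed this into the spectral sequence $\bigoplus_i\Hom^p(H^i(S),H^{i+q}(S))\Rightarrow\Hom^{p+q}(S,S)$. Either $\Hom^1(H^{\mathrm{top}}(S),H^{\mathrm{top}-1}(S))\neq 0$, which produces a nonzero square-zero endomorphism of $S$, or the top cohomology splits off. Indecomposability then gives $S\cong j_*F[n]$ (Proposition~\ref{prop:class-of-stable}, Lemma~\ref{lem:stable-objects}).

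Your Step 4 and your injectivity argument both rest on this, and so does the equality $\Dom{j_*}=\Stab{\mb D^b(\mca X)}$. For $g>0$ the paper proves that equality in two steps:
\begin{enumerate}
\item It shows $j_*\mca O_x$ and $j_*L$ are stable for every $\sigma$ (Lemma~\ref{lem:GKR}, Proposition~\ref{prop:macri}). The almost hereditary property first forces the two ends of any destabilizing triangle of $j_*E$ to be sheaves on $X_0$. Only then can Serre duality and $\Hom(F,G)\subset\Hom(F,G\otimes\omega_{X_0})$ act.
\item It proves such $\sigma$ lie in $\widetilde{\mr{GL}}_2^+(\bb R)\cdot\sigma_{\mr{st}}$ (Proposition~\ref{prop:key-1}), where HN factors of sheaves on $X_0$ stay on $X_0$.
\end{enumerate}
Your appeal to ``Serre duality and Ext$^1$-dimension bounds'' cannot get started without this input.

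\textbf{Second}, your plan breaks for $g=0$. $\Stab{\mb D^b(\bb P^1)}$ contains non-geometric stability conditions, in which only $\mca O(k)$ and $\mca O(k-1)$ are stable and $\phi(\mca O(k))\ge\phi(\mca O(k-1)[1])$. These are not in the $\widetilde{\mr{GL}}_2^+(\bb R)$-orbit of the standard one. So on $\bb P^1_R$ the skyscrapers $j_*\mca O_x$ are not stable for every $\sigma$, and equivariance gives surjectivity only onto the geometric orbit, not onto all of Okada's space.

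The paper treats this case separately, extending the Okada and Gorodentsev--Kuleshov--Rudakov analysis to $\bb P^1_R$ (again via the almost hereditary property):
\begin{enumerate}
\item Every $\sigma$ either has all $j_*\mca O_x$ and $j_*L$ stable, or is glued along the semiorthogonal decomposition $\langle\mb T(k-1),\mb T(k)\rangle$ with $\mb T(k)=\langle j_*\mca O(k)\rangle\simeq\mb D^b(R)$ (Propositions~\ref{prop:1123gogo} and~\ref{prop:gluing-k}).
\item It obtains $\Dom{j_*}=\Stab{\mb D^b(\bb P^1_R)}$ from the explicit shape of the HN filtrations of $j_*T_x$ and $j_*L$, and proves connectedness.
\item It gets the non-geometric part of surjectivity by lifting each glued piece through $\Stab{\mb D^b(R)}\cong\Stab{\mb D^b(\mb k)}$ (Proposition~\ref{prop:key-prop}) and regluing by Collins--Polishchuk.
\end{enumerate}
None of this is present in your proposal.
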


In particular, the theorem above indicates that 
the existence of stability conditions
depends only on the underlying reduced scheme and is insensitive to
infinitesimal structure.
It is natural to ask whether Theorem \ref{main-thm} 
admits a generalization to singular curves and 
to smooth projective surfaces. 
Our analysis depends on the fact 
that the homological dimension of $\mb D^b(X_0)$ is $1$. 
Therefore, it seems that new ideas are required in 
order to achieve such a generalization.

The proof relies on a detailed analysis 
of the property of objects which could be stable 
for some stability conditions on 
$\mb D^b(\mca X)$. 
The key step is to show 
that stable objects in $\mb D^b (\mca X)$ lie in the image of the 
functor $j_* \colon \mb D^b (X_0) \to \mb D^b(\mca X)$. 
To this end, we focus on a homological property 
of the inclusion $\mr{Coh}(X_0) \subset \mr{Coh}(\mca X)$ 
on the category of coherent sheaves on $X_0$ and on $\mca X$. 
We call such a property ``almost hereditary'' 
(see Definition \ref{dfn:almost-hereditary}). 
Although it is nontrivial that the inclusion 
$\mr{Coh}(X_0) \subset \mr{Coh}(\mca X)$ satisfies the property, 
we show that this indeed holds 
by using deformation theory of locally free sheaves on curves 
(see Proposition \ref{prop:almost-hereditary}). 

Recall that $\Stab{\mb D^b(\bb P^1)}$ 
admits non-geometric stability conditions. 
Because of this phenomenon, the proof of Theorem \ref{main-thm} 
depends on whether the genus satisfies $g=0$ (cf. Theorem \ref{thm:main2}) 
or $g>0$ (cf. Theorem \ref{thm:main1}).

Recall that the autoequivalence group 
$\Aut{\mb D^b(\mca X)}$ acts on $\Stab{\mb D^b(\mca X)}$. 
By Theorem \ref{main-thm}, 
we obtain an induced action of $\Aut{\mb D^b(\mca X)}$ on 
$\Stab{\mb D^b(X_0)}$. 
It is therefore natural to expect 
that the group 
$\Aut{\mb D^b(\mca X)}$ also acts on $\mb D^b(X_0)$. 
In fact we show the following applying a part of Theorem \ref{main-thm}. 

\begin{prop}\label{main-thm2}
We keep the notation of Theorem \ref{main-thm}. 
Then there exists a group homomorphism
\[
\Aut{\mb D^b(\mca X)} \to \Aut{\mb D^b(X_0)}.
\]
\end{prop}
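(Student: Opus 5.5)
The plan is to realize $\mb D^b(X_0)$ intrinsically inside $\mb D^b(\mca X)$, using data that every exact autoequivalence preserves, and then to transport an autoequivalence $\Phi$ of $\mb D^b(\mca X)$ along $j_*$. Theorem \ref{main-thm} supplies the key structural input. Every stable object, for any stability condition on $\mb D^b(\mca X)$, lies in the essential image of $j_*$. Moreover, $j_*^{-1}$ identifies the hearts and slicings on both sides.

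First, I would characterize the image. Fix some $\sigma\in\Stab{\mb D^b(\mca X)}$, which exists by the theorem. Let $\mca S\subset \mb D^b(\mca X)$ be the set of objects that are $\tau$-stable for some $\tau\in\Stab{\mb D^b(\mca X)}$. This set is intrinsic, so $\Phi(\mca S)=\mca S$, because $\Phi$ acts on $\Stab{}$ and $\Phi(E)$ is $\Phi\cdot\tau$-stable.

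Next, define $\mb T\subset\mb D^b(\mca X)$ as the smallest full triangulated subcategory closed under direct summands that contains $\mca S$. I claim $\mb T$ equals the essential image of $j_*$.
\begin{itemize}
\item For $\subset$: stable objects lie in $j_*\mb D^b(X_0)$ by the key step of Theorem \ref{main-thm}. The subtlety is that the image of $j_*$ is not obviously closed under cones, since $j_*$ is not full. I would use the isomorphism $\Stab{\mb D^b(\mca X)}\cong\Stab{\mb D^b(X_0)}$ to show that the Harder--Narasimhan filtrations of $j_*E$ come from those of $E$. I would also use that $\mb T$ is defined via $\Phi$-invariant data anyway, so in fact I only need $\mb T\supset j_*\mb D^b(X_0)$ and $\Phi$-invariance.
\item For $\supset$: $j_*$ of every $\sigma_0$-stable object on $X_0$ is stable by the theorem, and these generate $\mb D^b(X_0)$ under extensions.
\end{itemize}
Hence $\Phi(\mb T)=\mb T$.

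Second, I would define the homomorphism. The honest target is $\mb D^b(X_0)$ itself, not $\mb T$, so I need an equivalence between them. Since $j_*$ is not fully faithful, I would instead use a structure that $\Phi$ preserves and that recovers $X_0$. The cleanest route is via point objects. The objects $j_*\mca O_x[n]$ are exactly the objects of $\mca S$ that are stable in every geometric stability condition of phase-type "point" and have the minimal $\dim\Hom$ of a skyscraper. For $g>0$, the sheaves $\mca O_x$ are stable for all stability conditions by Macr\`i and Bridgeland, transported via Theorem \ref{main-thm}. Hence $\Phi$ permutes $\{j_*\mca O_x[n]\}$.

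Standard arguments (Bondal--Orlov type, or Huybrechts' treatment of point-like objects) then show that $\Phi$ is, up to the group generated by shifts and line-bundle twists, induced by an automorphism of $\mca X$. An automorphism of $\mca X$ restricts to one of $(\mca X)_{\mr{red}}=X_0$. Line bundles on $\mca X$ restrict to line bundles on $X_0$, and shifts go to shifts. This gives a map $\Aut{\mb D^b(\mca X)}\to\Aut{\mb D^b(X_0)}$, namely $\Phi\mapsto j^*\circ\Phi\circ j_*$ interpreted through this decomposition. Compatibility with composition follows because $f^*$, $\otimes L$ and $[1]$ all commute with $j^*$ and $j_*$ in the appropriate sense.

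The main obstacle will be this second step, for two reasons. The Bondal--Orlov reconstruction requires an ample or anti-ample canonical bundle, which fails for $g=1$ (and $g=0$). For $g=1$ I would instead use that $\Phi$ induces an equivalence on the subcategory generated by $\mca S$, together with Orlov's representability of Fourier--Mukai kernels on $\mca X\times\mca X$ restricted to $X_0\times X_0$. For $g=0$ I would use the non-geometric chambers handled in Theorem \ref{thm:main2}. If that fails, I would fall back on the more direct definition $\Phi_0:=Lj^*\circ\Phi\circ j_*$, and check functoriality using the fact, derived from the almost hereditary property (Proposition \ref{prop:almost-hereditary}), that $\Phi$ preserves $j_*\mb D^b(X_0)$.
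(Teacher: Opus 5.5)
Your proposal never proves the fact that actually produces the homomorphism: every $\Phi\in\Aut{\mb D^b(\mca X)}$ maps $\Im j_*$ into itself, both on objects and on morphisms of the form $j_*(f)$. The steps you offer toward it fail.

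In Step 1, the thick subcategory $\mb T$ generated by $\mca S$ is all of $\mb D^b(\mca X)$. It contains $j_*\mr{Coh}(X_0)$, and every coherent sheaf $F$ on $\mca X$ is an iterated extension of the sheaves $\mf m^iF/\mf m^{i+1}F\in j_*\mr{Coh}(X_0)$. For example $\mca O_{\mca X}\in\mb T$, but $\mca O_{\mca X}\notin\Im j_*$ for $R\neq\mb k$, since $\mf m$ acts nontrivially on it. So $\mb T=\Im j_*$ is false, and the weakened version ($\mb T\supset\Im j_*$ and $\Phi(\mb T)=\mb T$) is vacuous.

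In Step 2, stability of $j_*\mca O_x$ for every $\sigma$ only tells you that $\Phi(j_*\mca O_x)$ is stable for every $\sigma$. For $g\geq 1$ that allows $j_*F[n]$ with $F$ slope stable. For $g\geq2$ you need an extra invariant to exclude this: $\dim\Hom$ does not separate the two, while $\dim\Hom^1_{\mca X}$ does (Lemma \ref{lem:260308} and Remark \ref{rmk:0204}). For $g=1$ your claim is false. Take an elliptic curve $X_0$ and $\mca X=X_0\times_{\mb k}\Spec R$; the base change of the Poincar\'e-bundle transform sends $j_*\mca O_x$ to $j_*P_x$, a line bundle. This is exactly the situation handled in Lemma \ref{lem:genus1key} and Proposition \ref{prop:elliptic}.

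The reconstruction step does not apply. Bondal--Orlov (and point-like objects) are defined via a Serre functor, and $\mb D^b(\mca X)$ has none for $R\neq\mb k$. By Lemma \ref{lem:260308}, $\Hom^p_{\mca X}(j_*\mca O_x,j_*\mca O_x)\neq0$ for all $p\geq 0$, whereas a Serre functor on a bounded derived category would force vanishing for $p\gg0$. For $g\le 1$ you give no argument at all.

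The fallback $\Phi_0=Lj^*\circ\Phi\circ j_*$ fails outright. For $G\in\mr{Coh}(X_0)$ one has $Lj^*j_*G\cong G\otimes_{\mb k}(\mb k\otimes^{L}_R\mb k)$, whose cohomology $G\otimes_{\mb k}\mathrm{Tor}^R_i(\mb k,\mb k)$ is nonzero in every degree $-i\le0$. So $\Phi_0$ does not land in $\mb D^b(X_0)$, and it is not multiplicative. The almost hereditary property alone does not give preservation of $\Im j_*$ either; in the paper it enters only through the classification of stable objects.

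The missing argument is the paper's, in these steps:
\begin{enumerate}
\item Identify an intrinsic class of objects that every $\Phi$ must permute. For $g=0$ these are the objects of Lemma \ref{lem:0213}; for $g\geq1$ they are the objects stable for all $\sigma$, using Theorem \ref{thm:main1}.
\item Separate skyscrapers from bundles, or for $g=1$ control their interchange.
\item Show the shift is constant, using the Hom-vanishing patterns of Lemma \ref{lem:260308}.
\item Extend to all sheaves via JH/HN filtrations and Lemma \ref{lem:0211}.
\item Obtain preservation of the morphisms $j_*(f)$ from Lemma \ref{lem:0304}.
\end{enumerate}
The homomorphism is then restriction of $\Phi$ to $\Im j_*$, transported along the faithful functor $j_*$.
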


The proof is divided into three cases according to the genus:
$g=0$, $g=1$, and $g>1$. 
In the case $g>0$, we use Theorem \ref{main-thm}.

\begin{rmk}
In the preprint \cite{kawatani2020stability}, the author showed that $\Stab{\mb D^b(\Spec R)}$ is non-empty if and only if $\Spec R$ has dimension zero.
In \cite{Kawatani_2023}, we provided a generalization of the 
“if” direction to the relative setting.
Proposition \ref{pre-prop} was part of 
the main results of \cite{kawatani2020stability}.
Although \cite{Kawatani_2023} was posted on arXiv later, 
it was published earlier.
For this reason, we have substantially revised the preprint 
and present it here as the present paper; 
accordingly, we have decided to 
withdraw \cite{kawatani2020stability} from arXiv.
\end{rmk}

\subsection*{Notations} 
Through the paper, 
we denote by $\mb k$ a fixed field and denote 
by $(R, \mf m)$ an Artinian local $\mb k$-algebra 
with residue field $\mb k$. 

Let $\mb D$ be a triangulated category. 
For objects $E$ and $E$ in $\mb D$, 
the set of morphisms  from $E$ to the shifts 
$F[n]$ of $F$ where $n \in \bb Z$ 
is denoted by $\Hom ^n_{\mb D} (E, F)$ or 
$\Hom_{}^n (E, F)$. 
By convention, $\Hom^0_{\mb D}(E, F)$ is 
nothing but $\Hom_{\mb D}(E,F)$.

If the triangulated category is the 
bounded derived category $\mb D^b(\mca A)$ 
of an abelian category, 
the $i$-th cohomology of an object $E \in \mb D^b(A)$ with respect 
to the standard $t$-structure is denoted by $H^i(E)$.

Suppose that the abelian category $\mca A$ 
is the category of coherent sheaves on a scheme $X$. 
Then we denote by $\mb D^b(X)$ the triangulated $\mb D^b(\mca A)$. 
In particular, if the scheme $X$ is the affine scheme 
$\Spec A$ of a Noetherian ring $A$, 
we simply write $\mb D^b(A)$ instead of $\mb D^b(\Spec A)$.

\subsection*{Acknowledgement}
The author would like to thank Hiroyuki Minamoto and Atsushi Takahashi 
for their helpful advice in carrying out this research, and 
Dogjian Wu for pointing out several typos in an earlier version. 
This work was supported by JSPS KAKENHI Grant Number 21K03212 and 
in part by the Research Institute for Mathematical Sciences (RIMS), 
 Kyoto University.

\section{Almost hereditary subcategory}

\begin{dfn}\label{dfn:almost-hereditary}
Let $\mca A$ be an abelian category. 
A full abelian subcategory $\mca A_0 \subset \mca A$ is 
said to be \textit{almost hereditary} in $\mca A$ if it  satisfies the following 
condition. 
\begin{itemize}
	\item[(AH)] For any objects $E, F \in \mca A_0$, 
	if $\Hom_{\mca A}^0(E, F)=\Hom_{\mca A}^1(E, F)=0$ then 
	the vanishing $\Hom _{\mca A}^p(E, F)=0$ holds for all $p \in \bb Z$. 
\end{itemize}
\end{dfn}

The terminology almost hereditary reflects the idea that, 
although $\mca A$ does not have global dimension $1$, 
it behaves as if it did when we restrict attention 
to the subcategory $\mca A_{0}$.
We introduce two trivial examples. 

\begin{ex}
Suppose that the global dimension of an abelian category 
$\mca A$ is $1$. 
Then $\mca A$ itself is almost hereditary in $\mca A$.

Let $\mca A$ be the category of 
finitely generated  modules
over an Artinian local ring $(R, \mf m)$ and 
let $\mca A_0$ be the full subcategory of 
$R/\mf m$ modules. 
Then $\mca A_0$ is almost hereditary in $\mca A_0$ as follows. 
If objects $E$ and $F$ in $\mca A_0$ satisfy 
the assumption in (AH), 
then, recalling that they are $\mb k$-vector spaces, 
either $E$ or $F$ is zero and hence the conclusion follows. 
\end{ex}

Let $\mca A_0$ be an abelian full subcategory of an abelian category $\mca A$. 
Set a full subcategory $\mb D^b(\mca A)_{\mca A_0}$ 
of $\mb D^b(\mca A)$  by 
\[
\mb D^b (\mca A)_{\mca A_0} := 
\{
E \in \mb D^b (A) \mid H^i(\mca A) \in \mca A_0 (\forall i \in \bb Z)
\}. 
\]

\begin{lem}\label{lem:vanishing-for-HN}
Let $\mca A_0$ be an abelian full subcategory of an 
abelian category $\mca A$. 
Assume that $\mca A_0$ is almost hereditary in $\mca A $. 
Take $E$ and $F$ in $\mb D^b(\mca A)_{\mca A_0}$. 
If  the vanishings 
\[
\Hom_{}^{n}(E, F)=0  \quad (n \leq 0)
\]
hold, 
then  the following holds. 
\begin{enumerate}
	\item $\forall i\in \bb Z, \forall p \in \bb Z$, $\forall n \in \bb Z_{>0}$, 
	$\Hom^p (H^i(E), H^{i-n}(F) )=0$.  
	\item $\forall i \in \bb Z$, $\Hom^0( H^i(E), H^i(F))=0$. 
\end{enumerate}
\end{lem}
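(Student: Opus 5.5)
The plan is to use the standard spectral sequence computing morphisms between complexes from morphisms between their cohomology objects. For $E,F\in\mb D^b(\mca A)$ it reads
\[
E_2^{p,q}=\prod_{i\in\bb Z}\Hom^p\bigl(H^i(E),H^{i+q}(F)\bigr)\Longrightarrow \Hom^{p+q}(E,F),
\]
with differentials $d_r\colon E_r^{p,q}\to E_r^{p+r,q-r+1}$. Here $E_2^{p,q}=0$ for $p<0$, since the $H^i(E)$ and $H^{i+q}(F)$ lie in the heart. Only finitely many rows $q$ are nonzero, because $E$ and $F$ are bounded, so the spectral sequence converges even though $\mca A$ may have infinite global dimension. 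If one prefers to avoid spectral sequences, the same bookkeeping can be done by induction on the number of nonzero cohomology objects of $E$ and $F$, using the truncation triangles $\tau_{\le a-1}E\to E\to H^a(E)[-a]$ and their analogues for $F$. Statement (1) with $n>0$ says exactly that every row $q=-n<0$ of $E_2$ vanishes, and statement (2) says that $E_2^{0,0}=0$.

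To prove (1), I will induct upward on $q<0$, starting below the lowest possible nonzero row $q_0=b-a$. Here $a$ is the top degree of $E$ and $b$ the bottom degree of $F$; below $q_0$ everything vanishes trivially. Fix $q<0$ and assume all rows $q'<q$ vanish.
\begin{itemize}
\item Incoming differentials: any differential into $E_r^{0,q}$ or $E_r^{1,q}$ starts at a column $p<0$, hence is zero.
\item Outgoing differentials: any differential out of $E_r^{0,q}$ or $E_r^{1,q}$ lands in row $q-r+1<q$ (as $r\ge2$), which vanishes by the induction hypothesis.
\end{itemize}
Hence $E_2^{0,q}=E_\infty^{0,q}$ and $E_2^{1,q}=E_\infty^{1,q}$. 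These are subquotients of $\Hom^{q}(E,F)$ and $\Hom^{q+1}(E,F)$ respectively. Both groups vanish by hypothesis, since $q\le -1$ and $q+1\le 0$.

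So for each $i$ we get
\[
\Hom^0\bigl(H^i(E),H^{i+q}(F)\bigr)=\Hom^1\bigl(H^i(E),H^{i+q}(F)\bigr)=0 .
\]
Since $E,F\in\mb D^b(\mca A)_{\mca A_0}$, both objects lie in $\mca A_0$, so condition (AH) of Definition \ref{dfn:almost-hereditary} gives $\Hom^p=0$ for all $p$. Thus the whole row $q$ vanishes, completing the induction and proving (1).

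For (2), look at $E_2^{0,0}$. Incoming differentials again come from negative columns and vanish. Outgoing ones land in rows $1-r<0$, which are zero by (1). So $E_2^{0,0}=E_\infty^{0,0}$, which is a subobject of $\Hom^0(E,F)=0$ (it is the bottom piece of the filtration). This gives $\Hom^0(H^i(E),H^i(F))=0$ for all $i$.

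The main point requiring care is the induction step. It only works because (AH) upgrades the vanishing of $\Hom^0$ and $\Hom^1$ in row $q$, the only information obtainable from the abutment, to vanishing of the entire row. That full vanishing is exactly what kills the differentials in the next step. Without (AH), higher $\Ext$ terms in lower rows could receive or emit differentials and break the argument. I would also double-check the convergence and filtration conventions, since the columns are unbounded, so that $E_\infty^{0,q}$ is indeed a subobject of $\Hom^q(E,F)$.
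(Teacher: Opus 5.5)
Your proposal is correct and is essentially the paper's argument: the same spectral sequence, with (AH) upgrading the vanishing of $E_2^{0,q}$ and $E_2^{1,q}$ in each negative row to vanishing of the whole row, then $E_2^{0,0}=E_\infty^{0,0}$ for part (2). Whether $E_\infty^{0,0}$ is a sub or a quotient of $\Hom^0(E,F)$ does not matter, since it is a subquotient of $0$. The only difference is bookkeeping: the paper inducts on $m_1-m_2$ by applying the lemma to the pair $(E,F[-1])$, while you induct directly on the row index $q$, which unwinds to the same thing.
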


\begin{proof}
Recall the spectral sequence 
\begin{equation} \label{spectral1022}
E^{p,q}_2 = 
\bigoplus_{i \in \bb Z} \Hom ^{p}(H^i(E), H^{i+q}(F)) \Rightarrow 
\Hom^{p+q}(E,F)=E^{p+q}. 
\end{equation}
Put 
$m_1= \max \{ i\in \bb Z \mid H^i(E) \neq 0 \}$ and 
$m_2 = \min \{ i \in \bb Z \mid H^j(F) \neq 0 \}$. 
If $m_1 < m_2$,  then nothing to prove. 
Thus it is enough to prove the assertion for $m_1 \geq m_2$ and 
we prove it by the induction for $m_1-m_2$.

Suppose $m_1 = m_2$. 
Then the first assertion is obvious since 
$H^i(E)$ or $H^{i-n}(F)$ is zero. 
Since 
$E_{2}^{p,q}=0$ holds for $q <0$, 
the equality 
$E_{2}^{0,0}=E_{\infty}^{0,0}=E^{0}=\Hom(E, F)=0$ implies 
$E_2^{0,0}=0$.  
Thus we have 
$\Hom^{0}(H^i(E), H^i(F))=0$.

Suppose that the assertions hold for $m_1-m_2-1$. 
By the equlaity 
$ 
\Hom^p(E, F)=\Hom^{p+1}(E, F[-1])
$, 
the pair $(E, F[-1])$ satisfies 
the induction hypothesis. 
Hence, for all $i \in \bb Z $, 
\begin{equation}\label{eq:1022}
\begin{cases}
\forall p \in \bb Z, \forall n \geq 2, 
\Hom^{p}(H^i(E), H^{i-n}(F))=0 \\
\Hom^{0}(H^i(E), H^{i-1}(F))=0. 	
\end{cases}
\end{equation}
Since each cohomology are in $\mca A_0$ 
which is almost hereditary in $\mca A$, 
it is enough to prove 
$\Hom^1(H^i(E), H^{i-1}(F))=0$ and 
$\Hom^0(H^i(E), H^i(F))=0$. 

For the spectral sequence (\ref{spectral1022}), 
the vanishings (\ref{eq:1022}) imply
$E_{\infty}^{1,-1}=E_{2}^{1,-1}$. 
Then the short exact sequence 
\[
\xymatrix{
0 \ar[r] & 
E_{\infty}^{1,-1} \ar[r] & 
E^{0} \ar[r] & 
E_{\infty}^{0,0} \ar[r] & 
0
}
\]
implies 
$E^{1,-1}_2 = 
\Hom^1 ( H^i(E), H^{i-1}(F))=0
$
since $E^0=0$. 
By the assumption for $\mca A_0$, 
we have 
$\Hom ^p (H^i(E), H^{i-1}(F))=0$ for any $p \in \bb Z$. 
Then we see 
\[
E^{0,0}_{\infty} = E^{0,0}_{2} = \Hom (H^i(E), H^i(F))=0. 
\]
This gives the proof. 
\end{proof}

\begin{lem}\label{lem:nilpotent-morphism}
Let $\mca A_0\subset \mca A$ be a full abelian subcategory. 
Suppose that $\mca A_0$ is almost hereditary in $\mca A$.
For a non-zero object $E\in \mb D^b(\mca A)_{\mca A_0}$, put
$m_0 = \max\{ i \in \bb Z \mid H^i(E) \neq 0 \}$. 

If the vanishings 
$
\Hom_{}^{n}(E, E)=0$
hold for $n <0$ and 
$\Hom^1(H^{m_0}(E), H^{m_0-1}(E))$ is non-zero, 
then there is a non-zero morphism 
$\psi  \colon E \to E$ with $\psi^2 =0$. 
\end{lem}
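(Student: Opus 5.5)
The plan is to produce $\psi$ from the $E_2^{1,-1}$-term of the spectral sequence (\ref{spectral1022}) for the pair $(E,E)$. I will then show that $\psi^2=0$ because it would have to lie in a deeper piece of the filtration on $\Hom^0(E,E)$, and that piece vanishes.

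First I apply Lemma \ref{lem:vanishing-for-HN} to the pair $(E, E[-1])$. Both objects lie in $\mb D^b(\mca A)_{\mca A_0}$, and
\[
\Hom^n(E,E[-1])=\Hom^{n-1}(E,E)=0 \quad (n\le 0)
\]
by the hypothesis $\Hom^{<0}(E,E)=0$. Since $H^{j}(E[-1])=H^{j-1}(E)$, part (1) of that lemma gives $\Hom^p(H^i(E),H^{j}(E))=0$ for all $p$ and all $j\le i-2$. Part (2) gives $\Hom^0(H^i(E),H^{i-1}(E))=0$. In the spectral sequence
\[
E_2^{p,q}=\bigoplus_i \Hom^p(H^i(E),H^{i+q}(E))\Rightarrow \Hom^{p+q}(E,E)
\]
this means $E_2^{p,q}=0$ for all $q\le -2$, and also $E_2^{0,-1}=0$. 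The terms with $p<0$ vanish as usual.

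Next I check that $E_2^{1,-1}$ survives. A differential $d_r$ ($r\ge2$) into $E_r^{1,-1}$ starts at $E_r^{1-r,\,r-2}$, which has $p<0$ and so is zero. A differential out of it lands in $E_r^{1+r,\,-r}$, which has $q=-r\le -2$ and so is zero. Hence
\[
E_\infty^{1,-1}=E_2^{1,-1}\supset \Hom^1(H^{m_0}(E),H^{m_0-1}(E))\neq 0.
\]
Let $F^\bullet$ be the decreasing filtration on $\Hom^0(E,E)$ coming from the truncation filtration of $E$. The graded pieces $F^p/F^{p+1}$ are $E_\infty^{p,-p}$, and these vanish for $p\ge 2$. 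So $F^2=0$ and $F^1\cong E_\infty^{1,-1}$. I take $\psi\in F^1$ to be any nonzero element, for instance one lifting a nonzero class in the $i=m_0$ summand.

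The main obstacle is to justify multiplicativity, $F^1\circ F^1\subset F^2$, since the spectral sequence itself is not set up with a product. I will avoid the spectral sequence here and describe $F^p$ concretely: $f\in F^p$ if and only if, for every $i$, the composite
\[
\tau_{\le i}E\to E\xrightarrow{f}E\to \tau_{\ge i-p+1}E
\]
vanishes. Equivalently, $f$ maps $\tau_{\le i}E$ into $\tau_{\le i-p}E$ after factoring. With this description the inclusion $F^p\circ F^{p'}\subset F^{p+p'}$ is a direct diagram chase using the truncation triangles and the fact that $\Hom(\tau_{\le a}E,\tau_{\ge a+1}E)=0$. I also need that this concrete filtration matches the abutment filtration of (\ref{spectral1022}); I will verify this through the standard construction of the spectral sequence from the exact couple of truncations. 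Alternatively, I can argue directly that every $f\in F^2$ is zero by descending induction on the amplitude of $E$, using $E_2^{p,q}=0$ for $q\le-2$. Granting multiplicativity, $\psi^2\in F^2=0$ while $\psi\neq0$, which proves Lemma \ref{lem:nilpotent-morphism}.
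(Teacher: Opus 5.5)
Your argument is correct, but it runs in the opposite direction from the paper's.

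The paper never filters $\Hom(E,E)$. It writes $\psi$ explicitly as the composite $E\to H^{m_0}(E)[-m_0]\xrightarrow{\tilde\varphi}\tau_{\le m_0-1}E\to E$, built from the two truncation maps and a lift $\tilde\varphi$ of the given class $\varphi\neq 0$. The lift exists because $\Hom(H^{m_0}(E)[-m_0],\tau_{\le m_0-2}E[p])=0$ for all $p$. Then $\psi^2=0$ is automatic, since the truncation maps compose to zero in the other order. The real work in the paper is showing $\psi\neq0$, via two injectivity checks.

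You get $\psi\neq0$ for free from $E_\infty^{1,-1}=E_2^{1,-1}\neq0$, and instead pay for $\psi^2=0$. What your route buys is a structural statement: the ideal $\ker\bigl(\Hom(E,E)\to\bigoplus_i\Hom(H^i(E),H^i(E))\bigr)$ is nonzero and squares to zero. It also needs $\Hom^1(H^i(E),H^{i-1}(E))\neq0$ only for some $i$, not necessarily $i=m_0$. The paper's route is more elementary and uses the spectral sequence only through Lemma \ref{lem:vanishing-for-HN}.

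On the step you defer, you do not need the full identification of the abutment filtration with your concrete one. That identification is the delicate part, since the factorizations through $\tau_{\le i-p}E$ are not unique. Two facts suffice.

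First, the abutment $F^1$ is the kernel of the edge map $f\mapsto(H^i(f))_i$. This is exactly your concrete $F^1$, and it is the same fact the proof of Lemma \ref{lem:vanishing-for-HN} relies on.

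Second, your concrete $F^2$ vanishes. Write $\iota_i\colon\tau_{\le i}E\to E$ and $j_i\colon\tau_{\le i-1}E\to\tau_{\le i}E$, and induct on $i$. Suppose $f\in F^2$ and $f\iota_{i-1}=0$, and write $f\iota_i=\iota_{i-2}f_i$.
\begin{enumerate}
\item Then $\iota_{i-2}f_ij_i=0$, which forces $f_ij_i=0$ because $\Hom(\tau_{\le i-1}E,\tau_{\ge i-1}E[-1])=0$.
\item Hence $f_i$ factors through $H^i(E)[-i]$.
\item But $\Hom(H^i(E)[-i],\tau_{\le i-2}E)=0$ by the vanishing for $q\le-2$, so $f\iota_i=0$.
\end{enumerate}
Taking $i=m_0$ gives $f=0$.

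You can even skip the second fact. Your inclusion $F^p\circ F^{p'}\subset F^{p+p'}$, together with $F^N=0$ once $N$ exceeds the amplitude of $E$, shows that $F^1$ is nilpotent. So if $\psi^k\neq0=\psi^{k+1}$, then $\psi^k$ is already a nonzero square-zero endomorphism.
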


\begin{proof}
By Lemma \ref{lem:vanishing-for-HN}, 
we have 
\begin{equation}\label{eq:0909}
\begin{cases}
\Hom_{}^p (H^i(E), H^{i-q}(E)) =0  & \forall p \in \bb Z , q \geq 2 \\
\Hom_{}(H^i(E), H^{i-1}(E)) =0  & 
\end{cases}
\end{equation}
Without loss of generality, 
we may assume $m_0=0$. 
The cohomological filtration of $E$ with respect to the 
standard $t$-structure is denoted by 
\[
\xymatrix{
\cdots \ar[r]  & E^{-2} \ar[r]\ar[d] & E^{-1} \ar[r]  \ar[d] & E^{0} =E  \ar[d]^{\tau_0}\\
& A^{-2}[2] \ar@{-->}[ul] & A^{-1}[1] \ar@{-->}[ul] & A^0 \ar@{-->}[ul]
}. 
\]
Here $A^i =H^i(E)$.  By the assumption, 
there is a non-zero morphism $\varphi \colon A^0 \to A^{-1}[1]$.

Then the vanishings 
(\ref{eq:0909}) imply 
$\Hom(A^0, E^{-2}[p])=0$ ($\forall p \in \bb Z$). 
Hence we see 
$\Hom(A^0, E^{-1}) \cong \Hom(A^0, A^{-1}[1])$ and 
there exists a lift $\tilde{\varphi}$ to $E^{-1}$ of   
$\varphi \colon A^0 \to A^{-1}[1]$: 
\[
\xymatrix{
&  & A^0 \ar[d]^{\varphi} \ar@{-->}[ld]_{\tilde{\varphi}} & \\
E^{-2} \ar[r]	 & E^{-1} \ar[r]&	 A^{-1}[1]	 \ar[r]	& E^{-2}[1] . 
}
\]
Moreover the morphism $f$ in the diagram below is an isomorphism. 
The morphism $g$ is mono by 
$\Hom(E^{-1}[1], A^{-1}[1])=0$. 
\[
\xymatrix{
&& \Hom^1(E^{-1}[1], A^{-1}) \ar[d] & \\
\Hom(A^0, E^{-2}) \ar[r] & \Hom(A^0, E^{-1}) \ar[r]^{f} \ar[d]^{\tau_0^*} & \Hom^1(A^0, A^{-1})  \ar[r] \ar[d]_{g} & \Hom^1(A^0, E^{-2}) \\
 & \Hom(E^0, E^{-1}) \ar[r] & \Hom^1(E^0, A^{-1}) & \\
}
\]
Since the composite $g \cdot f$ is mono, the morphism 
$\tau_0^*$ is mono and the morphism 
\[
\tau_0^*(\tilde{\varphi})=\tilde{\varphi}\cdot \tau_0
\colon E^0 \to E^{-1}
\]
is non-zero. 

The exact triangle 
\[
\xymatrix{
A^0[-1]		\ar[r]	& E^{-1} \ar[r]^{\rho} & E^0  \ar[r]^{\tau_0} & A^0 , 
}
\]
yields the following exact sequence 
\[
\xymatrix{
\Hom^{-1}(E^0, A^0)  \ar[r] &  \Hom(E^0, E^{-1}) \ar[r]^{\rho_*}& 
\Hom(E^0, E^0). \\
}
\]
Since $\mca A$ is the heart of a $t$-structure, 
$\Hom^{-1}(E^0, A^0)$ is zero 
hence  $\rho_*$ is a mono morphism. 
Thus the morphism 
$
\psi := \rho_*(
\tau_0^* (\tilde{\varphi} )
)
= \rho \cdot \tilde{\varphi}\cdot \tau_0  \colon E^0 \to E^0
$
is non-zero and we easily see that the composite $\psi ^2$ is zero. 
Thus the morphism $\psi$ is desired morphism and this gives 
the proof. 
\end{proof}

\begin{prop}\label{prop:class-of-stable}
Assume that a full subcategory $\mca A_0$ of $\mca A$ 
is almost hereditary and that 
a non-zero object 	
$E \in \mb D^b(\mca A)_{\mca A_0}$
satisfies 
\begin{itemize}
	\item[(a)] $\Hom^{n}(E, E) =0$ for $n <0$ and 
	\item[(b)] any non-zero endomorphism of $E$ is invertible. 
\end{itemize}
Then $E$ is in $\mca A_0$ up to shift. 
\end{prop}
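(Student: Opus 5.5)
We argue by contradiction on the cohomological amplitude of $E$. Shift $E$ so that $m_0=\max\{i\mid H^i(E)\neq 0\}=0$, and let $m=\min\{i\mid H^i(E)\neq 0\}$. If $m=0$, then $E\cong H^0(E)\in\mca A_0$ and we are done. So suppose $m<0$. The goal is to construct a non-zero, non-invertible endomorphism of $E$, which contradicts (b).

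The first step applies Lemma \ref{lem:vanishing-for-HN} to the pair $(E,E)$; this is allowed by (a), together with the case $n=0$ handled separately below. Lemma \ref{lem:nilpotent-morphism} then splits the argument into two cases.

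\emph{Case 1:} $\Hom^1(H^0(E),H^{-1}(E))\neq 0$. Here Lemma \ref{lem:nilpotent-morphism} gives a non-zero $\psi\colon E\to E$ with $\psi^2=0$. Such a $\psi$ is not invertible, contradicting (b).

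\emph{Case 2:} $\Hom^1(H^0(E),H^{-1}(E))=0$. We split off the top cohomology. Consider the truncation triangle
\[
\tau_{\le -1}E\to E\to H^0(E)\xrightarrow{\delta}(\tau_{\le -1}E)[1].
\]
We want $\delta=0$. Lemma \ref{lem:vanishing-for-HN} does not apply directly here, since we only know $\Hom^n(E,E)=0$ for $n<0$ and not for $n=0$. However, the proof of Lemma \ref{lem:nilpotent-morphism} obtains the vanishings \eqref{eq:0909}, that is $\Hom^p(H^i(E),H^{i-q}(E))=0$ for all $p$ and all $q\ge 2$, using only the $n<0$ hypothesis: it applies Lemma \ref{lem:vanishing-for-HN} to $(E,E[-1])$. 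We reuse those vanishings. Combined with the Case 2 assumption, they give $\Hom^1(H^0(E),H^{-q}(E)[q])=0$ for $q=1$ and for $q\ge2$. Filtering $\tau_{\le-1}E$ by its cohomology then yields $\Hom(H^0(E),(\tau_{\le -1}E)[1])=0$. Hence $\delta=0$ and
\[
E\cong H^0(E)\oplus\tau_{\le -1}E,
\]
with both summands non-zero because $m<0$. The projection onto a summand is a non-zero, non-invertible idempotent, again contradicting (b).

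The step needing most care is the vanishing $\Hom^1(H^0(E),\tau_{\le-1}E)=0$. It requires an induction along the standard filtration of $\tau_{\le -1}E$. The $q\ge 2$ terms vanish in all degrees $p$ by \eqref{eq:0909}, and the $q=1$ term vanishes by the Case 2 hypothesis together with (AH). Since $H^0(E),H^{-1}(E)\in\mca A_0$ and we also know $\Hom^0(H^0(E),H^{-1}(E))=0$ from \eqref{eq:0909}, (AH) upgrades this to vanishing in every degree. This makes the long exact sequences of the filtration triangles collapse and gives the splitting.
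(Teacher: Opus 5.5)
Your argument is correct and is essentially the paper's proof. You shift so the top cohomology is $H^0(E)$, and you exclude $\Hom^1(H^0(E),H^{-1}(E))\neq 0$ via the square-zero endomorphism of Lemma \ref{lem:nilpotent-morphism}. You then use (AH) together with the vanishings from Lemma \ref{lem:vanishing-for-HN} for the pair $(E,E[-1])$ to split off $H^0(E)$ and contradict (b); where the paper cites an indecomposability lemma, you use the projection idempotent. The one slip is your opening remark about applying Lemma \ref{lem:vanishing-for-HN} to $(E,E)$, which is impossible because $\mathrm{id}_E\neq 0$; the correct pair is $(E,E[-1])$, which is the one you actually use later.
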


\begin{proof}
If necessary by shift of the complex, 
we may assume that 
the object $E$ satisfies $H^i(E)=0$ for $i>0$ and $H^0(E)\neq 0$.

Let $E^{-1}$ be the co-cone of the truncation morphism $E \to H^0(E)$. 
Then we have the following exact triangle: 
\begin{equation}\label{0912-coffee}
\xymatrix{
E^{-1} \ar[r] & E \ar[r]  & H^0(E) \ar[r] & E^{-1}[1]
}
\end{equation}
It is enough to show that $E^{-1}=0$.

Applying Lemma \ref{lem:vanishing-for-HN} to $F=E[-1]$, 
we have 
\begin{align}
& \Hom ^{0} (H^0(E), H^{-1}(E))=0, \text{ and } \notag  \\
& \Hom^p(H^0(E), H^{-q}(E))=0 \ (\forall p \in \bb Z, \forall q\geq 2). \label{eq:1224-1} 
\end{align}

If $\Hom^1(H^0(E), H^{-1}(E)) \neq 0$, 
then Lemma \ref{lem:nilpotent-morphism} yields a nilpotent 
endomorphism $\psi \colon E \to E$, 
which contradicts the assumption on $E$.
Therefore,
$ \Hom^1(H^0(E), H^{-1}(E)) =0$. 
Since $\mca A_0$ is almost hereditary in $\mca A$, 
it follows that 
\begin{equation} \label{eq:1224-2}
\Hom ^p(H^0(E), H^{-1}(E))=0 \ (\forall p \in \bb Z). 
\end{equation}

Then the vanishings (\ref{eq:1224-1}) and (\ref{eq:1224-2}) imply 
$\Hom^p(H^0(E), E^{-1})=0$ for all $p \in \bb Z$. 
Then $E$ is isomorphic to the direct sum $E^{-1} \+ H^0(A)$. 
By the assumption (b) and \cite[Lemma 3.3]{MR4517994}, 
$E$ is indecomposable.
Hence $E^{-1}=0$.
\end{proof}

\section{Preliminaries on stability conditions}

We first give a quick review of stability conditions on a 
tirangulated category.

\subsection{Review of stability conditions}

Let $\mb D$ be a triangulated category. 
Following the original article \cite{MR2373143}, 
we briefly recall stability conditions on 
a triangulated category $\mb D$. 

\begin{dfn}\label{dfn:stability-condition}
A stability condition on $\mb D$ consists 
of a pair $\sigma=(Z, \mca P)$ 
where $Z$ is a group homomorphism from 
the Grothendieck group of $\mb D$ to $\bb C$ 
and $\mca P=\{ \mca P(\phi) \}_{\phi \in \bb R}$ 
is the collection of full sub-additive 
categories $\mca P(\phi)$ of $\mb D$ 
satisfying the following: 
\begin{enumerate}
	\item[(s1)] any $E \in \mca P(\phi)$ satisfies $Z(E)\in R_{>0} \cdot e^{\pi\sqrt{-1}\phi}$ , 
	\item[(s2)]if $\phi > \psi$ then $\Hom(E,F)=0$ for any 
	$E \in \mca P(\phi)$ and $F \in \mca P(\psi)$, 
	\item[(s3)] $\mca P(\phi +1 ) = \mca P(\phi)[1]$,  and 
	\item[(s4)]any object $E \in \mb D$ has a sequence of exact triangles in $\mb D$
	\begin{equation}
	\label{eq:HN-filtration}	
\xymatrix{
0 \ar[r] & E_1 \ar[d]\ar[r] & E_2 \ar[r] \ar[d] & \cdots \ar[r]  & E_{n-1}\ar[d] \ar[r] & E_n=E \ar[d]  \\ 
		&A_1 \ar@{-->}[ul]&A_2 \ar@{-->}[ul]& & A_{n-1}\ar@{-->}[ul]&A_{n}\ar@{-->}[ul]& 
}	
	\end{equation}
	such that 
	each cone $A_i$ of the morphism $E_{i-1}\to E_i$ is in $\mca P(\phi_i)$ with 
	$\phi_1 > \phi_2 > \cdots > \phi_n$. 
\end{enumerate}
\end{dfn}

\begin{rmk}
\begin{enumerate}
	\item 
An object $A \in \mb D$ is said to be 
\textit{$\sigma$-semistable} if $A$ is in $\mca P(\phi)$ and $A$ is non-zero. 
Moreover the object $A$ is said to be 
\textit{$\sigma$-stable} if $A$ is simple in $\mca P(\phi)$. 
The full subcategory of stable objects in $\mca P(\phi)$ is 
denoted by $\mca P(\phi)^s$.  

\item 
Given an interval $I \subset \bb R$, 
the extension closure 
$\bigcup _{\phi \in I} \mca P(\phi)$ is denoted by 
$\mca P(I)$. 
A stability condition $(Z, \mca P)$ is locally finite if, for any
$\phi \in \mathbb R$, there exists $\delta > 0$ such that
$\mca P((\phi-\delta, \phi+\delta))$ has finite length.

\item The set of locally finite stability conditions on $\mb D$ 
is denoted by $\Stab{\mb D}$. 
The set $\Stab{\mb D}$ has a natural topology and 
a structure of complex manifolds by \cite{MR2373143}.

\item If a stability condition is locally finite, 
any semistable objects is given by finite extension 
of stable objects. 
Thus the slicing $\mca P(\phi)$ is generated by 
$\mca P(\phi)^s$.

\item 
By the definition, 
a stability condition on a triangulated 
category is the pair 
$(Z, \mca P)$ satisfying the above 4 conditions. 

When the pair $(Z, \mca P)$ satisfies (s1), (s2), and (s3), 
if the filtration of an object $E \in \mb D$ exists, 
it is unique up to isomorphisms. 
Hence we refer to the filtration (\ref{eq:HN-filtration}) 
as the Harder-Narasimhan filtration (HN-filtration) of an object $E$. 
\end{enumerate}	
\end{rmk}

\begin{lem}\label{lem:direct-sum}
	Assume that the pair $(Z, \mca P)$ satisfies the conditions (s1), (s2), and (s3), 
	where $Z \colon K_0(\mb D) \to \bb C$ and $\mca P$ is the collection of 
	full sub categories. 
	If $X $ and $Y$ in $\mb D$ have 
	the HN filtrations, then so does $X \+ Y$. 
\end{lem}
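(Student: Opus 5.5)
We prove Lemma \ref{lem:direct-sum} by building the HN filtration of $X \+ Y$ by merging those of $X$ and $Y$ phase by phase. Let
\[
0=X_0 \to X_1 \to \cdots \to X_n = X, \qquad 0=Y_0 \to Y_1 \to \cdots \to Y_m = Y
\]
be HN filtrations, with semistable factors $A_i \in \mca P(\phi_i)$ and $B_j \in \mca P(\psi_j)$, where $\phi_1 > \cdots > \phi_n$ and $\psi_1 > \cdots > \psi_m$. Let $\theta_1 > \theta_2 > \cdots > \theta_r$ be the distinct elements of $\{\phi_i\} \cup \{\psi_j\}$, listed in decreasing order. For each $k$, let $a(k)$ be the largest index $i$ with $\phi_i \geq \theta_k$ (or $0$ if there is none), and define $b(k)$ similarly using the $\psi_j$. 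Then set $E_k := X_{a(k)} \+ Y_{b(k)}$, with $E_0=0$ and $E_r = X \+ Y$. The maps $E_{k-1} \to E_k$ are direct sums of the corresponding filtration maps; if an index does not change, that component is the identity.

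Next we identify the cones. A direct sum of exact triangles is exact, so the cone of $E_{k-1} \to E_k$ is $C_k \+ D_k$. Here $C_k$ is the cone of $X_{a(k-1)} \to X_{a(k)}$. By construction $a(k) - a(k-1) \leq 1$, since the $\phi_i$ are strictly decreasing and so at most one of them equals $\theta_k$. Hence $C_k$ is either $0$ (the cone of an identity) or $A_i$ with $\phi_i = \theta_k$. The same holds for $D_k$ with respect to the $\psi_j$. So each cone is either $A_i$, $B_j$, or $A_i \+ B_j$, and every summand present has phase exactly $\theta_k$. The cone is nonzero because $\theta_k$ occurs among the $\phi_i$ or the $\psi_j$.

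It remains to check that $\mca P(\theta_k)$ is closed under finite direct sums, so that $A_i \+ B_j \in \mca P(\theta_k)$. In Definition \ref{dfn:stability-condition} each $\mca P(\phi)$ is assumed to be a full additive subcategory, which gives this immediately. If one reads the definition more weakly, closure under direct sums should still follow from (s1)–(s3): Bridgeland's argument shows that $\mca P(\phi)$ is extension-closed, and a direct sum is a split extension. Condition (s1) is also consistent, since $Z$ is additive and $\bb R_{>0} e^{\pi\sqrt{-1}\theta}$ is closed under addition. With the cones in $\mca P(\theta_k)$ and $\theta_1 > \cdots > \theta_r$ strictly decreasing, the sequence $E_\bullet$ is an HN filtration of $X \+ Y$.

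The main point of care is the bookkeeping in the merging step. Because the phases in each filtration are strictly decreasing, each index advances by at most one at each step, so no cone is a nontrivial extension of two factors from the same filtration. Beyond that, the only thing to confirm is the direct-sum closure of the slices.
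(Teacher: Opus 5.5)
Your proposal is correct and is essentially the paper's proof. Both merge the two strictly decreasing phase lists, pad each filtration with identity maps so that it is indexed by the merged list (your $X_{a(k)}$ is the paper's $\tilde{X}_k$), and take the termwise direct sum; your check that each cone is $A_i$, $B_j$ or $A_i \oplus B_j$ in $\mca P(\theta_k)$ spells out what the paper leaves implicit. Drop the fallback remark that direct-sum closure would follow from extension-closedness under (s1)--(s3) alone: Bridgeland's extension-closedness argument needs an HN filtration of the middle term, and it is moot anyway because each $\mca P(\phi)$ is assumed additive.
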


\begin{proof}
Let 
$0=X_0 \to X_1 \to \cdots \to X_n= X$ and 
$0=Y_0 \to Y_1 \to \cdots \to Y_m= Y$ 
be the HN filtrations of $X$ and of $Y$ respectively. 
Put by $A_i$ (resp. $B_j$) of the cone 
of $X_{i-1} \to X_i$ (resp. $Y_{j-1} \to Y_j$). 
The phase of $A_i$ (resp. of $B_j$) is 
denoted by $\phi_i$ (resp. $\psi_j$). 
Let $
\{ \theta_1, \theta _2, \cdots, \theta_{\ell} \}= 
{\phi_{i}}_{i=1}^n \cup \{ \psi _j \}_{j=1}^m$ 
with $i<j \Rightarrow \theta _i > \theta _j$.

Then for each $\phi _i$ (resp. $\psi _j$), 
there exists a unique  
$\alpha (i) \in \{ 1, 2, \cdots, \ell \}$
(resp. $\beta (j)\in \{ 1,2, \cdots, \ell \}$)
such that 
$\phi_i = \theta _{\alpha(i)}$
(resp. $\psi_j = \theta _{\beta (j)}$).

Using $\alpha(i)$ and $\beta (j)$, 
we set a filtration $\{ \tilde{X}_{\alpha} \}_{\alpha=1}^{\ell}$ of $X$ by 
\begin{align*}
& \tilde{X} _{\alpha} = X_i \quad (\alpha (i)\leq \alpha < \alpha (i+1))  \\
& \tilde{X} _{\alpha} \to  \tilde {X}_{\alpha +1 } 
=\begin{cases}
	\1 & \alpha (i) \leq \alpha < \alpha (i+1)-1  \\
	X_{i-1} \to X_i & \alpha = \alpha (i+1)-1
\end{cases} 
\end{align*}
Similarly for $Y$, set a filtration 
$\tilde{Y}_{j}$ as follows: 
\begin{align*}
& \tilde{Y} _{\beta} = Y_j \quad (\beta(j)\leq \beta < \beta (j+1))  \\
& \tilde{Y} _{\beta} \to  \tilde {Y}_{\beta +1 } 
=\begin{cases}
	\1 & \beta (i) \leq \beta < \beta (j+1)-1  \\
	Y_{j-1} \to Y_j & \beta = \beta (j+1)-1
\end{cases} 
\end{align*}
Then 
$Z_k = \tilde{X}_k \+ \tilde {Y}_k$ gives 
the HN filtration of $X \+ Y$. 
\end{proof}

Before the introduction of a key lemma, 
we quickly review of a linear structure of 
an abelian category. 
If an abelian category $\mca A$ is 
linear over a commutative ring $K$, 
there exists a multiplication map 
\[
\mu \colon K \to  \mr{End}(E)
\]
for any object $E \in \mca A$. 
Here we say that an object $E \in \mca C$ is 
\textit{annihilated by an ideal $I$} of $K$ if 
$I$ maps to zero via the map $\mu$. 
If any object in a full subcategory $\mca  B$ of $\mca A$ 
is annihilated by an ideal $I$, 
then we call the category $\mca B$ to be annihilated by $I$. 
If the category $\mca B$ is maximal 
among the category annihilated by $I$, 
we call $\mca B$ maximally annihilated by $I$.

\begin{prop}
Let $\mca A$ be an $K$-linear abelian category where 
$K$ is a commutative ring. 
Set a full subcategory $\mca A_I$ by 
\[
\mca A_I := \{ E \in \mca A \mid   E
\text { is annihilated by }I  \}
\]
Then 
$\mca A_I$ is an abelian subcategory and is maximally annihilated. 
\end{prop}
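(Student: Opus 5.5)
We verify that $\mca A_I$ is closed under kernels, cokernels and finite direct sums, and that it is maximal among full subcategories annihilated by $I$. The key input is the naturality of the $K$-linear structure. For every $a \in K$, the endomorphisms $\mu_E(a) \colon E \to E$ form a natural transformation of the identity functor of $\mca A$. That is, for any morphism $f \colon E \to F$ we have
\[
f \cdot \mu_E(a) = \mu_F(a) \cdot f .
\]
This holds because composition is $K$-bilinear and $\mu_E(a) = a\cdot \1_E$.

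First, closure under subobjects and quotients. Let $f\colon E \to F$ be a morphism with $E, F \in \mca A_I$, and write $k \colon \Ker f \to E$ and $c \colon F \to \Cok f$ for the canonical morphisms. For $a \in I$, naturality gives
\[
k \cdot \mu_{\Ker f}(a) = \mu_E(a) \cdot k = 0 .
\]
Since $k$ is a monomorphism, $\mu_{\Ker f}(a) = 0$. Dually, $\mu_{\Cok f}(a) \cdot c = c \cdot \mu_F(a) = 0$, and $c$ is an epimorphism, so $\mu_{\Cok f}(a)=0$. The same argument shows that any subobject or quotient of an object of $\mca A_I$ lies in $\mca A_I$.

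Next, direct sums. We have $\mu_{E\oplus F}(a) = \mu_E(a) \oplus \mu_F(a)$, by naturality applied to the inclusions and projections. Hence $E \oplus F \in \mca A_I$, and $0 \in \mca A_I$. Since $\mca A_I$ is full, kernels and cokernels computed in $\mca A$ are kernels and cokernels in $\mca A_I$. Therefore $\mca A_I$ is an abelian full subcategory, and the inclusion is exact.

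Finally, maximality. If $\mca B \subset \mca A$ is any full subcategory annihilated by $I$, then by definition every object of $\mca B$ lies in $\mca A_I$, so $\mca B \subset \mca A_I$. By construction $\mca A_I$ is itself annihilated by $I$, so it is the maximal one.

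The only point needing care is the naturality of $\mu$, i.e. that multiplication by scalars commutes with all morphisms. This is exactly what $K$-linearity (bilinearity of composition) provides. Everything else is formal.
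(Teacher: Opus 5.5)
Your proposal is correct and takes the same route as the paper: closure of $\mca A_I$ under subobjects, quotients and finite direct sums makes it an abelian full subcategory, and maximality holds because $\mca A_I$ contains every object annihilated by $I$. The only difference is that you spell out the naturality of the scalar action $\mu$, which follows from bilinearity of composition and which the paper's one-line proof leaves implicit.
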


\begin{proof}
The maximality is obvious. 
To be annihilated by $I$ is closed under 
subobjects, quotients, and direct sums. 
Hence $\mca A_I$ is an abelian subcategory.  
\end{proof}

\begin{lem}\label{lem:stable-objects}
Let $\mca A$ be an abelian category 
which is linear over $(R, \mf m)$. 
Assume that
a full abelian subcategory of $\mca A_{\mf m}$ of $\mca A$ 
is almost hereditary.

If an object $E \in \mb D^b(\mca A)$ 
is stable with respect to a stability condition $\sigma \in \Stab{\mb D^b(\mca A)}$, 
then $E$ is indecomposable and in $\mca A_{\mf m}$ up to shifts. 
\end{lem}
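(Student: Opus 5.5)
The plan is to check that a $\sigma$-stable object $E$ satisfies the hypotheses of Proposition \ref{prop:class-of-stable} with $\mca A_0=\mca A_{\mf m}$. That means verifying conditions (a) and (b), and then showing that $E$ lies in $\mb D^b(\mca A)_{\mca A_{\mf m}}$, i.e. that every cohomology object $H^i(E)$ is annihilated by $\mf m$. Once this is done, Proposition \ref{prop:class-of-stable} says that $E$ lies in $\mca A_{\mf m}$ up to shift. Indecomposability will come out along the way from condition (b).

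\emph{Step 1: conditions (a) and (b).} Let $E\in\mca P(\phi)$ be stable.
\begin{itemize}
\item For $n<0$, axiom (s3) gives $E[n]\in\mca P(\phi+n)$ with $\phi+n<\phi$. Axiom (s2) then gives $\Hom^n(E,E)=\Hom(E,E[n])=0$, which is (a).
\item The slice $\mca P(\phi)$ is abelian and $E$ is simple in it. By Schur's lemma every non-zero endomorphism of $E$ is an isomorphism, so $\mr{End}(E)$ is a division ring. This is (b).
\item A decomposition $E\cong E_1\+E_2$ with both summands non-zero would give a non-zero idempotent that is not invertible. So $E$ is indecomposable (equivalently one can cite \cite[Lemma 3.3]{MR4517994} as in the proof of Proposition \ref{prop:class-of-stable}).
\end{itemize}

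\emph{Step 2: the cohomology of $E$ is annihilated by $\mf m$.} The $R$-linear structure of $\mca A$ induces an $R$-linear structure on $\mb D^b(\mca A)$, so we have a ring map $\mu\colon R\to\mr{End}(E)$. Since $R$ is Artinian local, $\mf m$ is nilpotent, say $\mf m^N=0$. For $x\in\mf m$ we get $\mu(x)^N=\mu(x^N)=0$. A nilpotent element of a division ring is zero, so $\mu(x)=x\cdot\1_E=0$. The cohomology functors $H^i\colon\mb D^b(\mca A)\to\mca A$ are $R$-linear, so the action of $x$ on $H^i(E)$ is $H^i(x\cdot\1_E)=0$. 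Hence $H^i(E)\in\mca A_{\mf m}$ for every $i$, i.e. $E\in\mb D^b(\mca A)_{\mca A_{\mf m}}$.

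\emph{Step 3: conclusion.} By hypothesis $\mca A_{\mf m}$ is almost hereditary in $\mca A$, and $E$ is non-zero and satisfies (a) and (b). Proposition \ref{prop:class-of-stable} therefore applies and gives $E\in\mca A_{\mf m}$ up to shift.

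The only point needing care is Step 2. One has to confirm that the $R$-action on morphisms in $\mb D^b(\mca A)$, defined termwise on complexes, really is compatible with $H^i$. This is immediate, because multiplication by $x$ is a natural endomorphism of the identity functor of $\mca A$, and it extends to complexes and passes to cohomology. The rest of the argument is formal.
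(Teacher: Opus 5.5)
Your proposal is correct and follows the paper's proof. The key step is the same: for $x\in\mf m$, nilpotence of $\mu_x$ together with invertibility of non-zero endomorphisms of a stable object forces $\mf m$ to act trivially on $E$, hence on each $H^i(E)$, and then Proposition \ref{prop:class-of-stable} gives the conclusion. You also spell out details the paper leaves implicit or only cites: hypothesis (a) via (s2)--(s3), the Schur-lemma argument for (b), and indecomposability via idempotents, which the paper takes from \cite[Lemma 3.3]{MR4517994}.
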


\begin{proof}
For any $x \in \mf m$, 
the endomorphism $\mu_x$ of $E$ is nilpotent. 
Since $E$ is stable, any non-zero endomorphism 
is invertible, 
the morphism $\mu_x$ must be zero and each cohomology of $E$ is in 
$\mca A_{\mf m}$.

Thus $E$ is in $\mb D^b(A)_{\mca A_{\mf m}}$. 
By Proposition \ref{prop:class-of-stable}, 
$E$ is in $\mca A_{\mf m}$ up to shifts. 
The indecomposability follows from 
\cite[Lemma 3.3]{MR4517994}. 
\end{proof}

\begin{cor}\label{cor:stable-objects-on-R}
Let $\mb D^b(R)$ be the 
bounded derived category of finitely generated modules over 
an Artinian local ring $(R, \mf m)$. 
For any $\sigma \in \Stab{\mb D^b(R)}$, 
the residue field $\mb k = R/\mb m$ is stable. 
\end{cor}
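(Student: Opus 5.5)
We combine Lemma \ref{lem:stable-objects} with the fact that $\mb k$ is the only simple $R$-module. Let $\mca A$ be the category of finitely generated $R$-modules. Then $\mca A_{\mf m}$ is the category of finite dimensional $\mb k$-vector spaces. By the second part of the example following Definition \ref{dfn:almost-hereditary}, $\mca A_{\mf m}$ is almost hereditary in $\mca A$: two non-zero $\mb k$-vector spaces always have a non-zero morphism between them. So Lemma \ref{lem:stable-objects} applies. Every $\sigma$-stable object $E \in \mb D^b(R)$ is indecomposable and lies in $\mca A_{\mf m}$ up to shift. An indecomposable finite dimensional $\mb k$-vector space is one-dimensional, so every $\sigma$-stable object has the form $\mb k[n]$ for some $n \in \bb Z$.

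Next we show that some stable object exists. The category $\mb D^b(R)$ is non-zero, so take $0 \neq M \in \mb D^b(R)$ (for instance $M=\mb k$). By (s4) it has a non-trivial HN filtration, so some semistable factor $A \in \mca P(\phi)$ is non-zero. Since $\sigma$ is locally finite, $\mca P(\phi)$ has finite length, and $A$ has a Jordan--H\"older filtration whose factors are stable objects in $\mca P(\phi)^s$ (remark (4)). Hence $\mca P(\phi)^s \neq \emptyset$. By the previous paragraph, some $\mb k[n]$ is $\sigma$-stable.

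Finally, stability is preserved by shifts. By (s3), $\mca P(\phi+1)=\mca P(\phi)[1]$, and the shift functor is an equivalence of $\mca P(\phi)$ onto $\mca P(\phi+1)$, so it preserves simple objects. Therefore $\mb k[n][-n]=\mb k$ is $\sigma$-stable of phase $\phi - n$.

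The only step that needs care is the existence of a stable object, which uses local finiteness. The rest is a direct application of Lemma \ref{lem:stable-objects} together with the classification of indecomposable objects of $\mca A_{\mf m}$.
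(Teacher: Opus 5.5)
Your proposal is correct and follows the same route as the paper: you apply Lemma~\ref{lem:stable-objects} with $\mca A_{\mf m}$ the category of finite-dimensional $\mb k$-vector spaces, so every stable object is $\mb k[n]$ for some $n$. You also spell out two points the paper leaves implicit, namely that local finiteness guarantees a stable object exists and that stability is invariant under shift, and together these give the stability of $\mb k$ itself.
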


\begin{proof}
Let $\mca A$ be the abelian category of 
finitely generated modules over $(R, \mf m)$. 
Then the subcategory $\mca A_{\mf m}$ is the 
abelian category 
of finitely generated $\mb k$ modules and 
it is almost hereditary in $\mca A$. 
Thus a stable object must be in $\mca A_{\mf m}$ up to shifts. 
Now any object in $\mca A_{\mf m}$ is isomorphic to 
a direct sum of $\mb k$. 
Hence $\mb k$ must be stable. 
\end{proof}

\subsection{Inducing stability conditions}
An exact functor $F \colon \mb D \to \mb D'$ between triangulated
categories does not, in general, induce a map between the spaces of
stability conditions.
However, a ``good'' functor $F \colon \mb D \to \mb D'$ induces a map,
denoted by $F^{-1}$, from a subset of $\Stab(\mb D')$ to $\Stab(\mb D)$,
as shown by Macr\'i--Mehrotra--Stellari \cite{MR2524593}.
Let us briefly recall the construction of $F^{-1}$.

Let $F \colon \mb D \to \mb D'$ be an exact functor 
between triangulated categories. 
Assume that $F$ satisfies the following condition:
\begin{enumerate}
\item[(Ind)] $\Hom_{\mb D'}(F(a) , F(b))=0$ implies $\Hom_{\mb D}(a,b)=0$ for any $a, b\in \mb D$. 
\end{enumerate}

Let $\sigma' =(Z', \mca P')\in \Stab {\mb D'}$. 
Define $F^{-1}\sigma'$ by the pair $(Z, \mca P)$ where 
\begin{equation}
Z = Z'\circ F, \ \mca P(\phi) = \{ x \in \mb D \mid F(x) \in \mca P'(\phi) \}. 
\end{equation}\label{eq:inducing}
By the definition of $F^{-1}\sigma'$, the pair $F^{-1} \sigma'$ is a stability condition on $\mb D$ if and only if $F^{-1} \sigma' $ has the Harder-Narasimhan property. 

\begin{lem}[{\cite[Lemma 2.9]{MR2524593}}]\label{lm:MMS}
Let $F \colon \mb D \to \mb D'$ be an exact functor 
between triangulated categories satisfying the 
condition (Ind). 
The map $F^{-1} \colon \Dom F \to \Stab {\mb D}$ is continuous. 
\end{lem}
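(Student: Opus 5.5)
Here $\Dom F$ denotes the set of $\sigma'\in\Stab{\mb D'}$ for which $F^{-1}\sigma'$ is again a locally finite stability condition. The plan is to show that for $\sigma'=(Z',\mca P')\in\Dom F$ and small $\epsilon>0$, every $\tau'\in\Dom F$ with $d(\sigma',\tau')<\epsilon$ satisfies
\[
d\bigl(F^{-1}\sigma',F^{-1}\tau'\bigr)\le d(\sigma',\tau').
\]
Here $d$ is Bridgeland's generalized metric on stability conditions,
\[
d(\sigma_1,\sigma_2)=\sup_{0\neq E}\Bigl\{|\phi^+_{\sigma_2}(E)-\phi^+_{\sigma_1}(E)|,\ |\phi^-_{\sigma_2}(E)-\phi^-_{\sigma_1}(E)|,\ \|Z_1-Z_2\|_{\sigma_1}\Bigr\},
\]
and the topology on $\Stab{\mb D}$ is the one generated by $d$ together with the norms on central charges. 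The inequality gives continuity at once, because $Z=Z'\circ F$ is obtained from $Z'$ by composing with the fixed linear map $K_0(F)$, and this is continuous in $Z'$.

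The key step compares the HN filtration of $x\in\mb D$ for $F^{-1}\sigma'$ with that of $F(x)$ for $\sigma'$. Since $F$ is exact and $F^{-1}\sigma'$ has the HN property, $F$ carries the $F^{-1}\sigma'$-HN filtration of $x$ to a filtration of $F(x)$. Its factors $F(A_i)$ lie in $\mca P'(\phi_i)$ with strictly decreasing $\phi_i$. By uniqueness of HN filtrations this is the $\sigma'$-HN filtration of $F(x)$. Hence
\[
\phi^\pm_{F^{-1}\sigma'}(x)=\phi^\pm_{\sigma'}(F(x)),
\]
and the same holds for $\tau'$. The phase part of $d(F^{-1}\sigma',F^{-1}\tau')$ is therefore a supremum over the subset $\{F(x)\}$, so it is bounded by the phase part of $d(\sigma',\tau')$. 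For this we need $F(x)\neq 0$ whenever $x\neq 0$, and this follows from (Ind): $F(x)=0$ gives $\Hom(F(x),F(x))=0$, hence $\Hom(x,x)=0$, hence $x=0$.

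For the central charge part, an object $x$ is $F^{-1}\sigma'$-semistable exactly when $F(x)$ is $\sigma'$-semistable. Thus
\[
\|Z-W\|_{F^{-1}\sigma'}=\sup\frac{|(Z'-W')(F(x))|}{|Z'(F(x))|},
\]
with the supremum taken over semistable $x$, where $W=W'\circ F$ is the central charge of $F^{-1}\tau'$. This is again a supremum over a subset of the $\sigma'$-semistable objects, so it is at most $\|Z'-W'\|_{\sigma'}$.

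The main obstacle is the first step: that the image of an HN filtration is the HN filtration, and that semistability is detected by $F$. Both follow directly from how $\mca P(\phi)$ is defined in (\ref{eq:inducing}) and from the uniqueness remark in the preceding Remark. Some care is needed to stay inside the connected component where $d<\infty$, and to work with the topology restricted to $\Dom F$ (on which $F^{-1}$ is defined) rather than with all of $\Stab{\mb D'}$.
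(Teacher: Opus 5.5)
Your argument is correct and is essentially the standard proof of Macr\'i--Mehrotra--Stellari \cite{MR2524593}, which the paper cites rather than reproves. As there, $F$ carries $F^{-1}\sigma'$-HN filtrations to $\sigma'$-HN filtrations, with nonzero factors by (Ind). Hence phases, semistability and masses (or your norm $\|\cdot\|_{\sigma'}$) for $x\in\mb D$ are computed from $F(x)$ in $\mb D'$, and $F^{-1}$ is $1$-Lipschitz for Bridgeland's generalized metric. The only superfluous point is your appeal to continuity of $Z'\mapsto Z'\circ K_0(F)$: your estimate $\|Z-W\|_{F^{-1}\sigma'}\le\|Z'-W'\|_{\sigma'}$ already handles the central charges.
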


\begin{rmk}
Recall that the universal cover $\widetilde{\mr {GL}}_{2}^{+}(\bb R)$ of 
$\mr{GL}_{2}^{+}(\bb R)$ has the right action to the space of stability conditions. 
The map $F^{-1}$ is $\widetilde{\mr {GL}}_{2}^{+}(\bb R)$-equivariant by the definition of $F^{-1}$. 
\end{rmk}

\begin{prop}\label{prop:injectivity}
Let $\mca A$ be abelian and linear over 
$(R, \mf m)$. 
Assume that 
a full sub-abelian category of $\mca A_0$ of $\mca A$ 
is almost hereditary and maximally annihilated by $\mf m$. 
If the natural functor 
$j_* \colon \mb D^b (\mca A_0) \to \mb D^b (\mca A)$
satisfies the condition (Ind), 
then 
the natural map 
\[
j_*^{-1} \colon \Dom{j_*} \to \Stab{\mb D^b (\mca A_0)}
\]
is injective. 
\end{prop}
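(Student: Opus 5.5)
Suppose $\sigma_1=(Z_1,\mca P_1)$ and $\sigma_2=(Z_2,\mca P_2)$ lie in $\Dom{j_*}$ and satisfy $j_*^{-1}\sigma_1=j_*^{-1}\sigma_2=:(Z_0,\mca P_0)$. The plan is to show $Z_1=Z_2$ and $\mca P_1(\phi)^s=\mca P_2(\phi)^s$ for every $\phi$. Then local finiteness (each $\mca P_k(\phi)$ is generated by $\mca P_k(\phi)^s$ under extensions) gives $\mca P_1(\phi)=\mca P_2(\phi)$, so $\sigma_1=\sigma_2$.

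\emph{Stable objects.} Let $E$ be $\sigma_1$-stable of phase $\phi$. By Lemma \ref{lem:stable-objects} (the hypotheses hold, since $\mca A_0$ is maximally annihilated by $\mf m$ and hence equals $\mca A_{\mf m}$), we may write $E\cong j_*(E_0)[n]$ with $E_0\in\mca A_0$. By the definition (\ref{eq:inducing}), $E_0[n]\in\mca P_0(\phi)$, so $j_*(E_0[n])=E\in\mca P_2(\phi)$ and $E$ is $\sigma_2$-semistable of phase $\phi$. It remains to see that $E$ is $\sigma_2$-stable. Take its $\sigma_2$-Jordan–Hölder factors; they are $\sigma_2$-stable of phase $\phi$, so by Lemma \ref{lem:stable-objects} again each is of the form $j_*(F_0)[n']$ with $F_0\in \mca A_0$, and in fact each lies in $\mca P_1(\phi)$ by the same transfer through $\mca P_0(\phi)$. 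This gives a proper subobject of $E$ in $\mca P_2(\phi)$ that also lies in $\mca P_1(\phi)$. The main obstacle is that $\mca P_1(\phi)$ and $\mca P_2(\phi)$ are both abelian, but a monomorphism in one need not a priori be a monomorphism in the other. I would handle this by exploiting $\Hom$-vanishing: the cone of $F\to E$ (with $F$ the first JH factor) is an extension of objects in $\mca P_2(\phi)$, and these are $\sigma_1$-semistable of phase $\phi$ by the argument just given. So the triangle $F\to E\to C$ lies in $\mca P_1(\phi)$, contradicting simplicity of $E$ unless $F\cong E$. Swapping the roles of $\sigma_1$ and $\sigma_2$ gives equality of stable objects.

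\emph{Central charges.} The classes $[j_*(G)]$ of stable objects generate $K_0(\mb D^b(\mca A))$, because every object has a filtration whose factors are stable. On each such class $Z_1$ and $Z_2$ agree, since both equal $Z_0(G)$. Hence $Z_1=Z_2$.
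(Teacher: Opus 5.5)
Your proof is correct and follows the same route as the paper. Lemma \ref{lem:stable-objects} places every stable object in $\Im j_*$; from this, the slices coincide because they are generated by stable objects (local finiteness), and the central charges coincide because they agree on stable classes, which generate $K_0$.

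Your direct transfer of semistability through $\mca P_0(\phi)$, followed by the extension-closure argument, is a more explicit justification of the step the paper compresses into the identity $j_*^{-1}\mca P(\phi)^s=\mca P(\phi)^s\cap\Im j_*$. Note also that the upgrade to $\sigma_2$-stability is not needed: "$\sigma_1$-stable $\Rightarrow$ $\sigma_2$-semistable of the same phase" already gives $\mca P_1(\phi)\subseteq\mca P_2(\phi)$ via Jordan--H\"older filtrations and extension closure.
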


\begin{proof}
If $\Dom{j_*}$ is empty, 
then nothing to prove. 
Hence assume $\Dom{j_*} \neq \emptyset$.

Take $\sigma = (Z, \mca P)$ and  $\tau = (W, \mca Q) $ 
in $\Dom{j_*}$ so that 
$j_* ^{-1}\sigma = j_* ^{-1}\tau$. 
By the definition of the map $j_*^{-1}$, 
the slicing $j_*^{-1}\mca P(\phi)$ of $j_*^{-1}\sigma$ is 
just $\mca P(\phi) \cap \Im j_*$ and 
we have 
$j_*^{-1}\mca P(\phi)^s = \mca P(\phi)^s \cap \Im j_*$
On the other hand, 
Lemma \ref{lem:stable-objects} implies 
$\mca P(\phi)^{s} \subset \Im j_*$. 
Hence we see 
\[
j_*^{-1} \mca P(\phi) ^s= \mca P (\phi)^s \cap \Im j_* = \mca P (\phi)^s
\]
Thus $\mca P(\phi)^s=\mca Q(\phi) ^s$ if $j_*^{-1} \sigma = j_*^{-1}\tau$. 
Since $\mca P(\phi)^s$ generates $\mca P(\phi)$, 
we have $\mca P(\phi)=\mca Q(\phi)$. 

Thus an object $E \in \mb D^b(\mca A)$ 
is $\sigma$-stable if any only if it is $\tau$-stable. 
If $j_*^{-1} \sigma = j_*^{-1}\tau$, then 
the value of central charge of any stable object coincides. 
Since the class of stable objects generates 
the group $K_0(\mca D^b(\mca A))$, 
we see that $Z=W$ and $j_*^{-1}$ is injective. 
\end{proof}

\begin{prop}\label{prop:key-prop}
Let $j \colon \Spec \mb k \to \Spec R$ be 
the natural inclusion.  
The bounded derived category of finite $R$-module 
(resp. finite $\mb k$-module)
is denoted by $\mb D^b(R)$ (resp. $\mb k$). 

Then $\Stab{\mb D^b(R)}$ is isomorphic to $\Stab{\mb D^b (\mb k)}$ via 
the natural functor $j_* \colon \mb D^b(\mb k) \to \mb D^b(R)$. 
\end{prop}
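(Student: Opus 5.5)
We show that $j_*$ satisfies (Ind), that $\Dom{j_*}$ is all of $\Stab{\mb D^b(R)}$, and that $j_*^{-1}$ is a bijection with continuous inverse.

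\emph{Condition (Ind).} Objects of $\mb D^b(\mb k)$ are finite direct sums of shifts $\mb k[n]$, and $\Hom_{\mb D^b(R)}(\mb k,\mb k)=\mb k\neq 0$. Hence if $\Hom_{\mb D^b(R)}(j_*a,j_*b)=0$, then for every pair of summands $\mb k[p]$ of $a$ and $\mb k[q]$ of $b$ we must have $p\neq q$. It follows that $\Hom_{\mb D^b(\mb k)}(a,b)=0$.

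\emph{$\Dom{j_*}$ is everything.} Take $\sigma=(Z,\mca P)\in\Stab{\mb D^b(R)}$.
\begin{enumerate}
\item By Corollary \ref{cor:stable-objects-on-R}, $\mb k$ is $\sigma$-stable, say of phase $\phi_0$.
\item Lemma \ref{lem:stable-objects} shows that every stable object is, up to shift, a direct sum of copies of $\mb k$. Being indecomposable, it is isomorphic to some $\mb k[n]$.
\item Since $\mb k$ lies in $\mca P(\phi_0)$, axiom (s3) gives $\mb k[n]\in\mca P(\phi_0+n)$.
\end{enumerate}
For $j_*^{-1}\sigma$ we need HN filtrations in $\mb D^b(\mb k)$. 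Each $j_*(\mb k[n])=\mb k[n]$ is semistable of phase $\phi_0+n$. Lemma \ref{lem:direct-sum} then gives HN filtrations for finite direct sums of these objects. Their HN factors are direct sums of the $\mb k[n]$, so they lie in the image of $j_*$. Hence $j_*^{-1}\sigma$ is a stability condition. It is locally finite because $\mca P(\phi)$ is nonzero only for $\phi\in\phi_0+\bb Z$, where it consists of finite direct sums of the simple object $\mb k[\phi-\phi_0]$.

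\emph{Bijectivity and topology.} Injectivity follows from Proposition \ref{prop:injectivity}. Its hypotheses hold: $\mca A_{\mf m}=\mr{mod}\,\mb k$ is almost hereditary, as in the Example, and it is maximally annihilated by $\mf m$.

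For surjectivity, note that $\Stab{\mb D^b(\mb k)}$ consists exactly of the stability conditions with $\mb k$ stable of some phase $\phi$ and $Z(\mb k)\in\bb R_{>0}e^{\pi\sqrt{-1}\phi}$; this is a copy of $\bb C$. Given such data, define $\sigma$ on $\mb D^b(R)$ in two steps.
\begin{enumerate}
\item Set $\mca P(\phi+n)$ to be the extension closure of $\mb k[n]$, and set all other slices to zero. Put $Z(E)=\sum_i(-1)^i\,\mr{length}(H^i(E))\,Z(\mb k)$, which makes sense because $K_0(R)=\bb Z[\mb k]$.
\item For (s2), we need $\Hom(A,B)=0$ whenever $A$ is an iterated extension of $\mb k[n]$, $B$ is an iterated extension of $\mb k[m]$, and $n>m$. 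By d\'evissage this reduces to $\Hom^{m-n}(\mb k,\mb k)=\mr{Ext}^{m-n}(\mb k,\mb k)$, which vanishes for $m-n<0$.
\end{enumerate}

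The main obstacle is (s4). The slice $\mca P(\phi)$ is the category of finite-length $R$-modules, namely extensions of $\mb k$, so it equals $\mr{mod}\,R$. The slicing is therefore the standard $t$-structure, rotated so that $\mb k$ has phase $\phi$. The HN filtration of an object is then its cohomology filtration, and $\sigma$ is locally finite because $\mr{mod}\,R$ has finite length.

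By construction $j_*^{-1}\sigma$ recovers the given data, so $j_*^{-1}$ is surjective. In the last step I would express the inverse map in coordinates via $(\phi,Z(\mb k))$ and check it is continuous using Bridgeland's metric. Together with Lemma \ref{lm:MMS}, this shows $j_*^{-1}$ is a homeomorphism, and it is compatible with the complex structures since both sides are described by the same $Z(\mb k)$.
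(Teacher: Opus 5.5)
Your proposal is correct and follows essentially the same route as the paper: stability of $\mb k$ (Corollary \ref{cor:stable-objects-on-R}) gives $\Dom{j_*}=\Stab{\mb D^b(R)}$, injectivity comes from Proposition \ref{prop:injectivity}, and surjectivity comes from the rotated standard heart $\mr{mod}\,R$ with the central charge transported along $K_0(R)\cong K_0(\mb k)$. Your additional steps only fill in details the paper leaves implicit: the explicit check of (Ind), defining the slicing directly so that phases of $\mb k$ outside $(0,1]$ are handled (rather than using the literal heart $\mr{mod}\,R$), and the metric argument that the inverse is continuous, where the paper just says ``both spaces are complex manifolds''.
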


\begin{proof}
By Corollary \ref{cor:stable-objects-on-R}, 
we see that $\mb k$ is stable for any $\sigma \in \Stab{\mb D^b(R)}$. 
Hence any finite dimensional vector space is 
semistable for any $\sigma$. 
This implies that $\Dom{j_*} = \Stab{\mb D^b(R)}$.  
Moreover, the action of $\widetilde{\mr{GL}}_2(\bb R)$ is transitive. 
Thus $\Stab{\mb D^b(R)}$ is connected.

By Proposition \ref{prop:injectivity}, 
the natural map 
\[
j_*^{-1} \colon \Stab{\mb D^b(R)} \to \Stab{\mb D^b(\mb k)}
\]
is injective.

It is immediate the map is surjective. 
In fact, for any $\tau= (W, \mca Q) \in \Stab{\mb D^b(\mb k)}$, 
$\mb k$ is stable with phase $\psi$. 
Since the Grothendieck group of the abelian category
$\ms{mod}\, R$ of finitely generated $R$-modules is isomorphic 
to the Grothendieck group of $\ms{mod}\, \mb k$, 
the central charge $W$
induces a central charge on $\mb D^b(R)$.
Moreover, since 
any finite $R$-module is semistable, 
the pair $\sigma= (W, \ms{mod}\, R )$ gives 
a stability condition on $\mb D^b(R)$ and 
it clearly satisfies $j_*^{-1} \sigma = \tau$.

Since both spaces $\Stab{\mb D^b(R)}$ and 
$\Stab{\mb D^b}(\mb k)$ are complex manifolds, 
they are isomorphic to each other. 
\end{proof}

\begin{thm}\label{thm:affine-case}
Let $A$ be a Noetherian ring. 
The space $\Stab {\mb D^b (A)}$ is non-empty 
if and only if the Krull dimension $\dim A$ of $A$ is zero. 
Moreover, if $\dim A=0$, then 
the space $\Stab{\mb D^b(A)}$ is isomorphic to 
$\bb C^{n}$ where $n$ is the number of points of $\Spec A$. 
\end{thm}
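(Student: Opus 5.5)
We prove the two directions separately, handling the dimension-zero case by reducing to Proposition \ref{prop:key-prop} and the positive-dimensional case by contradiction.

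\emph{The case $\dim A=0$.} A Noetherian ring of Krull dimension zero is Artinian. By the structure theorem for Artinian rings, $A \cong R_1 \times \cdots \times R_n$ with each $(R_i,\mf m_i)$ Artinian local, and $n$ equals the number of points of $\Spec A$. Then $\ms{mod}\,A \simeq \bigoplus_i \ms{mod}\,R_i$, so $\mb D^b(A) \simeq \bigoplus_i \mb D^b(R_i)$ is an orthogonal decomposition into mutually $\Hom$-orthogonal summands. A locally finite stability condition on such a direct sum is the same as a tuple of stability conditions on the summands. To see this, note that every indecomposable object, in particular every stable object, lies in a single summand. The HN filtrations of objects in different summands combine by Lemma \ref{lem:direct-sum}. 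The central charge splits because $K_0$ splits. Hence
\[
\Stab{\mb D^b(A)} \cong \prod_i \Stab{\mb D^b(R_i)}.
\]
By Proposition \ref{prop:key-prop}, each factor is isomorphic to $\Stab{\mb D^b(\mb k_i)}$, where $\mb k_i = R_i/\mf m_i$. Here I use that the proof of Proposition \ref{prop:key-prop} only needs $R_i$ to be Artinian local, not a $\mb k$-algebra. Finally, $\Stab{\mb D^b(\mb k_i)} \cong \bb C$: a stability condition is determined by the phase $\phi$ of $\mb k_i$ and by $Z(\mb k_i) \in \bb R_{>0}e^{\pi\sqrt{-1}\phi}$, and $\phi \in \bb R$ together with $\log|Z|$ gives $\bb R^2 \cong \bb C$. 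This shows the space is non-empty and isomorphic to $\bb C^n$.

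\emph{The case $\dim A>0$.} Suppose $\sigma=(Z,\mca P) \in \Stab{\mb D^b(A)}$ exists. Choose a prime $\mf p$ that is not maximal, and a maximal ideal $\mf m \supset \mf p$ such that the local ring $(A/\mf p)_{\mf m}$ has positive dimension.

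The key point is that stable objects are rigid under the $A$-action. Every $\sigma$-stable object $E$ has $\mr{End}(E)$ a division ring, and $A$ acts centrally on $E$. So the kernel of $A \to \mr{End}(E)$ is a prime $\mf q$. By Lemma \ref{lem:stable-objects}-type reasoning (a nilpotent element acts by zero), the cohomology sheaves of $E$ are annihilated by $\mf q$. Moreover, every element of $A \setminus \mf q$ acts invertibly on $E$.

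If $\mf q$ is not maximal, pick $s \in \mf m' \setminus \mf q$ for some maximal $\mf m' \supset \mf q$. Then $s$ acts invertibly on the finitely generated cohomology modules $H^i(E)$. Nakayama's lemma then forces $H^i(E)_{\mf m'}=0$ for every such $\mf m'$, so $E=0$, a contradiction. Hence every stable object has cohomology supported at a single closed point, i.e. has finite length.

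The HN filtrations with stable factors then show that every object of $\mb D^b(A)$ is an iterated extension of objects with finite-length cohomology. This is absurd for $A$ itself, since $\Supp A$ contains the non-closed point $\mf p$ and support is preserved under extensions.

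The main obstacle is the step "$A\setminus\mf q$ acts invertibly, hence $E$ has finite-length cohomology". One must check that invertibility of $\mu_s$ on the complex $E$ passes to each $H^i(E)$. It does, since $H^i$ is a functor and $\mu_s$ acts on $H^i(E)$ by multiplication by $s$. One must also apply Nakayama's lemma correctly after localizing at each maximal ideal containing $\mf q$. Everything else is formal.
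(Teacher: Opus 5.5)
Your proof is correct. For $\dim A=0$ you follow the paper's route: split $A$ into Artinian local factors, identify $\Stab{\mb D^b(A)}$ with the product of the $\Stab{\mb D^b(R_i)}$, apply Proposition \ref{prop:key-prop} to each factor, and use $\Stab{\mb D^b(\mb k_i)}\cong\bb C$. The only differences there are presentational. You sketch the product decomposition and the computation over a field, where the paper cites \cite{MR3984103} and \cite[Corollary 3.8]{MR4517994}. Your remark that Proposition \ref{prop:key-prop} needs only that $R_i$ be Artinian local, not a $\mb k$-algebra, makes explicit something the paper uses silently.

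For $\dim A>0$ the routes genuinely differ. The paper only observes that $\mb D^b(A)$ is finite over $A$ and invokes the emptiness theorem of \cite{Kawatani_2023}. You instead give a self-contained argument that extends the idea of Lemma \ref{lem:stable-objects} from nilpotent elements to all of $A$. The action $A\to\mr{End}(E)$ on a stable object lands in a division ring, so its kernel $\mf q$ is prime and every element outside $\mf q$ acts invertibly. Nakayama then forces $\mf q$ to be maximal. Local finiteness (Jordan--H\"older filtrations of the HN factors) makes every object an iterated extension of objects with finite-length cohomology, which contradicts $A_{\mf p}\neq 0$ at a non-maximal prime $\mf p$. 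The citation is shorter and places the statement inside the more general relative framework of \cite{Kawatani_2023}. Your version costs a few lines but shows exactly which ingredients are used: Schur's lemma, Nakayama, and local finiteness.

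Some small points to tidy:
\begin{itemize}
\item The parenthetical about nilpotent elements is beside the point. $\mf q$ kills each $H^i(E)$ simply because $\mf q$ is the kernel of the action and $H^i$ is $A$-linear.
\item In the Nakayama step, also record that $H^i(E)_{\mf n}=0$ for maximal ideals $\mf n\not\supseteq\mf q$. This holds because $H^i(E)$ is an $A/\mf q$-module.
\item The auxiliary choice of $\mf m\supset\mf p$ is never used.
\end{itemize}
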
 

\begin{proof}
Suppose that $\dim A >0$. 
Then the derived category $\mb D^b(A)$ is finite over $A$. 
Hence the result \cite{Kawatani_2023} implies 
$\Stab{\mb D^b (A)}$ is empty.

Suppose that $\dim A= 0$. 
Then $A$ is a finite product of local Artinian ring $\{ R_i \}_{i=1}^n$. 
Hence the derived category $\mb D^b(A)$ 
orthogonally decomposes into the derived category 
of Artinian local rings: 
\[
\mb D^b(A) \cong \bigoplus _{i=1}^n \mb D^b (R_i)
\]
Then the result due to \cite{MR3984103} implies that 
$\Stab{\mb D^b(A)} $ is a product of 
$\Stab{\mb D^b (R_i)}$: 
\[
\Stab{\mb D^b(A)} \cong 
\prod_{i=1}^n \Stab{\mb D^b (R_i)}
\]

Recall that $\Stab{\mb D^b (\mb k) }$ is isomorphic to 
$\bb C$ by \cite[Corollary 3.8]{MR4517994}. 
Hence $\Stab{\mb D^b(R)}$ is also isomorphic to $\bb C$. 
This gives the proof of the desired assertion. 
\end{proof}

Combining the results from \cite{Kawatani_2023}, 
we obtain the following;

\begin{cor}
Let $A$ be a Noetherian ring. 
Then the space $\Stab{\mb D^b(A)}$ is non-empty 
if and only if $\dim A=0$. 
\end{cor}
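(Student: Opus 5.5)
The statement is the corollary that $\Stab{\mb D^b(A)}$ is non-empty if and only if $\dim A = 0$. It is essentially a restatement of the first half of Theorem \ref{thm:affine-case}, so the plan is to split into the two directions and cite the relevant results.

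\emph{Only if.} Suppose $\dim A>0$; we need $\Stab{\mb D^b(A)}=\emptyset$. Here we invoke the emptiness theorem of \cite{Kawatani_2023}: for a Noetherian scheme affine (finite) over a base of positive dimension, there are no stability conditions. The point to check is that $\mb D^b(A)$, viewed as an $A$-linear category, satisfies the hypotheses of that result, namely that $\Spec A$ is affine of positive dimension and Noetherian. Heuristically, the mechanism is the one behind Lemma \ref{lem:stable-objects}. Multiplication by a non-unit, non-nilpotent $x\in A$ gives an endomorphism of a stable object that is neither zero nor invertible, which is absurd. Pushing this through the HN filtrations then forces a contradiction on the skyscraper objects.

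\emph{If.} Suppose $\dim A=0$. Then $A$ is Artinian, so $A\cong\prod_{i=1}^n R_i$ with each $R_i$ Artinian local. The category $\mb D^b(A)$ then splits orthogonally as $\bigoplus_i \mb D^b(R_i)$, and \cite{MR3984103} gives $\Stab{\mb D^b(A)}\cong\prod_i\Stab{\mb D^b(R_i)}$. Each factor is non-empty by Proposition \ref{prop:key-prop}: it is isomorphic to $\Stab{\mb D^b(\mb k_i)}\cong\bb C$, where $\mb k_i$ is the residue field of $R_i$. Concretely, the standard heart $\ms{mod}\,R_i$ with central charge $-\ell(\,\cdot\,)+\sqrt{-1}\cdot 0$, or any $W$ with $W(\mb k_i)\neq 0$, is a stability condition because every object of the heart is of finite length with all simple factors isomorphic to $\mb k_i$. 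A product of non-empty sets is non-empty.

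\emph{Main obstacle.} The only substantive step is the positive-dimensional direction, and it is entirely carried by \cite{Kawatani_2023}. The care needed is in matching the paper's phrase ``$\mb D^b(A)$ is finite over $A$'' to the hypotheses of that theorem. If those hypotheses required more, such as a base field, I would fall back on the direct argument through the endomorphism $\mu_x$ sketched above.
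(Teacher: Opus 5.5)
Your proposal follows the paper's proof: the ``only if'' direction is the same citation of \cite{Kawatani_2023}, and the ``if'' direction is the decomposition $A\cong\prod_i R_i$ with $\Stab{\mb D^b(A)}\cong\prod_i\Stab{\mb D^b(R_i)}$ and each factor non-empty by Proposition \ref{prop:key-prop}, exactly as in Theorem \ref{thm:affine-case}. One caution about your fallback heuristic: on a stable $E$ the map $\mu_x$ is always zero or invertible, so a single element $x$ yields no contradiction (think of an object supported at a closed point inside or outside $V(x)$); the argument that works is that $\mr{End}(E)$ is a division ring finite over $A$, which forces $\mr{Ann}_A(E)$ to be maximal, and then filtering $A$ by stable factors shows $A$ is killed by a product of maximal ideals and hence is Artinian.
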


\begin{proof}
The \textit{if} part follows from the above theorem. 
The \textit{only if}  part follows from \cite{Kawatani_2023}. 
\end{proof}

\subsection{Semiorthogonal decompositions and stability conditions}
Collins and Polishchuck \cite{MR2721656} proposed a 
construction of stability conditions on a 
triangulated category $\mb D$ 
from a semiorthogonal decomposition. 
A key ingredient of the construction is 
a \textit{reasonable} stability condition on a triangulated category.   

\begin{dfn}[{\cite[pp. 568]{MR2721656}}]\label{dfn-reasonable}
A stability condition $\sigma =(\mca A, Z)$ on 
a triangulated category $\mb D$ is 
\textit{reasonable} if $\sigma$ satisfies
\[
0 < \inf	\{	|Z(E)| \in \bb R  \mid 	E\mbox{ is semistable in }\sigma  \}. 
\]
\end{dfn}

\begin{rmk}\label{rmk:reasonable}
A reasonable stability condition is locally finite 
by \cite[Lemma 1.1]{MR2721656}. 
\end{rmk}

Let $\mb D$ be a triangulated category. 
Recall that a pair $(\mb D_{1}, \mb D_{2})$ of 
full triangulated subcategories of $\mb D$ 
is said to be a \textit{semiorthogonal decomposition} of $\mb D$ 
if the pair satisfies 
\begin{itemize}
\item[(1)] $\Hom_{\mb D}(E_{2}, E_{1})=0$ for any $E_{i } \in \mb D_{i}$ ($i=1,2$), and 
\item[(2)] any object $E \in \mb D$ is decomposed into a pair of objects $E_{i}\in \mb D_{i}$ $(i=1,2)$ by the following distinguished triangle in $\mb D$:
\[
\xymatrix{
E_{2}	\ar[r]	&	E	\ar[r]	&	E_{1}	\ar[r]	&	E_{2}[1]. 
}
\]
\end{itemize}
The situation will be denoted by the symbol $\mb D=\< \mb D_{1}, \mb D_{2}\>$ or simply $\< \mb D_{1}, \mb D_{2}\>$. 
In addition to the first condition (1) above, if $\Hom_{\mb D}(E_{1}, E_{2})=0$ holds, the semiorthogonal decomposition is said to be \textit{orthogonal}. 
Note the assignment of $E_i$ gives a functor $\mb D \to \mb D_i$ ($i=1,2$). 
The functor $\tau_1$ is the left adjoint of the inclusion $\mb D_1 \hookrightarrow \mb D$, 
and $\tau_2$ is the right adjoint of $\mb D_2 \hookrightarrow \mb D$.

\begin{dfn}
Let $\mb D=\< \mb D_1, \mb D_2 \>$ be a semiorthogonal decomposition of $\mb D$ and 
let $\sigma _i =(Z_i, \mca A_i) $ be a locally finite stability condition on 
the subcategory $\mb D_i$ ($i=1,2$). 
If a stability condition $\sigma =(Z, \mca A)$ on $\mb D$ satisfies 
\begin{itemize}
	\item $Z(E) = Z_1(\tau_1 E) + Z_2 (\tau_2 E)$, and 
	\item $\mca A = \{   E \in \mb D \mid \tau_i(E) \in \mca A_i\,  (i=1,2) \}$, 
\end{itemize}
then $\sigma$ is said to be \textit{glued from $\sigma_1$ and $\sigma_2$} and 
denoted by $\gl{\sigma_1}{\sigma_2}$. 
\end{dfn}

\begin{prop}[{\cite{MR2721656}}]\label{CP2.2}
Let $\<\mb D_1, \mb D_2  \>$ be a semiorthogonal 
decomposition of a triangulated category $\mb D$ and 
let $\sigma_i = (Z_i, \mca P_i)$ be a reasonable 
stability condition on $\mb D_i$ for $i \in \{1,2\}$.  
Suppose that $\sigma_1$ and $\sigma_2$ satisfy the following conditions
 \begin{enumerate}
\item  $\Hom_{\mb D}^{\leq 0}\left( \mca P_1(0,1], \mca P _2(0,1]\right) =0$ \label{condition-a} and 
\item There is a real number $a\in (0,1)$ such that $\Hom_{\mb D}^{\leq 0} \left(\mca P_1(a,a+1], \mca P _2(a,a+1]\right) =0$. \label{condition-b}
\end{enumerate}

Then there exists a unique reasonable 
stability condition $\gl{\sigma_1} {\sigma_2}$ on $\mb D$ 
glued from $\sigma _1$ and $\sigma _2$. 
\end{prop}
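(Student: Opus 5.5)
The plan is to construct $\gl{\sigma_1}{\sigma_2}$ as a pair (heart, central charge), using the Bridgeland correspondence between stability conditions and bounded hearts equipped with stability functions having the HN property. Put $\mca A_i=\mca P_i(0,1]$ for $i=1,2$, and define
\[
\mca A=\{E\in \mb D \mid \tau_1E\in \mca A_1,\ \tau_2E\in\mca A_2\},\qquad Z(E)=Z_1(\tau_1E)+Z_2(\tau_2E).
\]
Uniqueness is then immediate: the definition of ``glued'' prescribes both the heart and the central charge, and a stability condition is determined by its heart and central charge. So the work lies in existence.

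\emph{Step 1: $\mca A$ is a heart.} Glue the bounded $t$-structures $(\mca P_i(>0),\mca P_i(\le 1))$ along $\<\mb D_1,\mb D_2\>$. Set $\mb D^{\le 0}=\{E\mid \tau_iE\in\mca P_i(>0)\}$ and define $\mb D^{\ge 0}$ similarly. The standard recollement-type argument shows this is a $t$-structure provided $\Hom(E,F)=0$ for $E\in\mb D^{\le 0}$ and $F\in\mb D^{\ge 1}$. Splitting $E$ and $F$ via their triangles $E_2\to E\to E_1$, the only potentially non-zero contributions are the maps $E_1\to F_2[1]$ and $E_2\to F_1$. The latter vanish by semiorthogonality. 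The former reduce, by d\'evissage over cohomology with respect to $\mca A_1$ and $\mca A_2$, to $\Hom^{\le 0}(\mca P_1(0,1],\mca P_2(0,1])=0$, which is hypothesis (1). Boundedness of the glued $t$-structure follows from the boundedness of each factor. Hence $\mca A$ is the heart of a bounded $t$-structure. Since $\tau_i$ sends $\mca A$ to $\mca A_i$ and $Z_i(\mca A_i\setminus 0)$ lies in $\bb H\cup\bb R_{<0}$, the map $Z$ is a stability function on $\mca A$. For a non-zero $E$, at least one $\tau_iE$ is non-zero, so $Z(E)\neq 0$.

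\emph{Step 2: HN property and reasonableness.} This is the main obstacle. Here I would use hypothesis (2). Exactly as in Step 1, it produces a second glued heart $\mca A^{(a)}$ from $\mca P_i(a,a+1]$. One then checks that $\mca A^{(a)}$ is the tilt of $\mca A$ at phase $a$ with respect to $Z$. The HN property is verified via Bridgeland's criterion: there are no infinite chains of subobjects with increasing phase, nor of quotients with decreasing phase. Reasonableness of $\sigma_1$ and $\sigma_2$ is the key input here. The $Z_i$-semistable factors of $\tau_iE$ have $|Z_i|\ge\varepsilon>0$. Moreover, any subobject or quotient in $\mca A$ induces subobjects or quotients of $\tau_iE$ in the finite-length categories $\mca P_i(\phi-\delta,\phi+\delta)$, by Remark \ref{rmk:reasonable}. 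Together these bound the length of such chains. The role of the second heart $\mca A^{(a)}$ is to control phases near the boundary $\bb R_{<0}$, where the argument in $\mca A$ alone breaks down. Finally, a $\sigma$-semistable $E$ satisfies $|Z(E)|\ge \mr{Im}$ or $|\mr{Re}|$ contributions from $\tau_1E$ and $\tau_2E$ which lie in a common half-plane (in $\mca A$ or in $\mca A^{(a)}$), and each is bounded below by $\varepsilon$. This gives $\inf|Z(E)|>0$, so the glued stability condition is reasonable, hence locally finite.
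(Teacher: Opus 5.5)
The paper gives no proof of this statement; it is quoted from Collins--Polishchuk \cite{MR2721656}, so the comparison is with what a complete argument needs. Your overall plan is the right one. The uniqueness (a stability condition is determined by its heart and central charge) and the stability-function claim in Step~1 are fine. The genuine gap is Step~2, which is the real content of the theorem, and three of its claims fail.

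\begin{itemize}
\item ``$\mca A^{(a)}$ is the tilt of $\mca A$ at phase $a$ with respect to $Z$'' presupposes the torsion pair $(\mca P(a,1],\mca P(0,a])$. That pair is defined by the HN filtrations you are trying to construct, so it cannot be an input.
\item $\tau_i\colon\mca A\to\mca A_i$ is exact, so subobjects do map to subobjects. But they land in $\mca A_i=\mca P_i(0,1]$, which need not have finite length (think of $\mr{Coh}$ of a curve). Local finiteness only controls thin slices, and nothing keeps the $\tau_iE_j$ of a chain inside one. So ``together these bound the length of such chains'' is unsupported.
\item Vectors of modulus $\ge\varepsilon$ in a common half-plane can have arbitrarily small sum, e.g.\ $e^{\pi\sqrt{-1}\delta}+e^{\pi\sqrt{-1}(1-\delta)}$. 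So your reasonableness estimate is false as stated.
\end{itemize}

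\textbf{The missing idea} is a non-circular use of (2).

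\emph{A torsion pair that exists a priori.} Comparing glued aisles shows that $\mca A^{(a)}$ lies in the extension closure of $\mca A[1]$ and $\mca A$. Hence $\mca T=\mca A\cap\mca A^{(a)}=\{E\mid \tau_iE\in\mca P_i(a,1]\}$ and $\mca F=\mca A\cap\mca A^{(a)}[-1]=\{E\mid \tau_iE\in\mca P_i(0,a]\}$ form a torsion pair in $\mca A$, before anything is known about HN filtrations.

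\emph{A uniform lower bound.} For $E\in\mca T$, all $\sigma_i$-HN factors of $\tau_1E$ and $\tau_2E$ lie in a sector of opening $\pi(1-a)<\pi$. Put $u=e^{\pi\sqrt{-1}(1+a)/2}$ and $\ell(E)=\mathrm{Re}(\bar u\, Z(E))$, which is additive. Reasonableness of $\sigma_1,\sigma_2$ then gives $\ell(E)\ge\cos(\pi(1-a)/2)\,\varepsilon$ for every nonzero $E\in\mca T$. The same holds on $\mca F$ with $e^{\pi\sqrt{-1}a/2}$ in place of $u$ and constant $\cos(\pi a/2)$.

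\emph{Finiteness.} $\mca T$ is closed under quotients and $\mca F$ under subobjects. Hence chains of subobjects of $E\in\mca T$ lying in $\mca T$ have bounded length, and likewise chains of subobjects of $E\in\mca F$ with quotient in $\mca F$. This is the finiteness Step~2 needs.

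\emph{HN filtrations.} Destabilizing subobjects and quotients can always be replaced by such admissible ones; for instance, passing from $F\subset E\in\mca T$ to the $\mca T$-part of $F$ does not lower the phase. So Bridgeland's chain-condition argument runs inside $\mca T$ and inside $\mca F$, with factors that are $Z$-semistable in $\mca A$. Concatenating along $0\to T\to E\to F\to 0$ gives HN filtrations in $\mca A$.

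\emph{Reasonableness.} A semistable object of phase in $(a,1]$ has no nonzero quotient in $\mca F$, so it lies in $\mca T$. One of phase in $(0,a]$ has no nonzero subobject in $\mca T$, so it lies in $\mca F$. Either way $|Z(E)|\ge\min\{\cos(\pi a/2),\cos(\pi(1-a)/2)\}\,\varepsilon$.

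\textbf{A smaller point in Step~1.} Orthogonality only involves $\Hom(E_1,F_2)$ (not $\Hom(E_1,F_2[1])$) and uses only $\Hom^{<0}(\mca A_1,\mca A_2)=0$. It does not produce truncation triangles, because there is no recollement here. For example, take $\mb D^b(\bb P^1)=\<\<\mca O\>,\<\mca O(1)\>\>$ with hearts $\mr{add}\,\mca O$ and $\mr{add}\,\mca O(1)$. Orthogonality holds, yet $\mca O_x$ would need a truncation triangle $\mca O[1]\to\mca O_x\to\mca O(1)$, which does not exist. The truncation has to be built by lifting $E_2\to\tau^{\ge 1}_{\mca A_2}E_2$ over the cocone of $E\to\tau^{\ge 1}_{\mca A_1}E_1$. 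The obstruction lies in $\Hom(\tau^{\le 0}_{\mca A_1}E_1,\tau^{\ge 1}_{\mca A_2}E_2[1])$, and this is exactly where the degree-zero part of (1) is used.
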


\section{Infinitesimal deformation of curves}\label{sec:4}

We first give an example of almost hereditary subcategory 
of an abelian category. 
Such an example naturally occurs in an infinitesimal deformation 
of curves. 
In this section, 
a smooth projective curve over the field $\mb k$ is denoted by 
$X_0$ and 
an infinitesimal 
deformation of $X_0$ 
is denoted by 
$\pi \colon \mca X \to \Spec R$. 
That is 
$\pi \colon \mca X \to \Spec R$ is flat 
together with 
the isomorphism 
\[
\mca X \times _{\Spec R} \Spec R/\mf m  \cong X_0. 
\]
We first show that 
the abelian category $\mr{Coh}(X_0)$ is almost hereditary in $\mr{Coh}(\mca X)$. 
After that 
we study the space of stability conditions on 
$\mb D^b(\mca X)$. 

\subsection{A review of deformation theory of sheaves}

\begin{lem}\label{lem:deformation-of-vect}
For any locally free sheaf $E_0$ on $X_0$, 
there exists a locally free sheaf $\mca X$ such that 
the restriction to $X_0$ is isomorphic to $E_0$. 
\end{lem}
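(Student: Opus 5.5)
The plan is to lift $E_0$ step by step along a chain of small extensions and show that at each step the lift is unobstructed because $X_0$ is a curve. Since $R$ is Artinian local with residue field $\mb k$, we can choose a chain of surjections
\[
R = R_N \to R_{N-1} \to \cdots \to R_1 \to R_0 = \mb k
\]
with kernels $I_k = \Ker(R_k \to R_{k-1})$ satisfying $\mf m_{R_k} I_k = 0$, so each $I_k$ is a one-dimensional $\mb k$-vector space. For instance, take $R_k = R/\mf m^{k}$ and refine each step by a composition series of $\mf m^{k}/\mf m^{k+1}$. Put $\mca X_k = \mca X \times_{\Spec R} \Spec R_k$; each $\mca X_k$ is flat over $R_k$, and $\mca X_0 = X_0$. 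We construct locally free sheaves $E_k$ on $\mca X_k$ with $E_k|_{\mca X_{k-1}} \cong E_{k-1}$ by induction on $k$. The required sheaf is then $E = E_N$ on $\mca X$, and restricting successively shows $E|_{X_0} \cong E_0$.

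For the inductive step, suppose $E_{k-1}$ is locally free of rank $r$ on $\mca X_{k-1}$. Choose an affine open cover $\{U_\alpha\}$ of $X_0$; the same cover serves all the $\mca X_k$, because they share the underlying topological space. On each $U_\alpha$ we trivialize $E_{k-1}$ and take the trivial lift $\mca O_{\mca X_k}^{\oplus r}|_{U_\alpha}$. On the overlaps $U_{\alpha\beta}$ the transition matrices $g_{\alpha\beta} \in \mr{GL}_r(\mca O_{\mca X_{k-1}}(U_{\alpha\beta}))$ lift to matrices over $\mca O_{\mca X_k}(U_{\alpha\beta})$. This works because the surjection $\mca O_{\mca X_k} \to \mca O_{\mca X_{k-1}}$ is surjective on sections over affines, and a matrix that is invertible modulo a nilpotent ideal is invertible. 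The lifts $\tilde g_{\alpha\beta}$ need not satisfy the cocycle condition. The failure $\tilde g_{\alpha\beta}\tilde g_{\beta\gamma}\tilde g_{\gamma\alpha} - 1$ has entries in $I_k \mca O_{\mca X_k}$, and by flatness $I_k\mca O_{\mca X_k} \cong I_k \otimes_{\mb k} \mca O_{X_0} \cong \mca O_{X_0}$. It therefore defines a \v{C}ech $2$-cocycle with values in $\mca End(E_0) \otimes I_k$. Changing the lifts changes this cocycle by a coboundary, and the cocycle condition can be achieved exactly when its class vanishes. This class is the standard obstruction, and it lies in $H^2(X_0, \mca End(E_0)) \otimes_{\mb k} I_k$.

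The main point is that $X_0$ is a curve, so $H^2(X_0, \mca End(E_0)) = 0$ because coherent cohomology vanishes above the dimension. Concretely, for the cover by two affine opens that exists on a projective curve, the \v{C}ech complex has no $2$-cochains at all, so every cocycle condition on triple overlaps is vacuous. The argument is cleanest if we simply choose $X_0 = U_1 \cup U_2$ with $U_1, U_2$ affine and $E_0$ trivial on each; this is possible because $X_0$ is a smooth projective curve and a vector bundle is trivial on the complement of finitely many points. Then $E_k$ is obtained by gluing two free modules along a single lift of one transition matrix, and no obstruction computation is needed. What remains to check is that the glued sheaf is locally free on $\mca X_k$ and restricts to $E_{k-1}$, and both follow by reducing the gluing data modulo $I_k$. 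The only delicate point I anticipate is this reduction modulo $I_k$, and flatness of $\mca X_k$ over $R_k$ handles it.
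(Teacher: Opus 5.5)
Your proposal is correct and is essentially the paper's proof. Both induct along a chain of small extensions and note that the obstruction to each lift lies in $H^2(X_0, \mathcal{E}nd(E_0))$, which vanishes because $X_0$ is a curve; the paper cites Hartshorne's deformation theory for the obstruction, whereas you verify it with \v{C}ech cocycles. Your two-affine shortcut also works, but the open set containing the finitely many bad points needs more than "trivial off finitely many points": use that $E_0$ is free over the semilocal ring at a finite set of points, hence trivial on an affine neighbourhood of that set.
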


\begin{proof}
Since
$X_0$ is a smooth projective curve, 
there exists an infinitesimal deformation 
of $X_0$ on an arbitrary Artinian local ring.

	Let $A'$ be a small extension of an 
Artinian local ring $A$, 
that is, suppose that there is a short exact sequence 
\[
\xymatrix{
0 \ar[r] & (t) \ar[r] & A' \ar[r]  & A \ar[r] & 0
} 
\]
with $m_{A'} \cdot (t)=0$ and $A'/\mf m \cong A/ \mf m=\mb k$. 
Let $\mca X_{A'}$ (resp. $\mca X_{A}$) be 
an infinitesimal deformation on $A'$ (resp. $A$). 
Assume that there exists a locally free sheaf $E$ on $\mca X_{A}$
such that $E|_{X_0} \cong E_0$. 
Then the obstruction to lifting of $E$ to $\mca X_{A'}$
is in $H^2(X_0, \mca E\mi{nd}(E_0))$ (for instance see \cite[Theorem 7.1]{MR2583634})
Since $X_0$ is a curve, 
the obstruction vanishes. 
Thus there is a lift of $E$ to $\mca X_{A'}$
\end{proof}

\begin{lem}\label{lem:0220}
Let $j \colon X_0 \to \mca X$ be the closed embedding. 
Put $d_n = \dim _{\mb k}\mr{Tor}_n^R(\mb k,\mb k)$. 
Then the following holds. 
\begin{enumerate}
	\item Let $E_0 $ be a locally free sheaf on $X_0$. 
	For any coherent sheaf $F_0$ on $X_0$, we have 
	\[
	\mca Ext^p _{\mca X}(j_* E_0, j_* F_0) 
		\cong j_* \mca Hom_{X_0}(E_0, F_0)^{\+ d_p}
	\]
	\item 
	Let $T_x$	be an indecomposable torsion sheaf supported 
	on a closed point $x \in X_0$. 
	For any coherent sheaf $F_0$ on $X_0$, we have 
\[
	\mca Ext^p_{\mca X}	(j_* T_x, j_* F_0) \cong 
	j_* \mca Ext^1_{X_0}(T_x, F_0)^{\+ d_{p-1}} \+ 
	j_* \mca Hom_{X_0}(T_x, F_0)^{\+ d_p}
\]	
\end{enumerate}
\end{lem}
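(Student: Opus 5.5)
We will compute the local Ext sheaves by a change-of-rings argument. The key input is flatness of $\pi$: it allows us to resolve $j_*\mca O_{X_0}$ over $\mca X$ by pulling back a resolution of $\mb k$ over $R$.

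\textbf{Step 1 (local computation for $E_0$ locally free).} Choose a free resolution $P_\bullet \to \mb k$ of finitely generated $R$-modules. Since $\pi$ is flat, $\pi^*P_\bullet \to \pi^*\mb k = j_*\mca O_{X_0}$ is a locally free resolution on $\mca X$. Hence, locally on $\mca X$,
\[
\mca Hom_{\mca X}(\pi^*P_\bullet, j_*F_0) \cong j_*F_0 \otimes_R \Hom_R(P_\bullet, R) \cong j_*F_0 \otimes_{\mb k} \Hom_{\mb k}(P_\bullet\otimes_R \mb k, \mb k).
\]
Take $P_\bullet$ minimal. Then the differentials of $P_\bullet\otimes_R\mb k$ vanish, so the complex on the right has zero differentials. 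Its $p$-th term is $j_*F_0^{\oplus d_p}$, because $\mathrm{rank}\, P_p = \dim \mr{Tor}_p^R(\mb k,\mb k) = d_p$. This gives $\mca Ext^p_{\mca X}(j_*\mca O_{X_0}, j_*F_0)\cong j_*F_0^{\oplus d_p}$.

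For a general locally free $E_0$, use Lemma \ref{lem:deformation-of-vect} to lift it to a locally free sheaf $E$ on $\mca X$ with $j_*E_0 = E\otimes \pi^*\mb k$. Then $\pi^*P_\bullet\otimes E$ is a locally free resolution of $j_*E_0$. Alternatively, argue locally, where $E_0$ is free; this suffices because only the sheaf $\mca Ext$ is involved. Repeating the computation gives
\[
\mca Ext^p(j_*E_0, j_*F_0)\cong j_*\mca Hom_{X_0}(E_0,F_0)^{\oplus d_p}.
\]
One must check that these local isomorphisms glue. They do, because the identification $\Hom_R(P_p,R)\otimes \mb k \cong \mb k^{d_p}$ is independent of the open set: we use a single global resolution pulled back from $\Spec R$.

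\textbf{Step 2 (torsion $T_x$).} Since $X_0$ is a smooth curve, $T_x$ has a locally free resolution $0\to E_1\to E_0\to T_x\to 0$ on $X_0$; take for instance $E_1 = \mca O(-nx)$ and $E_0=\mca O_{X_0}$ when $T_x\cong \mca O_{nx}$. Apply $\mca Hom_{\mca X}(j_*(-), j_*F_0)$ and use Step 1. This gives a long exact sequence
\[
\cdots\to j_*\mca Hom(E_0,F_0)^{\oplus d_p}\xrightarrow{\alpha_p} j_*\mca Hom(E_1,F_0)^{\oplus d_p}\to \mca Ext^{p+1}(j_*T_x,j_*F_0)\to j_*\mca Hom(E_0,F_0)^{\oplus d_{p+1}}\to\cdots.
\]
By naturality of the isomorphism in Step 1, the map $\alpha_p$ is $d_p$ copies of the restriction map $\mca Hom(E_0,F_0)\to \mca Hom(E_1,F_0)$ on $X_0$. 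The kernel of that map is $\mca Hom(T_x,F_0)$ and its cokernel is $\mca Ext^1(T_x,F_0)$. So we obtain a short exact sequence
\[
0\to j_*\mca Ext^1(T_x,F_0)^{\oplus d_{p-1}}\to \mca Ext^p(j_*T_x,j_*F_0)\to j_*\mca Hom(T_x,F_0)^{\oplus d_p}\to 0.
\]

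\textbf{Step 3 (splitting), the main obstacle.} It remains to show that this extension splits. Naturality in the first argument is the delicate point. To handle it, I would compute directly with the double complex $\mca Hom(\pi^*P_\bullet\otimes [E_1\to E_0]^\sim, j_*F_0)$, where $\sim$ denotes a lift to $\mca X$. Because the differentials coming from $P_\bullet$ vanish after applying $\mca Hom(-, j_*F_0)$, this double complex is the tensor product over $\mb k$ of the two-term complex $[\mca Hom(E_0,F_0)\to\mca Hom(E_1,F_0)]$ with a graded space of dimensions $d_p$ and zero differential. The cohomology of such a tensor product is the direct sum of the cohomologies, which gives the splitting.

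The subtle part is that $E_1 \to E_0$ need not lift to a morphism on $\mca X$. To avoid this, I would work in the derived category: write $\mb R\mca Hom_{\mca X}(j_*T_x, j_*F_0)\cong j_*\mb R\mca Hom_{X_0}(\mb Lj^*j_*T_x, F_0)$ and show that $\mb Lj^*j_*T_x\cong \bigoplus_q T_x\otimes \mr{Tor}_q^R(\mb k,\mb k)[q]$. This follows from flatness of $\pi$: $\mb Lj^*j_*T_x \cong T_x\otimes^{\mb L}_R\mb k$, which is $T_x\otimes_{\mb k}(\mb k\otimes^{\mb L}_R\mb k)$, and a complex of $\mb k$-vector spaces is formal. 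With this formula, both (1) and (2) drop out at once from $\mb R\mca Hom_{X_0}(T_x,F_0)$, whose cohomology sits in degrees $0$ and $1$.
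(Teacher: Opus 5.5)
\textbf{Where the gap is.} Step 1 is exactly the paper's argument: lift $E_0$ to $E$ and resolve by $E\otimes_R P_\bullet$ with $P_\bullet$ minimal. Step 2 gives the expected short exact sequence. The problem is the splitting in Step 3, which is the real content of (2).

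Your derived argument asserts that $\mb Lj^*j_*T_x\cong T_x\otimes_{\mb k}(\mb k\otimes^{\mb L}_R\mb k)$ in $\mb D(X_0)$ ``by flatness of $\pi$''. Flatness does give $\mca O_{X_0}\simeq\mca O_{\mca X}\otimes_R P_\bullet$. Hence it gives $j_*\mb Lj^*j_*T_x\simeq j_*T_x\otimes_R P_\bullet$ in $\mb D(\mca X)$, and it identifies the cohomology sheaves of $\mb Lj^*j_*T_x$ with $T_x^{\+ d_q}$. But the adjunction $\mb R\mca Hom_{\mca X}(j_*T_x,j_*F_0)\cong j_*\mb R\mca Hom_{X_0}(\mb Lj^*j_*T_x,F_0)$ needs $\mb Lj^*j_*T_x$ as an object of $\mb D(X_0)$. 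There the $\mca O_{X_0}$-structure comes from the factor $\mca O_{X_0}$, not from $T_x$. Since $j_*$ is not fully faithful, formality after applying $j_*$ says nothing about formality in $\mb D(X_0)$.

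\textbf{Why this is a real condition.} For $R=\mb k[\epsilon]/(\epsilon^2)$, the class of the extension $G[1]\to\tau_{\geq -1}\mb Lj^*j_*G\to G$ in $\mr{Ext}^2_{X_0}(G,G)$ is the obstruction to deforming $G$ over $\mca X$. Your argument uses only flatness of $\pi$ and $\mf m G=0$. It would therefore equally prove formality for a line bundle on a surface that does not extend to a given first-order deformation, which is false. Formality would make the edge map $\Hom^1_{\mca X}(j_*G,j_*G)\to\Hom_{X_0}(G,G)$ surjective, and an extension mapping to the identity is an $R$-flat lift of $G$.

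\textbf{How to repair it.} The formality you want is true here, but for another reason: $T_x$ admits an $R$-flat lift $\tilde T$. Then $\tilde T\otimes_R P_\bullet$ resolves $j_*T_x$ by $j^*$-acyclic sheaves, and $j^*(\tilde T\otimes_R P_\bullet)=T_x\otimes_R P_\bullet$ has zero differentials.

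More directly, the obstacle that made you abandon your double complex does not arise. The sheaf $\mca Ext^p_{\mca X}(j_*T_x,j_*F_0)$ is supported at $x$, so you may compute on an affine neighbourhood $U=\Spec A$ with $\mca X|_U\cong U\times_{\mb k}\Spec R$ (smooth affine schemes are rigid). There $T_x\cong A/(f)$, and $A\xrightarrow{f}A$ lifts to $A\otimes_{\mb k}R\xrightarrow{f\otimes 1}A\otimes_{\mb k}R$ with $R$-flat cokernel. Tensoring with $P_\bullet$ gives a locally free resolution of $j_*T_x$. Your tensor-product double complex then has cohomology given by the kernel and cokernel of $f$ on $F_0|_U$, namely $\mca Hom(T_x,F_0)^{\+ d_p}\+\mca Ext^1(T_x,F_0)^{\+ d_{p-1}}$, which is the split formula. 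This is precisely the paper's proof, and it makes your Step 2 and its naturality question unnecessary.
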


\begin{proof}
By Lemma \ref{lem:deformation-of-vect}, 
there exists a locally free sheaf $E$ on $\mca X$ such that 
the restriction to $X_0$ is isomorphic to $E_0$. 
Using $E$, we can take a locally free resolution of $j_* E_0$. 
In fact, take a minimal free resolution of $\mb k$ by $R$: 
\[
\cdots \to R^{\+ d_2} \to R^{\+ d_1} \to R \to \mb k \to 0. 
\]
By the functor $\otimes _{R} E$, 
we obtain a locally free resolution $j_* E_0$ as follows 
\[
\cdots \to E^{\+ d_2} \to E^{\+ d_1} \to E \to \mb k \otimes _{R} E \cong j_*E_0 \to 0. 
\] 

Taking $\mca Hom_{\mca O_{\mca X}}(-,j_* F_0)$, 
we see that
\begin{equation*}
\mca Ext^{p}(j_* E_0, j_* F_0) \cong \mca Hom(j_*E_0, j_* F_0)^{\+ d_p} 
 \cong j_* \mca Hom_{X_0}(E_0, F_0)^{\+ d_p}
 \text{ (for $p \in \bb Z_{\geq 0}$)}  
\end{equation*}

To prove the assertion (2), 
we need a locally free resolution of $j_* T_x$. 
Recall that an affine smooth scheme is rigid. 
Choose an affine open neighborhood $U = \Spec A$ 
of the point $x \in X_0$ so that 
$\mca X|_{U} \cong U \times _{\mb k} \Spec R$. 
If $U$ is sufficiently small, 
then there exists $f \in A$ such that 
$T_x \cong A/(f)$. 
A locally free resolution of $j_* T_x$ 
is given by the following complex: 
\begin{align*}
j_* T_x &\cong T_x \otimes _k R / \mf m \\  
 & \cong 
(A \overset{f}{\to} A ) \otimes _k ( \cdots \to R^{d_2} \to R^{d_1} \overset{\pi}{\to} R) 
\end{align*}
Taking $\mca Hom_{\mca X}(-, j_* F_0)$, 
we obtain the complex 
$\{ \mca Hom_{\mca X}(j_* T_x, j_* F)^{p}, \partial^p \}_{p\in \bb Z}$. 
The complex is explicitly given by 
\begin{align*}
&\mca Hom_{\mca X}(j_* T_x, j_* F_0)^{p} \cong 
\mca Hom_{\mca X}(A \otimes _{\mb k} R, j_* F_0)^{\+ d_{p-1}} 
\+ 
\mca Hom_{\mca X}(A \otimes _{\mb k} R, j_* F_0)^{\+ d_{p}}  \\
& \partial^p \colon  
\mca Hom_{\mca X}(j_* T_x, j_* F_0)^{p} 
\to \mca Hom_{\mca X}(j_* T_x, j_* F_0)^{p+1} , 
\begin{bmatrix}
	x \\ y 
\end{bmatrix} \mapsto 
\begin{bmatrix}
0 &  \mu_{f\otimes 1}^{\+d_p}  \\ 
0 & 0 
\end{bmatrix}
\begin{bmatrix}
x \\ y
\end{bmatrix}. 
\end{align*}
Here we extend the definition of 
$d_p$ to all integers $p$ 
by setting $d_p=0$ for $p<0$, 
and the morphism $\mu_{f\otimes 1}$ is the multiplication of $f\otimes 1$: 
\[
\mu_{f \otimes 1} \colon \mca Hom_{\mca X}( A\otimes _{\mb k} R , j_* F_0) 
\to 
\mca Hom_{\mca X}( A\otimes _{\mb k} R , j_* F_0). 
\]
Thus we obtain 
\begin{align*}
	\mca Ext_{\mca X}^p(j_* \mca O_x, j_* F_0) \cong 
	\begin{cases}
	\Ker (\mu_{f \otimes 1}) & (p=0 ) 	 \\
	\Cok(\mu_{f \otimes 1})^{\+ d_{p-1}} \+ \Ker (\mu_f)^{\+ d_{p}} & (p>0). 
	\end{cases}
\end{align*}

To complete the proof, 
we wish to calculate $\Ker (\mu _{f \otimes 1})$ and 
$\Cok (\mu_{f \otimes 1})$. 
Here we note that 
$\mca Hom_{\mca X}(A\otimes _{\mb k} R, j_* F) \cong j_* \mca Hom_{X_0}(A, F)$. 
Hence the morphism $\mu_{f \otimes 1}$ is nothing but 
the push-forward by the closed embedding $j$ of the multiplication 
of $f$: 
\[
\mu_{f \otimes 1} = j_*(\mu_f \colon 
\mca Hom_{X_0}(A, F) \to \mca Hom_{X_0}(A, F)  )
\]
Since $f$ is non-zero divisor, 
we have 
\begin{align*}
	\Ker (\mu _{f \otimes 1}) & \cong j_* \mca Hom_{X_0}(A/(f) , F_0 ) 
	\text{, and} \\  
	\Cok(\mu_{f \otimes 1}) & \cong j_* \mca Ext_{X_0}^1(A/(f), F_0 ). 
\end{align*}
and complete the proof. 
\end{proof}

\begin{lem}\label{lem:260308}
Put $d_n = \dim _{\mb k}\mr{Tor}_n^R(\mb k,\mb k)$. 
Assume that $E_0$ is an indecomposable sheaf on $X_0$. 
For any coherent sheaf $F_0$ on $X_0$, 
we have 
\[
\Hom_{\mca X}^p(j_* E_0, j_* F_0)\cong 
\Hom_{X_0}^0(E_0, F_0)^{\+ d_p} \+ 
\Hom_{X_0}^1(E_0, F_0)^{\+ d_{p-1}}
\]
\end{lem}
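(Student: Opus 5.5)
We pass from the local computation of Lemma \ref{lem:0220} to global Hom groups through the local-to-global spectral sequence
\[
E_2^{q,r} = H^q\bigl(\mca X, \mca Ext^r_{\mca X}(j_*E_0, j_*F_0)\bigr) \Rightarrow \Hom^{q+r}_{\mca X}(j_*E_0, j_*F_0).
\]
Every $\mca Ext^r$ sheaf is of the form $j_*(\cdot)$ for a coherent sheaf on the curve $X_0$. Since $j$ is a closed embedding, $H^q(\mca X, j_*G)=H^q(X_0,G)$, and this vanishes for $q\neq 0,1$. So the spectral sequence has only two nonzero columns, $q=0$ and $q=1$. It therefore degenerates at $E_2$ and gives short exact sequences
\[
0\to H^1(\mca Ext^{p-1}) \to \Hom^p_{\mca X}(j_*E_0,j_*F_0)\to H^0(\mca Ext^{p})\to 0 .
\]
Over the field $\mb k$ these split, so we only need to identify the two outer terms.

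We use that an indecomposable coherent sheaf on the smooth curve $X_0$ is either locally free or an indecomposable torsion sheaf $T_x$ supported at a single closed point $x$, and we split into these two cases.

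\textbf{Locally free $E_0$.} Lemma \ref{lem:0220}(1) gives $\mca Ext^r=j_*\mca Hom(E_0,F_0)^{\+ d_r}$. Hence $H^0(\mca Ext^p)=\Hom_{X_0}(E_0,F_0)^{\+d_p}$. Also $H^1(\mca Ext^{p-1})=H^1(\mca Hom(E_0,F_0))^{\+d_{p-1}}=\Hom^1_{X_0}(E_0,F_0)^{\+d_{p-1}}$, where the last equality holds because $E_0$ is locally free.

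\textbf{Torsion $E_0=T_x$.} Lemma \ref{lem:0220}(2) expresses each $\mca Ext^r$ through $j_*\mca Ext^1_{X_0}(T_x,F_0)$ and $j_*\mca Hom_{X_0}(T_x,F_0)$, both of which are skyscraper-supported at $x$. Their $H^1$ vanishes, so only the $q=0$ column survives and $\Hom^p_{\mca X}=H^0(\mca Ext^p)$. This equals
\[
H^0\bigl(\mca Ext^1_{X_0}(T_x,F_0)\bigr)^{\+d_{p-1}}\+ H^0\bigl(\mca Hom_{X_0}(T_x,F_0)\bigr)^{\+d_p}.
\]
The same two-column argument on $X_0$ gives $H^0(\mca Ext^1_{X_0}(T_x,F_0))=\Hom^1_{X_0}(T_x,F_0)$ and $H^0(\mca Hom)=\Hom_{X_0}(T_x,F_0)$, since the $H^1$ terms of sheaves supported at $x$ are zero. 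This yields the claimed formula.

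The main obstacle is bookkeeping rather than conceptual: one must check that the identifications of Lemma \ref{lem:0220} are compatible with the spectral sequence, or at least that the dimensions match. Only an abstract isomorphism of $\mb k$-vector spaces is asserted, so a dimension count suffices once the $E_2$ page is known; the degeneration itself is automatic from the two-column shape. One should also note that the resolution in Lemma \ref{lem:0220}(2) is local, but the resulting $\mca Ext$ sheaves are supported at $x$, so the local computation determines them globally.
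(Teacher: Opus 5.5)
Your proposal is correct and follows the paper's proof. Both arguments degenerate the local-to-global spectral sequence into $H^0(\mca Ext^p)\+ H^1(\mca Ext^{p-1})$, split into the locally free and point-supported torsion cases, and read off the terms from Lemma \ref{lem:0220}. You also justify steps the paper leaves implicit: degeneration because only two columns are nonzero, splitting over $\mb k$, and $H^0(\mca Ext^1_{X_0}(T_x,F_0))\cong\Hom^1_{X_0}(T_x,F_0)$.
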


\begin{proof}
Since the sheaves $E_0$ and $F_0$ are defined over 
the field $\mb k$, 
the set 
$\Hom_{\mca X}^p(j_* E_0, j_* F_0)$ is a $\mb k$-module. 
The local-to-global spectral yields the following decomposition: 
\begin{equation}
\label{eq:260308}	
\Hom_{\mca X}^p(j_* E_0, j_* F_0)\cong 
H^0(\mca X, 
\mca Ext_{\mca X}^p(j_* E_0, j_* F_0) 
) 
\+ 
H^1(\mca X, 
\mca Ext_{\mca X}^{p-1}(j_* E_0, j_* F_0))
\end{equation}

By the assumption for $X_0$, 
$E_0$ must be a locally free sheaf on $X_0$ or 
a torsion sheaf $T_x$ supported on a point $x \in X_0$. 
If $E_0$ is locally free, by Lemma \ref{lem:0220}, we have 
\begin{equation}
\Hom_{\mca X}^p(j_* E_0, j_* F_0)\cong 
H^0(X_0, \mca Hom_{X_0}(E_0, F_0))^{\+ d_p} 
\+ 
H^1(X_0, \mca Hom_{X_0}(E_0, F_0))^{\+ d_{p-1}}. 
\end{equation}
By the assumption for $E_0$, 
the second term in the right hand side is isomorphic to 
$\Hom_{X_0}^1(E_0, F_0)^{\+ d_{p-1}}$. 

To complete the proof, 
assume $E_0=T_x$. 
Then Lemma \ref{lem:0220} yields 
\begin{align*}
\Hom_{\mca X}^p(j_* T_x, j_* F_0) &\cong 
H^0(\mca X, \mca Ext_{\mca X}^p(j_* E_0, j_* F_0))  \\
		& \cong H^0(\mca X,  \mca Hom_{X_0}(T_x, F_0))^{\+ d_p} \+ 
		H^0(X_0, \mca Ext^1_{X_0}(T_x, F_0))^{\+ d_{p-1}} \\
		&\cong 
		\Hom_{X_0}(T_x, F_0)^{\+ d_p} \+ \Hom_{X_0}^1(T_x, F_0)^{\+ d_{p-1}}. 
\end{align*}
\end{proof}

\begin{prop}\label{prop:almost-hereditary}
The abelian category $\mr{Coh}(X_0)$ is 
almost hereditary in $\mr {Coh}(\mca X)$. 
\end{prop}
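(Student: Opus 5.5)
We use Lemma \ref{lem:260308}, which computes $\Hom^p_{\mca X}(j_*E_0, j_*F_0)$ for indecomposable $E_0$, and reduce the general case to the indecomposable one by additivity. Note first that $\mr{Coh}(X_0)$, viewed inside $\mr{Coh}(\mca X)$ via the exact fully faithful functor $j_*$, is a full abelian subcategory. For $E, F \in \mr{Coh}(X_0)$ the groups $\Hom^p_{\mr{Coh}(\mca X)}(E,F)$ appearing in (AH) are therefore $\Hom^p_{\mca X}(j_*E, j_*F)$.

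Take $E_0, F_0 \in \mr{Coh}(X_0)$ with $\Hom^0_{\mca X}(j_*E_0,j_*F_0) = \Hom^1_{\mca X}(j_*E_0,j_*F_0) = 0$. Since $X_0$ is a projective curve, $E_0$ decomposes as a finite direct sum $\bigoplus_k E_k$ of indecomposable sheaves. Each $E_k$ is either locally free or a torsion sheaf $T_x$ supported at a single point; this is the dichotomy already used in the proof of Lemma \ref{lem:260308}. Because $\Hom^p_{\mca X}(j_*E_0, j_*F_0) = \bigoplus_k \Hom^p_{\mca X}(j_*E_k, j_*F_0)$, the hypothesis gives $\Hom^p_{\mca X}(j_*E_k,j_*F_0)=0$ for $p=0,1$ and every $k$. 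It therefore suffices to prove (AH) when $E_0$ is indecomposable.

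For indecomposable $E_0$, Lemma \ref{lem:260308} gives
\[
\Hom^p_{\mca X}(j_*E_0,j_*F_0)\cong \Hom_{X_0}(E_0,F_0)^{\+ d_p}\+\Hom^1_{X_0}(E_0,F_0)^{\+ d_{p-1}},
\]
where $d_n=\dim_{\mb k}\mr{Tor}^R_n(\mb k,\mb k)$ and $d_n = 0$ for $n<0$. We extract two vanishings from the hypothesis.
\begin{itemize}
\item At $p=0$ we have $d_0 = 1$ and $d_{-1}=0$, so the right-hand side is $\Hom_{X_0}(E_0,F_0)$, and the hypothesis gives $\Hom_{X_0}(E_0,F_0)=0$.
\item At $p=1$ the right-hand side contains $\Hom^1_{X_0}(E_0,F_0)^{\+ d_0} = \Hom^1_{X_0}(E_0,F_0)$, so the hypothesis gives $\Hom^1_{X_0}(E_0,F_0)=0$.
\end{itemize}
Substituting both vanishings back into the formula gives $\Hom^p_{\mca X}(j_*E_0,j_*F_0)=0$ for all $p\ge 0$. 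For $p<0$ the groups vanish because both objects are sheaves. This proves (AH).

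The only step that needs care is confirming that Lemma \ref{lem:260308} holds for every indecomposable $E_0$, in both the locally free and torsion cases. That lemma rests on the lifting result Lemma \ref{lem:deformation-of-vect} and on the local computations of Lemma \ref{lem:0220}. Granting it, the argument is the formal bookkeeping above, using only $d_0=1$ and $d_{-1}=0$.
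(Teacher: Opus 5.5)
Your proposal is correct and is the paper's argument: the paper also reduces to indecomposable $E_0$ and then deduces (AH) from Lemma \ref{lem:260308}. The only difference is that you spell out the bookkeeping the paper leaves implicit, namely that $d_0=1$ and $d_{-1}=0$ let the cases $p=0,1$ force $\Hom_{X_0}(E_0,F_0)=\Hom^1_{X_0}(E_0,F_0)=0$, after which every $\Hom^p$ vanishes.
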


\begin{proof}
The closed immersion $X_0 \to \mca X$ is denoted by $j \colon X_0 \to \mca X$. 
Suppose that $E_0, F_0 \in \mr {Coh}(X_0)$ satisfies 
\begin{equation}\label{eq:0220ass}
\Hom^0_{\mca X} (j_*  E_0, j_*  F_0) = 
\Hom^1_{\mca X} (j_*  E_0, j_*  F_0) =  0. 
\end{equation} 
Without loss of generality, we may assume that 
$E_0$ is indecomposable. 
Then Lemma \ref{lem:260308} yields the desired assertion. 
\end{proof}

\subsection{Stability conditions on $\mca X$}

\begin{prop}
Let $\pi \colon \mca X \to \Spec \, R$ be an 
infinitesimal deformation of $X_0$ 
over $(R, \mf m)$. 
Then there is a locally finite stability condition 
on $\mb D^b (\mca X)$. 
\end{prop}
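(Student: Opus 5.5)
We will construct a stability condition on $\mb D^b(\mca X)$ of "geometric" type, with heart $\mr{Coh}(\mca X)$ and central charge built from rank and degree on the closed fibre. Concretely, since $\mca X$ is flat over $\Spec R$, every coherent sheaf on $\mca X$ has a finite filtration by the subsheaves $\mf m^k F$. Each successive quotient $\mf m^k F/\mf m^{k+1}F$ is annihilated by $\mf m$ and is therefore of the form $j_*G$ with $G \in \mr{Coh}(X_0)$. Hence $j_*$ induces a surjection $K_0(X_0)\to K_0(\mca X)$; in fact it is an isomorphism by dévissage, since $\mr{Coh}(X_0)$ is exactly the Serre-type subcategory of sheaves killed by $\mf m$ and every sheaf has a finite filtration with such factors. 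We define
\[
Z(F) = -\deg(F) + \sqrt{-1}\,\rank(F),
\]
where $\rank$ and $\deg$ are the $\bb Z$-linear maps on $K_0(\mca X)\cong K_0(X_0)$ obtained from those on $X_0$.

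The first step is to verify that $Z$ is a stability function on $\mr{Coh}(\mca X)$. Let $F\neq 0$ and take the $\mf m$-adic filtration above. Then $Z(F)=\sum_k Z(G_k)$ with $G_k\in\mr{Coh}(X_0)$ and at least one $G_k\neq 0$. On $X_0$ each nonzero $G_k$ has $Z(G_k)$ in the strict upper half plane union $\bb R_{<0}$. This region is closed under addition of nonzero elements, so $Z(F)$ lies in it as well.

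The second step is the Harder--Narasimhan property. Following Bridgeland's criterion, it suffices that $\mr{Coh}(\mca X)$ is Noetherian (true, since $\mca X$ is a Noetherian scheme) and that $Z$ has discrete image. The image is contained in $\bb Z+\sqrt{-1}\,\bb Z$, which is discrete. Bridgeland's Proposition 2.4 (the argument for curves, or equivalently the general statement for Noetherian hearts with discrete central charge) gives HN filtrations. Anticipating the one subtle point: the standard argument also needs that there are no infinite chains of subobjects with strictly increasing phase. This is handled as on a curve: the ranks are bounded by $\rank F$, and for fixed rank the degrees of subsheaves of $F$ are bounded above. To see the degree bound, apply the $\mf m$-adic filtration and use the boundedness of degrees of subsheaves of the finitely many sheaves $j_*G_k$ on $X_0$.

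The last step is local finiteness. Because $Z$ takes values in the lattice $\bb Z[\sqrt{-1}]$, the infimum of $|Z(E)|$ over semistable $E$ is at least $1$. So the stability condition is reasonable in the sense of Definition~\ref{dfn-reasonable}, hence locally finite by Remark~\ref{rmk:reasonable}. I expect the main obstacle to be the HN step, specifically the bound on degrees of subsheaves of sheaves on the non-reduced $\mca X$. The plan there is to reduce via the $\mf m$-adic filtration to the statement on $X_0$, which is classical.
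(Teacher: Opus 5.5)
Your proposal is correct and follows essentially the same route as the paper. Both arguments identify $K_0(\mr{Coh}(\mca X))\cong K_0(\mr{Coh}(X_0))$ by d\'evissage, take $Z=-\deg+\sqrt{-1}\,\rank$ on the heart $\mr{Coh}(\mca X)$, and use that this heart is Noetherian and that $Z$ has discrete image.

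You go further than the paper in three places, all of which it leaves to its citation. You check that $Z$ is a stability function via the $\mf m$-adic filtration. You verify the chain conditions directly; your degree bound is valid, though not needed once Noetherianity and discreteness are used. You deduce local finiteness from $|Z|\geq 1$ and reasonableness.
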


\begin{proof}
By a theorem due to Quillen \cite{MR338129}, 
the $K$-theory of $\mr{Coh}(\mca X)$ 
is homotopy equivalent to that of $\mr{Coh}(X_0)$ via 
the canonical inclusion $X_0 \to \mca X$. 
In particular, 
the $K_0$ group of $\mr{Coh}(\mca X)$ and of 
$\mr{Coh}(X_0)$ are isomorphic. 

Thus we can take the same central charge of 
the standard stability condition on $X_0$. 
Namely set 
a group homomorphism  
\[
Z_{\mr{st}} \colon 
K_0 (\mr{Coh}(\mca X)) \to \bb C
\]
by 
$Z_{\mr{st}} ([E]) : =  -\deg E|_{X_0} + \sqrt{-1} \rank E|_{X_0}$.

Then the image of $Z_{\mr{st}}$ is clearly discrete. 
Since the category $\mr{Coh}(\mca X)$ is Noetherian, 
the argument in \cite{MR2376815} implies that 
the pair $(Z_{\mr{st}}, \mr{Coh}(\mca X))$ is a 
locally finite stability condition. 
\end{proof}

\begin{dfn}\label{dfn:standard-stability}
We denote by $\sigma_{\mr{st}}$ 
the stability condition constructed in the proof of the above 
proposition: 	
\[
\sigma_{\mr{st}}  = (Z_{\mr {st}}, \mr{Coh}(\mca X)). 
\]
We call $\sigma_{\mr{st}}$ a standard stability condition on $\mca X$. 
\end{dfn}

\begin{rmk}
Since $X_0$ is a smooth curve, 
any object in $\mb D^b(X_0)$ is a direct sum of 
shifts of sheaves on $X_0$. 
Hence the natural functor 
\[
j_* \colon \mb D^b (X_0) \to \mb D^b(\mca X)
\]
is faithful, but not full. 
Thus we obtain the map 
\[
j_* ^{-1} \colon \Dom{ j_*} \to \Stab{\mb D^b(X_0)}. 
\]
The following lemma claims the non-emptiness of $\Dom{j_* }$. 
\end{rmk}

\begin{lem}\label{lem:non-emp-of-Dom}
The standard stability condition $\sigma_{\mr{st}}$
on $\mca X$ belongs 
to 
$\Dom{j_*}$ where $j_* \colon \mb D^b(X_0) \to \mb D^b(\mca X)$. 
\end{lem}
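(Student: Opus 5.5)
The plan is to verify directly that the induced pair $j_*^{-1}\sigma_{\mr{st}} = (Z, \mca P)$ is a locally finite stability condition on $\mb D^b(X_0)$, where $Z = Z_{\mr{st}} \circ j_*$ and $\mca P(\phi) = \{ x \mid j_* x \in \mca P_{\mr{st}}(\phi)\}$. As remarked after the construction of $F^{-1}$, axioms (s1)--(s3) hold automatically once (Ind) holds, and (Ind) follows from faithfulness of $j_*$. So the content is the Harder--Narasimhan property (s4), together with local finiteness.

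First I would identify $Z$ and $\mca P$ concretely.
\begin{itemize}
\item For $E_0 \in \mr{Coh}(X_0)$ we have $Z(E_0) = -\deg E_0 + \sqrt{-1}\rank E_0$, because $(j_*E_0)|_{X_0}$ has the same class in $K_0$ as $E_0$ under Quillen's isomorphism. So $Z$ is the usual slope central charge on $X_0$.
\item The essential image $j_*\mr{Coh}(X_0)$ is the full subcategory $\mr{Coh}(\mca X)_{\mf m}$ of sheaves annihilated by $\mf m$. It is closed under subobjects and quotients in $\mr{Coh}(\mca X)$.
\item On it, $j_*$ is fully faithful at the level of $\Hom^0$.
\end{itemize}
Consequently, for a sheaf $E_0$ on $X_0$, every subsheaf of $j_*E_0$ in $\mr{Coh}(\mca X)$ is of the form $j_*F_0$ for a subsheaf $F_0 \subset E_0$, with the same slope. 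Hence $j_*E_0$ is $\sigma_{\mr{st}}$-semistable if and only if $E_0$ is slope semistable on $X_0$.

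Next I would establish HN filtrations. Let $0 = E_{0,0} \subset E_{0,1} \subset \cdots \subset E_{0,n} = E_0$ be the slope HN filtration of a sheaf $E_0$ on $X_0$. Applying the exact functor $j_*$ gives a filtration of $j_*E_0$ in $\mr{Coh}(\mca X)$. Its factors $j_*(E_{0,i}/E_{0,i-1})$ are $\sigma_{\mr{st}}$-semistable by the previous step, and their phases strictly decrease. By uniqueness, this is the $\sigma_{\mr{st}}$-HN filtration of $j_*E_0$. All its pieces and cones lie in the image of $j_*$, so it gives an HN filtration of $E_0$ with respect to $(Z,\mca P)$.

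For a general object of $\mb D^b(X_0)$, I would use that it is a finite direct sum of shifts of sheaves, since $X_0$ is a smooth curve. Shifts are handled by (s3), and direct sums by Lemma \ref{lem:direct-sum}. Hence every object has an HN filtration, and $j_*^{-1}\sigma_{\mr{st}}$ is a stability condition.

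Finally, for local finiteness I would observe that $j_*^{-1}\sigma_{\mr{st}}$ is exactly the standard stability condition $(-\deg + \sqrt{-1}\rank, \mr{Coh}(X_0))$ on $X_0$. This is locally finite because $Z$ has discrete image (it is even reasonable; see Remark \ref{rmk:reasonable}). Therefore $\sigma_{\mr{st}} \in \Dom{j_*}$.

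The main obstacle I expect is the claim that every $\sigma_{\mr{st}}$-destabilizing subobject of $j_*E_0$ already lies in the image of $j_*$. I would handle it by the annihilation argument: any subsheaf of an $\mf m$-annihilated sheaf is $\mf m$-annihilated, hence a pushforward from $X_0$. This uses that $\mr{Coh}(X_0)$ is maximally annihilated by $\mf m$ inside $\mr{Coh}(\mca X)$.
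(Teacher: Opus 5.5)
Your proof is correct and follows essentially the same route as the paper. Both reduce to sheaves via Lemma \ref{lem:direct-sum} and use that $\mr{Coh}(X_0)$ is closed under subobjects and quotients in $\mr{Coh}(\mca X)$, so the $\sigma_{\mr{st}}$-HN filtration of $j_*E_0$ stays in the image of $j_*$; the only difference is that you build that filtration by pushing forward the slope HN filtration from $X_0$, whereas the paper reads it off directly in $\mr{Coh}(\mca X)$, and your identification of $j_*^{-1}\sigma_{\mr{st}}$ with the standard stability condition on $X_0$ additionally gives the local finiteness check that the paper leaves implicit.
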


\begin{proof}
Let $\mca P=\{ \mca P(\phi)\}$ be the slicing 
of $\sigma_{\mr{st}}$. 
It is enough to show that 
$\Im j_* \cap \mca P$ satisfies the condition (s4).

Since indecomposable objects in $\Im j_*$
are isomorphic to sheaves on $X_0$, 
it is enough to show the assertion for 
sheaves on $X_0$ by Lemma \ref{lem:direct-sum}. 

The subcategory $\mr{Coh}(X_0)$ is closed under 
subobjects and quotients. 
Hence each semistable factor is in $\mr{Coh}(X_0)$ and 
$j_*^{-1} \sigma$ satisfies the condition (s4). 
\end{proof}

\begin{prop}\label{prop:key-1}
Let $j \colon X_0 \to \mca X$ be the closed embedding. 	
Assume that a stability condition $\sigma$ on $\mb D^b(\mca X)$ satisfies 
\begin{enumerate}
	\item for any point $x \in X_0$, $j_* \mca O_x$ is stable, and 
	\item for any line bundle $L$ on $X_0$, $j_* L$ is stable. 
\end{enumerate}
Then $\sigma $ is in the orbit $ \widetilde{\mr{GL}}_2^+ (\bb R)  \cdot \sigma_{\mr{st}}$. 
\end{prop}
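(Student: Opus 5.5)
We follow Macr\`i's argument for curves (and Bridgeland's for elliptic curves), transported to $\mca X$ through the facts established above. The plan is first to pin down the phases of the stable objects $j_*\mca O_x$ and $j_*L$, and then to show that a suitable element of $\widetilde{\mr{GL}}_2^+(\bb R)$ carries the heart $\mca P((0,1])$ of $\sigma$ to $\mr{Coh}(\mca X)$.

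\textbf{Step 1: the skyscrapers share a phase.} Write $\phi(E)$ for the phase of a stable object $E$. Lemma \ref{lem:260308} gives $\Hom^p_{\mca X}(j_*E_0,j_*F_0)$ for $E_0$ indecomposable in terms of $\Hom_{X_0}$ and $\Hom^1_{X_0}$. Take the nonzero morphisms
\[
j_*L\to j_*\mca O_x, \qquad j_*\mca O_x\to j_*L(x)[1],
\]
coming from $L\to\mca O_x$ and from the extension $0\to L\to L(x)\to\mca O_x\to 0$. Since both objects are stable by hypothesis, condition (s2) forces
\[
\phi(j_*L)<\phi(j_*\mca O_x) \quad\text{and}\quad \phi(j_*\mca O_x)<\phi(j_*L(x))+1 .
\]
Iterating over line bundles and points, as in Macr\`i and Bridgeland, the phases $\phi(j_*\mca O_x)$ are all equal to some $\phi_0$. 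The point is that for $x\ne y$ the objects $j_*\mca O_x$ and $j_*\mca O_y$ are $\Hom^\bullet$-orthogonal, while both receive maps from the same $j_*L$. Moreover $Z(j_*\mca O_x)$ is independent of $x$, because $K_0(\mca X)\cong K_0(X_0)$ by Quillen's theorem and all points of $X_0$ have the same class there (after the standard reduction $Z$ factors through numerical $K_0$, or we simply compare classes in $K_0$).

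\textbf{Step 2: normalize.} Act by $\widetilde{\mr{GL}}_2^+(\bb R)$ so that $Z(j_*\mca O_x)=-1$ with $\phi_0=1$ and $Z(j_*\mca O_{X_0})=\sqrt{-1}$. Then $Z$ is a real-linear combination of $-\deg$ and $\sqrt{-1}\rank$ on $K_0(X_0)=\bb Z\oplus\mr{Pic}$-degree data, and the normalization forces $Z=Z_{\mr{st}}$. Now the phase inequalities from Step 1 give $\phi(j_*L)\in(0,1)$ for every line bundle $L$.

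\textbf{Step 3: identify the heart.} Put $\mca A=\mca P((0,1])$. We show that $j_*L$ and $j_*\mca O_x$ lie in $\mca A$, and hence so do all of $j_*\mr{Coh}(X_0)$: every object of $\mr{Coh}(X_0)$ is an iterated extension of line bundles and skyscrapers, because on a smooth curve vector bundles have filtrations with line bundle quotients. Next, every sheaf on $\mca X$ is an iterated extension of sheaves in $j_*\mr{Coh}(X_0)$, via the $\mf m$-adic filtration $\mf m^kF/\mf m^{k+1}F$, which is finite since $\mf m$ is nilpotent. Therefore $\mr{Coh}(\mca X)\subset\mca A$. Two hearts of bounded t-structures with one contained in the other are equal, so $\mca A=\mr{Coh}(\mca X)$ and $\sigma=\sigma_{\mr{st}}$ up to the group action.

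\textbf{Main obstacle.} The delicate part is Step 1 together with the claim in Step 2 that all phases of the $j_*L$ lie in a single interval of length $<1$. Unlike $X_0$, the objects $j_*E_0$ on $\mca X$ carry extra higher Ext groups, namely the $d_p$-terms. I would handle this with Lemma \ref{lem:260308}. Those extra terms only shift degrees upward, so every nonvanishing of $\Hom^0$ or $\Hom^1$ used above is the same as on $X_0$, and the remaining comparisons go through unchanged.
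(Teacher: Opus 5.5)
Your Steps 1--2 match the paper's opening: strict phase inequalities from $\Hom(j_*L,j_*\mca O_x)\neq0$ and $\Hom^1(j_*\mca O_x,j_*L)\neq0$, followed by normalization. Your Step 3, however, runs in the opposite direction, and it is correct.

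You show $\mr{Coh}(\mca X)\subset\mca P((0,1])$ by extension closure alone. Line bundles and skyscrapers generate $\mr{Coh}(X_0)$ under extensions, and the finite filtration by $\mf m^kF/\mf m^{k+1}F$ shows that $j_*\mr{Coh}(X_0)$ generates $\mr{Coh}(\mca X)$.

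The paper instead shows $\mca P((0,1])\subset\mr{Coh}(\mca X)$ one semistable object at a time. For semistable $E$ of phase in $(0,1]$ it proves $m_+\le0$ from a nonzero map $H^{m_+}(E)\to\mca O_y$. It proves $m_-\ge0$ from a surjection onto $H^{m_-}(E)$ out of $\mca M=H^0(\mca X,F\otimes\mca L^n)\otimes_R\mca L^{-n}$, where $\mca L$ is an ample lift of an ample $L_0$. That needs the lift (Lemma~\ref{lem:deformation-of-vect}) and a check that $\mca M$ is semistable of the phase of $L_0^{-n}$.

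Both arguments finish with the fact that nested bounded hearts coincide. Your version is shorter and avoids the ample lift and $\mca M$ entirely. The paper's version bounds the cohomological amplitude of each semistable object directly, which is the form that still works when the heart is only a tilt of $\mr{Coh}$ and your inclusion would fail.

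One point in Step 1 needs correcting. When $g>0$, distinct points do not have the same class in $K_0(X_0)\cong\bb Z\oplus\mr{Pic}(X_0)$, since their difference has determinant $\mca O_{X_0}(x-y)$. So "compare classes in $K_0$" fails. Requiring the central charge to factor through $(\rank,\deg)$ is an assumption, not a standard reduction.

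The paper makes the same assumption tacitly, when it asserts that the phase of $\mca O_x$ is independent of $x$ and that the normalized $Z$ equals $Z_{\mr{st}}$. So this is not a gap relative to the paper, but you should state it explicitly.

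Granting it, the correct argument is as follows. Your inequalities give $\phi(j_*\mca O_y)<\phi(j_*L(y))+1<\phi(j_*\mca O_x)+1$. Hence any two skyscrapers have phases differing by less than $1$, while their central charges lie on a common ray, so the phases are equal. The $\Hom^\bullet$-orthogonality of distinct skyscrapers plays no role.

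Also, the connecting map of $0\to L\to L(x)\to\mca O_x\to0$ lands in $j_*L[1]$, not $j_*L(x)[1]$. This is harmless, since $\Hom^1_{X_0}(\mca O_x,M)\neq0$ for every line bundle $M$.
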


\begin{proof}
Let $\phi_x$ (resp. $\phi_L$) be the phase of $j_* \mca O_x$ (resp. $j_* L $). 
Then the non-vanishings
$\Hom_{\mca X}(j_*L, \mca O_x)\neq 0$ and 
$\Hom_{\mca X}(\mca O_x, j_* L[1])\neq 0$ 
imply 
$\phi_L \leq \phi_x$ and $\phi_x \leq \phi _L +1$. 
Since both of them are stable, 
the equality in the above inequality cannot occur. 
Thus we obtain 
\[
\phi_x -1 <\phi_L < \phi_x < \phi _L +1. 
\]
When we take $L= \mca O_{X_0}$, 
the phase $\phi$ is uniquely determined, 
independent of the choice of a closed point. 
Hence, by the action of 
$\widetilde{\mr{GL}}_2(\bb R)$,
we may assume the following: 
\begin{itemize}
	\item For $\sigma  = (Z, \mca P) \in \Stab{\mb D^b(\mca X)}$, 
	the structure sheaf $\mca O_x$ of any closed point $x \in X_0$ 
	is stable with phase $1$ and $Z(\mca O_x)= -1$.  
	Any invertible sheaf $L$ on $X_0$ is $\sigma$-stable with phase 
	$0 < \psi <1$ and $Z(j_* \mca O_{X_0})=\sqrt{-1}$. 
\end{itemize}

Under the above assumption, 
the central charge of $\sigma$ coincides with that of 
$\sigma_{\mr{st}}$ in Definition \ref{dfn:standard-stability}. 
Hence it is enough to 
show that the $t$-structure $\mca P((0,1])$ of $\sigma $
is contained in $\mr {Coh}(\mca X)$. 
Let $E$ be a $\sigma$-semistable object 
with phase $\phi$ and $0< \phi \leq 1$. 
We show that $E \in \mr{Coh}(\mca X)$.

Let $H^i(E)$ be the $i$-th cohomology with 
respect to the standard $t$-structure of $\mb D^b(\mca X)$. 
Set $m_+ := \max \{ i \mid H^i(E)\neq 0 \}$.  
Then, truncating $E$ with respect to the $t$-structure, 
we obtain the following exact triangle 
\[
\xymatrix{
\tau_{\leq -1} (E[m_+]) \ar[r] & E[m_+] \ar[r] & \tau_{\geq 0}(E[m_+]) \ar[r] 
&\tau_{\leq -1} (E[m_+]) [1]. 
}
\]
Note that $\tau_{\geq 0}(E[m_+]) \cong H^{m_+}(E)\neq 0$. 
Hence we see that there exists a closed point $y$ in $X_0$ such that 
\[
\Hom_{\mca X} (  \tau_{\geq 0}(E[m_+]), \mca O_y) \neq 0. 
\]

If $m_+ >0$, then 
we have 
\[
\Hom_{\mca X} (  \tau_{\geq -1 }(E[m_+]), \mca O_x) \cong 
\Hom_{\mca X} (  \tau_{\geq -1 }(E[m_+])[1], \mca O_x) \cong 
 0. 
\]
Therefore we have 
\begin{equation}\label{eq:1029-16}
\Hom_{\mca X} (E[m_+], \mca O_x) \cong  
\Hom_{\mca X}(\tau _{\geq 0}(E[m_+]), \mca O_x) \neq 0. 
\end{equation}
By the assumptions for $E$ and $m_+$, 
the phase of $E[m_+]$ satisfies 
\begin{equation*}
1 \leq m_+ < \phi + m_+. 
\end{equation*}
Hence 
we have 
$\Hom_{\mca X}(E[m_+], \mca O_y) =0$ which contradicts with 
(\ref{eq:1029-16}). 
Thus we see $m_+ \leq 0$.

Set 
$m_- := \min\{ i \in \bb Z  \mid H^i(E) \neq 0 \}$. 
We finally show that $m_- \geq 0$.   
Let $L_0$ be an ample line bundle on $X_0$. 
We see that there exists an invertible sheaf $\mca L$ 
on $\mca X$ such that 
the restriction to $X_0$ is isomorphic to $L_0$. 

Since $X_0$ is the reduced part of the scheme $\mca X$, 
$\mca L$ is ample on $\mca X$. 
Moreover $\mca L$ is given by the successive extension 
(as $\mca O_{\mca X}$-module) of $L_0$. 
Hence $\mca L$ is $\sigma$-semistable with the 
same phase of $j_* L_0$.

Similarly to the case of $m_+$, 
truncating $E$, we obtain the following exact 
triangle in $\mb D^b(\mca X)$
\[
\xymatrix{
\tau _{\geq 1} (E[m_-])[-1] \ar[r] & 
\tau _{\leq 0}(E[m_-]) \ar[r] & E[m_-] \ar[r] & \tau _{\geq 1} (E[m_-]) . 
}
\]
Note that $\tau _{\leq 0}(E[m_-])\cong H^{m_-}(E) $. 

Set $F= H^{m_-}(E)$. 
Since $\mca L$ is ample on $\mca X$, 
for sufficiently large $n >0$, 
the canonical morphism 
\[
H^0(\mca X, F \otimes \mca L ^{n}) 
\otimes _{R} \mca L^{-n} \to F
\]
is surjective. 
Put $\mca M = H^0(\mca X, F \otimes \mca L ^{n})  \otimes_{R} \mca L^{-n}$, 
and we have 
\begin{equation}\label{eq:1029-nv}
\Hom_{\mca X}(\mca M, F) \neq 0. 
\end{equation}
Since the isomorphisms 
\[
\Hom_{\mca X}(\mca M , \tau _{\geq 1} (E[m_-])[-1])
\cong 
\Hom_{\mca X}(\mca M, \tau _{\geq 1} (E[m_-])) =0
\]
holds, the non-vanishing (\ref{eq:1029-nv}) implies 
\begin{equation}\label{eq:1029-nv2}
0 \neq 
\Hom_{\mca X}( \mca M, E[m_-]) . 
\end{equation}

Note that the sheaf $\mca M$
is given by 
a successive extension of $\mca L^{-n} \otimes _{R} \mb k \cong L_0^{-n}$, 
and it is semistable with the same phase as that of $L^{-n}_0$, 
since the $R$-module $H^0(\mca X, F \otimes \mca L^n)$ is finite and 
$\mca L$ is flat over $R$. 
By the assumption for $E$, 
the phase $\phi + m_-$ of $E[m_-]$ satisfies 
\[
m_- < \phi + m_- \leq 1 + m_-
\]
If $m_- <0$, then 
the inequality $\phi + m_- \leq 0$ holds. 
Since the phase of $\mca M$ is in the interval $(0,1)$, 
we must have 
\[
\Hom_{\mca X}( \mca M, E[m_-])=0
\]
which contradicts with (\ref{eq:1029-nv2}). 
Thus we have $m_- \geq 0$ and this 
gives the proof. 
\end{proof}

\subsection{The case $g(X_0) >0$}

\begin{lem}\label{lem:GKR}
Assume $g(X_0) >0$. 
Let 
	\[
\xymatrix{
	A \ar[r] & E \ar[r] & B \ar[r] & A [1]
}
\]
be an exact triangle in $\mb D^b(\mca X)$ 
with $\Hom_{\mca X}^{\leq 0}(A,B)=0$.

If the complex $E$ concentrates in degree $0$ and 
is an $\mca  O_{X_0}$-module, then 
the same holds for $A$ and $B$. 
\end{lem}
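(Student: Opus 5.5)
The plan is to adapt the Gorodentsev--Kuleshov--Rudakov argument for hereditary categories, using Lemma \ref{lem:vanishing-for-HN} for the vanishings and Serre duality on $X_0$ (this is where $g(X_0)>0$ enters). The argument has three steps.

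\textbf{Step 1: the cohomology sheaves of $A$ and $B$ are $\mca O_{X_0}$-modules.} Since $\Hom^{0}_{\mca X}(A,B)=\Hom^{-1}_{\mca X}(A,B)=0$, the triangle is functorial. Concretely, every endomorphism $e$ of $E$ extends uniquely to a morphism of triangles $(e_A,e,e_B)$. Existence comes from the vanishing of the composite $A\to E\to B$, and uniqueness from $\Hom^{-1}(A,B)=0$.

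Take $x\in\mf m$. Because $E$ is an $\mca O_{X_0}$-module, $\mu_x=0$ on $E$. The triples $(\mu_x,\mu_x,\mu_x)$ and $(0,0,0)$ both extend $0\colon E\to E$, so uniqueness forces $\mu_x=0$ on $A$ and on $B$. Hence $\mu_x$ vanishes on every $H^i(A)$ and $H^i(B)$, so all these cohomology sheaves lie in $\mr{Coh}(X_0)$. Thus $A,B\in\mb D^b(\mr{Coh}(\mca X))_{\mr{Coh}(X_0)}$.

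By Proposition \ref{prop:almost-hereditary}, we may then apply Lemma \ref{lem:vanishing-for-HN} to the pair $(A,B)$. It gives, for all $i$ and $p$ and all $n>0$,
\[
\Hom^0(H^i(A),H^i(B))=0,\qquad \Hom^p(H^i(A),H^{i-n}(B))=0 .
\]

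\textbf{Step 2: the long exact sequence of cohomology.} The triangle gives $H^i(A)\cong H^{i-1}(B)$ for $i\neq 0,1$. It also gives exact sequences
\[
0\to H^{-1}(B)\to H^0(A)\to E\to H^0(B)\to H^1(A)\to 0 .
\]
If $H^i(A)\neq 0$ for some $i\ge 2$ or some $i\le -1$, then $\Hom^0(H^i(A),H^{i-1}(B))\neq 0$, contradicting the vanishing with $n=1$. So $A$ has cohomology only in degrees $0,1$, and $B$ only in degrees $-1,0$.

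\textbf{Step 3: ruling out $H^1(A)$ and $H^{-1}(B)$.} This is the main obstacle, since the vanishings give $\Ext^p_{\mca X}(H^1(A),H^0(B))=0$ for all $p$, but this is not immediately contradictory. Suppose $H^1(A)\neq0$. By Lemma \ref{lem:260308} (after splitting $H^1(A)$ into indecomposables), $\Hom^1_{X_0}(H^1(A),H^0(B))$ is a direct summand of $\Hom^1_{\mca X}(H^1(A),H^0(B))$, so it vanishes. By Serre duality on $X_0$ this group is dual to $\Hom_{X_0}(H^0(B),H^1(A)\otimes\omega_{X_0})$.

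Compose the surjection $H^0(B)\twoheadrightarrow H^1(A)$ with multiplication by a section $s\in H^0(\omega_{X_0})$. Since $g>0$, $\omega_{X_0}$ is globally generated (trivial if $g=1$), so we can choose $s\neq0$ not vanishing at the finitely many points of the torsion support of $H^1(A)$. Then $s\colon H^1(A)\to H^1(A)\otimes\omega_{X_0}$ is nonzero, which is a contradiction.

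The symmetric argument applies to the injection $H^{-1}(B)\hookrightarrow H^0(A)$, using $\Hom^1_{X_0}(H^0(A),H^{-1}(B))^\vee\cong\Hom(H^{-1}(B),H^0(A)\otimes\omega_{X_0})$. Hence $A=H^0(A)$ and $B=H^0(B)$ are sheaves on $X_0$. The point needing the most care is verifying that the multiplication by $s$ is nonzero on a possibly non-torsion-free sheaf, which the choice of $s$ handles.
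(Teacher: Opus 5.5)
Your proof is correct. Steps 1 and 2 agree with the paper's argument. The only difference there is that you prove inline the annihilation statement the paper quotes from the author's earlier paper, via uniqueness of the extension of $0=\mu_x|_E$ to a morphism of triangles; this uses exactly $\Hom^0(A,B)=\Hom^{-1}(A,B)=0$.

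The real divergence is in Step 3. The paper splits each cohomology sheaf into a locally free part and a torsion part and argues case by case:
\begin{itemize}
\item the torsion parts are removed using the $\Hom^0$-vanishings of Lemma \ref{lem:vanishing-for-HN} together with support considerations (and, for $H^1(A)$, the non-vanishing of $\Hom^1_{X_0}(T,F)$ for nonzero torsion $T$ and nonzero locally free $F$);
\item the locally free parts are removed by Serre duality and the injection $\Hom_{X_0}(F,G)\hookrightarrow \Hom_{X_0}(F,G\otimes\omega_{X_0})$ coming from a single nonzero section of $\omega_{X_0}$.
\end{itemize}
You instead apply Serre duality to the whole sheaves. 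You then compose the given surjection $H^0(B)\to H^1(A)$ (resp.\ injection $H^{-1}(B)\to H^0(A)$) with multiplication by a section $s$ that acts nontrivially. This uses only the $\Hom^1$-vanishings and avoids the decomposition and the torsion case analysis. The cost is that you ask slightly more of $\omega_{X_0}$ than one nonzero section.

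One small correction to that last point. You neither need, nor can always find over a finite base field, an $s$ that is nonvanishing at \emph{all} points of the torsion support simultaneously: a vector space over $\bb F_q$ of dimension at least $2$ is a finite union of proper subspaces. It suffices that $s\neq 0$, which already handles any locally free summand, and, when the sheaf is purely torsion, that $s$ does not vanish at one point of its support. Alternatively, use $T\otimes\omega_{X_0}\cong T$ for torsion $T$, which removes the need for global generation of $\omega_{X_0}$ altogether.
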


\begin{proof}
By the assumption, 
the multiplication morphism $\mu_{\epsilon}$, 
where $\epsilon \in \mf m$,  of $E$ is zero. 
Then, by \cite[Lemma 2.2]{Kawatani_2023}, 
the multiplications of $A$ and of $B$ are zero. 
Thus 
each cohomology of $A$ and $B$ are $\mca O_{X_0}$ modules. 

Since $\mr{Coh}(X_0)$ is almost hereditary in $\mr{Coh}(\mca X)$ 
by Proposition \ref{prop:almost-hereditary}, 
Lemma \ref{lem:vanishing-for-HN} 
implies the following: 
\begin{equation}
\label{eq:1029-13}	
\Hom_{ \mca X}^p (H^i(A), H^{i-1}(B)) = 0 (\forall p \in \bb Z, i \in \bb Z)
\end{equation}
If $i \in \bb Z -\{ 0,1 \}$, 
then 
$H^i(A) \cong H^{i-1}(B)$. 
Hence we have
\[
\begin{cases}
	H^i(A) = 0 & i \neq 0, 1 \\
	H^j(B) =0 & j \neq 0,-1
\end{cases}. 
\]
Thus we obtain the following 
exact sequence of $\mca O_{X_0}$-module. 
\[
\xymatrix{
0  \ar[r] & H^{-1}(B) \ar[r]&  
H^0(A) \ar[r] & E \ar[r] & H^0(B) \ar[r] & H^{1}(A) \ar[r] & 0 
}
\]

It is enough to show that $H^{-1}(B)=H^0(A)=0$. 
Since the cohomology sheaves of 
$A$ and $B$ are $\mca O_{X_0}$ modules, 
they decompose as 
$H^i(A) = F^i _A \+ T^i_A$ and $H^i(B)=F^i_B \+ T^i_B$ 
where $F_A^i$ and  $F_B^i$  are locally free 
on $X_0$ 
and $T_A^i$ and $T_B^i$ are torsion sheaves on $X_0$. 

By the vanishing $\Hom_{\mca X}(H^0(A), H^{-1}(B))=0$, 
we have $\Hom_{X_0} (T^0_A, T^{-1}_B)=0$. 
Since $T_B^{-1}$ is a subobject of $T_A^0$, 
it must be zero, 
in particular $H^{-1}(B)$ is locally free.

Moreover the vanishing
$\Hom_{\mca X}^1(H^{0}(A), H^{-1}(B))=0$ implies 
$\Hom_{\mca X}^1(F^0_A, F^{-1}_B)=0$. 
Since $\Hom_{X_0}^1
(F^0_A, F^{-1}_B) \subset 
\Hom_{\mca X}^1(F^0_A, F^{-1}_B)
$, 
we see $\Hom_{X_0}^1(F^0_A, F^{-1}_B)=0$. 
By the Serre duality as $\mca O_{X_0}$ module, 
we have 
\begin{equation} \label{eq:1029}
0 = \Hom_{X_0}^1 (F^{0}_A, F^{-1}_B)  \cong 
\Hom_{X_0}^0 (F^{-1}_B, F^0_A \otimes \omega_{X_0})^{\vee}. 
\end{equation}
Since the genus of $X_0$ is non-zero, 
there is an inclusion 
\begin{equation}\label{eq:}
\Hom_{X_0}^0 (F^{-1}_B, F^0_A)
\subset 
\Hom_{X_0}^0 (F^{-1}_B, F^0_A \otimes \omega_{X_0}).  
\end{equation}
Thus (\ref{eq:1029}) implies 
$\Hom_{X_0}^0 (F^{-1}_B, F^0_A) =0$. 
Since $H^{-1}(B)$ is locally free, 
$F_B^{-1}$ is a subobject of $F_A^0$. 
Thus we have $H^{-1}(B)=0$.

We finally show that $H^1(A)=0$. 
Similarly to the case of $H^{-1}(B)$, 
the vanishing 
\[
\Hom_{\mca X}^1 (H^1(A), H^0(B))=0
\]
implies that 
$\Hom_{X_0}^1(F^1_A, F^0_B)=0$. 
Then, by 
Serre duality for $\mca O_{X_0}$ modules, 
we obtain 
\begin{equation}\label{eq:1029-l}
\Hom_{X_0}(F^0_B, F^1_A)=0. 
\end{equation}
If $F^1_A \neq 0$, then clearly we see $H^{1}(A) \neq 0$ and 
$\Hom(H^0(B), H^1(A))\neq 0$. 
This contradicts with (\ref{eq:1029-l}). 
Thus $F_A^1$ is zero and 
$H^1(A)$ is a torsion sheaf on $X_0$.

Moreover the vanishing 
$\Hom_{\mca X}^1(T^1_A, F^0_B)=0$ implies that 
$\Hom_{X_0}^1(T^1_A, F^0_B)=0$. 
Since $F_B^0$ is locally free on $X_0$, 
we obtain 
$T^1_A=0$ or $F^0_B=0$.  

Suppose to the contrary $T_A^1 \neq 0$. 
Then $F_B^0$ is zero, 
and we obtain a surjection 
$T^0_B \to T^1_A$. 
Thus the support of $T_A^1$ is contained in 
that of $T_B^0$. 
By (\ref{eq:1029-13}), we see 
$\Hom_{\mca X}(T^1_A, T^0_B)=0$
and this contradicts with $T_A^1 \neq 0$. 
Thus $T_A^1 = 0$ and, hence $H^1(A) = 0$ as well. 
\end{proof}

\begin{rmk}
Lemma \ref{lem:GKR} is a generalization of 
\cite[Lemma 7.2]{MR2084563}
to the case of the Artin base $\Spec R$. 
\end{rmk}

\begin{prop}\label{prop:macri}
Assume that $g(X_0)>0$. 
Then, 
for any $\sigma \in \Stab{\mb D^b (\mca X)}$, 
the structure sheaf $\mca O_x$ of any closed point $x \in X_0$ and 
any line bundle $L$ on $X_0$ are $\sigma$-stable. 
\end{prop}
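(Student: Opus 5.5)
We follow Macrì's argument for curves of positive genus (cf. Gorodentsev--Kuleshov--Rudakov), with Lemma \ref{lem:GKR} replacing \cite[Lemma 7.2]{MR2084563}. Fix $\sigma=(Z,\mca P)\in\Stab{\mb D^b(\mca X)}$ and let $E$ be either $j_*\mca O_x$ or $j_*L$. In both cases $E$ is a sheaf concentrated in degree $0$ annihilated by $\mf m$, i.e. an $\mca O_{X_0}$-module.

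First we show that $E$ is $\sigma$-semistable. Take the HN filtration of $E$, let $A_1$ be the first HN factor (maximal phase $\phi_1$), and let $B$ be the cone of $A_1\to E$. Then $B$ has HN factors of phases strictly less than $\phi_1$, so $\Hom^{\le 0}(A_1,B)=0$ by (s2). Lemma \ref{lem:GKR} then shows that $A_1$ and $B$ are sheaves on $X_0$ (pushed forward) and that
\[
0\to A_1\to E\to B\to 0
\]
is a short exact sequence of $\mca O_{X_0}$-modules. For $E=j_*\mca O_x$, a simple object of $\mr{Coh}(\mca X)$, either $A_1=0$ or $B=0$, so $E$ has a single HN factor. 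For $E=j_*L$, the subsheaf $A_1$ is a line bundle $L(-D)$ with $D\ge 0$ effective, or zero, and $B$ is the torsion sheaf $\mca O_D$. If $D>0$ we get $\Hom(A_1,B)\neq 0$, since a line bundle maps nontrivially onto a skyscraper quotient. This contradicts $\Hom^{0}(A_1,B)=0$, so again $E$ is semistable.

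Next we upgrade semistability to stability. Let $E$ be semistable of phase $\phi$, and let $S\to E$ be a nonzero morphism from a stable object $S\in\mca P(\phi)$ whose cone $C$ lies in $\mca P(\phi)$. I would rather take a Jordan--Hölder quotient $E\to Q$ with $Q$ stable, using local finiteness. The issue is that $\Hom^{0}(S,C)$ need not vanish, so Lemma \ref{lem:GKR} does not apply directly. The trick is to rotate by a small element of $\widetilde{\mr{GL}}_2^+(\bb R)$, or simply to use the stability of the factors. Alternatively, we can apply Lemma \ref{lem:stable-objects}, since $\mr{Coh}(X_0)$ is almost hereditary by Proposition \ref{prop:almost-hereditary}: every $\sigma$-stable object is, up to shift, an indecomposable $\mca O_{X_0}$-sheaf. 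So the Jordan--Hölder factors of $E$ are shifts of pushforwards of sheaves on $X_0$. Since they all have phase $\phi$ and are extended to form the sheaf $E$, a standard cohomology-sheaf argument shows the shifts coincide. In practice, because $\Hom$ between distinct factors in degree $<0$ vanishes, we can compute $H^*$ and find that the factors are sheaves in a common degree, which we normalize to degree $0$.

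Finally, a filtration of $E$ in $\mca P(\phi)$ by sheaves yields a filtration of $E$ by subsheaves in $\mr{Coh}(X_0)$, at least after checking exactness. For $j_*\mca O_x$, simplicity forces a single factor. For $j_*L$, every factor would be a stable sheaf, either a line bundle $L(-D)$ or a torsion sheaf. Using $\Hom(E,Q)\neq0$ and $\Hom(S,E)\neq0$ with $S,Q$ stable of the same phase, we derive $\Hom(L(-D),\mca O_D)\neq 0$ with both objects stable of equal phase. This forces an isomorphism, which is impossible, so $D=0$ and $E$ is stable.

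The main obstacle is the middle step. The exactness and degree-concentration of the Jordan--Hölder filtration inside $\mca P(\phi)$ is where Lemma \ref{lem:GKR} does not directly apply. I would first try to handle it by perturbing $\sigma$ slightly so that the stable factor $S$ becomes the first HN factor of $E$ for a nearby stability condition. Stability of $E$ is an open condition and semistability a closed condition on the wall-and-chamber structure, and the first part of the argument applies to every $\sigma$, so $E$ is semistable everywhere. If $E$ were strictly semistable at $\sigma$, then in a nearby chamber $E$ would be destabilized, contradicting semistability everywhere. This wall-crossing argument, valid because semistability holds for all of $\Stab{\mb D^b(\mca X)}$, is what I expect to carry the proof.
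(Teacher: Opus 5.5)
\textbf{There is a genuine gap in the stability step.} Your semistability step is the paper's argument: apply Lemma \ref{lem:GKR} to $A_1\to E\to B$, where $A_1$ is the first HN factor and $\Hom^{\le 0}(A_1,B)=0$ by (s2). Then use simplicity of $j_*\mca O_x$, resp.\ $\Hom_{X_0}(L(-D),\mca O_D)\neq 0$ for $D>0$.

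Upgrading to stability is not proved.
\begin{itemize}
\item Your ``standard cohomology-sheaf argument'' is supposed to put all Jordan--H\"older factors of $E$ in a common degree. It is never supplied, and it cannot work without using $g(X_0)>0$. On $\bb P^1_R$ with $\phi_k=\phi_{k-1}+1$, $j_*\mca O_x$ is strictly semistable with factors $j_*\mca O(k)$ and $j_*\mca O(k-1)[1]$ in different degrees (Lemma \ref{lem:1128TNJ}). Knowing from Lemma \ref{lem:stable-objects} that each factor is a shifted $\mca O_{X_0}$-sheaf does not help.
\item Nothing in the paper gives openness of stability or a locally finite wall structure for arbitrary locally finite stability conditions. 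These usually come from a support property, which is not assumed, and $K_0(\mca X)\cong K_0(X_0)$ contains $\mr{Pic}(X_0)$.
\item Bridgeland's deformation theorem only moves $Z$ inside the subspace of central charges of finite $\sigma$-norm. You have not shown that this subspace contains a direction separating $Z(S)$ from $Z(E)$.
\item If all Jordan--H\"older factors of $E$ are isomorphic to one $S$, then $[E]$ is a multiple of $[S]$. No deformation can separate their phases, so the wall-crossing route fails outright there.
\end{itemize}

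\textbf{The missing idea} is to choose the decomposition inside $\mca P(\phi)$ so that $\Hom^0$ also vanishes, letting Lemma \ref{lem:GKR} apply a second time. The paper's proof simply asserts a sequence $0\to A\to L\to B\to 0$ with $\Hom^{\le 0}(A,B)=0$ and reruns the semistability argument, without spelling out the choice. Here is one way to make it.
\begin{itemize}
\item Since $\sigma$ is locally finite, $\mca P(\phi)$ has finite length. Fix a simple subobject $S\subset E$ in $\mca P(\phi)$.
\item Let $A\subset E$ be the largest subobject in $\mca P(\phi)$ all of whose Jordan--H\"older factors are isomorphic to $S$; it exists because such subobjects are closed under sums. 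Put $B=E/A$.
\item By maximality, $B$ contains no copy of $S$. Hence $\Hom(A,B)=0$, since the image of a nonzero map would contain one. Also $\Hom^{<0}(A,B)=0$ because $A,B\in\mca P(\phi)$.
\item If $B\neq 0$, Lemma \ref{lem:GKR} makes $0\to A\to E\to B\to 0$ an exact sequence of $\mca O_{X_0}$-modules. Your first-step argument then gives a contradiction.
\item If $B=0$, then $[E]=n[S]$, with $S\cong j_*F[k]$ for an indecomposable sheaf $F$ on $X_0$ by Lemma \ref{lem:stable-objects}.
\begin{itemize}
\item For $E=j_*L$, comparing ranks gives $n=1$.
\item For $E=j_*\mca O_x$, the sheaf $F$ is torsion, and $\Hom(S,E)\neq 0$ forces $\Supp F=\{x\}$. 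Comparing degrees then gives $n=1$.
\end{itemize}
So $E\cong S$ is stable.
\end{itemize}
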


\begin{proof}
Lemma \ref{lem:GKR} directly 
implies the semistability of $\mca O_x$ and of $L$.

If $L$ is not stable, 
we obtain the short exact sequence 
as $\mca O_{X_0}$-modules
\[
\xymatrix{
	0 \ar[r] & A \ar[r] & L  \ar[r] & B \ar[r] & 0, 
}
\]
with 
\begin{equation}\label{eq:1029samui}
\Hom_{\mca X}^{\leq 0}(A, B). 
\end{equation}
Applying Lemma \ref{lem:GKR}, 
we see that  $A$ is a line bundle on $X_0$ and 
$B$ is a torsion sheaf on $X_0$. 
The vanishing (\ref{eq:1029samui}) implies 
$A =0$ or $B=0$ which gives the proof. 
The proof for the stability of $\mca O_{x}$ is similar. 
\end{proof}

\begin{lem}\label{lem:injective}
Assume that $g(X_0)>0$. 
	Then the natural map 
	 $j_* {}^{-1} \colon \Dom{j_*} \to \Stab{\mb D^b (X_0)}$
	 is injective. 
\end{lem}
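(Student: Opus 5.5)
Since Proposition \ref{prop:injectivity} already proves injectivity of $j_*^{-1}$ on $\Dom{j_*}$ in the abstract setting, the plan is to verify its hypotheses for $\mca A = \mr{Coh}(\mca X)$ and $\mca A_0 = \mr{Coh}(X_0)$, viewed as $R$-linear abelian categories. Three things are needed. First, $\mr{Coh}(X_0)$ is almost hereditary in $\mr{Coh}(\mca X)$; this is Proposition \ref{prop:almost-hereditary}. Second, $\mr{Coh}(X_0)$ is maximally annihilated by $\mf m$. Third, $j_* \colon \mb D^b(X_0)\to \mb D^b(\mca X)$ satisfies (Ind).

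For maximal annihilation, note that a coherent $\mca O_{\mca X}$-module on which every $\mu_x$ with $x \in \mf m$ vanishes is a module over $\mca O_{\mca X}/\mf m\mca O_{\mca X} = \mca O_{X_0}$. Here we use flatness and the isomorphism $\mca X\times_{\Spec R}\Spec \mb k\cong X_0$. Hence $\mr{Coh}(\mca X)_{\mf m}$ is exactly the essential image of $\mr{Coh}(X_0)$, and this category equals $\mca A_{\mf m}$. This point also makes Lemma \ref{lem:stable-objects} available: every $\sigma$-stable object lies in $\Im j_*$ up to shift. That fact is the key input in the proof of Proposition \ref{prop:injectivity}.

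For (Ind), take $a,b\in\mb D^b(X_0)$. Since $X_0$ is a smooth curve, both split as finite direct sums of shifted sheaves, and we may further split these into indecomposable sheaves. Both $\Hom$ and $j_*$ are additive, so it suffices to treat $a = E_0[s]$ and $b = F_0[t]$ with $E_0$ an indecomposable sheaf and $F_0$ a sheaf. Lemma \ref{lem:260308} then gives
\[
\Hom^{t-s}_{\mca X}(j_*E_0, j_*F_0) \cong \Hom^0_{X_0}(E_0,F_0)^{\+ d_{t-s}} \+ \Hom^1_{X_0}(E_0,F_0)^{\+ d_{t-s-1}} .
\]
We have $d_0 = 1$, and $d_1 = \dim \mf m/\mf m^2$, which is at least $1$ unless $R = \mb k$; the case $R = \mb k$ is trivial. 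So if the left side vanishes, then:
\begin{itemize}
\item for $t-s = 0$, we get $\Hom^0_{X_0}(E_0,F_0) = 0$;
\item for $t-s = 1$, we get $\Hom^1_{X_0}(E_0,F_0) = 0$;
\item for any other $t-s$, $\Hom^{t-s}_{X_0}(E_0,F_0) = 0$ automatically, because $X_0$ has homological dimension one.
\end{itemize}
In every case $\Hom_{X_0}(a,b) = 0$. Together with the faithfulness of $j_*$ noted in the Remark above, this establishes (Ind).

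Once the hypotheses are checked, Proposition \ref{prop:injectivity} applies directly. The main subtlety is that its proof uses the fact that stable objects generate $K_0$ and determine the slicing, and this uses the local finiteness built into $\Stab$. Note that the assumption $g(X_0)>0$ is not needed for injectivity itself. Proposition \ref{prop:macri} (all $\mca O_x$ and line bundles are stable) could alternatively be combined with Proposition \ref{prop:key-1} to show that every $\sigma$ is a $\widetilde{\mr{GL}}_2^+(\bb R)$-translate of $\sigma_{\mr{st}}$. Injectivity would then follow from the $\widetilde{\mr{GL}}_2^+(\bb R)$-equivariance of $j_*^{-1}$ and the freeness of that action on $\Stab{\mb D^b(X_0)}$. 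I would present the first route and mention the second as a check.
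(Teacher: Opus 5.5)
Your proposal is correct and matches the paper's proof, which simply observes that $j_*$ satisfies (Ind) and then invokes Proposition~\ref{prop:injectivity}; you additionally spell out the hypotheses the paper leaves implicit, namely Proposition~\ref{prop:almost-hereditary} and $\mr{Coh}(X_0)=\mr{Coh}(\mca X)_{\mf m}$. Your remark that $g(X_0)>0$ plays no role is consistent with the paper's later reuse of this lemma for $\bb P^1$ in Theorem~\ref{thm:main2}, and (Ind) already follows from faithfulness of $j_*$ alone, so your check via Lemma~\ref{lem:260308} (and the aside about $d_1$) is optional.
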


\begin{proof}
Since the functor 
\[
j_* \colon \mb D^b(X_0) \to \mb D^b(\mca X)
\] 
satisfies the condition (Ind), 
	Proposition \ref{prop:injectivity}, 
implies the desired assertion. 
\end{proof}

\begin{thm}\label{thm:main1}
Assume that $g(X_0)>0$. 
There exists a natural isomorphism 
\[
\Stab{\mb D^b(\mca X)} \cong \Stab{\mb D^b(X_0)}. 
\]
In particular, 
we have $\Stab{\mb D^b(\mca X)} \cong \widetilde{\mr{GL}} ^+_2(\bb R)$. 
\end{thm}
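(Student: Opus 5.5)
The plan is to show three things in turn: $\Dom{j_*}=\Stab{\mb D^b(\mca X)}$, then that $j_*^{-1}$ is a bijection onto $\Stab{\mb D^b(X_0)}$, and finally to identify both sides with a single $\widetilde{\mr{GL}}^+_2(\bb R)$-orbit.

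First, let $\sigma\in\Stab{\mb D^b(\mca X)}$ be arbitrary. By Proposition \ref{prop:macri}, every $j_*\mca O_x$ and every $j_*L$ with $L$ a line bundle on $X_0$ is $\sigma$-stable. Proposition \ref{prop:key-1} then puts $\sigma$ in the orbit $\widetilde{\mr{GL}}^+_2(\bb R)\cdot\sigma_{\mr{st}}$, so
\[
\Stab{\mb D^b(\mca X)}=\sigma_{\mr{st}}\cdot\widetilde{\mr{GL}}^+_2(\bb R).
\]
Lemma \ref{lem:non-emp-of-Dom} gives $\sigma_{\mr{st}}\in\Dom{j_*}$. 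The condition defining $\Dom{j_*}$ (existence of HN filtrations for objects of $\Im j_*$) is preserved by the group action, since the action only relabels phases and rotates central charges. Hence $\Dom{j_*}$ is the whole space.

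Second, consider the map $j_*^{-1}\colon\Stab{\mb D^b(\mca X)}\to\Stab{\mb D^b(X_0)}$. It is continuous by Lemma \ref{lm:MMS}, $\widetilde{\mr{GL}}^+_2(\bb R)$-equivariant by the Remark following it, and injective by Lemma \ref{lem:injective}. For surjectivity, use Macr\'i's theorem for $g\ge1$: the space $\Stab{\mb D^b(X_0)}$ is a single free $\widetilde{\mr{GL}}^+_2(\bb R)$-orbit of the standard stability condition $(-\deg+\sqrt{-1}\rank,\mr{Coh}(X_0))$. The induced condition $j_*^{-1}\sigma_{\mr{st}}$ has central charge $Z_{\mr{st}}\circ j_*=-\deg+\sqrt{-1}\rank$, and its slicing is $\mca P_{\mr{st}}(\phi)\cap\Im j_*$. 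Since $\mr{Coh}(X_0)$ is closed under subobjects and quotients in $\mr{Coh}(\mca X)$, a sheaf on $X_0$ is $\mu$-semistable on $X_0$ if and only if it is semistable in $\mr{Coh}(\mca X)$. So $j_*^{-1}\sigma_{\mr{st}}$ is exactly the standard stability condition on $X_0$. Equivariance then makes $j_*^{-1}$ surjective.

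Third, the action on $\Stab{\mb D^b(\mca X)}$ is free: if $\sigma_{\mr{st}}\cdot g=\sigma_{\mr{st}}$, apply $j_*^{-1}$ and use freeness downstairs to get $g=1$. Thus both spaces are identified equivariantly with $\widetilde{\mr{GL}}^+_2(\bb R)$. The map $j_*^{-1}$ is a continuous equivariant bijection between free transitive orbits, so it corresponds to right translation and is a homeomorphism. It is also biholomorphic, because central charges correspond via the isomorphism of numerical $K$-groups, $K_0(\mca X)\cong K_0(X_0)$ by Quillen, and $Z\mapsto Z\circ j_*$ is linear.

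The main obstacle is the first step. One must check that $\Dom{j_*}$ is stable under the group action and that Proposition \ref{prop:key-1} applies to an arbitrary $\sigma$. The second point is subtle in that the normalisation in that proof uses the given stable objects, which are supplied by Proposition \ref{prop:macri}. It also matters that injectivity in Lemma \ref{lem:injective} was stated only on $\Dom{j_*}$, which is why $\Dom{j_*}$ has to be shown to be the whole space first.
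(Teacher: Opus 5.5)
Your proposal is correct and follows essentially the same route as the paper. Propositions \ref{prop:macri} and \ref{prop:key-1} put every $\sigma$ in the orbit $\widetilde{\mr{GL}}^+_2(\bb R)\cdot\sigma_{\mr{st}}$. Lemma \ref{lem:non-emp-of-Dom} then gives $\Dom{j_*}=\Stab{\mb D^b(\mca X)}$, injectivity comes from Lemma \ref{lem:injective} (that is, Proposition \ref{prop:injectivity}), and surjectivity follows because $j_*^{-1}\sigma_{\mr{st}}$ is the standard stability condition, combined with Macr\'i's theorem.

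You also make explicit several points the paper leaves implicit: the $\widetilde{\mr{GL}}^+_2(\bb R)$-invariance of $\Dom{j_*}$, the use of equivariance to get surjectivity and freeness, and why the continuous bijection is a biholomorphism, where the paper only appeals to both sides being complex manifolds.
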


\begin{proof}
Let $j \colon X_0 \to X$ denote the closed embedding. 
	By Proposition \ref{prop:macri}, 
any stability condition on $\mb D^b(\mca X)$ satisfies 
the assumption of Proposition \ref{prop:key-1}. 
Thus the space $\Stab{\mb D^b(\mca X)}$ is contained in 
the orbit $\widetilde{\mr{GL}}_2^+(\bb R)\cdot \sigma_{\mr{st}}$. 
Hence $\Stab{\mb D^b(\mca X)} = \widetilde{\mr{GL}}_2^+(\bb R)\cdot \sigma_{\mr{st}}$ 
and $\Stab{\mb D^b(\mca X)} $ is connected.

Since $\sigma_{\mr{st}}$ is in $\Dom{j_*}$, 
we have 
$\Stab{\mb D^b (\mca X)} = \Dom{j_*}$. 
Thus we obtain a map of spaces 
\[
j_* ^{-1} \colon \Stab{\mb D^b(\mca X)} \to \Stab{\mb D^b (X_0)}. 
\]
Since $j_*^{-1} \sigma_{\mr{st}}$ gives a standard stability 
condition on $\mb D^b (X_0)$, 
the above map is surjective and is injective by Proposition \ref{prop:injectivity}. 
Thus the map is an isomorphism since both are complex manifolds. 
The last assertion follows from 
\cite[Theorem 2.7]{MR2335991}. 
\end{proof}

\section{The case of genus zero}

We consider the case of genus $0$. 
Since the projective line over a field is rigid, 
any deformation is trivial. 
Thus, we denote by $\bb P^1$ or by $\bb P^1_{\mb k}$ the 
projective line over the field $\mb k$, 
and by $\bb P^1 _{R}$ the projective line 
over $(R, \mf m)$.  
The closed immersion $\bb P^1 \hookrightarrow \bb P^1_R$ is denoted by 
$j \colon \bb P^1 \to \bb P^1 _{R}$. 
To study $\Stab{\mb D^b(\bb P^1_R)}$, 
we generalize the work of
Okada~\cite{MR2219846} and of Gorodentsev, Kuleshov, and Rudakov~\cite{MR2084563}.

We note that there is the following exact triangles in $\mb D^b(\bb P^1_R)$: 
\begin{align}
\label{eq:1123-1}
& \xymatrix{
j_* \mca O(k) \ar[r] & j_* \mca O_x \ar[r] & j_*\mca O(k-1)[1] 
}  (x \in \bb P^1)
 \\
\label{eq:1123-2}
& 
\xymatrix{
j_*\mca O(k)^{\+ n-k+1} \ar[r] & j_* \mca O(n) \ar[r] & j_* \mca O(k-1)[1]^{\+ n-k}
} (k<n) \\
\label{eq:1123-3}
& 
\xymatrix{
j_* \mca O(k)^{\+ k-n-1}[-1] \ar[r] & j_* \mca O(n) \ar[r] & j_* \mca O(k-1) ^{\+ k-n} 
} (n <k-1)
\end{align}
If $R=\mb k$, then the above sequence are the 
same as that of \cite{MR2084563}.

\begin{lem}\label{lem:0911JR}
Assume that non-zero coherent sheaves 
$E$ and $F$ on $\bb P^1$ satisfy	
\[
	\bb R\Hom_{\bb P^1_R} (j_*  E, j_* F) =0 \text{ and }
\Hom_{\bb P^1_R}^0 (j_*F, j_* E) \neq 0	. 
	\]
Then  the following holds 
	\[
	(E, F)= (\mca O_{\bb P ^1}(k)^{\+ m}, \mca O_{\bb P^1}(k-1)^{\+ n })  (n,m \in \bb N)
	 \]
\end{lem}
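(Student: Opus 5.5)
The first step is to translate the hypothesis on $\bb P^1_R$ into a statement on $\bb P^1$. Lemma \ref{lem:260308} gives this translation directly: for $E_0$ indecomposable,
\[
\Hom^p_{\bb P^1_R}(j_*E_0,j_*F_0)\cong \Hom^0_{\bb P^1}(E_0,F_0)^{\+ d_p}\+\Hom^1_{\bb P^1}(E_0,F_0)^{\+ d_{p-1}} .
\]
Both sides are additive in $E_0$, so the formula holds for arbitrary coherent $E_0$. We have $d_0=1$, and also $d_1=0$ when $R=\mb k$ (in that case we are already on $\bb P^1$). Taking $p=0$ and $p=1$ in the vanishing $\bb R\Hom_{\bb P^1_R}(j_*E,j_*F)=0$ then gives
\[
\Hom_{\bb P^1}(E,F)=0,\qquad \Ext^1_{\bb P^1}(E,F)=0 .
\]
Conversely, these two vanishings imply the full vanishing. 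On the other side, the $p=0$ case with the roles of $E$ and $F$ exchanged gives $\Hom_{\bb P^1_R}(j_*F,j_*E)=\Hom_{\bb P^1}(F,E)$, so this group is nonzero. So everything reduces to the classical statement on $\bb P^1$: if $\bb R\Hom_{\bb P^1}(E,F)=0$ and $\Hom_{\bb P^1}(F,E)\neq 0$, then $(E,F)=(\mca O(k)^{\+ m},\mca O(k-1)^{\+ n})$. This is essentially the exceptional-pair analysis of \cite{MR2084563}.

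To prove the statement on $\bb P^1$, I decompose $E=\bigoplus \mca O(a_i)\+T_E$ and $F=\bigoplus\mca O(b_j)\+T_F$ with $T_E,T_F$ torsion. The vanishing $\bb R\Hom(E,F)=0$ holds summand by summand. First I rule out torsion.
\begin{itemize}
\item If $T_E\neq0$, then $\Ext^1(T_E,\mca O(b))\neq0$ for every line bundle, and $\Hom(T_E,T')$ or $\Ext^1(T_E,T')$ is nonzero whenever the supports meet. So $F$ must be torsion with support disjoint from that of $T_E$. Then every summand of $E$ must be torsion: a line bundle summand $\mca O(a)$ of $E$ would give $\Hom(\mca O(a),F)\neq0$. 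But torsion sheaves with disjoint supports have $\Hom(F,E)=0$, a contradiction.
\item If $T_F\neq0$, then $\Hom(\mca O(a),T_F)\neq0$ and $\Hom(T',T_F)\neq 0$ for torsion $T'$ with support meeting that of $T_F$. So $E$ would have to be torsion with disjoint support, and we are back in the previous case.
\end{itemize}

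Now $E$ and $F$ are both sums of line bundles. Since $\bb R\Hom(\mca O(a),\mca O(b))=H^*(\mca O(b-a))$, which vanishes exactly when $b-a=-1$, every pair $(a_i,b_j)$ satisfies $b_j=a_i-1$. Hence all $a_i$ are equal to a single integer $k$, and all $b_j=k-1$. In this case $\Hom(F,E)=H^0(\mca O(1))^{\oplus}\neq0$ is automatic. The nonemptiness of $E$ and $F$ gives $m,n\ge1$.

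The main obstacle is the first step: making sure Lemma \ref{lem:260308} applies to non-indecomposable sheaves, which is fine by additivity, and correctly extracting both $\Hom$ and $\Ext^1$ from the vanishing. One subtlety is the indexing. When $d_1=0$, the group $\Ext^1$ still appears at $p=2$ if $d_1=0$ and $R\neq\mb k$ is impossible; so I will simply use that some $p$ with $d_{p-1}\neq0$ exists, namely $p=1$ since $d_0=1$. Thus $\Ext^1_{\bb P^1}(E,F)\subset\Hom^1_{\bb P^1_R}$, and this inclusion is all that is needed.
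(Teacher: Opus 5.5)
Your proposal is correct and follows essentially the same route as the paper. Both arguments reduce the hypothesis to $\Hom_{\bb P^1}(E,F)=\Hom^1_{\bb P^1}(E,F)=0$, exclude torsion and mixed summands (a torsion pair with disjoint supports is ruled out by $\Hom(F,E)\neq 0$), and conclude from $H^*(\bb P^1,\mca O(k))=0 \iff k=-1$; your case analysis spells out the step the paper states in one line. The muddled indexing remark in your last paragraph can be deleted: $p=1$ with $d_0=1$ already shows $\Hom^1_{\bb P^1}(E,F)$ is a summand of $\Hom^1_{\bb P^1_R}(j_*E,j_*F)$.
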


\begin{proof}
By the assumption, we have 
\begin{equation}\label{eq:1029-JR}
\Hom_{\bb P^1}^{0}(E, F) = \Hom_{\bb P^1}^1(E, F )=0
\end{equation}
Any sheaf on $\bb P^1$ decomposes 
into a direct sum of torsion sheaves and 
invertible shaef on $\bb P^1$. 
By the vanishing (\ref{eq:1029-JR}), 
$E$ and $F$ are both torsion sheaves on $\bb P^1$ or 
are the direct sums of invertible sheaves on $\bb P^1$. 

If they are torsion sheaves satisfying (\ref{eq:1029-JR}), 
the supports does not intersect each other. 
Hence $\Hom _{\bb P^1_R}(j_* F, j_*E)$ must be zero and 
this contradicts the assumption. 

Thus suppose that $E$ and $F$ are 
direct sums of invertible sheaves on $\bb P^1$. 
Since the vanishing of cohomologies 
\[
H^0 (\bb P^1, \mca O(k))=H^1(\bb P^1, \mca O(k))=0 \iff k=-1
\]
holds, the pair $(E, F)$ must be 
$
(\mca O_{\bb P ^1}(k)^{\+ m}, \mca O_{\bb P^1}(k-1)^{\+ n }) 
$. 
\end{proof}

\begin{lem}\label{lem:0917knym}
Assume that an exact triangle in $\mb D^b(\bb P^1_R)$ 
\[
\xymatrix{
	A \ar[r] & E \ar[r] & B \ar[r] & A[1], 
}
\] 
of non-zero objects 
satisfies 
$\Hom^{\leq 0}(A, B)=0$.

\begin{enumerate}
	\item Let $T_x$ be a torsion shaves on $\bb P^1$ supported on 
	a closed point $x \in \bb P^1$. 
If $E = j_* T_x$, then 
there exist $k \in \bb Z$ and $r \in \bb N $  such that 
	\[
	(A, B) \cong 
	(\mca O(k)^{\+ r}, \mca O(k-1)^{\+ r}[1])	. 
	\]
	\item Let $L$ be an invertible sheaf on $\bb P^1$. 
	If $E= j_ * L$, then 
there exist $k \in \bb Z$ and $r \in \bb N $ such that 
	\[
	(A, B) \cong (\mca O(k)^{\+ r}, \mca O(k-1)^{\+ r-1}[1])	
	\text{ or }
	(A, B) \cong (\mca O(k)^{\+ r}[-1], \mca O(k-1)^{\+ r})	
	\]
\end{enumerate}
\end{lem}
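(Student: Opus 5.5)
Every object appearing in the conclusion is meant as a pushforward $j_*(-)$; for instance $\mca O(k)$ stands for $j_*\mca O(k)$.

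\emph{Step 1 (reduction to $\mb P^1$).} We follow the pattern of Lemma \ref{lem:GKR}. Since $E=j_*T_x$ or $j_*L$ is an $\mca O_{\bb P^1}$-module, every $\epsilon\in\mf m$ acts by zero on $E$. The hypothesis $\Hom^{\leq 0}(A,B)=0$ together with \cite[Lemma 2.2]{Kawatani_2023} shows that $\epsilon$ also acts by zero on $A$ and $B$. Hence all cohomology sheaves of $A$ and $B$ lie in $\mr{Coh}(\bb P^1)$, so $A,B\in\mb D^b(\bb P^1_R)_{\mr{Coh}(\bb P^1)}$.

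\emph{Step 2 (cohomology of $A$ and $B$).} By Proposition \ref{prop:almost-hereditary}, $\mr{Coh}(\bb P^1)$ is almost hereditary in $\mr{Coh}(\bb P^1_R)$. We apply Lemma \ref{lem:vanishing-for-HN} to the pair $(A,B[-1])$, which is legitimate because $\Hom^n(A,B[-1])=0$ for $n\leq 1$. This gives
\[
\Hom^p(H^i(A),H^{i-1}(B))=0 \quad\text{for all } p,i.
\]
The long exact cohomology sequence of the triangle gives $H^i(A)\cong H^{i-1}(B)$ for $i\neq 0,1$. These are therefore objects with vanishing self-Ext, hence zero. We are left with the exact sequence
\[
0\to H^{-1}(B)\to H^0(A)\to E\to H^0(B)\to H^1(A)\to 0 ,
\]
together with the vanishing $\bb R\Hom_{\bb P^1_R}(j_*H^0(A),j_*H^{-1}(B))=0$ and $\bb R\Hom_{\bb P^1_R}(j_*H^1(A),j_*H^0(B))=0$.

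We now read off the constraints. When $H^{-1}(B)\neq 0$, the injection $H^{-1}(B)\hookrightarrow H^0(A)$ is a nonzero morphism from $H^{-1}(B)$ to $H^0(A)$. Lemma \ref{lem:0911JR} then forces
\[
(H^0(A),H^{-1}(B))=(\mca O(k)^{\+ m},\mca O(k-1)^{\+ n}).
\]
Likewise, when $H^0(B)\to H^1(A)$ is a nonzero surjection, Lemma \ref{lem:0911JR} forces $(H^1(A),H^0(B))=(\mca O(k')^{\+ m'},\mca O(k'-1)^{\+ n'})$. In the remaining cases one of the two groups vanishes and the sequence simplifies.

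\emph{Step 3 (case analysis).} For $E=j_*T_x$: if $H^0(A)\to E$ were zero, then $\Hom^0(A,B)$ would receive a nonzero map. To see this, the composite $A\to H^0(A)\to H^{-1}(B)[1]\to B[1]$ is a degree-one class, and a genuinely nonzero degree-zero map can be built from $H^1(A)[-1]$. This is awkward, so instead we use ranks and torsion. The image of $H^0(A)$ in $T_x$ is torsion, while $H^0(A)$ is either zero, torsion, or a sum of line bundles. Ruling out the torsion possibilities via the $\bb R\Hom$-vanishing (which forces disjoint supports, impossible inside $T_x$), we get $H^0(A)=\mca O(k)^{\+ m}$ and $H^{-1}(B)=\mca O(k-1)^{\+ n}$ with $H^0(A)/H^{-1}(B)\subset T_x$. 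Comparing ranks gives $m=n=:r$. Next we show $H^0(B)=H^1(A)=0$. A nonzero surjection $H^0(B)\to H^1(A)$ would give line bundles $\mca O(k')$ surjecting onto the cokernel of the torsion image, and a degree and rank count contradicts the fact that $E$ has rank $0$. If $H^1(A)\neq 0$ but $H^0(B)=0$, then $H^1(A)=0$ from exactness. It remains to check that $A$ and $B$ are formal: $A$ is concentrated in one degree, and $B=H^{-1}(B)[1]$. This yields $(A,B)\cong(\mca O(k)^{\+ r},\mca O(k-1)^{\+ r}[1])$.

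For $E=j_*L$, ranks give $\rank H^0(A)-\rank H^{-1}(B)+\rank H^1(A)-\rank H^0(B)=-1$ up to sign conventions. Exactly one of the two pieces survives. One outcome is $H^0(A)=\mca O(k)^{\+ r}$ and $H^{-1}(B)=\mca O(k-1)^{\+ r-1}$, with $E$ the cokernel; this matches triangle (\ref{eq:1123-2}). The other is $H^1(A)=\mca O(k)^{\+ r}$ with the kernel $\mca O(k-1)^{\+ r}\to$ giving $L$ as in (\ref{eq:1123-3}). Mixed cases are excluded, since otherwise $A\to E$ and $E\to B$ would both be nonzero with a nonzero composite $A\to B$.

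\emph{Main obstacle.} The main obstacle is Step 3: excluding the mixed configurations and the torsion summands. I would try first to exploit $\Hom^{0}(A,B)=0$ directly on the truncations, as in the last paragraph of the proof of Lemma \ref{lem:GKR}.
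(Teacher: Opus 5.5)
Your Steps 1 and 2 follow the paper's proof: annihilation by $\mf m$, the five-term sequence, and Lemma~\ref{lem:0911JR} on the pairs $(H^0(A),H^{-1}(B))$ and $(H^1(A),H^0(B))$. There is, however, a slip that turns out to matter. Applying Lemma~\ref{lem:vanishing-for-HN} to $(A,B[-1])$ gives only $\Hom^p(H^i(A),H^{i-q}(B))=0$ for $q\geq 2$ and $\Hom^0(H^i(A),H^{i-1}(B))=0$. It does not give the full vanishing $\mathbb R\Hom(H^i(A),H^{i-1}(B))=0$ that you state. The hypothesis $\Hom^{\leq 0}(A,B)=0$ is exactly the hypothesis of that lemma for the pair $(A,B)$ itself. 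That application yields $\mathbb R\Hom(H^i(A),H^{i-1}(B))=0$ and, from part (2) of the lemma, also $\Hom^0(H^i(A),H^i(B))=0$. You never obtain this degree-zero vanishing, and it is precisely what Step~3 is missing.

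Without it, Step 3 has a genuine gap. Rank and degree counts cannot exclude the bad configurations, because these are numerically consistent and satisfy every constraint you derived. In (1), take $0\to\mca O(k-1)\to\mca O(k)\to T_x\to T_x/\mca O_x\to 0$ with $H^1(A)=0$ and $H^0(B)=T_x/\mca O_x\neq 0$, for $T_x\neq\mca O_x$. In (2), take $0\to L(-D)\to L\to L|_D\to 0$ with $H^{-1}(B)=H^1(A)=0$. Accordingly, the following steps are unproved or wrong:
\begin{itemize}
\item In (1) you prove $H^1(A)=0$ but never prove $H^0(B)=0$.
\item Your ``disjoint supports'' exclusion of $H^{-1}(B)=0$ needs a vanishing between $H^0(A)$ and $H^0(B)$, which is not among the $\mathbb R\Hom(H^i(A),H^{i-1}(B))$ you derived.
\item In (2), ``exactly one of the two pieces survives'' is only asserted.
\item Your reason for excluding mixed cases is false: in a distinguished triangle $A\to E\to B\to A[1]$ the composite $A\to E\to B$ is always zero.
\end{itemize}

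The paper closes these cases as follows. In (1), if $H^1(A)\neq 0$ then Lemma~\ref{lem:0911JR} makes $H^0(B)$ locally free. Hence $T_x\to H^0(B)$ is zero and $H^0(B)\cong H^1(A)$, contradicting $\mathbb R\Hom(H^1(A),H^0(B))=0$. Then $\Hom(H^0(A),H^0(B))=0$ excludes $H^{-1}(B)=0$, since both would be nonzero torsion at $x$. The same vanishing kills the torsion sheaf $H^0(B)$ once $H^0(A)=\mca O(k)^{\+ r}\neq 0$. In (2), if $H^1(A)\neq 0$ then $L\to H^0(B)$ is nonzero, hence injective. So $H^0(A)\cong H^{-1}(B)$, and both vanish. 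If $H^1(A)=0$, the degree-zero vanishing excludes $H^{-1}(B)=0$ and forces the torsion quotient $H^0(B)$ of $L$ to be zero. (In the second alternative of (2), the rank count actually gives $\mca O(k-1)^{\+ r+1}$, not $\mca O(k-1)^{\+ r}$; this is a typo in the statement that you inherited.)
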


\begin{proof}
By Proposition \ref{prop:almost-hereditary}, 
$\mr{Coh}(\bb P^1)$ is almost hereditary in $\mr{Coh}(\bb P^1_R)$. 
In both cases, 
$E$ is an $\mca O_{\bb P^1}$ module. 
Hence similarly to the proof of 
Lemma \ref{lem:GKR}, 
each cohomologies of $A$ and $B$ are 
$\mca O_{\bb P^1}$-modules. 
As in the proof of Lemma \ref{lem:GKR}, 
we obtain the exact sequence of $\mca O_{\bb P^1}$-modules: 
\[
\xymatrix{
0 \ar[r] & H^{-1}(B) \ar[r] &  H^0(A) \ar[r] 
		& E \ar[r] & H^0(B) \ar[r] & H^1(A) \ar[r]  & 0. 
} 
\]
By Lemma \ref{lem:vanishing-for-HN}, 
we have 
\begin{align}
\label{eq:1029-18}
& \bb R\Hom_{  \bb P^1_R } (H^i(A), H^{i-1}(B)) = 0  \quad (i \in \bb Z), \\
& \Hom_{\bb P^1_R}^0( H^i(A), H^i(B)  ) =0 \quad (i \in \bb Z).   \label{eq:1029-19}
\end{align}

We show the first assertion. 
If $H^1(A) \neq 0$, 
$H^0(B)$ does not vanish. 
Applying Lemma \ref{lem:0911JR} to 
the pair $(H^1(A), H^0(B))$, 
they must be the pair of locally free sheaves on $\bb P^1$. 
Since $E$ is torsion on $\bb P^1$, 
they must be isomorphic and this contradicts with (\ref{eq:1029-18}). 
Hence $H^1(A)$ must be zero.

Next we show that 
$H^{-1}(B) \neq 0$. 
Suppose to the contrary that 
$H^{-1}(B)=0$, then the following 
sequence is exact as $\mca O_{\bb P^1}$-modules. 
\begin{equation}\label{eq:1029-20}
\xymatrix{
0 \ar[r] & H^0(A) \ar[r] & E \ar[r] & H^0(B) \ar[r] & 0
}
\end{equation}
Since $E$ is torsion on $\bb P^1$, 
so as $H^0(A)$ and $H^0(B)$. 
Since the support of $E$ is one point, 
we have 
$\Supp H^0(A) \cap \Supp H^0(B) \neq \emptyset$. 
This contradicts with 
$\Hom^{\leq 0}(A, B)=0$. 
Hence we have $H^{-1}(B) \neq 0$.

Then 
$H^0(A) \neq 0$ by the assumption. 
Applying 
Lemma \ref{lem:0911JR} to the pair 
$(H^0(A), H^{-1}(B))$, we  have
\begin{equation}\label{eq:tukaimawasi}
(H^0(A), H^{-1}(B)) = 
	(\mca O(k)^{\+ r}, \mca O(k-1)^{\+ r}[1])	. 
\end{equation}
If 
$H^0(B)$ is non-zero, then it is torsion on $\bb P^1$ 
and we have $\Hom_{\bb P^1}(H^0(A), H^0(B)) \neq 0$. 
This contradicts with (\ref{eq:1029-19}). 
Hence we have $H^0(B)=0$.

We prove the second assertion. 
Assume that $H^1(A)\neq 0$. 
Then, 
similarly to the first case, 
we have 
the same equality as in (\ref{eq:tukaimawasi}). 
Since $L$ is invertible sheaf on $\bb P^1$, 
the morphism from $L$ is injective.
Hence $H^{-1}(B)$ must be isomorphic to $H^0(A)$. 
Then the vanishing (\ref{eq:1029-18}) implies 
$H^{-1}(B)=H^0(A)=0$. 
Thus we 
obtain the following exact sequence of 
$\mca O_{\bb P^1}$-module. 
\[
\xymatrix{
0 \ar[r] & L  \ar[r] & \mca O(k-1)^{\+ r} \ar[r] & \mca O(k)^{\+ r-1} \ar[r] & 0
}
\]
This gives a desired assertion.

To complete the proof, assume that 
$H^1(A)=0$. 
If $H^{-1}(B)=0$, the we obtain the  
same exact sequence as in (\ref{eq:1029-20}) with $E= j_*L$. 
Then $H^0(A)$ is an invertible sheaf on $\bb P^1$ and 
$H^0(A)$ must be torsion on $\bb P^1$. 
This contradicts with (\ref{eq:1029-19}). 
Hence $H^{-1}(B)$ must be non-zero and 
this implies $H^0(A) \neq 0$. 

Then, by Lemma \ref{lem:0911JR}, 
we have 
\begin{equation}\label{eq:1029-22}
	(H^0(A), H^{-1}(B))= 
	(\mca O_{\bb P ^1}(k)^{\+ m}, \mca O_{\bb P^1}(k-1)^{\+ n }).
\end{equation}
Moreover, if $H^0(B)$ is non-zero, it is torsion on 
$\bb P^1$. 
Since $H^0(A)$ is locally free on $\bb P^1$ by 
(\ref{eq:1029-22}), 
we have 
\[
\Hom_{\bb P^1_R}( H^0(A), H^0(B)) \neq 0. 
\]
This contradicts with (\ref{eq:1029-19}). 
Hence $H^0(B)$ must be zero and this 
implies the desired assertion. 
\end{proof}


If the base ring is a field $\mb k$, 
the following lemma was essentially proved 
in the proof of  \cite[Theorem 6.2]{MR2084563}.

\begin{lem}\label{lem:GKR-key}
Assume that we have an exact triangle in $\mb D^b(\bb P^1_R)$
\[
\xymatrix{
	A \ar[r] & j_* \mca O(k-1)[1] \ar[r] & B 
}
\]
satisfying $\Hom^{\leq 0}(A, B)=0$. 
If moreover $\Hom ^{0}(\mca O(k), A)=0$, then $A=0$. 
\end{lem}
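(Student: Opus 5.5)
The plan is to reduce the statement to the classification in Lemma \ref{lem:0917knym}(2) and then use the numerics of $\bb P^1$. I work under the standing convention of Lemma \ref{lem:0917knym} that the objects of the triangle are non-zero apart from the one we want to kill. So I assume $A\neq 0$ and aim for a contradiction.

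This convention genuinely matters. If $B=0$, then $A\cong j_*\mca O(k-1)[1]$, and Lemma \ref{lem:260308} gives
\[
\Hom^0(j_*\mca O(k),A)=\Hom^0_{\bb P^1}(\mca O(k),\mca O(k-1))^{\+ d_1}\+\Hom^1_{\bb P^1}(\mca O(k),\mca O(k-1))^{\+ d_0}=0 .
\]
So $B\neq 0$ must be part of the hypotheses (as it is in Lemma \ref{lem:0917knym}), and I read $\Hom^0(\mca O(k),A)$ as $\Hom^0(j_*\mca O(k),A)$.

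First, rotate and shift the triangle to obtain
\[
A[-1]\to j_*\mca O(k-1)\to B[-1].
\]
Since $\Hom^{\leq 0}(A[-1],B[-1])=\Hom^{\leq 0}(A,B)=0$, Lemma \ref{lem:0917knym}(2) applies with $L=\mca O(k-1)$. It gives some $l\in\bb Z$ and $r\in\bb N$ such that either
\[
(A[-1],B[-1])\cong(j_*\mca O(l)^{\+ r},\ j_*\mca O(l-1)^{\+ r-1}[1])
\quad\text{or}\quad
(A[-1],B[-1])\cong(j_*\mca O(l)^{\+ r}[-1],\ j_*\mca O(l-1)^{\+ r}).
\]
In each case I will read off the long exact cohomology sequence of $\mca O_{\bb P^1}$-modules, exactly as in the proof of Lemma \ref{lem:0917knym}, and compare ranks and degrees.

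In the second alternative, the cohomology sequence reads
\[
0\to \mca O(l)^{\+ r}\to \mca O(k-1)\to \mca O(l-1)^{\+ r}\to 0 .
\]
Comparing ranks gives $r=1+r$, which is impossible, unless $r=0$; but $r=0$ means $A=0$.

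In the first alternative, $A=j_*\mca O(l)^{\+ r}[1]$ and the cohomology sequence is
\[
0\to\mca O(k-1)\to\mca O(l-1)^{\+ r}\to\mca O(l)^{\+ r-1}\to 0 .
\]
The ranks are consistent, and comparing degrees gives $k-1+(r-1)l=r(l-1)$, that is, $l=k-1+r\geq k$ when $r\geq 1$. Now I compute $\Hom^0(j_*\mca O(k),A)$ by Lemma \ref{lem:260308}:
\[
\Hom^0(j_*\mca O(k),A)=\Hom^1_{\bb P^1_R}(j_*\mca O(k),j_*\mca O(l))^{\+ r}
\supset \Hom^0_{\bb P^1}(\mca O(k),\mca O(l))^{\+ r d_1}\+\Hom^1_{\bb P^1}(\mca O(k),\mca O(l))^{\+ r d_0}.
\]
The main obstacle is right here. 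The $d_0$-summand is $H^1(\mca O(l-k))=0$, because $l\geq k$, so it does not help. The $d_1$-summand is $H^0(\mca O(l-k))^{\+ rd_1}$, which is non-zero provided $d_1\neq 0$, i.e.\ provided $R\neq\mb k$.

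If $R=\mb k$, the nonvanishing has to come from elsewhere, namely the original argument of \cite[Theorem 6.2]{MR2084563}. There I would first use $\Hom^{\leq 0}(A,B)=0$ together with Lemma \ref{lem:vanishing-for-HN} to show that $r\geq 2$ is incompatible with the map $\mca O(l-1)^{\+ r}\to\mca O(l)^{\+ r-1}$. This leaves $r=1$, $l=k$, where the middle term has to be checked directly. I expect this bookkeeping to be the delicate step.

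Once $\Hom^0(j_*\mca O(k),A)\neq 0$ is established in the first alternative, it contradicts the hypothesis, and hence $A=0$.
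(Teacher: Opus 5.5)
Your outline matches the paper's: rotate to $A[-1]\to j_*\mca O(k-1)\to B[-1]$, apply Lemma \ref{lem:0917knym}(2), and show $\Hom^0(j_*\mca O(k),A)\neq 0$. You are also right that $B\neq 0$ has to be assumed. The gap is in the case analysis. You attach the wrong cohomology sequence to the first alternative, and this is what creates the spurious ``$R=\mb k$'' obstacle.

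\textbf{First alternative.} Here $A[-1]\cong j_*\mca O(l)^{\oplus r}$ is a sheaf and $B[-1]\cong j_*\mca O(l-1)^{\oplus (r-1)}[1]$ is concentrated in degree $-1$. So the long exact sequence is $0\to\mca O(l-1)^{\oplus (r-1)}\to\mca O(l)^{\oplus r}\to\mca O(k-1)\to 0$. Degrees give $l=k-r$, and $B\neq 0$ forces $r\geq 2$, so $l\leq k-2$. Lemma \ref{lem:260308} then gives $\Hom^0(j_*\mca O(k),A)=H^0(\bb P^1,\mca O(l-k))^{\oplus rd_1}\oplus H^1(\bb P^1,\mca O(l-k))^{\oplus r}$. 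The $d_1$-part, which you relied on, is \emph{zero} here. The $d_0$-part is non-zero for every $R$. So your argument for $R\neq\mb k$ rests on a wrong value of $l$, and for $R=\mb k$ nothing is actually proved. Your leftover case ``$r=1$, $l=k$'' is just $B=0$.

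\textbf{Second alternative.} The sequence $0\to\mca O(k-1)\to\mca O(l-1)^{\oplus r}\to\mca O(l)^{\oplus (r-1)}\to 0$ that you wrote belongs to the \emph{second} alternative, and you should not discard it. The rank contradiction you found there comes from an index slip in the statement of Lemma \ref{lem:0917knym}(2). Its proof produces $0\to L\to\mca O(k-1)^{\oplus r}\to\mca O(k)^{\oplus (r-1)}\to 0$, i.e.\ $(A[-1],B[-1])\cong(j_*\mca O(l)^{\oplus (r-1)}[-1],\,j_*\mca O(l-1)^{\oplus r})$. In this case $A\cong j_*\mca O(l)^{\oplus (r-1)}$ is a sheaf in degree $0$, and $A\neq 0$ forces $r\geq 2$. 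Your degree count then gives $l=k-1+r\geq k+1$, and $\Hom^0(j_*\mca O(k),A)\supseteq H^0(\bb P^1,\mca O(l-k))^{\oplus (r-1)}\neq 0$.

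With the alternatives matched correctly, $A$ is a sum of copies of $j_*\mca O(\ell)[1]$ with $\ell<k-1$, or of $j_*\mca O(\ell)$ with $\ell>k$. The $d_0$-summand of Lemma \ref{lem:260308} kills both cases uniformly in $R$. This is exactly the paper's proof, and no separate treatment of $R=\mb k$ is needed.
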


\begin{proof}
Suppose $A \neq 0$. 
	By Lemma \ref{lem:0917knym}, 
$A$ is a direct sum of objects form
$j_* \mca O(\ell)[1]$ with $\ell < k-1$ or 
$j_* \mca O(\ell)$ with $\ell > k$. 
If $\ell < k-1$, then $\Hom^0 (j_* \mca O(k), j_* \mca O(\ell)[1])\neq 0$. 
If $\ell >k$, then $\Hom^0(j_* \mca O(k), j_* \mca O(\ell))\neq 0$. 
Hence $A$ must be zero. 
\end{proof}

In the study of stability conditions on algebraic curves, 
the stability of skyscraper sheaves and that of line bundles 
plays a fundamental role. 
In \cite{MR2219846}, 
Okada studies the semistability of skyscraper sheaves
and of line bundles. 
The following proposition is a generalization 
of the study to the case of an Artinian ring.

\begin{lem}\label{lem:okada-san}
Let $\sigma$ be a locally finite stability condition on $\mb D^b(\bb P^1_R)$. 
Then the following holds: 
\begin{enumerate}
	\item There exists a non-$\sigma$-semistable line bundle on $\bb P^1$ (not on $\bb P^1_R$)
	if and only if there exists an integer $k$ such that 
	$j_* \mca O(k)$ and $j_* \mca O(k-1)[1]$ are $\sigma$-
	semistable with 
	$\phi (j_* \mca O(k)) > \phi (j_* \mca O(k-1)[1])$. 
	\item Let $x \in \bb P^1$. 
	Any skyscraper $\mca O_x$ on $\bb P^1$ is $\sigma$-semistable if and only if 
	any line bundle on $\bb P^1$ is $\sigma$-semistable. 
	\item Assume that any line bundle $j_* \mca O(k)$ is $\sigma$-semistable with 
	the phase $\phi_k$. 
	Then the inequalities 
	\begin{equation}\label{eq:okada-san}
	\phi _{k-1} \leq \phi_k \leq \phi_{k-1}+1	
	\end{equation} 
	hold for any $k$. 
\end{enumerate}
\end{lem}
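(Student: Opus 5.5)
We follow Okada's strategy for $\bb P^1$, transported to $\bb P^1_R$ with Lemmas \ref{lem:0917knym} and \ref{lem:GKR-key}. The basic tool is this: for a $\sigma$-unstable object $E$, let $A$ be the first step of its HN filtration and $B$ the cone. Then $\Hom^{\leq 0}(A,B)=0$, because every HN factor of $A$ has phase strictly larger than every HN factor of $B$. So Lemma \ref{lem:0917knym} applies to the triangle $A \to E \to B$ whenever $E = j_*\mca O_x$ or $E = j_* L$.

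\textbf{Part (1).}
\begin{itemize}
\item[($\Rightarrow$)] Suppose $j_*\mca O(n)$ is not semistable. Take $A$ to be the first HN step as above. By Lemma \ref{lem:0917knym}(2) we get either $(A,B) \cong (\mca O(k)^{\+ r}, \mca O(k-1)^{\+ r-1}[1])$ or $(A,B) \cong (\mca O(k)^{\+ r}[-1], \mca O(k-1)^{\+ r})$. Since $A$ is semistable (it is the first HN factor) and $B$ is the remaining part, the HN factors of $B$ must be unambiguous.
\begin{itemize}
\item[(i)] I will argue that a direct sum of copies of a single object $j_*\mca O(m)$ has all HN factors being sums of copies of $j_*\mca O(m)$-pieces. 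Then $B$ being a power of $j_*\mca O(k-1)[1]$ means that $j_*\mca O(k-1)[1]$ itself is semistable.
\item[(ii)] Concretely: the HN filtration of a direct sum $X^{\+ r}$ is the direct sum of the HN filtrations, by Lemma \ref{lem:direct-sum} and uniqueness. So $X^{\+ r}$ is semistable iff $X$ is.
\end{itemize}
Thus $j_*\mca O(k)$ and $j_*\mca O(k-1)[1]$ are semistable, after a shift by $[-1]$ in the second case. The phase inequality $\phi(A) > \phi(B)$ gives $\phi(j_*\mca O(k)) > \phi(j_*\mca O(k-1)[1])$.
\item[($\Leftarrow$)] If $j_*\mca O(k)$ and $j_*\mca O(k-1)[1]$ are semistable with the stated phase inequality, pick $n > k$. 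Triangle (\ref{eq:1123-2}) then exhibits a filtration of $j_*\mca O(n)$ with semistable factors of strictly decreasing phase. By uniqueness of HN filtrations, $j_*\mca O(n)$ is not semistable.
\end{itemize}

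\textbf{Part (2).}
\begin{itemize}
\item[($\Rightarrow$)] Suppose some line bundle is not semistable. By (1) we get $k$ with $\phi(j_*\mca O(k)) > \phi(j_*\mca O(k-1)[1])$. Triangle (\ref{eq:1123-1}) then gives a nontrivial HN filtration of $j_*\mca O_x$, so $\mca O_x$ is not semistable.
\item[($\Leftarrow$)] If $j_*\mca O_x$ is not semistable, Lemma \ref{lem:0917knym}(1) gives $(A,B) = (\mca O(k)^{\+ r}, \mca O(k-1)^{\+ r}[1])$, with $A$ semistable and $\phi(A) > \phi(B)$. As in (1), $j_*\mca O(k)$ and $j_*\mca O(k-1)[1]$ are semistable. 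Then (1) produces a non-semistable line bundle.
\end{itemize}
This gives the equivalence.

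\textbf{Part (3).} Assume every $j_*\mca O(k)$ is semistable with phase $\phi_k$. Since $\Hom(j_*\mca O(k-1), j_*\mca O(k)) \neq 0$, axiom (s2) gives $\phi_{k-1} \leq \phi_k$. For the other inequality, use $\Hom^1(j_*\mca O(k), j_*\mca O(k-1)) \neq 0$. This non-vanishing holds by Lemma \ref{lem:260308}, which computes $\Hom^1_{\bb P^1_R}$ and contains $\Hom^0_{\bb P^1}(\mca O(k),\mca O(k-1))^{\+ d_1} \+ \Hom^1_{\bb P^1}(\mca O(k),\mca O(k-1))$. I expect the $\Hom^0_{\bb P^1}$ summand to vanish, so non-vanishing needs either $d_1 \neq 0$ together with a nonzero term, or an alternative route. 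The cleaner route is to use the triangle (\ref{eq:1123-1}), which comes from the nonzero map $j_*\mca O_x \to j_*\mca O(k-1)[1]$. If $\phi_k > \phi_{k-1}+1$, then by (1) (the ($\Leftarrow$) direction) some line bundle would be non-semistable, contradicting the hypothesis. This gives $\phi_k \leq \phi_{k-1}+1$.

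\textbf{Main obstacle.} I expect the hardest step to be justifying in (1) that the semistability of $B \cong j_*\mca O(k-1)^{\+ s}[1]$ follows for the single summand, and handling the degenerate cases $r=1$, where $B=0$ is excluded by non-semistability.
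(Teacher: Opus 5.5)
Your ``if'' direction of (1) is correct and matches the paper, as is the implication ``some $j_*\mca O(n)$ unstable $\Rightarrow$ $j_*\mca O_x$ unstable'' in (2). So is (3) via (1). In (3) your first route is in fact dead: $\Hom^1_{\bb P^1_R}(j_*\mca O(k),j_*\mca O(k-1))=0$ by Lemma \ref{lem:260308}, since $H^0$ and $H^1$ of $\mca O(-1)$ vanish, so the switch was necessary.

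The gap is in the ``only if'' direction of (1), and again in the other half of (2), where you write ``as in (1)''. Lemma \ref{lem:0917knym} gives, up to a common shift, $A\cong j_*\mca O(k)^{\+ r}$ and $B\cong (j_*\mca O(k-1)[1])^{\+ s}$. At that point you need $B$ to be semistable, and nothing in the proposal proves it. The remark that the HN factors of $B$ ``must be unambiguous'' is not an argument. Your step (ii) only shows that $B$ is semistable if and only if the single summand $X=j_*\mca O(k-1)[1]$ is; it decides neither. A priori the cone of the maximal destabilizing factor can have several HN factors; all you know is that its largest HN phase is below $\phi(A)$. In that case even the inequality $\phi(A)>\phi(B)$ you quote has no meaning.

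The missing step is Lemma \ref{lem:GKR-key}, which you cite in your first sentence but never use; the paper applies it to the maximal semistable factor $A_3$ of $A_2$. Suppose $X$ is not semistable, and let $A'\to X\to B'$ be its first HN step, so $A',B'\neq 0$ and $\Hom^{\leq 0}(A',B')=0$.
\begin{itemize}
\item By Lemma \ref{lem:direct-sum} and uniqueness of HN filtrations, the first HN factor of $B=X^{\+ s}$ is $(A')^{\+ s}$.
\item The HN factors of $B$ are those of $E$ other than $A$, so $\phi(A')<\phi(A)$. Since $j_*\mca O(k)$ is semistable of phase $\phi(A)$ by your (ii), axiom (s2) gives $\Hom(j_*\mca O(k),A')=0$.
\item Lemma \ref{lem:0917knym}(2), applied to the triangle $A'[-1]\to j_*\mca O(k-1)\to B'[-1]$, shows that $A'$ is a sum of copies of $j_*\mca O(\ell)[1]$ with $\ell<k-1$, or of $j_*\mca O(\ell)$ with $\ell>k$.
\item Both kinds receive nonzero maps from $j_*\mca O(k)$, which is a contradiction.
\end{itemize}
In the second case of Lemma \ref{lem:0917knym}(2), run the same argument after shifting $A\to E\to B$ by $[1]$.

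Your observation (ii) is genuinely useful here, since the paper silently applies Lemma \ref{lem:GKR-key} to a single copy of $j_*\mca O(k-1)[1]$ even though $A_2$ is a direct sum. But without the phase comparison and Lemma \ref{lem:GKR-key}, the semistability of $j_*\mca O(k-1)[1]$, which is the heart of both (1) and (2), is left unproved.
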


\begin{proof}
We prove the assertion (1). 
The ``if'' part follows from the exact triangles 
(\ref{eq:1123-2}) or (\ref{eq:1123-3}). 

Suppose that there exists $\mca O(n) \in \bb P^1$ such that 
$j_* \mca O(n)$ is not semistable. 
Taking the maximal semistable factor of $j_* \mca O(n)$, 
we obtain the exact triangle 
\[
\xymatrix{
	A_1 \ar[r] & j_* \mca O(n) \ar[r] & A_2 \ar[r] & A_1[1], 
}
\]
with $\Hom ^{\leq 0} (A_1, A_2)=0$ and $A_1$ is semistable. 
By Lemma \ref{lem:0917knym}, 
the pair $(A_1, A_2)$ is isomorphic to 
\[ 
(A_1, A_2) \cong 
\begin{cases}
(j_* \mca O(k)^{\+}, j_* \mca O(k-1)[1]^{\+ }) & (k<n) \\
(j_* \mca O(k)^{\+}[-1], j_* \mca O(k-1)^{\+ }) & (n < k-1) \\
\end{cases}
\]
If $A_2$ is not semistable, 
then it has the maximal semistable factor $A_3$. 
Thus we obtain the exact triangle 
\[
\xymatrix{
	A_3 \ar[r] & j_* \mca O(k-1)[1] \ar[r] & A_4 \ar[r] & A_3[1]. 
}
\]
Then Lemma \ref{lem:GKR-key} implies that $A_3 =0$ and 
hence $A_2$ is semistable. 
Thus $j_* \mca O(k)$ and $j_* \mca O(k-1) $ are semistable 
with $\phi (j_* \mca O(k)) > \phi (j_* \mca O(k-1)[1])$.

We prove the assertion (2). 
Assume that the exists a non-semistable line bundle $O(n)$ on $\bb P^1$, 
by the assertion (1), we obtain two line bundles on $\bb P^1$ such that 
$j_* \mca O(k)$ and $j_* \mca O(k-1)[1]$ are both semistable 
with $\phi (j_* \mca O(k)) > \phi (j_* \mca O(k-1)[1])$. 
Then the sequence (\ref{eq:1123-1}) gives the HN filtration of $j_* \mca O_x$ 
and $j_* \mca O_x$ is not semistable for any $x \in \bb P^1$. 

Assume that there exists a point $x \in \bb P^1$ such that 
$j_* \mca O_x$ is not semistable. 
Taking the maximal semistable factor of $j_* \mca O_x$, by Lemma \ref{lem:0917knym}, 
we obtain the exact triangle 
\[
\xymatrix{
j_* \mca O(k) \ar[r] & j_* \mca O_x \ar[r] & j_* \mca O(k-1)[1] 
}
\]
for some $k \in \bb Z$. 
In particular, $j_* \mca O(k)$ is semistable since it 
is the maximal semistable factor. 

If $j_* \mca O(k-1)[1]$ is not semistable, 
the maximal semistable factor $A$ of $j_* \mca O(k-1)[1]$ 
satisfies $\Hom ^{\leq 0}( j_* \mca O(k), A)=0$. 
Then Lemma \ref{lem:GKR-key} implies $A=0$ and hence 
$j_* \mca O(k-1)[1]$ is semistable. 
Thus the HN filtration of $j_* \mca O_x$ is 
given by the sequence (\ref{eq:1123-1}). 
Hence the phases satisfy 
$\phi (j_* \mca O(k)) > \phi (j_* \mca O(k-1)[1])$. 
Then the assertion (1) implies that 
there exists a non semistable line bundle on $\bb P^1$.

To prove the assertion (3), 
assume that $j_* \mca O(n)$ is semistable for any $n \in \bb Z$. 
Since $\Hom(J_* \mca O(k-1), j_* \mca O(k)) \neq 0$, 
the phases satisfy $\phi _{k-1} \leq \phi_k$ for any $k \in \bb Z$. 
Moreover, it the exists a $k \in \bb Z$ such that 
$\phi_{k} > \phi_{k-1}+1$, then 
the sequence (\ref{eq:1123-2}) gives 
the HN filtration of $j_* \mca O(n)$ for $n >k$ and 
$j_* \mca O(n)$ is not semistable. 
This contradicts the assumption. 
Hence we have 
$\phi_k \leq \phi _{k-1}+1$ for any $k \in \bb Z$.  
\end{proof}

To study the stability of skyscraper sheaves and of line bundles, 
we first record the following key observation.

\begin{lem}\label{lem:0213}
If an object $S \in \mb D^b(\bb P^1_R)$ satisfies 
\begin{enumerate}
	\item $\Hom_{\bb P^1_R}^{<0}(S, S)=0$, and 
	\item any non-zero endomorphism of $S$ is invertible, 
\end{enumerate}
then $S$ is isomorphic to 
the skyscraper sheaf $j_* \mca O_x$ or an invertible sheaf $j_* \mca O(k)$ 
on $\bb P^1$ up to shifts. 
\end{lem}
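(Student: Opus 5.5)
The plan has two parts: first reduce $S$ to a coherent sheaf on $\bb P^1$, then classify such sheaves on $\bb P^1$ directly.

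\textbf{Reduction to $\mr{Coh}(\bb P^1)$.} The ring $R$ is Artinian local, so every $\epsilon\in\mf m$ acts on $S$ by the nilpotent endomorphism $\mu_\epsilon$. By hypothesis (2), a non-zero endomorphism of $S$ is invertible, so $\mu_\epsilon=0$ for all $\epsilon\in\mf m$. Hence $S$ is annihilated by $\mf m$, and as in the proof of Lemma \ref{lem:stable-objects} every cohomology sheaf $H^i(S)$ is an $\mca O_{\bb P^1}$-module. Thus
\[
S\in \mb D^b(\mr{Coh}(\bb P^1_R))_{\mr{Coh}(\bb P^1)}.
\]
By Proposition \ref{prop:almost-hereditary}, $\mr{Coh}(\bb P^1)$ is almost hereditary in $\mr{Coh}(\bb P^1_R)$. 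Hypotheses (1) and (2) are exactly conditions (a) and (b) of Proposition \ref{prop:class-of-stable}. That proposition then shows that, up to a shift, $S\cong j_*E$ for a single coherent sheaf $E$ on $\bb P^1$.

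\textbf{Classification of $E$.} The functor $j_*$ is faithful and $\mr{Coh}(\bb P^1)\subset \mr{Coh}(\bb P^1_R)$ is full. Therefore
\[
\mr{End}_{\bb P^1}(E)=\mr{End}_{\bb P^1_R}(j_*E),
\]
and this ring is a division ring by hypothesis (2). In particular $E$ is indecomposable; this also follows from \cite[Lemma 3.3]{MR4517994}. By Grothendieck's theorem and the structure of torsion sheaves on a smooth curve, $E$ is then either a line bundle $\mca O(k)$ or an indecomposable torsion sheaf $T_x$ supported at a closed point $x$. In the line bundle case we are done. In the torsion case, locally at $x$ we have $T_x\cong \mca O_{\bb P^1,x}/\mf m_x^{\,n}$. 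If $n\geq 2$, multiplication by a uniformizer is a non-zero nilpotent endomorphism of $T_x$, which contradicts (2). Hence $n=1$ and $T_x\cong\mca O_x$ is the skyscraper sheaf.

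\textbf{Where the difficulty lies.} Most of the weight is in the reduction step, which relies on Proposition \ref{prop:class-of-stable} and on almost heredity. The step that needs the most care is checking that the vanishing of every $\mu_\epsilon$ really forces each $H^i(S)$ to be annihilated by $\mf m$. This holds because taking cohomology is functorial and $R$-linear: $\mu_\epsilon$ on $H^i(S)$ is $H^i$ of $\mu_\epsilon$ on $S$, hence zero.

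One further point needs checking: the closed point $x$ may have a residue field $\kappa(x)\neq\mb k$. This causes no problem for the argument. $\mr{End}(\mca O_x)=\kappa(x)$ is a field, and $\mca O_x$ is still the skyscraper sheaf, so the statement is unaffected.
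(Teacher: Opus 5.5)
Your proposal is correct and follows the paper's proof essentially step by step. You kill the $\mf m$-action using hypothesis (2), invoke Proposition~\ref{prop:class-of-stable} together with the almost heredity of $\mr{Coh}(\bb P^1)$ to land in $\mr{Coh}(\bb P^1)$ up to shift, and then use indecomposability plus a non-zero nilpotent endomorphism to rule out non-reduced torsion sheaves. Your use of multiplication by a uniformizer for that endomorphism is a slightly cleaner variant of the paper's composite $T_x \to \mb k \subset T_x$, since it does not implicitly assume $\kappa(x)=\mb k$.
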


\begin{proof}
Since any element in the ideal $\mf m$ is nilpotent, 
the action of $\mf m$ on $S$ must be zero y
by the assumption (2). 
Thus 
any cohomology of $S$ with respect to the standard 
$t$-structure
is in $\mr{Coh}(\bb P_1)$. 
Since $\mr{Coh}(\bb P^1)$ is almost hereditary in 
$\mr{Coh}(\bb P^1_R)$, 
Proposition \ref{prop:class-of-stable} implies that 
$S$ is in $\mr{Coh}(\bb P^1)$ up to shifts.

Moreover, $S$ is indecomposable by the assumption (2) by 
\cite[Lemma 3.3]{MR4517994}. 
Then $S$ must be a torsion sheaf $j_*T_x$ 
supported on a point $x \in \bb P^1$ or 
an invertible sheaf $j_* \mca O(n)$ on $\bb P^1$. 
Assume that the torsion sheaf is not 
isomorphic to the skyscraper sheaf $\mca O_x$. 
Then there exists a natural quotient $T_x \to \mb k$. 
Since the kernel of $T_x \to \mb k$ contains $\mb k$, 
we obtain a non-zero endomorphism ot $T_x$ as follows: 
\[
\varphi \colon T_x \to \mb k \subset T_x
\]
Then the morphism $\varphi$ is nilpotent, that is, $\varphi ^2=0$. 
This contradicts the assumption. 
Hence $T_x$ must be the skyscraper sheaf 
$j_* \mca O_x$. 
\end{proof}

\begin{lem}\label{lem:class-stable-P1}
If an object $A$ in $\mb D^b(\bb P^1_R)$ is stable 
with respect to a stability condition $\sigma$ on $\mb D^b (\bb P^1)$, 
then $A$ must be a skyscraper sheaf $\mca O_x$ or 
a line bundle on $\bb P^1_{\mb k}$ up to shifts.  
\end{lem}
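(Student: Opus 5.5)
The plan is to reduce the statement to Lemma \ref{lem:0213}, by checking that a $\sigma$-stable object satisfies its two hypotheses. (Here $\sigma$ is a stability condition on $\mb D^b(\bb P^1_R)$; the ``$\mb D^b(\bb P^1)$'' in the statement is read as $\mb D^b(\bb P^1_R)$, since $A$ lives there.)

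Let $A$ be $\sigma$-stable of phase $\phi$, so $A\in\mca P(\phi)^s$.

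First, the vanishing $\Hom^{<0}(A,A)=0$. For $n>0$ we have $A[-n]\in\mca P(\phi-n)$, and $\phi-n<\phi$. Axiom (s2) then gives
\[
\Hom^{-n}(A,A)=\Hom(A, A[-n])=0 .
\]

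Second, every non-zero endomorphism of $A$ is invertible. Since $\mca P(\phi)$ is an abelian category (Bridgeland) and $A$ is simple in it, this is Schur's lemma. Concretely, for $f\colon A\to A$ non-zero, the kernel and image of $f$ in $\mca P(\phi)$ are subobjects of $A$. Simplicity forces $\Ker f=0$ and $\Im f=A$, so $f$ is an isomorphism.

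With both hypotheses of Lemma \ref{lem:0213} verified, that lemma gives that $A$ is isomorphic, up to shift, to a skyscraper sheaf $j_*\mca O_x$ or a line bundle $j_*\mca O(k)$ on $\bb P^1_{\mb k}$, which is the claim.

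There is no real obstacle; the substantive work (the almost-hereditary property and Proposition \ref{prop:class-of-stable}) is already contained in Lemma \ref{lem:0213}. The only point needing care is that $\mca P(\phi)$ is abelian, so that kernels and images make sense for applying simplicity. If one prefers to avoid this, the argument of Lemma \ref{lem:stable-objects} can be mirrored instead: \cite[Lemma 3.3]{MR4517994} already packages the fact that a stable object has endomorphism ring a division ring.
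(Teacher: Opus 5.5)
Your proposal is correct and follows the paper's argument: the paper also observes that a $\sigma$-stable object satisfies the two hypotheses of Lemma \ref{lem:0213} and invokes that lemma. What you add is the explicit verification of those hypotheses, namely axioms (s2)--(s3) for the vanishing of negative self-extensions and Schur's lemma in the abelian category $\mca P(\phi)$ for invertibility of nonzero endomorphisms; the paper leaves both checks implicit.
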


\begin{proof}
If an object $A \in \mb D^b(\bb P^1_R)$ is stable, 
then $A$ satisfies the assumption of 
Lemma \ref{lem:0213}. 
Hence Lemma \ref{lem:0213} implies the desired assertion. 
\end{proof}

We further develop Okada's observation's on semistability by using 
Lemma \ref{lem:class-stable-P1}.

\begin{lem}\label{lem:1128TNJ}
Assume that any line bundle $\mca O(k)$ on $\bb P^1_{\mb k}$ is semistable 
with the phase $\phi _k$. 
For any point $x \in \bb P^1_{\mb k}$, 
the skyscraper sheaf $\mca O_x$ is not stable but semistable if and only if 
	there exists an integer $k$ such that $\phi _k = \phi _{k-1}+1$ 
\end{lem}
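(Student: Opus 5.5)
We prove both directions separately, working throughout with the objects $j_*\mca O(k)$ and $j_*\mca O_x$ in $\mb D^b(\bb P^1_R)$ and the triangle (\ref{eq:1123-1}). By Lemma \ref{lem:okada-san}(2), every skyscraper $j_*\mca O_x$ is semistable under the standing hypothesis that all line bundles are semistable. So the content of the lemma is the criterion for when $j_*\mca O_x$ fails to be stable.

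\emph{The ``if'' direction.} Suppose $\phi_k=\phi_{k-1}+1$ for some $k$. Then $j_*\mca O(k)$ and $j_*\mca O(k-1)[1]$ are both semistable of the same phase $\phi_k$. The triangle (\ref{eq:1123-1}),
\[
j_*\mca O(k)\to j_*\mca O_x\to j_*\mca O(k-1)[1],
\]
exhibits $j_*\mca O_x$ as an extension inside the abelian category $\mca P(\phi_k)$. Since $\mca P(\phi_k)$ is closed under extensions, $j_*\mca O_x\in\mca P(\phi_k)$. The map $j_*\mca O(k)\to j_*\mca O_x$ is a nonzero monomorphism in $\mca P(\phi_k)$ with nonzero cokernel $j_*\mca O(k-1)[1]$. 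Hence $j_*\mca O_x$ is not simple, i.e.\ it is semistable but not stable.

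\emph{The ``only if'' direction.} Suppose $j_*\mca O_x$ is semistable of phase $\psi$ but not stable. By local finiteness, $j_*\mca O_x$ has a Jordan--H\"older filtration in $\mca P(\psi)$ by $\sigma$-stable factors. Choose a stable subobject $S\subsetneq j_*\mca O_x$ in $\mca P(\psi)$ with quotient $Q\neq 0$. By Lemma \ref{lem:class-stable-P1}, $S$ is, up to shift, either $j_*\mca O_y$ or some $j_*\mca O(m)$.

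First we rule out skyscrapers. If $S\cong j_*\mca O_y[s]$, compare classes in $K_0$ (or compare $Z$): all factors have the same phase and $Z(j_*\mca O_x)=Z(S)+Z(Q)$ is a positive multiple sum. The plan is to use $\Hom(S,j_*\mca O_x)\neq0$, which forces $y=x$ and $s\in\{0,-1,\dots\}$. Here $\Hom^{p}_{\bb P^1_R}(j_*\mca O_x,j_*\mca O_x)$ is computed by Lemma \ref{lem:260308}. Since $S$ and $j_*\mca O_x$ share the same phase, this yields $s=0$, and then $S\to j_*\mca O_x$ is an isomorphism (as $\mathrm{End}(j_*\mca O_x)=\mb k$), contradicting $Q\ne0$. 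So every stable factor is a shifted line bundle $j_*\mca O(m)[s]$.

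Next we pin down the shifts and degrees. For a factor $S=j_*\mca O(m)[s]$ we have $\Hom(j_*\mca O(m)[s],j_*\mca O_x)\neq0$ only for $s=0$, and $\Hom(j_*\mca O_x,j_*\mca O(m)[s])\ne0$ requires $s\geq1$. So the first factor is $j_*\mca O(m)$ and the last factor is some $j_*\mca O(m')[1]$, both of phase $\psi$, giving $\phi_m=\phi_{m'}+1$. Lemma \ref{lem:okada-san}(3) shows that the phases $\phi_k$ are nondecreasing with increments at most $1$. We then need to show that some consecutive pair has $\phi_k=\phi_{k-1}+1$.

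If $m'\le m-1$, then $\phi_{m'}\le\phi_{m-1}\le\phi_m\le\phi_{m'}+1=\phi_m$. Hence $\phi_{m-1}=\phi_m-1$ when $m'=m-1$. More generally, all inequalities between the $\phi_i$ for $m'\le i\le m$ force a single jump of exactly $1$ at some consecutive pair, which gives the required $k$. If instead $m'\ge m$, monotonicity gives $\phi_{m'}\ge\phi_m=\phi_{m'}+1$, which is absurd.

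\emph{Main obstacle.} The hardest step is justifying these Hom-vanishing and non-vanishing facts on $\bb P^1_R$ rather than on $\bb P^1$. We will handle this via Lemma \ref{lem:260308}, which reduces Homs between objects $j_*E_0$ to Homs on $X_0=\bb P^1$ with multiplicities $d_p$. We also need to ensure that the chosen subobject and quotient factors can be taken adjacent to $j_*\mca O_x$ through nonzero maps. To arrange this, we take $S$ to be the first factor and $j_*\mca O(m')[1]$ to be the last factor of the Jordan--H\"older filtration, so the maps $S\to j_*\mca O_x\to$ (last factor) are nonzero.
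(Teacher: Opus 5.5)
Your ``if'' direction is correct and matches the paper. The ``only if'' direction has a genuine gap at the last step. From $\phi_m=\phi_{m'}+1$ with $m'\le m-1$, Lemma~\ref{lem:okada-san}(3) only says that $\phi$ is nondecreasing with consecutive increments in $[0,1]$. When $m'<m-1$ this does not force any single increment to equal $1$; for example $\phi_{m'}=0$, $\phi_{m'+1}=\tfrac12$, $\phi_{m'+2}=1$ is consistent. So no $k$ has actually been produced.

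The missing idea is to identify the quotient $Q$ itself, rather than passing to the last Jordan--H\"older factor. By Lemma~\ref{lem:260308}, $\Hom(j_*\mca O(m),j_*\mca O_x)\cong\mb k$. Hence the inclusion $S=j_*\mca O(m)\to j_*\mca O_x$ is, up to a nonzero scalar, the surjection $\mca O(m)\to\mca O_x$, whose cone is $j_*\mca O(m-1)[1]$ by (\ref{eq:1123-1}). This map is a monomorphism in the abelian category $\mca P(\psi)$, so its cone is its cokernel there. Therefore $Q\cong j_*\mca O(m-1)[1]\in\mca P(\psi)$, which gives $\phi_{m-1}+1=\psi=\phi_m$, and $k=m$ works. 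This is exactly the paper's argument; neither the last factor nor the monotonicity from Lemma~\ref{lem:okada-san}(3) is needed.

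A secondary error concerns the shift. You claim that $\Hom(j_*\mca O(m)[s],j_*\mca O_x)\neq 0$ only for $s=0$, but this is false when $R\neq\mb k$. By Lemma~\ref{lem:260308} this group is $\mb k^{\+ d_{-s}}$. For an Artinian local ring that is not a field, $\mb k$ has infinite projective dimension, so $d_p\neq 0$ for all $p\ge 0$. You therefore only get $s\le 0$, and similarly only $t\ge 1$ for the last factor.

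To force $s=0$, use phases instead. The nonvanishings $\Hom(j_*\mca O(m),j_*\mca O_x)\neq 0$ and $\Hom(j_*\mca O_x,j_*\mca O(m)[1])\neq 0$ give $\phi_m\le\psi\le\phi_m+1$. Hence $s=\psi-\phi_m\in\{0,1\}$, and combined with $s\le 0$ this gives $s=0$. The paper passes over this point silently, so an explicit treatment is worthwhile once corrected.

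Finally, your assertion that \emph{every} stable factor is a shifted line bundle is justified only for the first factor. With the fix above, you need nothing beyond $S$.
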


\begin{proof}
Recall the exact triangle in (\ref{eq:1123-1}) for any $k \in \bb Z$ and any 
$x \in \bb P^1$. 

If there exists $k \in \bb Z$ such that $\phi_k = \phi _{k-1}+1$, 
then the above triangle gives a Jordan-Holder 
filtration of $\mca O_x$ for any $x \in \bb P^1$. 
Hence $\mca O_x$ is not stable. 
The semistability of $\mca O_x$ follows from 
Lemma \ref{lem:okada-san}.

Conversely, assume that 
any skyscraper sheaf is not stable but semistable. 
Then there exists a stable object $A \in \mb D^b(\bb P^1_R)$ 
such that $\Hom_{\bb P^1}(A, j_* \mca O_x) \neq 0$ and 
\begin{equation}\label{eq:1121}
\phi(A) = \phi(j_*\mca O_x)	
\end{equation}
Then Lemma \ref{lem:class-stable-P1} implies that 
$A= j_* \mca O(k)$ for some $ k \in \bb Z$. 
Since the cone of $j_* \mca O(k) \to j_* \mca O_x$ is 
$j_* \mca O(k-1)[1]$, 
the equation (\ref{eq:1121}) implies $\phi_k = \phi _{k-1}+1$. 
\end{proof}

\begin{lem}\label{lem:1123-F1}
For a stability condition $\sigma \in \Stab{\mb D^b(\bb P^1_R)}$, 
assume that there exists an integer $k \in \bb Z$ such that 
$j_* \mca O(k)$ and $j_* \mca O(k-1)$ are semistable with phases 
$\phi _k$ and $\phi_{k-1}$, respectively. 
	If $\phi_k \geq  \phi_{k-1}+1$, then 
	$j_* \mca O(k)$ and $j_* \mca O(k-1)$ are stable. 
\end{lem}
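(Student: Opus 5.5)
We argue by contradiction, separately for $j_*\mca O(k)$ and $j_*\mca O(k-1)$. Suppose $j_*\mca O(k)$ is semistable but not stable. Since $\sigma$ is locally finite, $j_*\mca O(k)$ has a Jordan--H\"older filtration in $\mca P(\phi_k)$ whose factors are $\sigma$-stable of phase $\phi_k$. So there is a stable object $S$ of phase $\phi_k$ with a non-zero non-isomorphism $S\to j_*\mca O(k)$; we take $S$ to be the first Jordan--H\"older factor. By Lemma \ref{lem:class-stable-P1}, $S$ is either $j_*\mca O_x[m]$ or $j_*\mca O(\ell)[m]$ for some $m\in\bb Z$. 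Since $j_*\mca O(k)$ is a sheaf, Lemma \ref{lem:260308} shows that $\Hom_{\bb P^1_R}(S,j_*\mca O(k))\neq 0$ forces $m\in\{0,1\}$ in the line bundle case. In the skyscraper case, Lemma \ref{lem:260308} gives $\Hom^p(j_*\mca O_x,j_*\mca O(k))\cong \Hom^1_{\bb P^1}(\mca O_x,\mca O(k))^{\+ d_{p-1}}$, which forces $m=-1$ (or more negative only if $d_{p}\neq0$ for larger $p$, but those shifts have phases far from $\phi_k$ and can be ruled out in the same way).

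The main step is to compare phases in each case using the hypothesis $\phi_k\ge\phi_{k-1}+1$.

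\emph{Skyscraper case.} The triangle (\ref{eq:1123-1}) together with $\phi_k\ge\phi_{k-1}+1$ shows that (\ref{eq:1123-1}) is an HN filtration or a Jordan--H\"older sequence of $j_*\mca O_x$. Hence $j_*\mca O_x$ cannot be stable unless one of the two factors vanishes, which is absurd.

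\emph{Line bundle case.} If $S=j_*\mca O(\ell)$ with $\ell<k$, then the non-zero maps $j_*\mca O(\ell)\to j_*\mca O(k-1)\to j_*\mca O(k)$ when $\ell\le k-1$ give $\phi_\ell\le\phi_{k-1}$, using semistability of $j_*\mca O(k-1)$ and (s2). This contradicts $\phi_\ell=\phi_k\ge\phi_{k-1}+1$. If $S=j_*\mca O(\ell)[1]$, then $\Hom^1(\mca O(\ell),\mca O(k))\ne0$ forces $\ell\ge k+2$, hence $\Hom(j_*\mca O(k),j_*\mca O(\ell))\neq0$. This should contradict stability of $S$ through phase comparisons, using the inequalities of Lemma \ref{lem:okada-san}(3) type applied locally.

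The argument for $j_*\mca O(k-1)$ is dual: consider the last Jordan--H\"older factor, i.e.\ a stable quotient $j_*\mca O(k-1)\to S$, and run the symmetric analysis.

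The main obstacle I expect is handling stable factors of the form $j_*\mca O(\ell)[m]$ with $\ell$ far from $k$. For these we have no a priori control of the phase of $j_*\mca O(\ell)$. What I would try first is to use the triangles (\ref{eq:1123-2}) and (\ref{eq:1123-3}): together with $\phi_k\ge\phi_{k-1}+1$, they show that every $j_*\mca O(n)$ with $n\ne k,k-1$ has an HN filtration with factors in $\langle j_*\mca O(k),j_*\mca O(k-1)[1]\rangle$. Its phases therefore lie between $\phi_{k-1}+1$ and $\phi_k$. This pins down $\phi(S)$ and, when $\phi_k>\phi_{k-1}+1$, gives the contradiction via (s2).

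In the boundary case $\phi_k=\phi_{k-1}+1$, I would instead show directly that $\Hom(j_*\mca O(\ell)[m],j_*\mca O(k))$ factoring through a proper Jordan--H\"older subobject contradicts $\Hom^{\le0}$-vanishing. Here I would use the exceptional pair structure $\bb R\Hom(j_*\mca O(k),j_*\mca O(k-1))=0$ from Lemma \ref{lem:260308}.
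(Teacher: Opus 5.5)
Your set-up is fine: you take a stable first Jordan--H\"older factor $S\to j_*\mca O(k)$ and classify it by Lemma \ref{lem:class-stable-P1}. Your skyscraper case and your case $S=j_*\mca O(\ell)$ with $\ell\le k-1$ are also fine. The remaining case analysis, however, does not hold up.

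\emph{The shift bookkeeping is reversed.} We have $\Hom(j_*\mca O(\ell)[m],j_*\mca O(k))=\Hom^{-m}_{\bb P^1_R}(j_*\mca O(\ell),j_*\mca O(k))$, which vanishes for $m>0$. So your case $S=j_*\mca O(\ell)[1]$ is vacuous, and invoking $\Hom^1(\mca O(\ell),\mca O(k))\neq0$ there is a sign error. Conversely, by Lemma \ref{lem:260308} this group can be non-zero for \emph{every} $m\le 0$, because $d_p\neq0$ for all $p\ge0$ once $R\neq\mb k$. These negative shifts are exactly the cases you leave open.

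\emph{The proposed tools do not apply.} Lemma \ref{lem:okada-san}(3) assumes all line bundles are semistable, which is false when $\phi_k>\phi_{k-1}+1$. For $\phi_k=\phi_{k-1}+1$ you only sketch an appeal to $\bb R\Hom(j_*\mca O(k),j_*\mca O(k-1))=0$, which is not the input that is needed.

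The missing step is to use the triangles (\ref{eq:1123-1})--(\ref{eq:1123-3}) in \emph{both} regimes to eliminate all other candidates.
\begin{itemize}
\item If $\phi_k>\phi_{k-1}+1$, these triangles are HN filtrations with two distinct phases. So no $j_*\mca O_x$ and no $j_*\mca O(n)$ with $n\neq k,k-1$ is semistable.
\item If $\phi_k=\phi_{k-1}+1$, they are short exact sequences in a single $\mca P(\phi)$ with non-zero outer terms. So these objects are semistable but not stable.
\end{itemize}
Combined with Lemma \ref{lem:class-stable-P1}, the only objects that can be $\sigma$-stable are shifts of $j_*\mca O(k)$ and $j_*\mca O(k-1)$.

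The paper concludes globally from here: by local finiteness the classes of stable objects generate $K_0(\bb P^1_R)$, which has rank $2$, so both objects must be stable.

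Your local approach also closes at this point:
\begin{itemize}
\item $S\cong j_*\mca O(k)$ is excluded, since the inclusion $S\to j_*\mca O(k)$ would be a non-zero element of $\Hom(j_*\mca O(k),j_*\mca O(k))\cong\mb k$, hence an isomorphism.
\item $S=j_*\mca O(k-1)[m]$ of phase $\phi_k$ requires $m=\phi_k-\phi_{k-1}\ge1$, so it admits no non-zero map to the sheaf $j_*\mca O(k)$.
\item Dually, a stable quotient of $j_*\mca O(k-1)$ would have to be $j_*\mca O(k)[-m]$ with $m\ge1$, which receives no non-zero map from $j_*\mca O(k-1)$.
\end{itemize}
This elimination step is the heart of the argument, and without it your proposal does not yet prove the lemma.
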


\begin{proof}
Since our stability condition is locally finite, 
any object is given by a successive extension of 
stable objects. 
Thus the classes of stable objects generate the group $K_0(\bb P^1_R)$. 
Since $\rank K_0(\bb P^1_R)=2$, 
there must be at least two stable objects for any $\sigma$.

Suppose the inequality $\phi _k > \phi_{k-1}+1$ holds. 
Then 
if the degree of a line bundle $\mca O(n)$ on $\bb P^1$ is neither $k$ nor $k-1$, 
$j_* \mca O(n)$ is 
not semistable by the triangles (\ref{eq:1123-2}) and (\ref{eq:1123-3}). 
Moreover any skyscraper sheaf is not semistable by 
the triangle (\ref{eq:1123-1}). 
Thus there exactly two semistable objects $\mca O(k)$ and $\mca O(k-1)$. 
Since stable object is semistable, 
Lemma \ref{lem:class-stable-P1} implies the desired assertion.

Suppose $\phi_k = \phi_{k-1}+1$. 
Then skyscraper sheaf $j_* \mca O_x$ is not stable 
but semistable by Lemma \ref{lem:1128TNJ}. 
Moreover any line bundle of degree other than 
$k$ or $k-1$ is not stable but semistable 
by the triangles (\ref{eq:1123-2}) and (\ref{eq:1123-3}). 
Thus $j_* \mca O(k)$ and $j_* \mca O(k-1)$ must be stable by Lemma \ref{lem:class-stable-P1}. 
\end{proof}

\begin{rmk}
Assume that there is $k \in \bb Z$ such that 
$\mca O(k)$ and $\mca O(k-1)$ is stable with $\phi_{k} \geq \phi_{k-1}[1]$. 
Then such a $k$ is unique. 
In fact, if the equality $\phi_{k}>\phi_{k-1}+1$ holds, then 
any line bundle except $\mca O(k)$ and $\mca O(k-1)$ is not semistable. 
If the equality $\phi_{k}=\phi_{k-1}+1$ holds, 
then 
any line bundle except $\mca O(k)$ and $\mca O(k-1)$ is not stable. 
\end{rmk}

\begin{prop}\label{prop:1122}
Fix a stability condition $\sigma =(Z, \mca P)$ on $\mb D^b (\bb P^1_R)$. 
\begin{enumerate}
	\item If any skyscraper sheaf $j_* \mca O_x$ of a point $x \in \bb P^1_{\mb k}$ is stable, 
	then any line bundle $j_* L$ on $\bb P^1$ is stable. 
	\item Conversely, if any line bundle $L$ on $\bb P^1$ is stable, then any skyscraper sheaf 
	$j_* \mca O_x$ is stable. 
\end{enumerate}	
\end{prop}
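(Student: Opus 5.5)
For (1), assume every $j_*\mca O_x$ is stable. By Lemma \ref{lem:okada-san}(2), every line bundle $j_*\mca O(k)$ is then semistable, with phase $\phi_k$. By Lemma \ref{lem:okada-san}(3) we have $\phi_{k-1}\le\phi_k\le\phi_{k-1}+1$. The upper bound is strict for every $k$: equality would make $j_*\mca O_x$ strictly semistable by Lemma \ref{lem:1128TNJ}.

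Suppose some $j_*\mca O(n)$ is semistable but not stable. Then it has a proper stable subobject $A$ in $\mca P(\phi_n)$. By Lemma \ref{lem:class-stable-P1}, $A$ is a shift of a skyscraper sheaf or of a line bundle. I would rule out each case by phase comparison.

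\emph{Skyscraper case.} Here $A=j_*\mca O_x[m]$. There is no nonzero map $j_*\mca O_x[m]\to j_*\mca O(n)$ unless $m=-1$, since $\Hom^{1}(\mca O_x,\mca O(n))\neq0$. So we would need $\phi(\mca O_x)-1=\phi_n$. From triangle (\ref{eq:1123-1}), the inequality $\phi_{n+1}\le\phi(\mca O_x)$ is forced, because $\mca O_x$ is stable and receives a nonzero map from $j_*\mca O(n+1)$. This gives $\phi_{n+1}\le\phi_n+1$. Equality is excluded because the map $j_*\mca O(n+1)\to j_*\mca O_x$ is then a nonzero non-isomorphism between stable objects of the same phase, contradicting stability of $\mca O_x$. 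More simply, $\phi(\mca O_x)-1=\phi_n$ contradicts $\phi_n<\phi(\mca O_x)-1$, which follows from $\Hom(\mca O_x,\mca O(n)[1])\neq0$ together with the strictness already obtained. I expect to settle this cleanly by fixing $\phi(\mca O_x)$ between $\phi_n$ and $\phi_n+1$, using (\ref{eq:1123-1}) with $k=n+1$ and $k=n$.

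\emph{Line bundle case.} Here $A=j_*\mca O(k)[m]$ with the same phase as $\mca O(n)$, $k\neq n$. The phase inequalities $\phi_{k-1}\le\phi_k<\phi_{k-1}+1$ force $m=0$ and make $k\mapsto\phi_k$ nondecreasing. Equal phases would then force $\phi_k=\phi_n$ for all $k$ between $k$ and $n$. In that case the triangle (\ref{eq:1123-1}) shows that $j_*\mca O_x$ is an extension of objects of phases $\phi_k$ and $\phi_{k-1}+1$, and its phase must lie between them. I would then derive a contradiction with stability of $\mca O_x$ through $\Hom(j_*\mca O(k),j_*\mca O_x)\ne0$ and $\Hom(j_*\mca O_x,j_*\mca O(k-1)[1])\neq0$, which give $\phi_k<\phi(\mca O_x)<\phi_{k-1}+1$.

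This case is the main obstacle: constant phases across several degrees. The step I would use there is a central-charge argument. Since $Z(\mca O(k))-Z(\mca O(k-1))=Z(\mca O_x)$, the equality $\phi_k=\phi_{k-1}$ forces $Z(\mca O_x)$ to be on the same ray as $Z(\mca O(k))$, so $\phi(\mca O_x)\equiv\phi_k \pmod 2$. Combined with $\phi_k<\phi(\mca O_x)<\phi_k+1$, this is a contradiction. Hence $\phi_k<\phi_{k+1}$ strictly for all $k$, which also kills the line bundle case.

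For (2), assume every line bundle is stable. Lemma \ref{lem:okada-san}(2),(3) gives that every $j_*\mca O_x$ is semistable and that $\phi_{k-1}\le\phi_k\le\phi_{k-1}+1$. If $\phi_k=\phi_{k-1}+1$ for some $k$, Lemma \ref{lem:1128TNJ} says $\mca O_x$ is strictly semistable. However, Lemma \ref{lem:1123-F1} and the subsequent remark then show that line bundles of degree other than $k,k-1$ are not stable, which contradicts the hypothesis. Hence no such $k$ exists, and Lemma \ref{lem:1128TNJ} yields that $j_*\mca O_x$ is stable.
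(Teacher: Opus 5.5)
Your part (2) is correct and is the paper's argument. The paper invokes the triangle $j_*\mca O(k_0)^{\oplus 2}\to j_*\mca O(k_0+1)\to j_*\mca O(k_0-1)[1]$ directly rather than quoting Lemma \ref{lem:1123-F1}, but the point is the same.

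Part (1), however, has a genuine gap. After your reductions, both cases hinge on excluding boundary phases:
\begin{itemize}
\item in the skyscraper case you must rule out $\phi_n=\phi(j_*\mca O_x)-1$;
\item in the line bundle case you need the strict bounds $\phi_k<\phi(j_*\mca O_x)<\phi_{k-1}+1$.
\end{itemize}
Non-vanishing of $\Hom(j_*\mca O(k),j_*\mca O_x)$ and $\Hom(j_*\mca O_x,j_*\mca O(k-1)[1])$ only gives $\phi_k\le\phi(j_*\mca O_x)\le\phi_{k-1}+1$. Strictness would follow if $j_*\mca O(k)$ were known to be stable, but that is what you are proving.

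In the skyscraper case, your ``more simply'' argument has the inequality reversed: $\Hom(j_*\mca O_x,j_*\mca O(n)[1])\neq0$ gives $\phi_n\ge\phi(j_*\mca O_x)-1$, which permits equality. Your first argument assumes $j_*\mca O(n+1)$ is stable, and in any case only recovers $\phi_{n+1}<\phi_n+1$, which you already had.

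There are also smaller slips.
\begin{itemize}
\item With $\phi_k=\phi_{k-1}$, the relation $Z(j_*\mca O_x)=Z(j_*\mca O(k))-Z(j_*\mca O(k-1))$ only gives $\phi(j_*\mca O_x)\equiv\phi_k \pmod 1$, since a difference of two positive multiples of $e^{\pi\sqrt{-1}\phi_k}$ may be negative. This would still suffice once strict bounds are known.
\item Consecutive inequalities do not force $m=0$, because $\phi_n-\phi_k$ can exceed $1$ for distant $k,n$. You need the uniform bound $\phi(j_*\mca O_x)-1\le\phi_k\le\phi(j_*\mca O_x)$ together with strictness.
\item For $R\neq\mb k$, Lemma \ref{lem:260308} gives $\Hom^p(j_*\mca O_x,j_*\mca O(n))\cong\mb k^{\oplus d_{p-1}}\neq0$ for every $p\ge1$. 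So it is that phase bound, not the Hom computation, that forces $m=-1$.
\end{itemize}

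The missing idea is to show that $Z(j_*\mca O_x)$ and $Z(j_*\mca O(k))$ are $\bb R$-linearly independent for some $k$. Then $Z\otimes\bb R$ is injective on $K_0(\bb P^1_R)\otimes\bb R\cong\bb R^2$. A stable subobject $A$ of $j_*L$ with $\phi(A)=\phi(j_*L)$ is then excluded at once, because by Lemma \ref{lem:class-stable-P1} its class is independent of $[j_*L]$. This is how the paper argues, with no case analysis.

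The paper gets the independence by first producing a stable line bundle. By local finiteness, the classes of stable objects generate $K_0$, which has rank $2$. By Lemma \ref{lem:class-stable-P1}, stable objects are shifts of skyscrapers (all of one class up to sign) or of line bundles, so some $j_*\mca O(k)$ is stable. Then $j_*\mca O_x$ and $j_*\mca O(k)$ are non-isomorphic stable objects with $\Hom(j_*\mca O(k),j_*\mca O_x)\neq0$ and $\Hom(j_*\mca O_x[-1],j_*\mca O(k))\neq0$, so $\phi(j_*\mca O_x)-1<\phi_k<\phi(j_*\mca O_x)$.

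Alternatively, you can prove the strict bound for every $k$ within your own approach.
\begin{itemize}
\item Suppose $\phi_k=\phi(j_*\mca O_x)$. The nonzero map $j_*\mca O(k)\to j_*\mca O_x$ is an epimorphism in $\mca P(\phi_k)$, because $j_*\mca O_x$ is simple there. Its kernel, which is the cocone $j_*\mca O(k-1)$, therefore also lies in $\mca P(\phi_k)$. Iterating puts every $j_*\mca O(m)$ with $m\le k$ in $\mca P(\phi_k)$. This is impossible, because $Z(j_*\mca O(m))=Z(j_*\mca O(k))-(k-m)Z(j_*\mca O_x)$ eventually leaves the ray $\bb R_{>0}e^{\pi\sqrt{-1}\phi_k}$.
\item The case $\phi_k=\phi(j_*\mca O_x)-1$ is symmetric. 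The nonzero map $j_*\mca O_x\to j_*\mca O(k)[1]$ is a monomorphism in $\mca P(\phi(j_*\mca O_x))$ with cokernel $j_*\mca O(k+1)[1]$, and you iterate upwards.
\end{itemize}
With these strict bounds, the rest of your case analysis goes through.
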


\begin{proof}
We first prove the first assertion. 
By the assumption, Lemma \ref{lem:okada-san} 
implies that any line bundle $j_* L $ on $\bb P^1$ is semistable. 
If $j_* L$ is not stable, 
then the line bundle should be given by 
a successive extension of skyscraper sheaves 
since $\sigma$ is locally finite. 
This clearly gives a contradiction, 
hence there exists at least one line bundle $\mca O(k)$ 
such that $j_* \mca O(k)$ is stable.

Suppose that a line bundle $j_* L$ on $\bb P^1$ is not stable. 
Then there is a stable object $A$ such that 
$\Hom_{\bb P^1_R} (A, j_*L)\neq 0$ and $\phi(A)=\phi(j_* L)$. 
Then the value $Z(j_* L)$ is contained in a real line spanned by 
$Z(A)$ in $\bb C$: 
\[
Z(j_* L) \in \bb R \cdot Z(A). 
\]

By Lemma \ref{lem:class-stable-P1}, 
up to shift, 
$A$ is $j_* \mca O_x$ or $j_* \mca O(\ell)$ 
for some $x \in \bb P^1$ and $\ell \in \bb Z$. 
In any cases, 
note that the classes $[L]$ and $[A]$ give 
a basis of the vector space $K_0(\bb P^1_R) \otimes _{\bb Z} \bb R$. 
Hence the image $V_{\sigma}$ of $Z \colon K_0(\bb P^1) \to \bb C$ is contained 
in a one dimensional $\bb R$ vector space in $\bb C$.

On the other hand, 
recall that $j_* \mca O_x$ and $j_* \mca O(k)$ are stable.  
Let $\phi _x$ and $\phi_k$ be the phases of $j_*\mca O_x$ and $j_* \mca O(k)$ respectively. 
The non-vanishings 
\[
\Hom_{\bb P^1_R} ( j_* \mca O(k), j_* \mca O_x) \neq 0 
\text{ and } \Hom_{\bb P^1_R}(j_* \mca O_x[-1], j_* \mca O(k)) \neq 0
\]
implies the inequalities
\begin{equation}\label{eq:1121KRS}
	\phi_x -1 < \phi _k  < \phi _x. 
\end{equation}
Since the classes $[j_ * \mca O_x]$ and $[j_* \mca O(k)]$ give a basis 
of $K_0(\bb P^1)  \otimes _{\bb Z} \bb R$, 
the inequalities (\ref{eq:1121KRS}) implies that 
the image $V_{\sigma}$ does not contained 
in a real $1$ dimensional line in $\bb C$ and 
this gives a contradiction. 
Hence any line bundle must be stable.

To prove the second assertion, 
suppose that any line bundle $j_* \mca O(k)$ is stable with 
the phase $\phi _k$. 
Then any skyscraper sheaf $j_* \mca O_x$ is semistable by Lemma \ref{lem:okada-san}.

Suppose to the contrary that 
there exists a non-stable skyscraper sheaf $j_* \mca O_x$. 
By Lemma \ref{lem:1128TNJ}, 
there exists an integer $k_0 \in \bb Z$ such that 
$\phi _{k_0} = \phi _{k_0-1}+1$. 
Then we see that $j_* \mca O(k_0+1)$ is not stable but semistable by the 
exact triangle
\[
\xymatrix{
j_* \mca  O(k_0)^{\+2} \ar[r] & j_* \mca O(k_0+1) \ar[r] & j_* \mca O(k_0-1)[1]. 
}
\]
This contradicts the assumption. 
Hence any skyscraper sheaf is stable. 
\end{proof}

\begin{prop}\label{prop:1123gogo}
Let $\sigma$ be in $\Stab{\mb D^b(\bb P^1_R)}$. 
Then exactly one of the following holds. 
\begin{enumerate}
	\item There exists an integer $k$ such that 
	$h_* \mca O(k)$ and $j_* \mca O(k-1)$ are only stable with 
	$\phi (j_* \mca O(k))  \geq \phi ( j_* \mca O(k-1)[1])$. 
\item Any line bundle and any skyscraper sheaf on $\bb P^1_{\mb k}$ are stable. 
\end{enumerate}	
\end{prop}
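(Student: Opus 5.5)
We argue by a dichotomy on the semistability of line bundles, using Lemma \ref{lem:okada-san} together with Lemma \ref{lem:1123-F1} and Proposition \ref{prop:1122}. Throughout, $\sigma$ is locally finite. Its stable objects are, by Lemma \ref{lem:class-stable-P1}, only the shifts of $j_*\mca O_x$ and $j_*\mca O(n)$.

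\emph{Case A: some line bundle $j_*\mca O(n)$ is not $\sigma$-semistable.}
\begin{itemize}
\item By Lemma \ref{lem:okada-san}(1), there is $k$ with $j_*\mca O(k)$ and $j_*\mca O(k-1)[1]$ semistable and $\phi(j_*\mca O(k))>\phi(j_*\mca O(k-1)[1])$.
\item Lemma \ref{lem:1123-F1} then makes both of them stable.
\item The triangles (\ref{eq:1123-1}), (\ref{eq:1123-2}), (\ref{eq:1123-3}) are HN filtrations with at least two distinct phases. Hence no skyscraper $j_*\mca O_x$ and no $j_*\mca O(n)$ with $n\neq k,k-1$ is semistable.
\item Combined with Lemma \ref{lem:class-stable-P1}, the only stable objects up to shift are $j_*\mca O(k)$ and $j_*\mca O(k-1)$. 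This is alternative (1).
\end{itemize}

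\emph{Case B: every line bundle is semistable.} Let $\phi_n$ be the phase of $j_*\mca O(n)$. By Lemma \ref{lem:okada-san}(3), $\phi_{n-1}\le \phi_n\le \phi_{n-1}+1$ for all $n$.
\begin{itemize}
\item \emph{Sub-case $\phi_k=\phi_{k-1}+1$ for some $k$.} Lemma \ref{lem:1123-F1} gives stability of $j_*\mca O(k)$ and $j_*\mca O(k-1)$. Lemma \ref{lem:1128TNJ} shows the skyscrapers are strictly semistable. The triangles (\ref{eq:1123-2}), (\ref{eq:1123-3}) then have all factors of equal phase, so they are Jordan--H\"older filtrations and the other line bundles are strictly semistable. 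So again only $j_*\mca O(k)$ and $j_*\mca O(k-1)$ are stable, with $\phi(j_*\mca O(k))=\phi(j_*\mca O(k-1)[1])$. This is alternative (1) with equality.
\item \emph{Sub-case $\phi_n<\phi_{n-1}+1$ for all $n$.} Lemma \ref{lem:1128TNJ} shows every skyscraper is stable (it is semistable by Lemma \ref{lem:okada-san}(2) and not strictly so). Proposition \ref{prop:1122}(1) then makes every line bundle stable, giving (2).
\end{itemize}

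For exclusivity, under (1) the skyscrapers are not stable: they are unstable when $\phi(j_*\mca O(k))>\phi(j_*\mca O(k-1)[1])$ and strictly semistable when the phases are equal. So (1) and (2) cannot both hold.

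The delicate step is the application of Lemma \ref{lem:1128TNJ}, which is phrased for all points $x$ simultaneously. One must check that non-stability of a single skyscraper already forces $\phi_k=\phi_{k-1}+1$. Its proof does this: the stable subobject $A$ of $j_*\mca O_x$ of equal phase must be some $j_*\mca O(k)$, and then the triangle (\ref{eq:1123-1}) forces the phase equality.
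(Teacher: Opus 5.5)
Your proof is correct and follows the paper's argument. It uses the same dichotomy on semistability of line bundles: Lemma~\ref{lem:okada-san}(1) with Lemma~\ref{lem:1123-F1} in the non-semistable case, and in the semistable case the split between $\phi_k=\phi_{k-1}+1$ for some $k$ (Lemma~\ref{lem:1123-F1}) and strict inequality for all $k$ (Lemma~\ref{lem:1128TNJ} plus Proposition~\ref{prop:1122}). The only difference is that you spell out the ``only stable'' clause and the exclusivity of (1) and (2) via the triangles (\ref{eq:1123-1})--(\ref{eq:1123-3}) and Lemma~\ref{lem:class-stable-P1}, which the paper leaves implicit in the proof of Lemma~\ref{lem:1123-F1}.
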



\begin{proof}
For any $\sigma$, we note that 
exactly one of the following holds: 
\begin{itemize}
	\item[(a)] There exists a non-semistable line bundle on $\bb P^1$. 
	\item[(b)] Any line bundle on $\bb P^1$ is semistable. 
\end{itemize}
In case (a), 
there exists an integer $k$ such that 
$j_* \mca O(k)$ and $j_* \mca O(k-1)[1]$ are semistable and satisfy
$\phi (j_* \mca O(k)) > \phi (j_* \mca O(k-1)[1])$ by Lemma \ref{lem:okada-san}. 
Then Lemma \ref{lem:1123-F1} implies that 
$j_* \mca O(k)$ and $j_* \mca O(k-1)$ are stable.

In case (b), 
let $\phi _k$ be the phase of $j_* \mca O(k)$. 
By Lemma \ref{lem:okada-san}, 
the inequalities 
\begin{equation}\label{eq:1123}
\phi (j_* \mca O(k)) \leq  \phi ( j_* \mca O(k-1)[1])
\end{equation}
hold for any $k$. 
If 
the equation in (\ref{eq:1123}) holds for some $k \in \bb Z$, 
Lemma \ref{lem:1123-F1} implies that 
$j_* \mca O(k)$ and $j_* \mca O(k-1)$ are stable. 
Thus the (1) holds. 

Finally suppose that 
inequalities $\phi _k < \phi _{k-1}+1$ holds for any $k$. 
Then we see that any skyscraper sheaf is stable 
by Lemma \ref{lem:1128TNJ} and that 
any line bundle is stable by Proposition \ref{prop:1122}.  
This is the case (2). 
\end{proof}

From now on, 
the smallest triangulated subcategory of 
$\mb D^{b}(\bb P^{1}_{R})$ 
containing $j_* \mca O(k)$ for $k \in \bb Z$ 
is denoted by $\mb T(k)$: 
\[
\mb T(k) = \< j_{*} \mca O(k) \>. 
\]

\begin{lem}
The category $\mb T (k)  $ is equivalent to $\mb D^b(R)$. 
\end{lem}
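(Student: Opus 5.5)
The plan is to write down an explicit fully faithful exact functor $\mb D^b(R) \to \mb D^b(\bb P^1_R)$ whose essential image is exactly $\mb T(k)$. Let $\pi \colon \bb P^1_R \to \Spec R$ be the structure morphism. Define
\[
\Phi_k \colon \mb D^b(R) \to \mb D^b(\bb P^1_R), \qquad M \mapsto \pi^*M \otimes \mca O_{\bb P^1_R}(k).
\]
Since $\pi$ is flat, $\pi^*$ is exact on modules. Since $\mca O_{\bb P^1_R}(k)$ is invertible, tensoring with it is exact. Hence $\Phi_k$ is an exact functor of triangulated categories, with no derived correction needed. Moreover
\[
\Phi_k(\mb k) = \mb k \otimes_R \mca O_{\bb P^1_R}(k) \cong j_*\mca O(k),
\]
because $\bb P^1_R \times_{\Spec R} \Spec \mb k = \bb P^1$.

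The first step is full faithfulness, via adjunction and the projection formula. Twisting by $\mca O(-k)$ is an autoequivalence, and $\pi^* \dashv \mathbb R\pi_*$. Together these give
\[
\Hom^p(\Phi_k M, \Phi_k N) \cong \Hom^p(\pi^*M, \pi^*N) \cong \Hom^p_R(M, \mathbb R\pi_*\pi^*N) \cong \Hom^p_R(M, N \otimes^{\mathbb L}_R \mathbb R\pi_*\mca O_{\bb P^1_R}).
\]
The standard computation of the cohomology of $\bb P^1_R$ gives $H^0(\bb P^1_R,\mca O)=R$ and $H^1(\bb P^1_R,\mca O)=0$, so $\mathbb R\pi_*\mca O_{\bb P^1_R}\cong R$. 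Hence the right-hand side is $\Hom^p_R(M,N)$. One must check that this isomorphism is the map induced by $\Phi_k$. That follows because the unit $N \to \mathbb R\pi_*\pi^*N$ is the isomorphism just identified.

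As a sanity check, taking $M=N=\mb k$ gives $\Hom^p(j_*\mca O(k), j_*\mca O(k)) \cong \mr{Ext}^p_R(\mb k,\mb k)$, of dimension $d_p$. This agrees with Lemma \ref{lem:260308}, since $\Hom^1_{\bb P^1}(\mca O(k),\mca O(k))=0$.

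The second step identifies the essential image with $\mb T(k)$. Since $R$ is Artinian local with residue field $\mb k$, every finitely generated $R$-module has a finite composition series with all factors $\mb k$. Every bounded complex is an iterated extension of shifts of its cohomology modules. Hence $\mb D^b(R)$ is the smallest triangulated subcategory containing $\mb k$, with no idempotent completion needed.

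Because $\Phi_k$ is fully faithful and exact, its essential image is a triangulated subcategory of $\mb D^b(\bb P^1_R)$. This image contains $\Phi_k(\mb k)=j_*\mca O(k)$, so it contains $\mb T(k)$. Conversely, the preimage $\Phi_k^{-1}(\mb T(k))$ is a triangulated subcategory of $\mb D^b(R)$ containing $\mb k$, so it is everything. Therefore the image equals $\mb T(k)$, and $\Phi_k$ gives the equivalence $\mb D^b(R)\simeq \mb T(k)$.

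The only point needing care is the first step: the base change identification $\mathbb R\pi_*\mca O_{\bb P^1_R}\cong R$ and the projection formula for bounded complexes of finitely generated $R$-modules. I would cite flat base change for $\bb P^1$ over an arbitrary ring, together with the projection formula for the quasi-compact separated morphism $\pi$. Everything else is formal.
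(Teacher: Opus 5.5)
Your proposal is correct and follows essentially the paper's argument. The paper uses the same functor $F=\mca O_{\bb P^1_R}(k)\otimes_R(-)$ and gets full faithfulness from its right adjoint $G=\bb R\Hom(\mca O_{\bb P^1_R}(k),-)$ via $G\circ F(M)\cong \bb R\Hom(\mca O(k),\mca O(k))\otimes_R M\cong M$; after untwisting by $\mca O(-k)$, this is your $\pi^*\dashv \bb R\pi_*$ plus projection-formula computation. Your version is more careful than the paper's on two implicit points: it checks that the unit itself is invertible (not just that $GF(M)\cong M$ abstractly), and it proves both inclusions showing the essential image is exactly $\mb T(k)$, using that $\mb k$ generates $\mb D^b(R)$.
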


\begin{proof}
	Consider two exact functors between 
$\mb D^b(\bb P^1_R)$ and $\mb D^b(R)$ as follows: 
\begin{equation*}
	\xymatrix{
 \mb D^b(R) \ar@<0.5ex>[r]^-{F} & \ar@<0.5ex>[l]^-{G}  \mb D^b (\bb P^1_R), 
	}
\end{equation*} 
where $F(-) = \mca O_{\bb P^1_R}(k) \otimes _R (-)$ and 
$G(-) =\bb R \Hom(\mca O_{\bb P^1_R}(k), -)  $. 
Then, for $M \in \mb D^b(R)$, we have 
\begin{align*}
G \circ F(M) &= \bb R\Hom(\mca O_{\bb P^1_R}(k), \mca O_{\bb P^1_R}(k)\otimes _R M) \\
&\cong \bb R\Hom(\mca O_{\bb P^1_R}(k), \mca O_{\bb P^1_R}(k)) \otimes _R M 	\cong M. 
\end{align*}
Since $F$ the left adjoint of $G$, $F$ is fully faithful. 
Moreover, one easily see $F(R/\mf m)= j_* \mca O_{\bb P^1}(k)$. 
Hence the image of $F$ is contained in 
$\mb T(k)$ and essentially surjective to it's image. 
Thus we have proved the assertion. 
\end{proof}

\begin{lem}\label{lem:1118}
Any semi-orthogonal decomposition of $\mb D^b(\bb P^1)$ can 
extends to $\mb D^b(\bb P^1_R)$. 
The pair $(\mb T(k-1), \mb T(k))$ gives a semi-orthogonal decomposition 
of $\mb D^b(\bb P^1_R)$. 
\end{lem}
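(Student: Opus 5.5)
Since every semiorthogonal decomposition of $\mb D^b(\bb P^1)$ is, up to shift, of the form $\<\<\mca O(k-1)\>, \<\mca O(k)\>\>$ for some $k \in \bb Z$, the first assertion reduces to the second: we prove that $(\mb T(k-1), \mb T(k))$ is a semiorthogonal decomposition of $\mb D^b(\bb P^1_R)$. For this we check the two defining conditions. By the preceding lemma, $\mb T(k) \cong \mb D^b(R)$ via $F(-) = \mca O_{\bb P^1_R}(k)\otimes_R(-)$, and this subcategory is admissible, since $F$ has the right adjoint $G = \bb R\Hom(\mca O_{\bb P^1_R}(k),-)$.

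\emph{Semi-orthogonality.} We must show $\Hom(E_2, E_1) = 0$ for $E_2 \in \mb T(k)$ and $E_1 \in \mb T(k-1)$. Both categories are generated by a single object, so it suffices to check $\bb R\Hom(j_*\mca O(k), j_*\mca O(k-1)) = 0$. Lemma \ref{lem:260308} gives this group as a direct sum of copies of $\Hom^0_{\bb P^1}(\mca O(k),\mca O(k-1))$ and $\Hom^1_{\bb P^1}(\mca O(k),\mca O(k-1))$. These are $H^0(\mca O(-1))$ and $H^1(\mca O(-1))$, and both vanish.

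\emph{Generation.} A cleaner route is to work with the flat lifts $\mca O_{\bb P^1_R}(k-1)$ and $\mca O_{\bb P^1_R}(k)$, which generate the same thick subcategories as $\mb T(k-1)$ and $\mb T(k)$ respectively. The inclusion $\mb T(k) \subset \<\mca O_{\bb P^1_R}(k)\>$ is the lemma above. Conversely, $\mca O_{\bb P^1_R}(k)$ has a finite filtration by $\mf m^i$ whose graded pieces are $j_*\mca O(k)^{\+}$, which gives the reverse inclusion. Beilinson's resolution of the diagonal on $\bb P^1_R$ over $R$ (equivalently, the standard relative Koszul argument for projective bundles) then shows that $\mca O_{\bb P^1_R}(k-1)$ and $\mca O_{\bb P^1_R}(k)$ generate $\mb D^b(\bb P^1_R)$. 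Combined with admissibility, this yields the decomposition triangle for every $E$: take $E_2 = F G(E) \to E$ and let $E_1$ be its cone.

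\emph{Main obstacle.} The step I expect to be hardest is generation in the bounded coherent category $\mb D^b(\bb P^1_R)$, rather than in the category of perfect complexes, when $R$ is not regular. I would handle it directly. Every coherent sheaf on $\bb P^1_R$ is a successive extension of sheaves in $j_*\mr{Coh}(\bb P^1)$, by the $\mf m$-adic filtration. Each $j_*\mca O_x$ lies in the span by the triangle (\ref{eq:1123-1}), and each $j_*\mca O(n)$ lies in it by the triangles (\ref{eq:1123-2}) and (\ref{eq:1123-3}). Together these show that the thick closure of $\mb T(k-1)\cup\mb T(k)$ contains everything, and extension-closedness then gives the whole category. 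It remains to pass from thick closure to the triangulated subcategory generated by $\mb T(k-1)$ and $\mb T(k)$, i.e.\ to show that this extension-closed subcategory is closed under direct summands. This follows from the equivalences $\mb T(k-1), \mb T(k) \cong \mb D^b(R)$, since $\mb D^b(R)$ is idempotent complete.
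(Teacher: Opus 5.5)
Your semi-orthogonality step is the paper's (Lemma \ref{lem:260308} together with $H^*(\mca O(-1))=0$). The argument under ``Main obstacle'' is also the paper's generation argument: $\mb D^b(\bb P^1_R)$ is the extension closure of $j_*\mr{Coh}(\bb P^1)$, and $j_*\mr{Coh}(\bb P^1)$ lies in the triangulated hull of $j_*\mca O(k-1)$ and $j_*\mca O(k)$.

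The paper gets this last point in one line, from exactness of $j_*$ and $\mb D^b(\bb P^1)=\<\mca O(k-1),\mca O(k)\>$. That formulation also covers closed points of degree $d>1$, for which (\ref{eq:1123-1}) must be replaced by $j_*\mca O(k-d)\to j_*\mca O(k)\to j_*\mca O_x$.

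Two smaller points on your write-up:
\begin{enumerate}
\item Every coherent sheaf on $\bb P^1$ is a direct sum of line bundles and iterated extensions of skyscrapers. So you already land in the triangulated hull, and the idempotent-completeness discussion is unnecessary.
\item Your admissibility sentence needs one more line. A priori the cone of $FG(E)\to E$ lies only in $\mb T(k)^{\perp}$. You need generation to put it in $\mb T(k-1)$, for instance via the exact projection onto $\mb T(k)^{\perp}$, which kills $\mb T(k)$ and fixes $\mb T(k-1)$.
\end{enumerate}

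The paragraph headed ``a cleaner route'' is false as written and should be removed. By the preceding lemma, $\mb T(k)=F(\mb D^b(R))$. The thick hull of $\mca O_{\bb P^1_R}(k)=F(R)$, however, is only $F(\mr{Perf}\,R)$. Since $\mb k$ has infinite projective dimension over any Artinian local $R\neq\mb k$, and $F$ is fully faithful, $j_*\mca O(k)=F(\mb k)$ is not in it.

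So the inclusion $\mb T(k)\subset\<\mca O_{\bb P^1_R}(k)\>$ fails. Likewise the flat lifts classically generate only $\mr{Perf}(\bb P^1_R)\subsetneq\mb D^b(\bb P^1_R)$. For example, $j_*\mca O_x$ is not perfect, since by Lemma \ref{lem:260308} its self-extensions are nonzero in all large degrees.

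The Beilinson idea does work if you keep non-perfect coefficients. On $\bb P^1_R\times_R\bb P^1_R$ take the resolution $0\to\mca O(-k-1)\boxtimes\mca O(k-1)\to\mca O(-k)\boxtimes\mca O(k)\to\mca O_{\Delta}\to 0$. Together with flat base change, it gives for every $E\in\mb D^b(\bb P^1_R)$ a triangle
\[
F_k\bigl(\bb R\Gamma(E(-k))\bigr)\to E\to F_{k-1}\bigl(\bb R\Gamma(E(-k-1))\bigr)[1],\qquad F_n=\mca O_{\bb P^1_R}(n)\otimes_R(-).
\]
Its outer terms lie in $F_k(\mb D^b(R))=\mb T(k)$ and $F_{k-1}(\mb D^b(R))=\mb T(k-1)$. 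This is a genuinely different and shorter proof than the paper's. It produces the decomposition triangle directly, with no $\mf m$-adic reduction and no separate ``generation implies decomposition'' step.
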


\begin{proof}
By Lemma \ref{lem:260308} 
	we see that 
	the pair $(\mb T(k-1), \mb T(k))$ is semi-orthogonal. 
Thus it suffices to show 
that the pair 
generates $\mb D^b(\bb P^1_R)$. 
Since $\mca O(k)$ and $\mca O(k-1)$ generate 
the category $\mb D^b(\bb P^1)$, 
the full-sub abelian category $\mr{Coh}(\bb P^1)$ of $\mr{Coh}(\bb P^1_R)$  
is contained in 
the extension closure $\< j_* \mca O(k-1), j_* \mca O(k)  \>$ of 
$j_* \mca O(k-1)$ and $j_* \mca O(k)$. 
Since the extension closure $\< j_* \mr{Coh}(\bb P^1) \>$ is 
the whole category $\mb D^b(\bb P^1_R)$, 
the pair $(\mb T(k-1), \mb T(k))$ generates $\mb D^b(\bb P^1_R)$. 
\end{proof}


\begin{prop}\label{prop:gluing-k}
	Take a stability condition $\sigma =(Z, \mca P) \in \Stab{\mb D^b(\bb P^1_R)}$. 
	Assume that $j_* \mca O(k)$ and $j_* \mca O(k-1)$ are stable with 
	phases $\phi(j_* \mca O(k)) \geq  \phi(j_* \mca O(k-1)[1])$. 
Then there exist stability conditions $\tau_1 \in \Stab{\mb T(k-1) }$
 and $\tau_2 \in \Stab{\mb T(k) }$ 
 such that $\sigma = \gl{\tau_1}{\tau_2}$. 
\end{prop}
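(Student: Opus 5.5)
We construct $\tau_1,\tau_2$ by restricting $\sigma$ to the two pieces of the semiorthogonal decomposition $\mb D^b(\bb P^1_R)=\<\mb T(k-1),\mb T(k)\>$ of Lemma \ref{lem:1118}. Then we check that $\sigma$ coincides with the glued stability condition $\gl{\tau_1}{\tau_2}$ of Proposition \ref{CP2.2}.

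\emph{Step 1: the pieces.} Each $\mb T(k)$ is equivalent to $\mb D^b(R)$ via $F(-)=\mca O_{\bb P^1_R}(k)\otimes_R(-)$, and $j_*\mca O(k)$ corresponds to $\mb k$. By Proposition \ref{prop:key-prop}, together with its proof, every stability condition on $\mb D^b(R)$ has heart $\ms{mod}\,R$ up to shift, and $\mb k$ is its unique stable object. So $\tau_2$ is determined by one complex number and one phase. We set
\[
\tau_2=(Z|_{\mb T(k)},\ \mca Q_2), \qquad \mca Q_2(\phi_k)=F(\ms{mod}\,R),
\]
with $\phi_k=\phi(j_*\mca O(k))$, and define $\tau_1$ on $\mb T(k-1)$ in the same way using the phase $\phi_{k-1}$ of $j_*\mca O(k-1)$. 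These are locally finite. They are reasonable because $Z$ takes only the values $n\,Z(j_*\mca O(k))$ on semistable objects, since $K_0$ is generated by $\mb k$. Note that every object of $F(\ms{mod}\,R)$ is an iterated extension of $j_*\mca O(k)$, so it is $\sigma$-semistable of phase $\phi_k$. Hence $\mca Q_i(\phi)\subset\mca P(\phi)$ for all $\phi$.

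\emph{Step 2: the gluing hypotheses.} After rotating by $\widetilde{\mr{GL}}_2^+(\bb R)$ we may assume $\phi_k\in(0,1]$. Condition (1) of Proposition \ref{CP2.2} then asks for $\Hom^{\le 0}(\mca Q_1(0,1],\mca Q_2(0,1])=0$. Both sides are built from $j_*\mca O(k-1)[m]$ and $j_*\mca O(k)[n]$, so the condition reduces to $\Hom^{p}(j_*\mca O(k-1),j_*\mca O(k))$ for suitable $p$. By Lemma \ref{lem:260308} this group equals $\Hom_{\bb P^1}(\mca O(k-1),\mca O(k))^{\+ d_p}$, which is nonzero for every $p\ge0$. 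So the real content is a constraint on the shifts.

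The hypothesis $\phi_k\ge\phi_{k-1}+1$ means that the shift of $j_*\mca O(k-1)$ lying in $\mca P(0,1]$ sits in a degree $m$ with $\phi_{k-1}+m\le\phi_k\le 1$. Using $\phi_{k-1}+m>0$ one gets $m\ge 1$. Then the relevant Hom groups are $\Hom^{p}$ with $p\le -m<0$, and these vanish. Condition (2) is handled the same way with a suitable $a\in(0,1)$ close to $\phi_k$. This case analysis on the position of $\phi_k,\phi_{k-1}$ relative to $a$ is where I expect the fiddly work to be, especially the boundary case $\phi_k=\phi_{k-1}+1$.

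\emph{Step 3: identify $\sigma$ with the gluing.} Both $\sigma$ and $\gl{\tau_1}{\tau_2}$ have the same central charge, because $K_0$ is generated by $[j_*\mca O(k)]$ and $[j_*\mca O(k-1)]$. Consider the glued heart $\mca A$, which consists of the objects $E$ whose components $\tau_i(E)$ lie in $\mca Q_i(0,1]$. Using the decomposition triangle $E_2\to E\to E_1$, extension closure gives $\mca A\subset\mca P(0,1]$. Two hearts of bounded $t$-structures with one contained in the other are equal. So the two stability conditions agree, since a stability condition is determined by its heart and central charge.
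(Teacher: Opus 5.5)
Your proposal is correct and follows essentially the same route as the paper: you restrict $\sigma$ to $\mb T(k-1)$ and $\mb T(k)$, check the $\Hom^{\le 0}$-vanishing between the shifts of $j_*\mca O(k-1)$ and $j_*\mca O(k)$ that lie in $\mca P(0,1]$, and conclude that the glued heart, being contained in $\mca P(0,1]$, equals it. The paper normalizes with integers $m,\ell$ instead of shifting $\phi_k$ into $(0,1]$, and leaves the ``containment implies equality'' step implicit. One small slip: the inequality $\phi_{k-1}+m\le\phi_k$ need not hold, but $m\ge 1$ already follows from $\phi_{k-1}\le\phi_k-1\le 0$ together with $\phi_{k-1}+m>0$, and the same phase estimate gives condition (2) for every $a\in(0,1)$, so there is no delicate boundary case.
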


\begin{proof}
Let $j_n \colon \mb T(n) \to \mb D^b(\bb P^1_R)$ be the embedding functor. 
	Since $\mb T(n)$ is equivalent to $\mb D^b (R)$, 
for any $\Stab{\mb T(n)}$, $j_* \mca O(n)$ is stable. 
Hence the stability condition $\sigma$ is in $\Dom{j_k} \cap \Dom{j_{k-1}}$. 
by the assumption.  

Put $\tau _i = j_i^{-1}\sigma$ ($i=k-1, k$) which gives a stability condition 
on $\mb T(i)$. 
Recall that the pair $(\mb T(k-1), \mb T(k))$ gives an semiorthogonal 
decomposition on $\mb D^b(\bb P^1_R)$. 
We wish to show that $\sigma = \gl{\tau_{k-1}}{\tau_k}$. 
Since the central charge of $\sigma$ is clearly glued from 
those of $\tau_{k-1}$ and of $\tau_{k}$, 
it is enough to show that the heart of $\sigma$ is glued from 
hearts of $\tau_{k-1}$ and of $\tau_{k}$. 

Choose an integer $m$ satisfying 
$m < \phi(j_* \mca O(k-1)[1]) \leq m+1 $. 
By the assumption for phases, 
there exists a non-negative integer $ \ell $ satisfying 
$m < \phi (j_* \mca O(k))-\ell \leq m+1$. 

Since $\mb T(n)$ is equivalent to $\mb D^b(\Spec\, R)$, 
both objects 
$j_* \mca O(k-1)[1-m]$ and $j_* \mca O(k)[-m-\ell] $ determine 
the hearts $\mca A_{k-1}$ and $\mca A_{k}$ of bounded 
$t$-structures on $\mb T(k-1)$ and on $\mb T(k)$. 
Moreover 
we clearly see 
\[
\Hom^{\leq 0} (j_* \mca O(k-1)[1-m], j_* \mca O(k)[-m -\ell]) 
= \Hom^{\leq 0} (j_* \mca O(k-1), j_* \mca O(k)[-1 -\ell]) =0. 
\]
Thus, 
we obtain the gluing heart $\gl{\mca A_{k-1}}{\mca A_k}$. 
Since 
$j_* \mca O(k-1)[1-m]$ and $j_* \mca O(k)[-m-\ell] $ are in 
$\mca P_{\sigma}(0,1]$, 
the gluing heart coincides with $\mca P_{\sigma}(0,1]$. 
\end{proof}

\begin{lem}\label{lem:onaji-P^1}
For the functor 
	$j_* \colon \mb D^b(\bb P^1) \to \mb D^b(\bb P^1_{R})$, 
	we have
\[
\Dom{j_*} = \Stab{\mb D^b(\bb P^1_R)}. 
\]
\end{lem}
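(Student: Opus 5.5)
We have to show that for every $\sigma=(Z,\mca P)\in\Stab{\mb D^b(\bb P^1_R)}$, the induced pair $j_*^{-1}\sigma$ has the Harder--Narasimhan property on $\mb D^b(\bb P^1)$. The slices $\{x \mid j_*x\in\mca P(\phi)\}$ already satisfy (s1)--(s3), so only (s4) is at stake. Every object of $\mb D^b(\bb P^1)$ is a direct sum of shifts of $\mca O(n)$ and of torsion sheaves $T_x$. Lemma \ref{lem:direct-sum} reduces (s4) to these indecomposable pieces, and shifts are harmless. It therefore suffices to show that for each $\mca O(n)$ and each $T_x$, every $\sigma$-HN factor of $j_*\mca O(n)$ (resp.\ $j_*T_x$) lies in the essential image of $j_*$. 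Once this holds, the $\sigma$-HN filtration of $j_*E$ is the image of a filtration in $\mb D^b(\bb P^1)$: the factors are of the form $j_*(-)$, and since $\mb D^b(\bb P^1)$ is hereditary and every factor is a sum of shifted sheaves, we can lift the triangles as well.

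\emph{Dichotomy.} We split according to Proposition \ref{prop:1123gogo}.

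In case (2), every $j_*\mca O(n)$ and $j_*\mca O_x$ is stable. Line bundles are then semistable, so they are their own HN filtrations. For $T_x$, a successive extension of $\mca O_x$ in $\mr{Coh}(\bb P^1)$, the object $j_*T_x$ is an extension of copies of the semistable object $j_*\mca O_x$. It is therefore itself semistable of the same phase, and we are done.

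In case (1), there is a unique $k$ with $j_*\mca O(k)$, $j_*\mca O(k-1)$ stable and $\phi(j_*\mca O(k))\ge\phi(j_*\mca O(k-1)[1])$. Proposition \ref{prop:gluing-k} shows that $\sigma$ is glued along $(\mb T(k-1),\mb T(k))$. We then use the explicit triangles (\ref{eq:1123-1}), (\ref{eq:1123-2}) and (\ref{eq:1123-3}).
\begin{itemize}
\item For $n>k$, the HN filtration of $j_*\mca O(n)$ is $j_*\mca O(k)^{\+ n-k+1}\to j_*\mca O(n)\to j_*\mca O(k-1)[1]^{\+ n-k}$, or trivial in the equality case.
\item For $n<k-1$, we use (\ref{eq:1123-3}) instead.
\item For $\mca O_x$, we use (\ref{eq:1123-1}).
\end{itemize}
In each case the factors are direct sums of the stable objects $j_*\mca O(k)$ and $j_*\mca O(k-1)$, up to shift. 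Direct sums of copies of a stable object lie in one slice, and since these objects are in $\Im j_*$, the factors are in the image. When the phase inequality is an equality, the triangle exhibits $j_*E$ as semistable, which is also fine.

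\emph{Main obstacle.} The hard part is the non-reduced torsion sheaves $T_x$ of length $>1$ in case (1). Here I would choose a resolution $0\to\mca O(k-1)^{\+ r}\to\mca O(k)^{\+ r}\to T_x\to0$ on $\bb P^1$. This exists because $T_x\otimes\mca O(-k)$ is globally generated with $H^1=0$: use the Beilinson-type resolution with respect to the exceptional pair $(\mca O(k-1),\mca O(k))$. This gives a triangle $j_*\mca O(k)^{\+ r}\to j_*T_x\to j_*\mca O(k-1)[1]^{\+ r}$. If $\phi(j_*\mca O(k))>\phi(j_*\mca O(k-1)[1])$, this triangle is the HN filtration. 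If the phases are equal, $j_*T_x$ is semistable. For a general line bundle $\mca O(n)$ the same Beilinson resolution argument works uniformly, so I would present everything via the semiorthogonal decomposition of Lemma \ref{lem:1118}. Every $E\in\mr{Coh}(\bb P^1)$ has a triangle $j_*\mca O(k)\otimes V\to j_*E\to j_*\mca O(k-1)\otimes W[1]$, possibly with shifts coming from $H^1$. The gluing of Proposition \ref{prop:gluing-k} then makes this, or its refinement with the $H^1$ part, the HN filtration, with factors in $\Im j_*$. Checking that the shifted pieces have the right phase ordering is where I expect care to be needed.
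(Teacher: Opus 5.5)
Your proposal is correct and follows essentially the same route as the paper. You reduce to indecomposable sheaves via Lemma \ref{lem:direct-sum}, split by the dichotomy of Proposition \ref{prop:1123gogo}, and read off the HN filtrations from the triangles (\ref{eq:1123-1})--(\ref{eq:1123-3}) and a two-term triangle $j_*\mca O(k)^{\+ r}\to j_*T_x\to j_*\mca O(k-1)^{\+ r}[1]$. You are more explicit than the paper in three places: the Beilinson resolution of $T_x$ (where the paper only alludes to Lemma \ref{lem:0917knym}), the semistability of $j_*T_x$ in case (2), and the fact that the filtrations lift to $\mb D^b(\bb P^1)$ because they are push-forwards of exact sequences on $\bb P^1$; the detour through the gluing of Proposition \ref{prop:gluing-k} is not needed.
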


\begin{proof}
If $\sigma \in \Stab{\mb D^b(\bb P^1_R)}$ is given, 
the HN filtration of a direct sum is the direct sum 
of the HN filtrations by Lemma \ref{lem:direct-sum}. 
Since any object in $\Im j_*$ is a 
direct sum of shifts of sheaves on $\bb P^1$, 
it is enough to show that 
each $\sigma$-semistable factor of 
any sheaf on $\bb P^1$ is in $\Im j*$. 

This essentially follows from Lemma \ref{lem:0917knym}. 
If a sheaf on $\bb P^1$ is indecomposable, 
then it must be a torsion sheaf $T_x$
supported on a point $x \in \bb P^1$ or 
an invertible sheaf $L$ on $\bb P^1$. 

If $T_x$ is semistable, then nothing to prove. 
If not, by Proposition \ref{prop:1123gogo}, 
there exists $k \in \bb Z$ such that 
$j_* \mca O(k)$ and $j_* \mca O(k-1)[1]$ are stable with 
$\phi (j_* \mca O(k)) \geq \phi (j_* \mca O(k-1)[1])$. 
Hence the HN filtration of $j_* T_x$ is given by the form 
\[
\xymatrix{
j_* \mca O(k)^{\+ r} \ar[r]  & j_* T_x \ar[d] \\
& j_* \mca O(k-1)^{\+ r'}[1]\ar@{-->}[ul]
}
\]
Hence any semistable factor of $T_x$ is in the image $\Im j_*$.

Suppose that $E = j_* \mca L$, 
where $\mca L$ is a line bundle on $\bb P^1$. 
If $j_* \mca L$ is not semistable, 
Proposition \ref{prop:1123gogo} implies that 
there exists an integer $k \in \bb Z$ such that 
$j_* \mca O(k)$ and $j_* \mca O(k-1)[1]$ are stable and 
satisfy 
$\phi (j_* \mca O(k)) \geq \phi (j_* \mca O(k-1)[1])$. 
Hence the HN filtration of $j_* \mca L$ is of the form 
(\ref{eq:1123-2}) or (\ref{eq:1123-3}) and 
the semistable factors of $j_* \mca L$ all belong to 
$\Im j_*$. 
Therefore, we conclude that $\sigma \in \Dom{j_*}$. 
\end{proof}

\begin{lem}\label{lem:connected}
The space $\Stab{\mb D^b (\bb P^1_R)}$ is connected. 	
\end{lem}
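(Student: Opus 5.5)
We follow Okada's argument for $\bb P^1$ and connect every $\sigma \in \Stab{\mb D^b(\bb P^1_R)}$ by a path to a fixed reference point. By Proposition \ref{prop:1123gogo}, each $\sigma$ falls into exactly one of two types. In type (1) there is a $k$ with $j_*\mca O(k)$ and $j_*\mca O(k-1)$ stable and $\phi(j_*\mca O(k)) \geq \phi(j_*\mca O(k-1)[1])$. In type (2) every line bundle and every skyscraper sheaf on $\bb P^1$ is stable.

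For type (2), the plan is to imitate Proposition \ref{prop:key-1}. That proof only used $\Hom(j_*L, j_*\mca O_x)\neq 0$, $\Hom(j_*\mca O_x, j_*L[1])\neq 0$, the lifting of an ample line bundle, and the standard $t$-structure. None of these depends on the genus, so the same argument shows that $\sigma$ lies in the orbit $\widetilde{\mr{GL}}_2^+(\bb R)\cdot\sigma_{\mr{st}}$. This orbit is connected, since $\widetilde{\mr{GL}}_2^+(\bb R)$ is connected.

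For type (1), Proposition \ref{prop:gluing-k} gives $\sigma = \gl{\tau_1}{\tau_2}$ with $\tau_1 \in \Stab{\mb T(k-1)}$ and $\tau_2 \in \Stab{\mb T(k)}$. Since $\mb T(n)\simeq \mb D^b(R)$, Proposition \ref{prop:key-prop} together with Theorem \ref{thm:affine-case} shows that each $\Stab{\mb T(n)}\cong\bb C$. Here $\tau_i$ is determined by the central charge $z_i$ of $j_*\mca O(n)$ and by a real lift $\phi_i$ of its phase. The gluing condition of Proposition \ref{CP2.2} holds whenever $\phi(j_*\mca O(k)) > \phi(j_*\mca O(k-1)[1])$, because the relevant $\Hom^{\leq 0}$ between shifts of $j_*\mca O(k-1)$ and $j_*\mca O(k)$ vanish by Lemma \ref{lem:260308}.

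So the type (1) stability conditions for a fixed $k$ form the set
\[
U_k=\{(z_1,\phi_1,z_2,\phi_2)\mid \phi_2 \geq \phi_1+1\},
\]
which is connected. By continuity of gluing, or equivalently by continuity of the central charges and phases of the stable objects $j_*\mca O(k)$ and $j_*\mca O(k-1)$, it is a connected subset of $\Stab{\mb D^b(\bb P^1_R)}$. It remains to link $U_k$ to the type (2) locus. Starting from the boundary case $\phi_2 = \phi_1 + 1$, we decrease $\phi_2$ slightly, so that $\phi_1 < \phi_2 < \phi_1+1$ while the gluing conditions still hold. The result is still a glued stability condition in which $\mca O(k-1)$ and $\mca O(k)$ are semistable, and by Proposition \ref{prop:1123gogo} it must be of type (2). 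Hence every $U_k$ meets the type (2) locus, which is a single $\widetilde{\mr{GL}}_2^+(\bb R)$-orbit, and connectedness follows.

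The main obstacle is this last step. We must check that the glued stability conditions with $\phi_1<\phi_2<\phi_1+1$ exist, which amounts to verifying condition (2) of Proposition \ref{CP2.2} for a suitable $a$, and that they lie in the closure of $U_k$. For the existence, we would first try reducing the gluing conditions to vanishings of $\Hom^p(j_*\mca O(k-1), j_*\mca O(k))$ for small $p$, using Lemma \ref{lem:260308}. Alternatively, we could deform $\sigma_{\mr{st}}$ directly along the $\widetilde{\mr{GL}}_2^+(\bb R)$-action, using Bridgeland's deformation theorem, until $\phi(j_*\mca O(k))-\phi(j_*\mca O(k-1))$ reaches $1$.
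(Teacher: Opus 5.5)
Your plan is the paper's proof. Type (2) conditions lie in $\widetilde{\mr{GL}}_2^+(\bb R)\cdot\sigma_{\mr{st}}$ by Proposition \ref{prop:key-1} directly; it is stated for arbitrary $X_0$, so no imitation is needed. A type (1) condition $\sigma=\gl{\tau_{k-1}}{\tau_k}$ is joined to that orbit by moving $\tau_k$ in $\Stab{\mb T(k)}$ until $\phi(j_*\mca O(k))<\phi(j_*\mca O(k-1)[1])$. The paper takes existence and continuity of this glued path for granted at exactly the point you flag.

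Your first suggestion closes the existence part. By Lemma \ref{lem:260308}, $\Hom^p(j_*\mca O(k-1),j_*\mca O(k))\cong H^0(\mca O_{\bb P^1}(1))^{\+ d_p}$ with $d_0=1$. Hence, for $\phi_1<\phi_2<\phi_1+1$, the condition $\Hom^{\leq 0}(\mca P_1(t,t+1],\mca P_2(t,t+1])=0$ holds iff $[\phi_1,\phi_2)$ meets $t+\bb Z$. If you first rotate $\sigma$ by $\bb C$ so that $\phi_1\in\bb Z$, this holds for $t=0$ and for some $t\in(0,1)$ at every point of the path. Continuity of the path still rests on continuity of gluing, as it does in the paper.

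Your second suggestion cannot work through the $\widetilde{\mr{GL}}_2^+(\bb R)$-action alone. On the orbit of $\sigma_{\mr{st}}$ that action keeps $0<\phi(j_*\mca O(k))-\phi(j_*\mca O(k-1))<1$, so the difference never reaches $1$.
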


\begin{proof}
Let $\sigma \in \Stab{\mb D^b (\bb P^1_R)}$. 
If $\sigma$ satisfies the condition (2) of Proposition \ref{prop:1123gogo}, 
then $\sigma \in  \widetilde{\mr{GL}}_2^+ (\bb R) \cdot \sigma_{\mr{st}}$ by 
Proposition \ref{prop:key-1}.

Assume that $\sigma $ satisfies the condition (1)of Proposition \ref{prop:1123gogo}. 
Then there exists an integer $k \in \mb Z$ such that 
$j_* \mca O(k)$ and $j_* \mca O(k-1)$ are stable with 
$\phi (j_* \mca O(k)) \geq \phi (j_* \mca O(k-1)[1])$. 
By Proposition \ref{prop:gluing-k}, 
we can write 
$\sigma =\gl{\tau_{k-1}}{\tau_k}$ with 
$\tau _i \in \Stab{\mb T(i)}$.  
Using the action of $\widetilde{\mr{GL}}_2^+ (\bb R)$, 
deform $\tau_{k}$ to $\tilde{\tau}_{k}$ so that 
$j_* \mca O(k)$ and $j_* \mca O(k-1)[1]$ are stable with 
\[
\phi (j_* \mca O(k)) < \phi (j_* \mca O(k-1)[1]), 
\] 
for $\tilde{\sigma}=\gl{\tau_{k-1}}{\tilde{\tau}_k}$. 
Then $\tilde{\sigma}$ 
satisfies the condition (2) in Proposition \ref{prop:1123gogo}. 
Indeed, if this was not the case, 
then $j_* \mca O(k-1)$ and $j_{*}\mca O(k)$ 
would fail to be semistable
contradicting the assumption.  
Hence $\tilde{\sigma}$ lies in the connected component
$\widetilde{\mr{GL}}_2^+ (\bb R) \cdot \sigma_{\mr{st}}$. 
\end{proof}

\begin{rmk}\label{rmk:related-work}
If the Artinian local ring $R$ is the field $\mb k$, 
then the embedding $\bb P^1 \to \bb P^1_R$ is an isomorphism. 
Hence the results obtained so far can be regarded 
as an extension of Okada's results \cite{MR2219846} to Artin local rings.
\end{rmk}

\begin{thm}\label{thm:main2}
For the functor $j_* \colon  \mb D^b(\bb P^1) \to \mb D^b(\bb P^1_R)$, 
the map
\[
j_* ^{-1} \colon \Stab{\mb D^b(\bb P^1_R)} \to \Stab{\mb D^b(\bb P^1)}
\]
gives an isomorphism of complex manifolds. 
\end{thm}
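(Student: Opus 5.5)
By Lemma \ref{lem:onaji-P^1}, every $\sigma \in \Stab{\mb D^b(\bb P^1_R)}$ lies in $\Dom{j_*}$. Hence $j_*^{-1}$ is defined on the whole space, and it is continuous by Lemma \ref{lm:MMS}. The plan has three steps: prove injectivity, prove surjectivity, and then upgrade the resulting continuous bijection to an isomorphism of complex manifolds. As in Theorem \ref{thm:main1}, we also use that $j_*^{-1}$ is $\widetilde{\mr{GL}}^+_2(\bb R)$-equivariant and commutes with the forgetful map to central charges via the identification $K_0(\bb P^1_R)\cong K_0(\bb P^1)$ given by Quillen's theorem.

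\emph{Injectivity.} We apply Proposition \ref{prop:injectivity} with $\mca A = \mr{Coh}(\bb P^1_R)$ and $\mca A_0 = \mr{Coh}(\bb P^1)$. Its hypotheses are checked as follows.
\begin{itemize}
\item $\mr{Coh}(\bb P^1)$ is almost hereditary by Proposition \ref{prop:almost-hereditary}.
\item It is exactly the subcategory of sheaves annihilated by $\mf m$, so it is maximally annihilated.
\item $j_*$ satisfies (Ind). Since every object of $\mb D^b(\bb P^1)$ is a direct sum of shifted sheaves, it suffices to treat sheaves $E_0, F_0$. Lemma \ref{lem:260308} with $d_0=1$ exhibits $\Hom_{\bb P^1}(E_0,F_0)$ as a direct summand of $\Hom_{\bb P^1_R}(j_*E_0,j_*F_0)$, and shifts are handled by the same formula.
\end{itemize}

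\emph{Surjectivity.} Take $\tau \in \Stab{\mb D^b(\bb P^1)}$. By Okada's classification \cite{MR2219846}, or equivalently Proposition \ref{prop:1123gogo} with $R=\mb k$ (Remark \ref{rmk:related-work}), exactly one of two cases holds.
\begin{itemize}
\item All line bundles and skyscrapers are $\tau$-stable. Then $\tau$ is in the $\widetilde{\mr{GL}}^+_2(\bb R)$-orbit of the standard stability condition on $\bb P^1$, by Proposition \ref{prop:key-1} with $R=\mb k$. Since $j_*^{-1}\sigma_{\mr{st}}$ is the standard stability condition and $j_*^{-1}$ is equivariant, $\tau$ is in the image.
\item $\tau = \gl{\tau'_{k-1}}{\tau'_k}$ is glued from the exceptional pair $(\mca O(k-1),\mca O(k))$, with $\phi(\mca O(k))\ge\phi(\mca O(k-1)[1])$. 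Since each $\mb T(n)\simeq\mb D^b(R)$, Proposition \ref{prop:key-prop} lifts $\tau'_{n}$ to $\tau_n\in\Stab{\mb T(n)}$, with $j_*\mca O(n)$ stable of the same central charge and phase. Next, Lemma \ref{lem:1118} together with Lemma \ref{lem:260308} gives the $\Hom^{\le 0}$-vanishing that Proposition \ref{CP2.2} needs, so we glue $\sigma=\gl{\tau_{k-1}}{\tau_k}$ on $\bb P^1_R$. Each $\tau_n$ is reasonable because its central charge takes values in the discrete set $\bb Z_{>0}\cdot Z(\mca O(n))$ on semistables. For the vanishing: the phase condition makes $\Hom^{\le 0}(\mca P_1(0,1],\mca P_2(0,1])$ reduce to $\Hom^{\le 0}(j_*\mca O(k-1)[a], j_*\mca O(k)[b])$ with $a> b$. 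Lemma \ref{lem:260308} computes this as $\Hom^{p}$ with $p\le -1$ for a pair of sheaves, which vanishes. Condition (2) follows by the same computation after a small rotation. Finally, the hearts of $j_*^{-1}\sigma$ and $\tau$ are both glued from the same pair with the same phases, and Proposition \ref{prop:gluing-k} applies on both sides, so $j_*^{-1}\sigma=\tau$.
\end{itemize}

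\emph{Upgrading to an isomorphism.} The map $j_*^{-1}$ is a continuous bijection, and it commutes with the local homeomorphisms $\Stab\to\Hom(K_0,\bb C)$ on both sides (the $K_0$ groups being identified). Hence it is a local homeomorphism, and therefore a homeomorphism and a biholomorphism. Connectedness from Lemma \ref{lem:connected} is not needed here but is consistent with this.

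The main obstacle is the surjectivity step in the non-geometric case. There one must check that the lifted gluing data satisfy the hypotheses of Proposition \ref{CP2.2} on $\bb P^1_R$, where extra higher $\Hom$'s exist (the $d_p$ terms). I would handle this through the explicit formula of Lemma \ref{lem:260308}. The key point is that all nonvanishing $\Hom^p(j_*\mca O(k-1),j_*\mca O(k))$ occur in degrees $p\ge 0$, which is exactly what the phase inequality requires.
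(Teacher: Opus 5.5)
Your proposal is correct and follows the paper's proof. It gets $\Dom{j_*}=\Stab{\mb D^b(\bb P^1_R)}$ from Lemma \ref{lem:onaji-P^1} and injectivity from Proposition \ref{prop:injectivity}. Surjectivity is split into the geometric case (the $\widetilde{\mr{GL}}_2^+(\bb R)$-orbit of the standard stability condition) and the glued case (lift the two factors via Proposition \ref{prop:key-prop} and reglue on $\bb P^1_R$). Your explicit check of the hypotheses of Proposition \ref{CP2.2} is more careful than the paper's appeal to Proposition \ref{prop:gluing-k}.

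The one difference is the final step: the paper invokes connectedness (Lemma \ref{lem:connected}), while you use that $j_*^{-1}$ intertwines the central charge maps. Your version is the cleaner justification. To genuinely drop connectedness, though, you must also show that every component of $\Stab{\mb D^b(\bb P^1_R)}$ is full, since Bridgeland's theorem a priori only gives a local homeomorphism onto a subspace $V(\Sigma)$. This holds because every $\sigma$-stable object is $j_*$ of a $j_*^{-1}\sigma$-semistable object, so $\|W\|_\sigma\le\|W\circ j_*\|_{j_*^{-1}\sigma}<\infty$, using that $\Stab{\mb D^b(\bb P^1)}$ is full.
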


\begin{proof}
By Lemma \ref{lem:onaji-P^1}, 
we have $\Dom{j_*}= \Stab{\mb D^b(\bb P^1_R)}$. 
Moreover, $\Stab{\mb D^{b}(\bb P^{1}_{R} ) }$ is connected by Lemma \ref{lem:connected}. 
To complete the proof, it suffices to show that 
the map  $j_{*}^{-1}$ is bijective 
since both $\Stab{\mb D^{b} ( \bb P^1 )    }$ and 
$\Stab{\mb D^b(\bb P^1_R)}$ are connected complex 
manifolds. 
Injectivity has already been established in Lemma~\ref{lem:injective},
so it remains to prove the surjectivity.

Let $\sigma \in \Stab{\mb D^b(\bb P^1)}$. 
By Proposition \ref{prop:key-1} applied to the case $R=\mb k$, 
it is enough to discuss the following two cases: 
\begin{enumerate}
	\item any line bundle and any skyscraper sheaf is stable. 
	\item there is an integer $k \in \bb Z$ such that 
	$\mca O(k)$ and $\mca O(k-1)[1]$ are stable and satisfy $\phi(\mca O(k)) \geq \phi (\mca O(k-1)[1] )$
\end{enumerate}
In both cases, we show that the stability condition 
$\sigma$ is in the image $\Im j_*^{-1}$.

Suppose that case (1) holds. 
Applying Proposition \ref{prop:key-1} to the case $R=\mb k$, 
we may assume that $\sigma$ is 
the standard stability condition on $\mb D^b(\bb P^1)$ up to the $
\widetilde{\mr{GL}}_2^+(\bb R) $-action. 
Since the standard stability condition 
$\sigma_{\mr{st}}$ on $\mb D^b (\bb P^1_R)$ maps to 
the stability condition $\sigma$ via the map $j_*^{-1}$. 
Hence $\sigma $ is in $\Im j_*^{-1}$.

Suppose that case (2) holds. 
By Proposition \ref{prop:gluing-k} to the case $R= \mb k$, 
there exist $\tau_1 \in \Stab{ \< \mca O(k-1) \>}$ and 
$\tau_2 \in \Stab{\< \mca O(k) \>}$
such that $\sigma =\gl{\tau_1}{\tau_2}$. 
Note that the functor $j_* \colon \mb D^b (\bb P^1) \to \mb D^b (\bb P^1_R)$ 
sends $\< \mca O(n) \>$ to $\< j_* \mca O(n) \> = \mb T(n)$ for any $n \in \bb Z$. 
Since the category $\mb T(n)$ is equivalent to $\mb D^b(R)$, 
we have $\tilde {\tau}_1 \in \Stab{\mb T (k-1) }$ and 
$\tilde {\tau }_2 \in \Stab{\mb T(k) }$ such that 
$j_* ^{-1} \tilde{\tau} _i = \tau _i$ ($i=1,2$) by Proposition \ref{prop:key-prop}.

Then we can glue $\tilde{\tau}_1$ and $\tilde{\tau}_2$ as follows. 
Put $\tilde {\tau}_i = (Z_i, \mca A_i)$ and 
the slicing of $\tilde{\tau}_i$ is denoted by $\mca P_i$ ($i=1,2$). 
Indeed, 
$j_* \mca O(k)$ and $j_* \mca O(k-1)$ are stable with respect to 
$\tilde {\tau}_1$ and to $\tilde{\tau}_2$. 
Let $\phi_k$ and $\phi_{k-1}$ be the phase of 
$j_* \mca O(k)$ and of $j_* \mca O(k-1) $. 
By Proposition \ref{prop:gluing-k}, 
we obtain the gluing stability condition $\tilde{\sigma}=
\gl{\tilde{\tau_1}}{\tilde{\tau_2}}$. 
Then the stability condition $\tau = \gl{\tilde{\tau}_1}{\tilde{\tau}_2}$ 
satisfies 
\[
j_*^{-1} \tau = 
j_*^{-1} 
\gl{\tilde{\tau}_1}{\tilde{\tau}_2} = 
\gl{j_*^{-1} \tilde{\tau}_1}{j_*^{-1}\tilde{\tau}_2} = \sigma. 
\]
This gives the proof of the surjectivity of $j_*^{-1}$ and hence 
of the theorem. 
\end{proof}

\section{Autoequivalence groups}

As in Section \ref{sec:4}, let $X_0$ be a smooth 
projective curve over a field $\mb k$ and 
let $\mca X$ be an 
infinitesimal deformation of $X_0$ over 
an Artinian local ring $(R, \mf m)$. 
The aim of this section is to show that 
the autoequivalence group $\Aut{\mb D^b(\mca X)}$ naturally 
acts on $\mb D^b(X_0)$.

\begin{lem}\label{lem:0211}
Let $\Phi \in \Aut {\mb D^b(\mca X)}$ and 
suppose that 
\[
\xymatrix{
	A \ar[r] & B \ar[r] & C \ar[r] & A[1]
}
\]
is a distinguished triangle in $\mb D^b(\mca X)$ with 
$A, B, C \in \mr{Coh}(X_0)$. 
If $\Phi(A)$ and $\Phi(C)$ lie in $\mr{Coh}(X_0)$, 
then so des $\Phi(B)$. 
\end{lem}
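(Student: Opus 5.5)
The plan is to show two things: that $\Phi(B)$ is a coherent sheaf on $\mca X$ concentrated in degree $0$, and that it is annihilated by $\mf m$. Together these say that $\Phi(B)$ lies in $\mr{Coh}(X_0)$.

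\emph{Step 1: $\Phi(B)$ is a sheaf.} We have the distinguished triangle $\Phi(A) \to \Phi(B) \to \Phi(C) \to \Phi(A)[1]$, and both outer terms lie in the heart $\mr{Coh}(\mca X)$ of the standard $t$-structure. The long exact sequence of cohomology gives $H^i(\Phi(B))=0$ for $i \neq 0$. Hence $\Phi(B)$ is the middle term of a short exact sequence $0 \to \Phi(A) \to \Phi(B) \to \Phi(C) \to 0$ in $\mr{Coh}(\mca X)$.

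\emph{Step 2: reduce to the centre of the category.} It remains to show $\mu_\epsilon=0$ on $\Phi(B)$ for every $\epsilon \in \mf m$. Since multiplication by $\epsilon$ is a natural transformation $\mu_\epsilon \colon \1 \to \1$ of $\mb D^b(\mca X)$, the conjugate
\[
\nu := \Phi^{-1} \circ \mu_\epsilon \circ \Phi
\]
is again a natural endo-transformation of the identity functor, with components $\nu_E = \Phi^{-1}(\mu_{\epsilon,\Phi(E)})$. Moreover $\Phi(\nu_B)=\mu_{\epsilon,\Phi(B)}$, so it suffices to prove $\nu_B = 0$. The same identity gives $\nu_A = 0$ and $\nu_C = 0$, because $\Phi(A)$ and $\Phi(C)$ are $\mca O_{X_0}$-modules.

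\emph{Step 3: identify $\nu$ on sheaves.} Restricting $\nu$ to the full subcategory $\mr{Coh}(\mca X)$ gives a natural endo-transformation of $\1_{\mr{Coh}(\mca X)}$. I would use the standard fact that such transformations are exactly the elements of $H^0(\mca X, \mca O_{\mca X})$ acting by multiplication. Since $\pi$ is flat and proper with geometrically connected reduced fibre $X_0$, we have $H^0(\mca X, \mca O_{\mca X}) = R$, so $\nu|_{\mr{Coh}(\mca X)} = \mu_r$ for a unique $r \in R$. This identification is the main point needing care; I would justify it by evaluating $\nu$ on the quotients $\mca O_{\mca X} \to F$ and using naturality.

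\emph{Step 4: conclude.} On any $\mca O_{X_0}$-module, $\mu_r$ acts as multiplication by the residue $\bar r \in \mb k$. If $A \neq 0$, then $0=\nu_A=\bar r \cdot \1_A$ forces $\bar r = 0$, and therefore $\nu_B = \bar r \cdot \1_B = 0$. The same argument works with $C$ if $C \neq 0$. If $A = C = 0$, then $B = 0$ and there is nothing to prove.

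The main obstacle is Step 3, namely that every natural transformation of the identity of $\mr{Coh}(\mca X)$ is multiplication by a global function and that $\Gamma(\mca O_{\mca X}) = R$. I would try the cohomology-and-base-change argument first: $\pi_*\mca O_{\mca X}$ is a finite flat $R$-module whose reduction is $H^0(\mca O_{X_0}) = \mb k$, so it is free of rank $1$.
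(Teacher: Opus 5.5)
Your proof is correct. Step 1 is exactly the paper's first step; the two arguments differ only in how they show that $\mf m$ annihilates $\Phi(B)$.

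The paper does this in one line. $B$ is an $\mca O_{X_0}$-module, so $\mu_{\epsilon,B}=0$, and since it takes $\Phi$ to be $R$-linear, $\mu_{\epsilon,\Phi(B)}=\Phi(\mu_{\epsilon,B})=0$. That step does not even use the hypotheses on $\Phi(A)$ and $\Phi(C)$.

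You make no linearity assumption on $\Phi$:
\begin{itemize}
\item You conjugate $\mu_\epsilon$ by $\Phi$ to get a central transformation $\nu$.
\item You identify its restriction to sheaves with multiplication by a global function.
\item You use $\nu_A=0$ with $A\neq 0$ (or the same for $C$) to force that function to vanish on $X_0$, hence $\nu_B=0$.
\end{itemize}
This makes the lemma valid for arbitrary exact autoequivalences, for instance those induced by automorphisms of $\mca X$ that act nontrivially on $R$. The notation $\Aut{\mb D^b(\mca X)}$ in the paper does not explicitly exclude these, although the paper's proof takes $R$-linearity for granted. The price is the computation of the centre.

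Two small remarks on your Step 3.
\begin{itemize}
\item Evaluating $\nu$ on morphisms $\mca O_{\mca X}\to F$ only reaches globally generated sheaves. Use instead morphisms from $\mca L^{-n}$ for an ample $\mca L$ lifting one on $X_0$, as in the paper. Naturality with respect to sections of $\mca L^{n}$ shows that $\nu$ acts on $\mca L^{-n}$ by the same function as on $\mca O_{\mca X}$, and surjections $(\mca L^{-n})^{\oplus N}\to F$ then finish the argument.
\item You can skip $H^0(\mca X,\mca O_{\mca X})=R$ entirely. Restrict $\nu$ to the full subcategory $\mr{Coh}(X_0)$, whose centre is the field $H^0(X_0,\mca O_{X_0})$. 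Then $\nu$ acts there by a scalar, and that scalar must be $0$ because $\nu_A=0$ and $A\neq 0$.
\end{itemize}
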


\begin{proof}
To complete the proof, it suffices to show the following: 
	\begin{enumerate}
	\item $\Phi(B) \in \mr{Coh}(\mca X)$, 
	\item the maximal ideal $\mf m$ annihilates $\Phi(B)$. 
\end{enumerate}
The first assertion follows immediately form the assumption on $A$ and $C$. 
Since $B\in \mr{Coh}(X_0)$,  
the ideal $\mf m$ annihilate $B$. 
Because $\Phi$ is $R$ linear, 
$\mf m$ also annihilates $\Phi(B)$. 
\end{proof}

\begin{lem}\label{lem:0304}
Assume that an equivalence $\Phi \in \Aut{\mb D^b(\mca X)}$ satisfies
\begin{equation*} \label{eq:0304}
\Phi (\mr{Coh}(X_0)) \subset 
\mr{Coh}(X_0)[k]
\end{equation*}
for some $k\in \bb Z$. 	
Then $\Phi$ preserves 
the subcategory $\Im j_*$ of $\mb D^b(\mca X)$. 
\end{lem}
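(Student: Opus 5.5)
The plan is to reduce to indecomposable objects of $\Im j_*$ and then use that $\Phi$ is an exact functor commuting with shifts. Since $X_0$ is a smooth projective curve, every object of $\mb D^b(X_0)$ is a finite direct sum of shifts of sheaves on $X_0$. Hence every object of $\Im j_*$ is isomorphic to $\bigoplus_i j_*F_i[n_i]$ with $F_i \in \mr{Coh}(X_0)$. The exact equivalence $\Phi$ commutes with finite direct sums and shifts, so
\[
\Phi\Bigl(\bigoplus_i j_*F_i[n_i]\Bigr) \cong \bigoplus_i \Phi(j_*F_i)[n_i].
\]
It therefore suffices to show that $\Phi(j_*F) \in \Im j_*$ for every coherent sheaf $F$ on $X_0$.

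By hypothesis, $\Phi(j_*F) \in \mr{Coh}(X_0)[k]$, i.e.\ $\Phi(j_*F) \cong G[k]$ for some sheaf $G$ on $\mca X$ annihilated by $\mf m$. We identify such a $G$ with $j_*G_0$, where $G_0 = G$ regarded as an $\mca O_{X_0}$-module. This identification holds because $\mf m$ annihilates $G$, and $\mca O_{X_0} = \mca O_{\mca X}/\mf m\mca O_{\mca X}$ since $X_0 = \mca X\times_{\Spec R}\Spec \mb k$. Thus $\Phi(j_*F)\cong j_*(G_0[k])$ lies in $\Im j_*$. Note that $\Im j_*$ here means the essential image, so it is automatically closed under isomorphism, and also under shifts and finite direct sums because $j_*$ is exact and additive.

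I do not expect a genuine obstacle; the only point needing care is the meaning of the hypothesis. I read $\Phi(\mr{Coh}(X_0))\subset \mr{Coh}(X_0)[k]$ with $\mr{Coh}(X_0)$ viewed inside $\mb D^b(\mca X)$ via $j_*$, i.e.\ as the full subcategory of $\mr{Coh}(\mca X)$ of sheaves annihilated by $\mf m$, the maximally $\mf m$-annihilated subcategory $\mr{Coh}(\mca X)_{\mf m}$. Under this reading the identification above with $j_*$ of an $\mca O_{X_0}$-module is exactly what is needed. If one wants it, a remark could add that $\Phi$ also preserves $\Im j_*$ in the sense of objects whose cohomology sheaves lie in $\mr{Coh}(X_0)$; this follows because such objects are already direct sums of shifts of sheaves in $\Im j_*$.
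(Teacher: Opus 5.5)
Your argument establishes only the object-level statement: each $\Phi(j_*F)$ is isomorphic to an object of $\Im j_*$. That part is correct. But once you read $\Im j_*$ as an isomorphism-closed essential image, the lemma becomes a restatement of its own hypothesis, and that is not what it asserts in this paper. As the paper stresses, $j_*\colon \mb D^b(X_0)\to\mb D^b(\mca X)$ is faithful but not full. So $\Im j_*$ is a non-full subcategory whose morphisms are those of the form $j_*(f)$, and ``$\Phi$ preserves $\Im j_*$'' means that $\Phi$ restricts to a functor on it. In other words, after identifying $\Phi(j_*\mca F)\cong j_*\mca F'[k]$, morphisms $j_*(f)$ must go to morphisms of the form $j_*(f')$. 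This is what is needed later to obtain the homomorphism $\Aut{\mb D^b(\mca X)}\to\Aut{\mb D^b(X_0)}$. It is also the entire content of the paper's proof, which disposes of objects in one line (``by assumption'') and then treats morphisms. Your proposal never addresses morphisms, so the substantive step is missing.

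\textbf{What has to be supplied.}
\begin{enumerate}
\item Reduce to indecomposables $j_*\mca F[m]$ and $j_*\mca G[n]$ with $\mca F,\mca G$ sheaves. Morphisms of $\Im j_*$ between them come from $\Hom^{n-m}_{X_0}(\mca F,\mca G)$, so only $n-m\in\{0,1\}$ matters, and additivity handles direct sums.
\item Degree $0$ is automatic: $\mr{Coh}(X_0)\subset\mr{Coh}(\mca X)$ is full, so $\Hom_{\mca X}(j_*\mca F',j_*\mca G')=\Hom_{X_0}(\mca F',\mca G')$.
\item Degree $1$ is where non-fullness bites. By Lemma \ref{lem:260308}, $\Hom^1_{\mca X}(j_*\mca F,j_*\mca G)$ contains, besides $\Hom^1_{X_0}(\mca F,\mca G)$, an extra summand $\Hom_{X_0}(\mca F,\mca G)^{\oplus d_1}$. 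A priori $\Phi$ could send a class of the first kind to one with a component in the second.
\item The key observation is that $e\colon j_*\mca F\to j_*\mca G[1]$ lies in $\Im j_*$ if and only if the middle term $B$ of the corresponding extension $0\to j_*\mca G\to B\to j_*\mca F\to 0$ is annihilated by $\mf m$.
\item Apply $\Phi$ to the triangle $j_*\mca G\to B\to j_*\mca F\xrightarrow{e}j_*\mca G[1]$ and use $\Phi(B)\in\mr{Coh}(X_0)[k]$ (the hypothesis, or Lemma \ref{lem:0211}). This shows that $\Phi(e)[-k]$ is the class of an extension of $\mca O_{X_0}$-modules, hence lies in $\Im j_*$.
\end{enumerate}

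\textbf{Your closing remark is false.} An object whose cohomology sheaves all lie in $\mr{Coh}(X_0)$ need not be a sum of shifted sheaves. Take $R=\mb k[\epsilon]/(\epsilon^2)$ and the cone of $\epsilon\colon\mca O_{\mca X}\to\mca O_{\mca X}$. Both of its cohomology sheaves are isomorphic to $j_*\mca O_{X_0}$, and it is perfect. On the other hand, $j_*\mca O_{X_0}$ is not perfect, since Lemma \ref{lem:0220} gives $\mca Ext^p_{\mca X}(j_*\mca O_{X_0},j_*\mca O_{X_0})\neq 0$ for all $p\geq 0$. So the cone does not split. This is the same extra-$\Hom$ phenomenon that makes the morphism part of the lemma non-trivial.
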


\begin{proof}
By assumption, 
$\Phi$ sends every object in $\Im j_*$ to 
an object in $\Im j_*$. 
Thus it remains to show that 
$\Phi$ also preserves the morphisms in $\Im j_*$. 

Let $A$ and $B$ be indecomposable objects 
in $\Im j_*$. 
Then there exist sheaves $\mca F$ and $\mca G$ on $X_0$ 
and integers $m$ and $n$ such that 
\[
A= j_* \mca F[m] \text{ and } B=j_* \mca G[n].  
\]
Set $\Phi(A)=j_* \mca F'[m+k]$ and 
$\Phi(j_* B)= j_* \mca G'[n+k]$. 
Then we have an isomorphism
\begin{equation}
\label{eq:0310}	
\Hom_{\mca X}^p(A, B) \cong 
\Hom_{\mca X}^{p+n-m}(j_* \mca F, j_* \mca G) \cong 
\Hom_{\mca X}^{p+n-m}(j_* \mca F', j_* \mca G').  
\end{equation}

If a non-zero morphism 
$
e \in \Hom_{\mca X}^p(A, B) \cong 
\Hom_{\mca X}^{p+m-n}(j_* \mca F, j_* \mca G)
$
is in $\Im j_*$, 
the integer $p+n-m$ must be $0$ or $1$.

Suppose $p+m-n=0$. 
Since the right hand side 
in (\ref{eq:0310}) is isomorphic to 
$\Hom_{X_0}^0(\mca F', \mca G')$, 
the image $\Phi(e)$ is in $\Im j_*$. 
Suppose $p+m-n=1$. 
Lemma \ref{lem:0211} implies 
that $\Phi (e)$ is in $\Im j_*$. 
\end{proof}

\subsection{The case $g(X_0)=0$}

\begin{prop}\label{prop:linecase}
Let $\Phi \in \mb D^b(\bb P^1_R)$. 
Then $\Phi$ preserve the image 
of $j_* \colon \mb D^b(\bb P^1) \to \mb D^b(\bb P^1_R)$. 
\end{prop}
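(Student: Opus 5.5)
The plan is to use the fact that $\Phi$ acts on $\Stab{\mb D^b(\bb P^1_R)}$, together with Lemma \ref{lem:class-stable-P1}. Every object that is stable for some stability condition is, up to shift, either $j_*\mca O_x$ or $j_*\mca O(k)$. Since $\Phi$ sends $\sigma$-stable objects to $\Phi\sigma$-stable objects, $\Phi$ permutes the set
\[
\mca S=\{ j_*\mca O_x[n],\ j_*\mca O(k)[n] \mid x\in \bb P^1,\ k,n\in \bb Z\}
\]
of objects that are stable for some $\sigma$. The object $j_*\mca O(k)$ is stable for $\sigma_{\mr{st}}$, and it is also stable in the glued stability conditions of Proposition \ref{prop:gluing-k}. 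Likewise $j_*\mca O_x$ is stable for $\sigma_{\mr{st}}$. Hence every element of $\mca S$ really occurs, and $\Phi(\mca S)=\mca S$. This already shows that $\Phi$ maps $j_*$ of skyscrapers and line bundles into $\Im j_*$ up to shift.

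Next I would extend this from $\mca S$ to all objects of $\Im j_*$. Every indecomposable sheaf on $\bb P^1$ is a line bundle or a torsion sheaf $T_x$, and $T_x$ is an iterated extension of copies of $\mca O_x$. Lemma \ref{lem:0211} handles extensions, but it requires the images to lie in a single shift of $\mr{Coh}(\bb P^1)$. I would therefore first arrange, after composing with a shift, that $\Phi(\mr{Coh}(\bb P^1))\subset \mr{Coh}(\bb P^1)[k]$, and then invoke Lemma \ref{lem:0304}, which yields exactly that $\Phi$ preserves $\Im j_*$, including morphisms.

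To obtain the uniform shift, I would use the $\Hom$ computations of Lemma \ref{lem:260308}. For objects $P,Q\in\mca S$, $\Hom^p_{\bb P^1_R}(P,Q)$ is controlled by $d_p$ and by $\Hom^{0,1}_{\bb P^1}$, and $\Phi$ preserves these graded dimensions. I would use this to show that $\Phi$ cannot send two objects with nonzero $\Hom^0$ between them to objects living in shifts that differ by anything other than the shift they had originally. For example, $\Hom(j_*\mca O(k),j_*\mca O_x)\neq 0$ is concentrated in the degrees dictated by $d_p$. The lowest nonvanishing degree of $\bb R\Hom$ is an invariant, and this pins down the relative shifts between images. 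The connectivity of the graph on sheaves whose edges are nonzero $\Hom^0$ between line bundles and skyscrapers then propagates a single $k$.

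The main obstacle is that $\Phi$ might mix the two types, for instance by sending $\mca O_x$ to a line bundle as a spherical-type twist would. On $\bb P^1_{\mb k}$, however, all autoequivalences are standard and in particular preserve $\mr{Coh}$ up to shift. Here I would rule out mixing by an endomorphism/Ext invariant. The family $\{j_*\mca O_x\}$ is one-parameter with pairwise $\bb R\Hom=0$ for $x\neq y$, whereas the line bundles form a discrete family with $\bb R\Hom(j_*\mca O(k),j_*\mca O(l))\neq 0$ whenever $l\neq k-1$. So $\Phi$ must send skyscrapers to shifted skyscrapers. The argument above then gives a uniform shift on skyscrapers, and the line bundles are pinned to the same shift via the triangle (\ref{eq:1123-1}).
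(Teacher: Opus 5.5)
Your proposal is correct. Its second half is the paper's argument: the uniform shift from the graded $\Hom$ between a line bundle and a skyscraper (Lemma \ref{lem:260308}), the passage to torsion sheaves via Lemma \ref{lem:0211}, and the conclusion via Lemma \ref{lem:0304}.

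You differ from the paper in two places.

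\textbf{The invariant class.} You get the $\Phi$-invariant class of shifted skyscrapers and line bundles as ``objects stable for some $\sigma$'', using the action on $\Stab{\mb D^b(\bb P^1_R)}$ together with $\sigma_{\mr{st}}$. The paper instead takes the intrinsic class $\mb S$ of objects $S$ with $\Hom^{<0}(S,S)=0$ and every non-zero endomorphism invertible. Any equivalence visibly preserves $\mb S$, and Lemma \ref{lem:0213} classifies it. Since Lemma \ref{lem:class-stable-P1} is itself deduced from Lemma \ref{lem:0213}, your detour through stability conditions is harmless but unnecessary.

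\textbf{Separating the two types.} The paper uses a one-object invariant. By Lemma \ref{lem:260308}, $\dim\Hom^1(S,S)$ is $d_1$ for $j_*\mca O(k)$ and $d_1+1$ for $j_*\mca O_x$. This is unchanged by shifts and rules out mixing in both directions at once. Your orthogonality argument uses coarser data, namely only the (non-)vanishing of $\bb R\Hom$, and it also works. However, it must be run as an explicit case check rather than a ``one-parameter versus discrete'' heuristic:
\begin{enumerate}
\item For $y\neq x$, the images of $j_*\mca O_x$ and $j_*\mca O_y$ are mutually $\bb R\Hom$-orthogonal.
\item A line bundle and a skyscraper never are, since $\Hom^0(\mca O(\ell),\mca O_z)\neq 0$.
\item Two line bundles never are, since $\bb R\Hom(j_*\mca O(a),j_*\mca O(b))=0$ forces $b=a-1$.
\end{enumerate}
You then apply the same argument to $\Phi^{-1}$ to conclude that line bundles go to line bundles.

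\textbf{Order of steps.} Type separation has to come before the shift comparison. The lowest-degree comparison pins relative shifts only for (line bundle, skyscraper) pairs whose images are already known to be of the same types. For two line bundles, graded dimensions do not determine the shift: $\Hom^p(j_*\mca O(k),j_*\mca O(k+1))$ and $\Hom^p(j_*\mca O(\ell),j_*\mca O(\ell-3)[1])$ both have dimension $2d_p$.

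\textbf{The triangle step.} Write $\Phi(j_*\mca O(k))=j_*\mca O(\ell_k)[m_k]$ and $\Phi(j_*\mca O_x)=j_*\mca O_y[n]$. The triangle (\ref{eq:1123-1}) for a single $k$ gives only $m_k\le n\le m_{k-1}$. To get equality you need it for all $k$, or the lowest-degree comparison.
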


\begin{proof}
Let $\mb S$ be the 
class of objects in $\mb D^b (\bb P^1_R)$	satisfying 
the conditions in the assumption of Lemma \ref{lem:0213}. 
Any equivalence $\Phi \in \Aut{\mb D^b(\bb P^1_R)}$
preserves the class $\mb S$. 
By Lemma \ref{lem:260308}, 
we have 
\begin{align*}
\Hom^{1}_{\bb P^1_R}(j_* \mca O(n), j_* \mca O(n)) &\cong 
\mf m / \mf m^{2}, \text{ and } \\
\Hom^{1}_{\bb P^1_R}(j_* \mca O_x, j_* \mca O_x) & \cong 
\mb k  \+ \mf m / \mf m ^{2}. 
\end{align*}
Hence 
$\Phi$ never sent the skyscraper sheaf $j_* \mca O_x$ to 
the invertible sheaf $j_* \mca O(k)$ on $\bb P^1$, up to shifts. 

Thus 
we see the following: 
\begin{enumerate}
	\item For any point $x \in X_0$, 
there exist a closed point $y \in X_0$ and an integer $n$ 
such that $\Phi (j_* \mca O_x) = j_* \mca O_y[n]$. 
\item For any $k \in \bb Z$, 
there exist integers $\ell$ and $m$ such that 
$\Phi (j_* \mca O(k))  =  j_* \mca O(\ell)[m]$. 
\end{enumerate}
To complete the proof, 
it suffices to show that the integers $n$ and $m$ are 
independent of the choice of $x$ and $k$.

Fix $k$ as $0$. 
Now we have 
\[
\Hom_{\mca X} ^p
(j_* \mca O_{\bb P^1} , j_* \mca O_x) \cong 
\Hom _{\mca X}^p 
(\Phi (j_* \mca O_{\bb P^1}) , \Phi (j_* \mca O_x)) \cong  
\Hom_{\mca X}^p
(j_* \mca O(\ell), j_* \mca O_y[n-m]). 
\]
By 
Lemma \ref{lem:260308}, 
the right hand side is non-zero whenever 
$n-m+p \geq 0$. 
Again Lemma \ref{lem:260308} implies 
the left hand side is non-zero whenever $p \geq 0$. 
Thus we have $n-m=0$, 
and in particular, 
$n$ is constant for $x\in X_0$. 
Fixing a point $x$, the similar argument yields 
that $m$ is constant for $k$. 

As a result we obtain 
\begin{align*}
\Phi (j_* \mca O_x)= j_* \mca O_y[n], 
\text{ and } 
\Phi (j_* \mca O(k)) = j_* \mca O(\ell)[n]. 	
\end{align*}

Hence the equivalence $\Phi$ sends 
$\mr{Coh}(\bb P^1)$ to $\mr{Coh}(\bb P^1)[n]$  for some $n\in \bb Z$, 
and we obtain the desired assertion by 
Lemma \ref{lem:0304}. 
\end{proof}

\subsection{The case $g(X_0)\geq 2$}

\begin{lem}\label{lem:02031}
Let $E_0$ be a locally free sheaf on $X_0$. 
If $g(X_0) \geq 2$, then the following 
\begin{equation}\label{eq:0203}
\dim \Hom_{X_0} (E_0, E_0) < \dim \Hom_{X_0}^1(E_0, E_0)
\end{equation}
holds. 
In particular we have $\chi (E_0, E_0) <0$. 
\end{lem}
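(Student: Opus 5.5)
The plan is a direct Riemann--Roch computation applied to the endomorphism bundle; no stability or deformation input is needed. We assume $E_0 \neq 0$, since for $E_0=0$ both sides of (\ref{eq:0203}) vanish. Put $r=\rank E_0 \geq 1$.

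First, since $E_0$ is locally free, I will rewrite the two spaces as sheaf cohomology of the endomorphism bundle:
\[
\Hom_{X_0}(E_0,E_0)\cong H^0(X_0, \mca E\mi{nd}(E_0)), \qquad \Hom^1_{X_0}(E_0,E_0)\cong H^1(X_0, \mca E\mi{nd}(E_0)).
\]
The local $\mca Ext$ sheaves vanish for a locally free source, so the local-to-global spectral sequence degenerates, as in the proof of Lemma \ref{lem:260308}. Hence
\[
\chi(E_0,E_0)=\dim\Hom_{X_0}(E_0,E_0)-\dim\Hom^1_{X_0}(E_0,E_0)=\chi(X_0,\mca E\mi{nd}(E_0)),
\]
and the higher terms vanish because $X_0$ is a curve.

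Next, I will apply Riemann--Roch on $X_0$ to the locally free sheaf $\mca E\mi{nd}(E_0)\cong E_0^{\vee}\otimes E_0$, which has rank $r^2$. Its degree is $\deg(E_0^\vee\otimes E_0)=r\deg E_0 - r\deg E_0=0$; equivalently, $\mca E\mi{nd}(E_0)$ is self-dual, so its degree equals its own negative. Riemann--Roch then gives
\[
\chi(X_0,\mca E\mi{nd}(E_0)) = 0 + r^2(1-g(X_0)).
\]
This is strictly negative when $g(X_0)\geq 2$, and in fact already when $g(X_0)\geq 1$. This yields both $\chi(E_0,E_0)<0$ and the inequality (\ref{eq:0203}).

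The only point I expect to need care is the validity of Riemann--Roch over a possibly non-algebraically-closed field $\mb k$, in the form $\chi(V)=\deg V+\rank V\,(1-g)$ with all dimensions taken over $\mb k$. This holds as long as $H^0(X_0,\mca O_{X_0})=\mb k$ and $g$ is defined as $\dim_{\mb k} H^1(X_0,\mca O_{X_0})$, which is the convention implicit throughout the paper. To reduce the general case to line bundles, I would filter $\mca E\mi{nd}(E_0)$ by line bundles after a finite base change, or use additivity of $\chi$ together with the rank-one formula. Beyond this foundational remark, the argument is a routine calculation.
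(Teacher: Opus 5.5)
Your proof is correct, but it follows a different route from the paper's. The paper does not use Riemann--Roch for vector bundles. Instead it combines Serre duality $\Hom^1_{X_0}(E_0,E_0)\cong\Hom_{X_0}(E_0,E_0(K_{X_0}))^{\vee}$ with the inclusion $\Hom_{X_0}(E_0,E_0)\subset\Hom_{X_0}(E_0,E_0(K_{X_0}))$, which comes from an effective canonical divisor (available since $g\geq 1$), to get $\chi(E_0,E_0)\leq 0$. It then gets strictness by contradiction from the long exact sequence of $0\to E_0\to E_0(K_{X_0})\to E_0(K_{X_0})|_{K_{X_0}}\to 0$, whose torsion term is nonzero because $\deg K_{X_0}=2g-2>0$.

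That strictness step amounts to the identity $\chi(E_0,E_0(K_{X_0}))-\chi(E_0,E_0)=r^2\deg K_{X_0}$ together with $\chi(E_0,E_0(K_{X_0}))=-\chi(E_0,E_0)$, i.e.\ Riemann--Roch done by hand. Your formula $\chi(E_0,E_0)=\chi(X_0,\mca E\mi{nd}(E_0))=r^2(1-g)$ gives the exact value in one line. It makes strictness immediate, needs no effective canonical divisor, and shows exactly where $g\geq 2$ enters. The cost is quoting Riemann--Roch for vector bundles over an arbitrary field.

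Your caveat about that is apt: the lemma really does need $H^0(X_0,\mca O_{X_0})=\mb k$. For example, an elliptic curve over $\bb C$ viewed as an $\bb R$-scheme has $\dim_{\bb R}H^1(\mca O)=2$, yet $\dim\Hom(\mca O,\mca O)=\dim\Hom^1(\mca O,\mca O)$. The paper uses the same convention implicitly through $\deg K_{X_0}=2g-2$. No base change is needed to reduce to line bundles. Over any field, saturating the image of a nonzero map $\mca O(-n)\to V$ gives a line subbundle with locally free quotient, so additivity reduces everything to the rank-one formula.

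One correction: your aside that $r^2(1-g)$ is strictly negative ``already when $g(X_0)\geq 1$'' is false. For $g=1$ it equals $0$, and indeed on a genus one curve $\dim\Hom(E_0,E_0)=\dim\Hom^1(E_0,E_0)$ by Serre duality with $\omega_{X_0}\cong\mca O_{X_0}$. This does not affect the lemma, which concerns only $g\geq 2$.
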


\begin{proof}
Let $K_{X_0}$ be the canoical divosr of $X_0$. 
By Serre duality, we have
\[
\Hom_{X_0}^1 (E_0, E_0) \cong 
\Hom_{X_0}(E_0, E_0(K_X))^{\vee}. 
\]
Moreover we have an inclusion 
\[
\Hom_{X_0}(E_0, E_0) \subset \Hom_{X_0}(E_0, E_0(K_X)). 
\]
Thus we obtain 
\begin{align*}
\chi(E_0, E_0) &= \dim \Hom(E_0, E_0) - \dim \Hom^1(E_0, E_0) \\
				&= \dim \Hom(E_0, E_0)- \dim \Hom (E_0, E_0(K_X))	 \leq 0. 
\end{align*}

Now we show that the equality above never hold. 
Suppose to the contrary, if the equality holds, 
we obtain the following short exact sequence: 
\[
\xymatrix{
0 \ar[r] & 
\Hom_{X_0}(E_0, E_0 (K_X)) \ar[r] & 
\Hom_{X_0}^1 (E_0, E_0) \ar[r] & 	
\Hom_{X_0}^1(E_0, E_0(K_X)|_{K_X}) \ar[r] & 0. 
}
\]
Since the left hand side is non-zero, we have 
\[
\dim \Hom^1_{X_0}(E_0, E_0) > \dim \Hom_{X_0}^1(E_0, E_0(K_X)). 
\]
Then Serre duality implies the inequality 
\[
\dim \Hom^0(E_0, E_0(K_X)) >\dim  \Hom ^0_{X_0}(E_0, E_0)
\]
which contradicts the assumption. 
Thus we have the inequality (\ref{eq:0203}). 
\end{proof}

\begin{rmk}\label{rmk:0204}
Under the same notation in Lemma \ref{lem:02031}, 
the following holds: 
	\[
\dim \Hom_{X_0}^1(E_0, E_0) >1	. 
	\]
\end{rmk}

\begin{lem}\label{lem:0212}
Let $\Phi \in \Aut{\mb D^b(\mca X)}$ and 
suppose $g(X_0) \geq 2$. 
\begin{enumerate}
	\item For any $x \in X_0$, there exist $y \in X_0$ and an integer $n$ such that 
	\[
	\Phi(j_* \mca O_x) = j_* \mca O_y[n]. 
	\]
	\item For any stable locally free sheaf $E$ on $X_0$, 
	there exist a stable locally free sheaf $F$ on $X_0$ and 
	an integer $n' $ such that 
	\[
	\Phi(j_* E) = j_* F [n'] . 
	\] 
	 \item The integers $n$ and $n'$ coincide and are independent of 
	 the choices of $x$ and $E$. 
\end{enumerate}
\end{lem}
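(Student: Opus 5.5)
We use that $\Phi$ preserves the intrinsic class $\mb S$ of objects $S$ with $\Hom^{<0}(S,S)=0$ and every non-zero endomorphism invertible, as in the proof of Proposition \ref{prop:linecase}. The argument of Lemma \ref{lem:0213} carries over to $\mca X$: $\mf m$ acts by zero on such $S$, and Propositions \ref{prop:class-of-stable} and \ref{prop:almost-hereditary} then show that $S\cong j_*S_0[n]$ for an indecomposable sheaf $S_0$ on $X_0$ whose endomorphism algebra is a division algebra. Torsion sheaves other than skyscrapers carry nilpotent endomorphisms, as in Lemma \ref{lem:0213}. Hence $S_0$ is either some $\mca O_y$ or a simple locally free sheaf. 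Stable bundles are simple, and $\mb k$-points as well as stable bundles lie in $\mb S$, so $\Phi(j_*\mca O_x)$ and $\Phi(j_*E)$ are both of the form $j_*S_0[n]$ with $S_0$ as above.

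To separate points from bundles we compare $\Hom^1$, which $\Phi$ preserves. Lemma \ref{lem:260308} gives
\[
\dim\Hom^1_{\mca X}(j_*S_0,j_*S_0)=d_1\dim\Hom_{X_0}(S_0,S_0)+\dim\Hom^1_{X_0}(S_0,S_0).
\]
For $S_0=\mca O_y$ this equals $d_1+1$. For a simple bundle $S_0$ it equals $d_1+\dim\Hom^1_{X_0}(S_0,S_0)$, which is greater than $d_1+1$ by Remark \ref{rmk:0204} (the consequence of Lemma \ref{lem:02031}, which uses $g\ge 2$). So $\Phi$ sends skyscrapers to shifted skyscrapers; applying the same argument to $\Phi^{-1}$ shows it sends simple bundles to shifted simple bundles. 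This proves (1), and gives (2) with $F$ simple.

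Stability of $F$ is the step I expect to be the main obstacle. I would try to characterize stable bundles among simple ones intrinsically, for instance through $\Hom$-vanishing against skyscrapers together with a Harder--Narasimhan or destabilizing argument. Alternatively, I would transport the statement via Theorem \ref{thm:main1}: $\Phi$ acts on $\Stab{\mb D^b(\mca X)}=\widetilde{\mr{GL}}^+_2(\bb R)\cdot\sigma_{\mr{st}}$, and stable bundles are exactly the $\sigma_{\mr{st}}$-stable objects that are not skyscrapers. Since $\Phi\sigma_{\mr{st}}=\sigma_{\mr{st}}\cdot g$ and stability is invariant under the $\widetilde{\mr{GL}}^+_2(\bb R)$-action, $\Phi(j_*E)$ is $\sigma_{\mr{st}}$-stable. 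Combined with the previous paragraph, this forces $F$ to be a stable bundle.

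For (3), write $\Phi(j_*\mca O_x)=j_*\mca O_y[n]$ and $\Phi(j_*E)=j_*F[n']$. By Lemma \ref{lem:260308}, $\Hom^p_{\mca X}(j_*E,j_*\mca O_x)\neq 0$ exactly for $p\ge 0$, because $\Hom_{X_0}(E,\mca O_x)\neq 0$ and $d_p$ may vanish for $p>0$ but $d_0=1$. The same holds with $(F,\mca O_y)$, so applying $\Phi$ gives $\Hom^{p+n-n'}(j_*F,j_*\mca O_y)\neq0$ exactly for $p\ge0$, which forces $n=n'$. Fixing $E$ and varying $x$ then shows $n$ is independent of $x$, and fixing $x$ and varying $E$ shows $n'$ is independent of $E$, exactly as in Proposition \ref{prop:linecase}.
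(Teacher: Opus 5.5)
Your proof is correct. The $\dim\Hom^1$ comparison via Lemma \ref{lem:260308} and Remark \ref{rmk:0204}, and the minimal-degree argument for (3), are exactly the paper's.

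The difference is the classification step. The paper uses Theorem \ref{thm:main1} from the start. Since $\Stab{\mb D^b(\mca X)}=\widetilde{\mr{GL}}^+_2(\bb R)\cdot\sigma_{\mr{st}}$, the objects $j_*\mca O_x$ and $j_*E$ are stable for every stability condition, and hence so are their images under $\Phi$. The only such objects are shifts of skyscrapers and slope-stable bundles, so stability of $F$ comes for free.

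Your route through the intrinsic class $\mb S$ yields only ``skyscraper or simple bundle''. You then invoke Theorem \ref{thm:main1} anyway to upgrade simple to stable; that second option is sound. So for the lemma as stated the $\mb S$ step is redundant. It does buy something, though: it shows that (1), and the preservation of simple bundles up to shift, hold without any knowledge of $\Stab{\mb D^b(\mca X)}$, in parallel with the genus-zero argument of Proposition \ref{prop:linecase}.

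One small correction in (3). You have $\Hom^p_{\mca X}(j_*E,j_*\mca O_x)\cong\Hom_{X_0}(E,\mca O_x)^{\+ d_p}$, which is non-zero exactly when $d_p\neq 0$. That means all $p\ge 0$ if $R\neq\mb k$, but only $p=0$ if $R=\mb k$, so ``exactly for $p\ge 0$'' is not right in general. What you actually use, and what holds in both cases, is vanishing for $p<0$ and non-vanishing at $p=0$. That is enough to force $n=n'$.
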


\begin{proof}
	Since $g(X_0)\geq 2$, 
the objects $j_*\mca O_x$ and any stable 
locally free sheaf $E$ are stable 
for any $\sigma \in \Stab{\mb D^b(\mca X)}$. 
Hence the images of them via $\Phi$ is also stable 
with respect to any $\tau \in \Stab{\mb D^b(\mca X)}$.

Recall 
\[
\Stab{\mb D^b(\mca X)} \cong \Stab{\mb D^b(X_0)} \cong \widetilde{\mr{GL}}_2^+(\bb R). 
\]
Thus, for arbitrary $\tau \in \Stab{\mb D^b(\mca X)}$, 
a $\tau$-stable object must be 
a skyscraper sheaf $\mca O_y$, or 
a slope stable locally free sheaf $F$ on $X_0$. 
Hence $\Phi (j_* \mca O_x)$ must be 
$j_* \mca O_y$ or $j_* F$ up to shifts.

Comparing the dimensions of self-extensions, 
Lemma \ref{lem:260308}
\begin{align*}
\Hom^1_{\mca X}(j_* \mca O_x, j_* \mca O_x) &= \mb k \+ \mf m / \mf m^2	\text{ and } \\
\Hom^1_{\mca X}(j_* F, j_* F) &= \Hom^1_{X_0}(F, F) \+ \mf m/\mf m^2. 
\end{align*}
Since $\dim \Hom_{X_0}^1(F, F)>1$ by Remark \ref{rmk:0204}, 
these dimension do not coincide. 
Hence we obtain 
$\Phi(j_* \mca O_x) = j_* \mca O_y [n]$ and 
$\Phi (j_* E )= j_* F [n']$. 
Moreover a similar argument in the proof of 
Proposition \ref{prop:linecase} gives the proof 
of the assertion (3). 
\end{proof}

\begin{lem}\label{lem:0212-krso}
Let $\Phi \in \Aut{\mb D^b(\mca X)}$ and 
suppose $g(X_0) \geq 2$. 
\begin{enumerate}
	\item 
For any locally free sheaf 
$F$ on $X_0$, there exist a locally 
free sheaf $G$ on $X_0$ and an integer $n$ such that 
$\Phi(j_* F)=j_* G[n]$. 
Moreover the integer $n$ is independent of the choice 
of $F$.  
\item For any torsion sheaf $T_x$ supported 
on a point $x \in X_0$, 
there exist a point $y \in X_0$ and an integer $n$ such that 
$\Phi(j_* T_x) = j_* T_y [n]$. 
Moreover the integer $n$ is independent of the choice 
of $F$.  
\end{enumerate}
\end{lem}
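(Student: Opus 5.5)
Set $n$ to be the common shift from Lemma \ref{lem:0212}(3), so that $\Phi(j_*\mca O_x)=j_*\mca O_y[n]$ for every $x$ and $\Phi(j_*E)=j_*F[n]$ for every stable locally free $E$. Both assertions come from propagating this through extensions with Lemma \ref{lem:0211}, applied to $\Phi[-n]$.

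\emph{Vector bundles.} For (1), fix a locally free sheaf $F$ on $X_0$. Its Harder--Narasimhan filtration has semistable factors, and every semistable factor has a Jordan--H\"older filtration with stable locally free quotients. Hence $F$ is an iterated extension in $\mr{Coh}(X_0)$ of stable locally free sheaves $E_1,\dots,E_r$. By Lemma \ref{lem:0212}, each $\Phi(j_*E_i)[-n]$ lies in $j_*\mr{Coh}(X_0)$.

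I argue by induction on the length $r$. At each step there is a short exact sequence $0\to A\to B\to C\to 0$ in $\mr{Coh}(X_0)$, giving a triangle in $\mb D^b(\mca X)$. Lemma \ref{lem:0211}, applied to $\Phi[-n]$, shows that $\Phi(j_*B)[-n]\in\mr{Coh}(X_0)$. So $\Phi(j_*F)=j_*G[n]$ for some coherent sheaf $G$ on $X_0$.

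It remains to see that $G$ is locally free. If $G$ had a torsion part, it would have a nonzero map to some $\mca O_y$. Write $\mca O_y=\Phi(j_*\mca O_x)[-n]$ using the essential surjectivity on points from Lemma \ref{lem:0212}(1) applied to $\Phi^{-1}$. Then $\Hom_{\mca X}^{-1}(j_*F,j_*\mca O_x)\neq 0$ after pulling back. This contradicts Lemma \ref{lem:260308}, which gives vanishing in negative degrees.

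More cleanly, I compare with points. By Lemma \ref{lem:260308}, $\Hom^p_{\mca X}(j_*\mca O_x,j_*F)$ is concentrated in degrees $p\ge 1$. The leading term is $\Hom^1_{X_0}(\mca O_x,F)\neq 0$, while the $\Hom^0$ part vanishes because $F$ is torsion-free. This property transfers to $G$ via $\Phi$. Since $\Hom(\mca O_y,G)=0$ for all $y$, $G$ is torsion-free, hence locally free. Independence of $n$ is automatic because $n$ was fixed from Lemma \ref{lem:0212}(3).

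\emph{Torsion sheaves.} For (2), an indecomposable torsion sheaf $T_x$ is an iterated extension of copies of $\mca O_x$. The same induction with Lemma \ref{lem:0211} gives $\Phi(j_*T_x)=j_*T'[n]$ with $T'\in\mr{Coh}(X_0)$. Since $\Phi(j_*\mca O_x)=j_*\mca O_y[n]$, every extension step stays supported at $y$. Thus $T'$ is a torsion sheaf supported at $y$, say $T'=T_y$.

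To keep $T_y$ indecomposable, I use that $\Phi$ is an equivalence and so preserves indecomposability. Then $j_*T_y$ is indecomposable in $\mb D^b(\mca X)$, and since $j_*$ is faithful and $\mr{End}$ is only enlarged, $T_y$ is indecomposable in $\mr{Coh}(X_0)$.

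\emph{Main obstacle.} The delicate point is that Lemma \ref{lem:0211} needs all three terms of the triangle in $\mr{Coh}(X_0)$ before applying $\Phi$. Its proof is also terse: the image of $B$ must be a sheaf on $\mca X$, not just a complex. I would justify this via the long exact cohomology sequence, which shows that $\Phi(B)[-n]$ has cohomology only in degree $0$. Annihilation by $\mf m$ then follows from $R$-linearity of $\Phi$.
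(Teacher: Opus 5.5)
Your argument is correct and follows the paper's proof: you refine the Harder--Narasimhan filtration by Jordan--H\"older filtrations into stable bundles (resp.\ filter a point-supported torsion sheaf by copies of $\mca O_x$), take the common shift $n$ from Lemma \ref{lem:0212}(3), and propagate through the extensions with Lemma \ref{lem:0211} applied to $\Phi[-n]$; the paper just does the JH and HN stages separately. Your first local-freeness paragraph is wrong, since every nonzero sheaf maps nontrivially to some $\mca O_y$ and the pulled-back Hom sits in degree $0$, not $-1$. The point-comparison argument that replaces it is valid, and in any case local freeness is immediate, because each step is an extension of locally free sheaves on $X_0$.
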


\begin{proof}
Since the proof is similar, we only prove the assertion (1). 
We first prove the assertion 
when $F$ is slope semistable, that is, 
we show the following: 
\begin{description}
	\item[Claim]  For any slope semistable locally free sheaf 
$F$ on $X_0$, there exist a semistable  locally 
free sheaf $G$ on $X_0$ and an integer $n$ such that 
$\Phi(j_* F)=j_* G[n]$. 
\end{description}

Take a JH filtration of $F$: 
\[
0 = F_0 \subset F_1 \subset F_2 \subset \cdots \subset F_{m-1} \subset F_{m} =F . 
\]
Here the quotients $S_i:= F_i/F_{i-1}$ is 
slope stable locally free sheaf which has 
the same slope of $F$. 

By Lemma \ref{lem:0212},  each $\Phi(j_* S_i)$ 
is a slope stable locally free sheaf with the same slope, 
up to shifts. 
Without loss of generality, 
we may assume $\Phi(j_* S_i)$ is in $\mr{Coh}(X_0)$. 
Using Lemma \ref{lem:0211} inductively, we see 
that $\Phi(j_*  F)$ is a slope semistable locally 
free sheaf on $X_0$, 
and this gives the proof of the claim.

Let us discuss the general case. 
For any locally free sheaf on $X_0$, 
there exists the HN filtration of $F$. 
Then each quotient $F_i/F_{i-1}$ is slope semistable. 
By the claim above, 
$\Phi(j_* F_i/F_{i-1})$ is a slope semistable 
locally free sheaf on $X_0$. 
Hence $\Phi(j_*F)$ is a locally free sheaf on $X_0$ 
by Lemma \ref{lem:0211}
\end{proof}

\begin{prop}\label{prop:higher-genus}
Let $\Phi$ be in $\Aut{\mb D^b(\mca X)}$ and 
suppose 
$g(X_0) \geq 2$. 
Then $\Phi $ preserves the image $\Im (j_* \colon \mb D^b(X_0) \to \mb D^b(\mca X))$. 
\end{prop}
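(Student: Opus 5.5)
The plan is to reduce Proposition \ref{prop:higher-genus} to Lemma \ref{lem:0304}. That lemma says it suffices to find a single integer $n$ with
\[
\Phi(\mr{Coh}(X_0)) \subset \mr{Coh}(X_0)[n].
\]

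\textbf{Step 1: indecomposable sheaves.} Every coherent sheaf on the smooth curve $X_0$ is a finite direct sum of indecomposable sheaves. Each indecomposable sheaf is either locally free or a torsion sheaf $T_x$ supported at a single point. Since $\Phi$ is additive, it is enough to treat indecomposable sheaves. For these, Lemma \ref{lem:0212-krso} gives the following.
\begin{itemize}
\item $\Phi(j_* F)=j_* G[n_1]$ for every locally free $F$, with $n_1$ independent of $F$.
\item $\Phi(j_* T_x)=j_* T_y[n_2]$ for every torsion $T_x$, with $n_2$ independent of $x$ and of the length of $T_x$.
\end{itemize}

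\textbf{Step 2: the two shifts agree.} I would first argue that $n_2$ can be compared with the shift $n$ of skyscraper sheaves from Lemma \ref{lem:0212}(1). Every $T_x$ is an iterated extension of copies of $j_*\mca O_x$. Lemma \ref{lem:0212}(1) sends each copy to $j_*\mca O_y[n]$, and Lemma \ref{lem:0211} applied inductively (after shifting by $-n$) keeps the result in $\mr{Coh}(X_0)$. Hence $n_2=n$.

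Likewise $n_1$ equals the shift $n'$ attached to stable bundles. By the Claim in the proof of Lemma \ref{lem:0212-krso}, and then the HN filtration step there, the shift is propagated from stable bundles to all locally free sheaves. Lemma \ref{lem:0212}(3) gives $n=n'$, so $n_1=n_2$.

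As a cross-check, I would also run the $\Hom$-comparison from the proof of Proposition \ref{prop:linecase} directly. By Lemma \ref{lem:260308}, $\Hom^p_{\mca X}(j_*F,j_*T_x)\neq 0$ exactly for $p\ge 0$ when $F\neq 0$ is locally free. Applying $\Phi$ turns this into $\Hom^{p}(j_*G,j_*T_y[n_2-n_1])$, which is nonzero exactly for $p+n_2-n_1\ge 0$. Comparing the two ranges forces $n_1=n_2$.

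\textbf{Step 3: conclude.} Write $n$ for the common value. Then $\Phi(j_*E)\in\mr{Coh}(X_0)[n]$ for every indecomposable, hence every, coherent sheaf $E$ on $X_0$. Lemma \ref{lem:0304} now gives that $\Phi$ preserves $\Im j_*$, including morphisms: degree-$0$ morphisms between images of sheaves remain of the form $\Hom_{X_0}$, and degree-$1$ morphisms correspond to extensions, which Lemma \ref{lem:0211} keeps inside $\mr{Coh}(X_0)$.

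\textbf{Main obstacle.} I expect the hardest step to be the uniformity of the shift in Step 2, and more precisely the fact that $\Phi(j_*T_x)$ is again a torsion sheaf rather than a bundle. Lemma \ref{lem:0211} only gives membership in $\mr{Coh}(X_0)$. To pin down the type, I would compare $\dim\Hom^1$ self-extension dimensions via Lemma \ref{lem:260308} together with Remark \ref{rmk:0204}, exactly as in Lemma \ref{lem:0212}. The fallback is to use support: $\Phi(j_*T_x)$ is an iterated extension of copies of $j_*\mca O_y$, and such an extension is necessarily supported at $y$.
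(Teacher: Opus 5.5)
Your proposal is correct and is essentially the paper's proof: it uses Lemma \ref{lem:0212-krso} for indecomposable locally free and torsion sheaves, gets a uniform shift from Lemma \ref{lem:0212}(3) and the $\Hom$-comparison of Proposition \ref{prop:linecase}, and then applies Lemma \ref{lem:0304}. Two small remarks: your ``main obstacle'' is not actually needed, since Lemma \ref{lem:0304} only requires $\Phi(\mr{Coh}(X_0))\subset \mr{Coh}(X_0)[n]$; and in the cross-check, the nonvanishing of $\Hom^p_{\mca X}(j_*F,j_*T_x)$ for all $p\ge 0$ relies on $d_p\neq 0$, which fails when $R=\mb k$, although then only $p=0$ survives and the comparison still gives $n_1=n_2$.
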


\begin{proof}
By Lemma \ref{lem:0212-krso}, 
we see that, up to shifts, 
any locally free sheaf and torsion sheaf 
on $X_0$ are sent to a locally free sheaf 
and to a torsion sheaf on $X_0$, respectively. 
Moreover, 
by the proof of Proposition \ref{prop:linecase}, 
the shifts is independent of the choice 
of the sheaf. 
Thus Lemma \ref{lem:0304} 
implies the desired assertion. 
\end{proof}

\subsection{The case $g(X_0)=1$}
From now on, we assume that $g(X_0)=1$.

\begin{lem}
Let $\mca F$ and $\mca G$ be a stable 
locally free sheaf on $X_0$. 
Then the following holds: 
\begin{enumerate}
	\item If $\mu(\mca F) > \mu (\mca G) $, then 
	$\Hom^1_{X_0}(\mca F, \mca G )\neq 0$ and 
	$\chi(\mca F, \mca G) <0$. 
	\item If $\mu(\mca F) < \mu (\mca G) $, then 
	$\Hom_{X_0}(\mca F, \mca G )\neq 0$ and 
	$\chi(\mca F, \mca G) >0$. 
	\item 
	If $\mca F \not\cong \mca G$ and 
	$\mu(\mca F) = \mu(\mca G)$, then $\Hom^1(F, G)=\Hom^1(G, F) =0$. 
\end{enumerate}
\end{lem}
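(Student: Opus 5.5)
The plan is to use two standard facts on an elliptic curve $X_0$: Riemann--Roch and Serre duality with trivial canonical bundle. Riemann--Roch for locally free sheaves on a genus one curve reads
\[
\chi(\mca F, \mca G) = \rank \mca F \cdot \rank \mca G \,(\mu(\mca G) - \mu(\mca F)).
\]
Serre duality gives $\Hom^1_{X_0}(\mca F, \mca G) \cong \Hom_{X_0}(\mca G, \mca F)^{\vee}$, since $\omega_{X_0} \cong \mca O_{X_0}$. The remaining ingredient is the vanishing of maps between slope stable bundles: if $\mu(\mca F) > \mu(\mca G)$, or if $\mu(\mca F)=\mu(\mca G)$ and $\mca F \not\cong \mca G$, then $\Hom_{X_0}(\mca F, \mca G)=0$.

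For (1), assume $\mu(\mca F) > \mu(\mca G)$. Stability gives $\Hom(\mca F, \mca G)=0$, and the Riemann--Roch formula gives $\chi(\mca F, \mca G)<0$ because the ranks are positive. Hence
\[
\dim \Hom^1(\mca F, \mca G) = -\chi(\mca F, \mca G) > 0.
\]
Alternatively, Serre duality identifies $\Hom^1(\mca F, \mca G)$ with $\Hom(\mca G, \mca F)^\vee$, and the same $\chi$ computation shows this is nonzero.

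For (2), assume $\mu(\mca F) < \mu(\mca G)$. Then $\Hom^1(\mca F, \mca G) \cong \Hom(\mca G, \mca F)^\vee = 0$ by stability, since now $\mu(\mca G) > \mu(\mca F)$. So
\[
\dim \Hom(\mca F, \mca G) = \chi(\mca F, \mca G) > 0,
\]
again by Riemann--Roch.

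For (3), assume $\mu(\mca F)=\mu(\mca G)$ and $\mca F \not\cong \mca G$. Any nonzero map between stable bundles of equal slope is an isomorphism, because its image would destabilize one of them otherwise. Hence $\Hom(\mca F,\mca G)=\Hom(\mca G,\mca F)=0$, and Serre duality turns these into $\Hom^1(\mca G,\mca F)=\Hom^1(\mca F,\mca G)=0$.

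The only step needing care is the standard argument that nonzero maps between stable bundles of equal slope are isomorphisms. I would cite it directly, using the torsion-free image and the strict slope inequalities for proper subsheaves and quotients. Everything else is routine, so I expect no real obstacle beyond writing the Riemann--Roch formula correctly for $g=1$.
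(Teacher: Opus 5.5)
Your proposal is correct and uses the same ingredients as the paper's proof: vanishing of $\Hom$ between stable bundles, Serre duality with $\omega_{X_0}\cong\mca O_{X_0}$, and Riemann--Roch. The paper's written argument for (1) is garbled (for instance it concludes $\Hom^1(\mca F,\mca G)=0$ where $\neq 0$ is meant), and your $\chi$ computation is the clean version of that argument; the only other difference is that in (3) you get the vanishing of $\Hom^1$ from Serre duality, whereas the paper combines $\chi(\mca F,\mca G)=0$ with $\Hom(\mca F,\mca G)=0$.
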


\begin{proof}
We prove the assertion (1). 
Since they are stable, we have $\Hom(\mca F, \mca G)=0$. 
If $\Hom^1(\mca G, \mca F)=0$, the Serre duality yields 
$\Hom(\mca F, \mca G)=0$ which gives a contradiction. 
Hence we have $\Hom^1(\mca F, \mca G)=0$ and $\chi(\mca F, \mca G)<0$. 
The proof of the assertion (2) is similar. 

For the assertion (3), 
Riemann-Roch theorem yields $\chi(\mca F, \mca G)=0$
by the assumption $\mu(\mca F)=\mu(\mca G)$. 
Since $\mca F$ and $\mca G$ are not isomorphic with the same slope, 
we have $\Hom(\mca F, \mca G)=0$. 
\end{proof}

\begin{lem}\label{lem:260310}
Let $\mca F$ be a locally free sheaf on $X_0$. 
If $\mca F$ is indecomposable then $\mca F$ is semistable and 
the stable factor of the JH filtration is unique. 
\end{lem}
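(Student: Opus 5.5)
We work in genus one and split the claim into two parts. First, an indecomposable locally free sheaf $\mca F$ on $X_0$ is slope semistable. Second, all Jordan--H\"older factors of $\mca F$ are isomorphic to a single stable bundle.

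For semistability we argue by contradiction. Suppose $\mca F$ is not semistable, and take its Harder--Narasimhan filtration $0\subset F_1\subset\cdots\subset F_m=\mca F$ with $m\geq 2$. The quotients $Q_i=F_i/F_{i-1}$ are semistable of strictly decreasing slopes. We show the filtration splits, and it suffices to split the last step $0\to F_{m-1}\to \mca F\to Q_m\to 0$. The class of this extension lies in $\Hom^1_{X_0}(Q_m,F_{m-1})$. By Serre duality with $\omega_{X_0}\cong\mca O_{X_0}$, this group is $\Hom_{X_0}(F_{m-1},Q_m)^{\vee}$. Every HN factor of $F_{m-1}$ has slope strictly bigger than $\mu(Q_m)$. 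Hence, by the standard vanishing of homomorphisms from semistable sheaves of larger slope to semistable sheaves of smaller slope (applied factor by factor via the filtration), $\Hom_{X_0}(F_{m-1},Q_m)=0$. So the sequence splits, $\mca F\cong F_{m-1}\+ Q_m$, and indecomposability is contradicted. Hence $\mca F$ is semistable.

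For uniqueness of the stable factor, let $\mca F$ be semistable of slope $\mu$. By the Jordan--H\"older theorem, the category of semistable bundles of slope $\mu$ is a finite-length abelian category whose simple objects are the stable bundles of slope $\mu$. Let $S$ and $S'$ be non-isomorphic stable bundles of slope $\mu$. By the preceding lemma, part (3), $\Hom^0$ and $\Hom^1$ vanish in both directions between $S$ and $S'$. We would then group the JH factors by isomorphism class. We claim that $\mca F$ decomposes as $\bigoplus_S \mca F_S$, where $\mca F_S$ is the part whose JH factors are all isomorphic to $S$.

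To prove the claim, take a subobject $\mca F_{S}$ of $\mca F$ maximal among those with all factors $\cong S$. Any subquotient built from factors in different classes has vanishing $\Hom$ and $\Ext^1$ to and from $\mca F_S$, by induction on length using the long exact sequences. Applied to $0\to \mca F_S\to \mca F\to \mca F/\mca F_S\to 0$, this shows the extension class lies in $\Hom^1(\mca F/\mca F_S,\mca F_S)=0$, so the sequence splits. Here we use that $\mca F/\mca F_S$ has no factor isomorphic to $S$ appearing as a subobject. If it does, we enlarge $\mca F_S$, which is why we take $\mca F_S$ maximal, or alternatively use the block decomposition for a finite-length category with no extensions between distinct simples. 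Indecomposability then forces a single class $S$.

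The main obstacle is this last decomposition step, namely ensuring that the vanishing of $\Ext^1$ between distinct simples really yields a direct sum decomposition of $\mca F$. The cleanest route is the standard fact that in a length category, if $\Ext^1(S,S')=0$ for non-isomorphic simples $S,S'$, then every object splits into a direct sum of its $S$-isotypic blocks. We would prove this by induction on length, using the vanishings to lift and split at each stage.
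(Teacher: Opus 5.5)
Your semistability argument is what the paper means by ``follows from Serre duality''. You split the last step of the HN filtration using $\Hom^1_{X_0}(Q_m,F_{m-1})\cong\Hom_{X_0}(F_{m-1},Q_m)^{\vee}=0$. For uniqueness, the paper also takes a maximal $A$-isotypic subsheaf $\mca G\subset\mca F$, i.e.\ one with $\Hom(A,\mca F/\mca G)=0$.

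The difference is how you get $\Hom^1(\mca F/\mca G,\mca G)=0$. The paper uses Serre duality again: $\Hom^1(\mca F/\mca G,\mca G)\cong\Hom(\mca G,\mca F/\mca G)^{\vee}$. This vanishes because $\mca G$ is an iterated extension of $A$ and $\Hom(A,\mca F/\mca G)=0$.

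Your first version argues instead by d\'evissage over JH factors ``in different classes''. As written, this step does not follow from maximality. Maximality only says that $S$ is not a \emph{subobject} of $\mca F/\mca F_S$. The d\'evissage needs $S$ not to be a \emph{composition factor} at all, because an $S$-factor deeper in the quotient contributes $\Hom^1(S,S)\neq 0$ and the induction no longer closes. The link can be supplied as follows. Let $Q_{i-1}\subset Q_i$ be the step where the lowest $S$-factor appears in a JH filtration of the quotient. Then $Q_{i-1}$ has no $S$-factor, so $\Hom^1(S,Q_{i-1})=0$, the extension splits, and $S$ embeds into the quotient, a contradiction. This has to be said explicitly; alternatively, use the paper's one-line duality argument.

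Your fallback is correct: the block decomposition for a finite-length abelian category with $\Hom^1$ vanishing between non-isomorphic simples. Here that category is semistable bundles of slope $\mu$, which is extension-closed in $\mr{Coh}(X_0)$, so its extensions are those of $X_0$. The induction works as follows. Peel off a simple subobject $T\cong S_0$ and decompose the quotient by induction. The extension class then restricts to zero on the blocks of type $S\not\cong S_0$ by a legitimate d\'evissage, since those blocks have no $S_0$-factor.

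That route is more general: it uses only part (3) of the preceding lemma and needs no maximality. The paper's route is shorter because it exploits $\omega_{X_0}\cong\mca O_{X_0}$ directly and avoids the isotypic bookkeeping. Either way, finish as the paper does. You need $\mca F_S\neq 0$ (choose $S$ to be a stable subobject, e.g.\ the first JH step) and $\mca F/\mca F_S\neq 0$ (it carries the other factor), so the splitting contradicts indecomposability.
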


\begin{proof}
The semistability follows from Serre duality. 
Suppose that $\mca F$ has at least two stable factor, 
say $A$ and $B$.

Choose a semistable subsheaf $\mca G\subset \mca F$ 
whose Jrdaon-Holder factors are all isomorphic to $A$ and 
such that 
$\Hom(A, \mca F/ \mca G)=0$. 
Then, 
we have $\chi(A, \mca F/\mca G)=0$, 
since the slopes of $A$ and of $\mca F/\mca  G$. 
Thus we obtain $\Hom(\mca F/\mca G, A)=0$ by Serre duality. 
By the assumption for $\mca G$, 
we have $\Hom^1(\mca F/\mca G , \mca G)=0$, 
and this implies that $\mca G$ is a direct summand of $\mca F$. 
Hence $\mca G$ must bet zero, and gives the proof. 
\end{proof}

\begin{lem}\label{lem:0309}
	Let $\mca F$ and $\mca G$ be 
	indecomposable sheaves on $X_0$. 
	If $\mu(\mca F) >  \mu(\mca G)$, 
	then the following holds:
\begin{align}
	\Hom_{X_0}^1(\mca F, \mca G) & \cong 
			\Hom_{\mca X}^1(j_* \mca F, j_* \mca G). \label{eq:03092}
\end{align}
\end{lem}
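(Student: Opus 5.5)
We apply Lemma \ref{lem:260308} with $p=1$. Take $E_0=\mca F$, which is indecomposable, and $F_0=\mca G$. This gives
\[
\Hom_{\mca X}^1(j_*\mca F, j_*\mca G)\cong \Hom_{X_0}^0(\mca F,\mca G)^{\+ d_1}\+ \Hom^1_{X_0}(\mca F,\mca G)^{\+ d_0}.
\]
Here $d_0=\dim_{\mb k}\mr{Tor}_0^R(\mb k,\mb k)=1$. So it suffices to show $\Hom_{X_0}(\mca F,\mca G)=0$ whenever $\mu(\mca F)>\mu(\mca G)$. The isomorphism (\ref{eq:03092}) then holds, and it is induced by the natural inclusion $\Hom^1_{X_0}\subset\Hom^1_{\mca X}$ coming from the local-to-global decomposition.

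The slope is meaningful only when both sheaves have positive rank. If both are torsion, the slopes are both $+\infty$ and the hypothesis cannot hold. If $\mca F$ is torsion and $\mca G$ is locally free, we have $\mu(\mca F)=+\infty>\mu(\mca G)$, and $\Hom(\mca F,\mca G)=0$ because a torsion sheaf has no non-zero maps to a locally free sheaf. If $\mca G$ is torsion, the hypothesis fails. This leaves the case where $\mca F$ and $\mca G$ are indecomposable locally free sheaves on an elliptic curve. By Lemma \ref{lem:260310}, both are slope semistable. A non-zero map $\mca F\to\mca G$ between semistable bundles forces $\mu(\mca F)\le\mu(\Im)\le\mu(\mca G)$, which contradicts $\mu(\mca F)>\mu(\mca G)$. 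Hence $\Hom_{X_0}(\mca F,\mca G)=0$.

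The only delicate point is checking that the local-to-global splitting used in Lemma \ref{lem:260308} is natural. Specifically, the summand $\Hom^1_{X_0}(\mca F,\mca G)$ should be the image of $j_*$ on $\Hom^1$, since later applications will probably need that, and not merely an abstract isomorphism of vector spaces. I would verify this through the $d_0$-component of the resolution in Lemma \ref{lem:0220}. That component is the term $E\to j_*E_0$ in degree $0$ (respectively $A\otimes R$ for torsion sheaves), so the $H^1$-part of $\mca Hom$ in degree $0$ is exactly $\mr{Ext}^1_{X_0}$ pushed forward. The remaining steps are formal.
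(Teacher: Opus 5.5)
Your proposal is correct and follows the paper's own proof. Both arguments reduce, via Lemma \ref{lem:260308} with $p=1$ and $d_0=1$, to the vanishing $\Hom_{X_0}(\mca F,\mca G)=0$. The paper simply asserts this vanishing from $\mu(\mca F)>\mu(\mca G)$, whereas you justify it with the torsion/locally free case split and the genus-one semistability of Lemma \ref{lem:260310}. Your naturality remark is not needed for the statement as written. If you want it for later use, it follows more cheaply from two facts: $j_*$ is injective on $\Hom^1$ (an extension in $\mr{Coh}(X_0)$ that splits in $\mr{Coh}(\mca X)$ already splits in $\mr{Coh}(X_0)$), and the two spaces have the same finite dimension.
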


\begin{proof}
By the assumption we have 
$
\Hom_{X_0}(\mca F, \mca G)=0 $ 
 and 
$\Hom_{X_0}^1(\mca F, \mca G) \neq 0$. 
Then Lemma \ref{lem:260308} yields 
the isomorphism (\ref{eq:03092}). 
\end{proof}

\begin{lem}\label{lem:genus1key}
Let $\Phi \in \Aut{\mb D^b(\mca X)}$. 
Assume that 
	 there exists a point $x_0 \in X_0$ such that 
	$\Phi(j_* \mca O_{x_0})= j_* \mca E_0[1]$, where 
	$\mca E_0$ is a locally free sheaf on $X_0$.  
	Then the following holds. 
\begin{enumerate}
	\item 
	For any point $y \in X_0$, 
	there exists a stable locally free sheaf 
	$\mca E_y$ on $X_0$ such that 
	$\Phi(j_* \mca O_y)= j_* \mca E_y[1]$.  
	\item For any point $y \in X_0$, 
	there exists a stable locally free 
	sheaf $\mca G_y$ on $X_0$ such that 
	$\Phi(j_* \mca G_y)=j_* \mca O_y$. 
	\item Let $\mca F$ be an indecomposable locally free sheaf on $X_0$. 
	\begin{itemize}
		\item 
	If $\mu(\mca F) > \mu(\mca G_{x_0})$, then we have 
	$\Phi(j_* \mca F)=j_* \mca F'[1]$, where 
	$\mca F'$ is a locally free sheaf on $X_0$. 
	\item 
	If $\mu(\mca F) < \mu(\mca G_{x_0})$, then we have 
	$\Phi(j_* \mca F)=j_* \mca F'$, where 
	$\mca F'$ is a locally free sheaf on $X_0$. 
	\end{itemize}
\end{enumerate}
\end{lem}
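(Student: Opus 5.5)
We work in genus one, where Theorem \ref{thm:main1} applies: $\Stab{\mb D^b(\mca X)}$ is a single $\widetilde{\mr{GL}}_2^+(\bb R)$-orbit of $\sigma_{\mr{st}}$. Hence the objects that are $\sigma$-stable for some (equivalently every) $\sigma$ are, up to shift, exactly $j_*\mca O_y$ and $j_*\mca F$ with $\mca F$ a slope stable locally free sheaf on $X_0$. The autoequivalence $\Phi$ acts on $\Stab{\mb D^b(\mca X)}$, so it preserves this class. Consequently each $\Phi(j_*\mca O_y)$ is $j_*$ of a point or of a stable bundle, up to shift, and similarly each $\Phi^{-1}(j_*\mca O_y)$.

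For (1), the plan is a rigidity argument in $y$. Whenever $y\neq y'$, Lemma \ref{lem:260308} gives $\bb R\Hom_{\mca X}(j_*\mca O_y,j_*\mca O_{y'})=0$, and $\Phi$ preserves this vanishing. Suppose $\Phi(j_*\mca O_y)=j_*\mca O_z[m]$. Then Lemma \ref{lem:260308} gives $\Hom^p_{\mca X}(j_*\mca E_0,j_*\mca O_z)\neq 0$ for $p\ge 0$, which contradicts the vanishing against $\Phi(j_*\mca O_{x_0})=j_*\mca E_0[1]$. So $\Phi(j_*\mca O_y)=j_*\mca E_y[n_y]$ with $\mca E_y$ a stable bundle. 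To show $n_y=1$, use continuity and $K$-theory. The class $[j_*\mca O_y]$ in $K_0(\mca X)\cong K_0(X_0)$ has numerical part $(0,1)$, independent of $y$. Since $\Phi$ acts on numerical $K_0$, the class $(-1)^{n_y}(\rank\mca E_y,\deg\mca E_y)$ is constant in $y$. Because $\rank \mca E_y>0$, the parity of $n_y$ is constant. To pin down $n_y$ exactly, use the vanishing $\bb R\Hom(j_*\mca E_y,j_*\mca E_{x_0})=0$. By Lemma \ref{lem:260308} it forces $\Hom^0_{X_0}(\mca E_y,\mca E_{x_0})=\Hom^1_{X_0}(\mca E_y,\mca E_{x_0})=0$, which with Riemann–Roch says the two bundles have equal slope. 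Their classes are then proportional, so $n_y\equiv n_{x_0}$. Phase considerations under the standard stability condition then rule out a shift by $2$. Concretely, a fixed $\tau=\Phi\cdot\sigma_{\mr{st}}$ puts all $\Phi(j_*\mca O_y)$ in one phase, and $j_*\mca E_y[n]$ has phase in $(n,n+1)$. Hence $n_y=1$.

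For (2), apply $\Phi^{-1}$. By the trichotomy above, $\Phi^{-1}(j_*\mca O_y)$ is stable, so it is $j_*\mca G_y[m]$ or $j_*\mca O_z[m]$. The point case is excluded by (1), since $\Phi(j_*\mca O_z)$ is a shifted bundle and not a point. The shift is determined from $\Hom_{\mca X}(j_*\mca O_{x_0}, \Phi^{-1}(j_*\mca O_y)[p])\cong\Hom^p(j_*\mca E_0[1],j_*\mca O_y)$. This is nonzero exactly for $p\ge 1$ by Lemma \ref{lem:260308}. Comparing with $\Hom^{p+m}(j_*\mca O_{x_0},j_*\mca G_y)$, which is nonzero exactly for $p+m\ge 1$, gives $m=0$.

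For (3), I take $\mca F$ indecomposable. By Lemma \ref{lem:260310} it is semistable with a single stable JH factor $S$. Lemma \ref{lem:0212}-style reasoning, using Lemma \ref{lem:0211} inductively along the JH filtration, reduces the question to $\mca F=S$ stable, and then $\Phi(j_*S)$ is a shifted stable bundle or point. If $\mu(S)>\mu(\mca G_{x_0})$, then $\Hom^1_{X_0}(S,\mca G_{x_0})\neq 0$, and by Lemma \ref{lem:0309} this is nonzero in $\mca X$. Applying $\Phi$ gives $\Hom^1(\Phi(j_*S),j_*\mca O_{x_0})\neq0$, while $\Hom^0$ vanishes. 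By Lemma \ref{lem:260308} this pins $\Phi(j_*S)$ to $j_*\mca F'[1]$. A point is excluded because $\Phi^{-1}$ of points are the $\mca G_y$, which differ from $S$. The opposite slope inequality is symmetric, using $\Hom^0\neq0$. The main obstacle I expect is controlling the shift uniformly in step (1), where the $K$-theory and phase bookkeeping must be done carefully.
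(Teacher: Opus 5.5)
Your proposal is correct. For the exclusion of points in (1), for (2), and for (3) it follows essentially the paper's proof: Hom-degree counting via Lemma~\ref{lem:260308} against $j_*\mca O_{x_0}$ and $j_*\mca G_{x_0}$, and reduction to the unique Jordan--H\"older factor via Lemmas~\ref{lem:260310} and~\ref{lem:0211}.

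The genuine difference is how you pin down $n_y=1$ in (1).

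\textbf{The paper's route.} It proves (2) first and then uses it at $y=x_0$. Since $\Phi(j_*\mca G_{x_0})=j_*\mca O_{x_0}$, one has $\Hom^p_{\mca X}(j_*\mca G_{x_0},j_*\mca O_y)\cong\Hom^{p+n_y}_{\mca X}(j_*\mca O_{x_0},j_*\mca E_y)$. Comparing lowest nonvanishing degrees ($p=0$ on the left, $p+n_y=1$ on the right) gives $n_y=1$. This is exactly the trick you already use in your (2).

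\textbf{Your route.} You instead use the orbit description of $\Stab{\mb D^b(\mca X)}$. Make explicit the link your sentence leaves implicit, since it currently conflates $\tau$-phases with $\sigma_{\mr{st}}$-phases. By Theorem~\ref{thm:main1}, $\tau=\Phi\cdot\sigma_{\mr{st}}=\sigma_{\mr{st}}\cdot g$ with $g=(T,f)$, so $\mca P_\tau(1)=\mca P_{\mr{st}}(f(1))$. Hence every $j_*\mca E_y[n_y]$ has the same $\sigma_{\mr{st}}$-phase $f(1)\in(n_y,n_y+1)$, which forces $n_y=n_{x_0}=1$.

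With that link, the phase argument suffices by itself. The $K$-theory parity step and the $\bb R\Hom(j_*\mca E_y,j_*\mca E_{x_0})=0$ step are then superfluous. Note also that ``$\Phi$ acts on numerical $K_0$'' is not automatic here. For $R\neq\mb k$ the Euler form is not defined on $\mb D^b(\mca X)$; for instance $\Hom^p(j_*\mca O_x,j_*\mca O_x)\neq 0$ for all $p\ge 0$. It can be recovered from $Z_{\mr{st}}\circ\Phi_*^{-1}=T^{-1}\circ Z_{\mr{st}}$, but you do not need it.

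\textbf{Trade-off.} Your route proves (1) independently of (2), at the price of using the global structure of $\Stab{\mb D^b(\mca X)}$ beyond the classification of stable objects. The paper's route stays within elementary Hom computations.

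\textbf{A small point in (3).} Your exclusion of the point case (``the $\mca G_y$ differ from $S$'') is most cleanly justified as follows. If $\Phi(j_*S)=j_*\mca O_z[n]$, the nonvanishing of $\Hom^\bullet(\Phi(j_*S),j_*\mca O_{x_0})$ forces $z=x_0$. Then $S\cong\mca G_{x_0}$, contradicting the slope hypothesis. The paper passes over this point.
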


\begin{proof}
We show the assertions (1) and (2) simultaneously. 
	Since the skyscraper sheaf 
$j_* \mca O_y$ is stable for any $\sigma \in \Stab{\mb D^b(\mca X)}$, 
$\Phi(j_* \mca O_y)$ is also stable. 
By the classification of stable objects, 
$\Phi(j_* \mca O_y)$ must be 
a skyscraper sheaf $j_*\mca O_z$ or 
a stable locally free sheaf $j_* \mca E_y$, up to shifts. 
Suppose that $\Phi(j_* \mca O_y)=j_* \mca O_z[k]$ 
for some $k \in \bb Z$. 

Then we have 
\begin{align*}
	 \Hom_{\mca X}^p(j_* \mca O_x, j_* \mca O_y) 
	 \cong \Hom_{\mca X}^p(j_* \mca E_0[1], j_* \mca O_z[k]). 
\end{align*}
If $x \neq y$, the left hand side is zero for any $p\in \bb Z$. 
However the right hand side is non-zero for some $p \in \bb Z$. 
Hence 
for any $y\in X_0$, there exists an integer $n_y$ such that 
$\Phi(j_* \mca O_y )= j_* \mca E_y[n_y]$. 

Then for any $y \in X_0$, 
there exist an stable locally free sheaf $\mca E_y$ on $X_0$ 
and an integer $m_y$ 
such that $\Phi(j_* \mca E_y) = j_* \mca O_y[m_y]$. 
By Lemma \ref{lem:260308}, 
$\Hom_{\mca X}^p(j_* \mca G_y, j_* \mca O_{x_0})$ is non-zero if 
and only if $p \geq 0$. 
The equivalence $\Phi$ yields 
\[
\Hom_{\mca X}^p(j_* \mca G_y, j_* \mca O_{x_0})
\cong 
\Hom_{\mca X}^{p+1-m_y}
(j_*\mca O_y, j_* \mca E_0). 
\]
By Lemma \ref{lem:260308}, 
the right hand side is non-zero if and only if 
$p+1-m_y \geq 1$. 
Hence we have $m_y=0$ and this implies 
the assertion (2). 

Applying the assertion (2) as $y=x_0$, 
we have 
$\Hom_{\mca X}^p (j_*\mca G_{x_0}, j_* \mca O_y)$ 
is non-zero if and only if $p\geq 0$. 
Since $\Phi$ is an equivalence we have 
\[
\Hom_{\mca X}^p(j_* \mca G_{x_0}, j_* \mca O_{y})
\cong 
\Hom_{\mca X}^{p+n_y}
(j_*\mca O_{x_0}, j_* \mca G_y). 
\]
Since the right hand side is non-zero if and only 
if $p+n_y \geq 1$, we see $n_y=1$ for all $y$ and 
this complete the proof of the assertion (1).

To compete the proof, 
let $\mca F$ be an indecomposable 
locally free sheaf on $X_0$. 
By Lemma \ref{lem:260310}, 
$\mca F$ is given by a finite 
successive extension of a single stable locally free 
sheaf $\mca F_0$. 
By Lemma \ref{lem:0211}, 
it suffices to show the assertion for the case 
$\mca F=\mca F_0$. 
Since $\mca F_0$ is stable, 
there exist a stable locally free sheaf $\mca F'$ on $X_0$ 
and an integer $n$ such that 
$\Phi(j_*\mca F_0)= \mca F'[n]$.

Suppose $\mu(\mca F_0) > \mu(\mca G_{x_0})$. 
Then we have 
$\Hom_{X_0}^0(\mca F_0, \mca G_{x_0})=0$ and 
$\Hom_{X_0}^1(\mca F_0, \mca G_{x_0})\neq 0$.     
Hence 
$\Hom_{\mca X}^p(j_* \mca F_0, \mca G_{x_0})$ is non-zero 
if and only if $p \geq 1$ by Lemma \ref{lem:260308}
The equivalence $\Phi $ yields the isomorphism: 
\[
\Hom_{\mca X}^p(j_* \mca F_0, j_* \mca G_{x_0}) 
\cong 
\Hom_{\mca X}^{p-n} (j_* \mca F', j_* \mca O_x)
\]
Similarly to the argument above, we see $n=0$. 
The proof of remained case is similar. 
\end{proof}

\begin{prop}\label{prop:elliptic}
Let $X_0$ be a smooth projective curve over a 
field $\mb k$ with genus 1. 
Then any equivalence $\Phi \in \Aut{\mb D^b(\mca X)}$ 
preserves the image of 
$j_* \colon \mb D^b(X_0) \to \mb D^b(\mca X)$. 
\end{prop}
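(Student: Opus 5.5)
The plan is to reduce to Lemma \ref{lem:0304}: I will show that, after composing $\Phi$ with a shift, $\Phi$ sends $\mr{Coh}(X_0)$ into $\mr{Coh}(X_0)[k]$ for a single $k$. Since $g(X_0)=1>0$, Theorem \ref{thm:main1} and Proposition \ref{prop:macri} give $\Stab{\mb D^b(\mca X)}\cong\widetilde{\mr{GL}}_2^+(\bb R)$. Thus the objects stable for every $\sigma$ are exactly the skyscrapers $j_*\mca O_x$ and the $j_*$ of stable locally free sheaves on $X_0$, up to shift, and $\Phi$ permutes this class. The dimension-count trick of Lemma \ref{lem:0212} is not available here: on an elliptic curve a stable bundle $F$ has $\dim\Hom^1_{X_0}(F,F)=1$, so skyscrapers may be exchanged with stable bundles. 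I will therefore split into two cases according to $\Phi(j_*\mca O_{x_0})$ for a fixed point $x_0$.

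\emph{Case 1: $\Phi(j_*\mca O_{x_0})=j_*\mca O_y[n]$.} I repeat the argument of Lemma \ref{lem:genus1key}(1), comparing $\Hom^\bullet$ between distinct points. For $x\neq x_0$ the groups $\Hom^\bullet(j_*\mca O_{x},j_*\mca O_{x_0})$ all vanish. If some $j_*\mca O_x$ were sent to a bundle, we would have $\Hom^\bullet(j_*E,j_*\mca O_y)\neq0$, a contradiction. So every skyscraper goes to a shifted skyscraper. The shifts are then constant, as in Proposition \ref{prop:linecase}, using a line bundle $L$ and the nonvanishing pattern of Lemma \ref{lem:260308}: $\Hom^p(j_*L,j_*\mca O_x)\neq0$ if and only if $p\ge0$. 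The same Hom pattern against skyscrapers forces each stable bundle to go to a shifted stable bundle with the same shift $n$. Lemma \ref{lem:260310} says indecomposable bundles are iterated extensions of a single stable bundle, and indecomposable torsion sheaves are iterated extensions of one skyscraper. Applying Lemma \ref{lem:0211} inductively then gives $\Phi(\mr{Coh}(X_0))\subset\mr{Coh}(X_0)[n]$, and Lemma \ref{lem:0304} finishes this case.

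\emph{Case 2: $\Phi(j_*\mca O_{x_0})=j_*\mca E_0[m]$ with $\mca E_0$ a bundle.} Composing with $[1-m]$, I may assume $m=1$ and apply Lemma \ref{lem:genus1key}. Then every skyscraper maps to $j_*(\text{stable bundle})[1]$. Indecomposable bundles of slope greater than $\mu(\mca G_{x_0})$ map to $j_*(\text{bundle})[1]$, and those of slope less than $\mu(\mca G_{x_0})$ map into $\mr{Coh}(X_0)$. The apparent obstruction to a uniform shift is that the image of $\mr{Coh}(X_0)$ is not contained in a single shift of $\mr{Coh}(X_0)$. Lemma \ref{lem:0304} does not apply directly, so I will prove preservation of $\Im j_*$ by hand.

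For objects, each indecomposable of $\Im j_*$ is sent into $\Im j_*$ by the lemma; the one remaining piece is the bundles of slope exactly $\mu(\mca G_{x_0})$. These are extensions of stable bundles $\mca G_y$ with $\Phi(j_*\mca G_y)=j_*\mca O_y$ (Lemma \ref{lem:genus1key}(2)), with Lemma \ref{lem:0211} for the extensions. For morphisms, I will follow the pattern of Lemma \ref{lem:0304}. The morphisms in $\Im j_*$ between indecomposables $j_*\mca F[a]$ and $j_*\mca G[b]$ are the $\Hom^0_{X_0}$ and $\Hom^1_{X_0}$ parts in Lemma \ref{lem:260308}. I will check that $\Phi$ carries these to the corresponding $X_0$-parts, matching pieces by degree.

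\begin{itemize}
\item When the two shifts agree, $\Hom^0$ maps to $\Hom^0$ between sheaves on $X_0$, and a degree-one class maps to an extension that stays in $\mr{Coh}(X_0)$ by Lemma \ref{lem:0211}.
\item When the shifts differ by one, there are two sub-cases. In the first, a $\Hom^0$ morphism becomes a degree-one class; it comes from $\Hom^1_{X_0}$ because, for slope-ordered sheaves, Lemma \ref{lem:0309} identifies $\Hom^1_{X_0}$ with $\Hom^1_{\mca X}$. In the second, a degree-one class from $\Hom^1_{X_0}$, e.g.\ between sheaves of slopes $>\mu(\mca G_{x_0})\ge$, becomes a degree-$0$ map. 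There the Hom vanishing of the phase ordering should give $\Hom^0_{\mca X}=\Hom^0_{X_0}$.
\end{itemize}

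The main obstacle I expect is this morphism bookkeeping in Case 2. I must ensure that no morphism of $\Im j_*$ lands in the $\mf m/\mf m^2$-type components $\Hom^0_{X_0}(-,-)^{\+d_1}$ of $\Hom^1_{\mca X}$. I will attack this using $R$-linearity of $\Phi$, arguing that those components are exactly the ones detected by the $\mf m$-action, and using the degree vanishing of Lemma \ref{lem:260308} and Lemma \ref{lem:0309}.
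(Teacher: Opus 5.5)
Your case split and Case 1 agree with the paper, but the morphism check in Case 2 misses one configuration. After normalizing, $\Phi(j_*A)=j_*E[s]$ and $\Phi(j_*B)=j_*F[t]$ with $s,t\in\{0,1\}$, so a morphism of degree $p$ goes to degree $p+t-s$. Your list covers three situations: $s=t$, the degree-$0$ part when $(s,t)=(0,1)$, and the degree-$1$ part when $(s,t)=(1,0)$. It omits the degree-$1$ part when $(s,t)=(0,1)$. There a nonzero $e\in\Hom^1_{X_0}(A,B)$ would go to a nonzero element of $\Hom^2_{\mca X}(j_*E,j_*F)\cong\Hom^0_{X_0}(E,F)^{\oplus d_2}\oplus\Hom^1_{X_0}(E,F)^{\oplus d_1}$. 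Since $\Hom^2_{X_0}=0$, no nonzero element of that group lies in $\Im j_*$, so this configuration has to be ruled out. The $R$-linearity idea you propose cannot do this. A cone argument of that kind (essentially Lemma \ref{lem:0211}) only decides whether a degree-one class comes from $\Hom^1_{X_0}$. A nonzero degree-two class can still have an $\mf m$-annihilated cone: for $R=\mb k[\epsilon]/(\epsilon^2)$, multiplication by $\epsilon$ on $R\xrightarrow{\epsilon}R$ is null-homotopic, and this complex is the shifted cone of the nonzero element of $\Hom^2_R(\mb k,\mb k)$.

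The missing ingredient is that the shift is monotone in the slope. This is why the paper organizes the check by comparing $\mu(A)$ with $\mu(B)$ rather than by relative shift. By Lemma \ref{lem:genus1key}(1),(3) and your treatment of slope $\mu(\mca G_{x_0})$, indecomposables of slope $\le\mu(\mca G_{x_0})$ get shift $0$. Those of slope $>\mu(\mca G_{x_0})$, torsion included, get shift $1$. Hence $(s,t)=(0,1)$ forces $\mu(A)<\mu(B)$. Then $\Hom^1_{X_0}(A,B)\cong\Hom_{X_0}(B,A)^\vee=0$, by Serre duality with $\omega_{X_0}\cong\mca O_{X_0}$ and semistability of indecomposables (Lemma \ref{lem:260310}). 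Alternatively, for $\mb k$-linear $\Phi$, compare dimensions in degrees $-1,0,1$ using Lemma \ref{lem:260308}:
\begin{itemize}
\item degree $-1$ gives $\Hom^0_{X_0}(E,F)\cong\Hom^{-1}_{\mca X}(j_*A,j_*B)=0$;
\item degree $0$ then gives $\dim\Hom^0_{X_0}(A,B)=\dim\Hom^1_{X_0}(E,F)$;
\item degree $1$ then forces $\dim\Hom^1_{X_0}(A,B)=0$.
\end{itemize}
That same vanishing $\Hom^0_{X_0}(E,F)=0$ is all your first sub-case needs. It kills the $\Hom^0_{X_0}(E,F)^{\oplus d_1}$ components (this is Lemma \ref{lem:0309}), so no $R$-linearity argument is required there. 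Your second sub-case needs no check at all, since $\Hom^0_{\mca X}(j_*E,j_*F)=\Hom^0_{X_0}(E,F)$ always. A smaller point: for the objects of slope $\mu(\mca G_{x_0})$ you also need that every stable bundle of that slope is one of the $\mca G_y$. This follows by comparing $\Hom^\bullet$ against the $j_*\mca G_y$, as in the proof of Lemma \ref{lem:genus1key}.
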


\begin{proof}
If an object in $\Im j_*$ is indecomposable, 
then it must be semistable sheaf on $X_0$ 
with a unique stable factor by Lemma \ref{lem:260310}. 

Suppose that 
for any skyscraper sheaf $j_* \mca O_x$, 
$\Phi(j_* \mca O_x)$ is a skyscraper sheaf $j_* \mca O_y$ up to shifts. 
are sent a skyscraper sheaf $j_* $
Then, for any indecomposable locally free sheaf 
$\mca E$, 
$\Phi(j_* \mca E)$ is a locally free sheaf on $X_0$ up to shifts. 
Similarly to the proof of Proposition \ref{prop:higher-genus}, 
we see that 
$\Phi$ preserves the image $\Im j_*$.

Suppose that 
there exists a point $x\in X_0$ such that 
$\Phi (j_* \mca O_x)$ is not a shift of 
a skyscraper sheaf. 
Without loss of generality, we may assume $\Phi(j_*\mca O_x)[-1]$ is 
a locally free sheaf on $X_0$. 
Then by Lemma \ref{lem:genus1key}, we see that 
the image of any indecomposable sheaf on $X_0$ via $\Phi$ 
is a sheaf on $X_0$. 
Hence $\Phi$ preserves the objects in the subcategory $\Im j_*$.

To complete the proof, it suffices to show that 
$\Phi$ preserves morphisms in $\Im j_*$. 
To show this, 
let $(A, B)$ be a pair 
of indecomposable (not necessary locally free) 
coherent sheaves on $X_0$.

Suppose that $\mu(A) >  \mu(B)$. 
Then, by Lemma \ref{lem:0309}, we have 
\begin{align}
\label{eq:031012}
	\Hom_{\mca X}^0(j_* \mca A, j_* \mca B)&=0 , \text{ and } \\
\label{eq:031011}
	\Hom_{\mca X}^1(j_* \mca A, j_* \mca B)&\cong \Hom_{X_0}^1(A, B) \neq 0. 
\end{align}
For the image of objects, Lemma \ref{lem:genus1key} yields 
the following three cases. 
\[
(\Phi(j_* A), \Phi(j_*B)) = 
\begin{cases}
	(j_* E[1], j_* F[1]) \\
	(j_* E[1], j_*F) \\
	(j_* E,j_* F). 
\end{cases}
\]
Here $E$ and $F$ are indecomposable 
sheaves on $X_0$. 
In the first and third cases, 
the vanishing (\ref{eq:031012}) implies 
$\Hom_{\mca X}(j_* E, j_*F)=0$  and hence we have 
$\mu(E) > \mu(F)$. 
Then Lemma \ref{lem:0309} and the equivalence $\Phi$ imply
\[
\Phi \colon 
\Hom_{\mca X}^1 (j_*  A, j_* B) \to
\Hom_{\mca X}^1(j_* E, j_* F)  
\cong 
\Hom_{X_0}^1(E, F) .  
\]
Hence the image of morphism $f \colon j_* A \to j_*B[1]$ by $\Phi$ 
belongs to $\Im j_*$. 
In the second case, 
the equivalence $\Phi$ yields the map 
\[
\Phi \colon 
\Hom_{\mca X}^1 (j_*  A, j_* B) \to
\Hom_{\mca X}^0(j_* E, j_* F)  
\cong 
\Hom_{X_0}^0(E, F) .  
\]
Again, the image of any morphism $f$ 
is in the image $\Im j_*$.

Suppose that $\mu(A)< \mu(B)$.  
Since we have $\Hom_{X_0}^1(A,B)=0$, 
a morphism $e \colon j_*A \to j_*B [1]$ is in $\Im j_*$ if 
and only if $e$ is zero. 
Hence the image $\Phi(e)$ is in $\Im j_*$. 

Now Lemma \ref{lem:genus1key} again yields the following 
three cases. 
\[
(\Phi(j_* A), \Phi(j_*B)) = 
\begin{cases}
	(j_* E[1], j_* F[1]) \\
	(j_* E, j_*F[1]) \\
	(j_* E,j_* F). 
\end{cases}
\]

In the first and third cases, 
 the image of any morphism $j_* A \to j_* B$ by $\Phi$ 
is clearly $\Im j_*$. 
In the second case, 
the vanishing 
$\Hom^{-1}_{\mca X}(j_*A, j_* B)=0$ and 
the equivalence $\Phi$ yields the vanishing 
\[
\Hom_{\mca X}^0(j_* E, j_* F)=0. 
\]
Hence $\mu(E)>\mu(F)$ must hold. 
Then Lemma \ref{lem:0309} and $\Phi$ induce 
\[
\Phi \colon \Hom_{\mca X}^0(j_*A, j_* B ) \to 
\Hom _{\mca X}^1(j_* E, j_*F) \cong 
\Hom_{X_0}^1(E, F). 
\]
Thus the image of any morphism $f \colon j_*A \to j _* B$ 
by $\Phi$ belongs to $\Im j_*$. 

Finally suppose that $\mu(A)=\mu(B)$. 
By Riemann-Roch theorem, we see 
\[
\Hom_{\mca X}^0(j_* A, j_* B) =0 \iff 
\Hom_{\mca X}^1(j_* A, j_* B) =0. 
\]
Moreover, if $\Hom_{\mca X}(j_*A, j_*B)=0$, then 
nothing to prove. 
If $\Hom^0_{\mca X}(j_* A, j_*B) \neq 0$, then 
the stable factors of $A$ and $B$ must be the same. 
Hence, by Lemma \ref{lem:0211}, 
we see that 
the image of any morphism $j_* A \colon j_*B [1]$ 
is in $\Im j_*$. 
Thus we complete the proof. 
\end{proof}

Combining the above Propositions \ref{prop:linecase}, 
\ref{prop:higher-genus}, and \ref{prop:elliptic}, 
we obtain the following: 
\begin{prop}
Let 
 $\mca X \to \Spec R$ be an infinitesimal deformation 
of a smooth projective curve $X_0$. 
There exists a natural homomorphism 
of groups:
\begin{equation*}
\label{eq:0210}	
j_*^{-1} \colon 
\Aut {\mb D^b(\mca X)} \to \Aut{\mb D^b(X_0)}
\end{equation*}
\end{prop}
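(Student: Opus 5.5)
The plan is to build the homomorphism by restricting each autoequivalence to the image of $j_*$. Take $\Phi \in \Aut{\mb D^b(\mca X)}$. By Propositions \ref{prop:linecase}, \ref{prop:higher-genus} and \ref{prop:elliptic}, according as $g(X_0)=0$, $g(X_0)\geq 2$ or $g(X_0)=1$, the functor $\Phi$ preserves the subcategory $\Im j_*$, on objects and on morphisms. We then define $j_*^{-1}\Phi$ as the functor $\mb D^b(X_0)\to \mb D^b(X_0)$ characterised by $j_*\circ (j_*^{-1}\Phi) \cong \Phi\circ j_*$. On objects this is unambiguous: every object of $\mb D^b(X_0)$ is a direct sum of shifted sheaves, and $j_*$ is injective on isomorphism classes of such objects, since $j_*F \cong j_*G$ in $\mr{Coh}(\mca X)$ forces $F\cong G$. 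On morphisms we use faithfulness of $j_*$. A morphism $f$ in $\mb D^b(X_0)$ goes to $\Phi(j_*f)$, which lies in $\Im j_*$ and so has a unique preimage.

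The key step is to show that $j_*^{-1}\Phi$ is an exact autoequivalence of $\mb D^b(X_0)$. Functoriality follows from faithfulness of $j_*$ and functoriality of $\Phi$. Exactness I would check on triangles. A triangle $A\to B\to C\to A[1]$ in $\mb D^b(X_0)$ maps under $j_*$ to a triangle whose morphisms, including the connecting one, lie in $\Im j_*$. Applying $\Phi$ gives a triangle in $\mb D^b(\mca X)$ with all terms and morphisms in $\Im j_*$. Its preimage is a candidate triangle in $\mb D^b(X_0)$, and we must show it is distinguished. For this I would use that $\mb D^b(X_0)$ is hereditary: complete the first morphism to a distinguished triangle in $\mb D^b(X_0)$, then compare cones after applying $j_*$, and use the injectivity on isomorphism classes noted above. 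For invertibility, apply the same construction to $\Phi^{-1}$; the natural isomorphisms $\Phi\Phi^{-1}\cong \1$ restrict, because their components on objects of $\Im j_*$ are morphisms in $\Im j_*$ (Lemma \ref{lem:0304} again).

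Compatibility with composition is then formal. Both $j_*^{-1}(\Phi\Psi)$ and $j_*^{-1}\Phi\circ j_*^{-1}\Psi$ intertwine with $\Phi\Psi$ through $j_*$, so by faithfulness they agree up to natural isomorphism. This gives a group homomorphism on isomorphism classes, $\Aut{\mb D^b(\mca X)}\to\Aut{\mb D^b(X_0)}$.

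The main obstacle is that $j_*$ is faithful but not full. Preservation of $\Im j_*$ was only established for morphisms between indecomposable objects, in degrees $0$ and $1$. So one must extend it to arbitrary morphisms between direct sums by working componentwise, and one must check that the resulting assignment really is a functor with natural (not merely objectwise) isomorphisms. The point of genuine content is that connecting morphisms of triangles go to morphisms in $\Im j_*$. Here I would rely on Lemma \ref{lem:0211} together with the case analyses in the three propositions above.
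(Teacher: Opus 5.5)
Your route is the paper's. There the proposition is stated as an immediate consequence of Propositions \ref{prop:linecase}, \ref{prop:higher-genus} and \ref{prop:elliptic}, namely that every $\Phi$ preserves $\Im j_*$, with no further argument. Your bookkeeping on objects and on inverses is fine, and so is your handling of morphisms once you fix identifications $\theta_E\colon \Phi(j_*E)\cong j_*E'$ compatible with a decomposition of $E$ into shifted indecomposable sheaves (this is what makes ``lies in $\Im j_*$'' meaningful).

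The step you add for exactness, however, does not work as stated. Let $T'=(A'\xrightarrow{u'}B'\xrightarrow{v'}C'\xrightarrow{w'}A'[1])$ be the candidate triangle, and let $T''=(u',v'',w'')$ be a distinguished completion of $u'$ in $\mb D^b(X_0)$. TR3 in $\mb D^b(\mca X)$ gives an isomorphism of triangles $(\1,\1,h)\colon j_*T''\to j_*T'$. But $h\colon j_*C''\to j_*C'$ is only a morphism of $\mb D^b(\mca X)$, and since $j_*$ is not full there is no reason for it to be of the form $j_*h_0$. Indeed, between different cohomology sheaves, $\Hom_{\mca X}(j_*C'',j_*C')$ in general has components in $\Hom^1_{\mca X}$ not coming from $\Hom^1_{X_0}$, and components in $\Hom^{\geq 2}_{\mca X}$ (Lemma \ref{lem:260308}). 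Injectivity of $j_*$ on isomorphism classes only produces \emph{some} isomorphism $C''\cong C'$ in $\mb D^b(X_0)$, which need not intertwine $(v'',w'')$ with $(v',w')$. An abstract isomorphism of third vertices, even combined with Hom-exactness, does not make a candidate triangle distinguished. So your argument tacitly assumes that $j_*$ reflects distinguished triangles, which you have not established.

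One way to close the gap works at least when $\Phi(\mr{Coh}(X_0))\subset \mr{Coh}(X_0)[n]$ for a single $n$. This covers genus $0$, genus $\geq 2$, and genus one when skyscrapers go to shifted skyscrapers.
\begin{itemize}
\item The functor $[-n]\circ j_*^{-1}\Phi$ restricts to an equivalence $\Psi_0$ of $\mr{Coh}(X_0)$. It is fully faithful because $\mr{Coh}(X_0)\subset\mr{Coh}(\mca X)$ is full, and essentially surjective by the same reasoning applied to $\Phi^{-1}$.
\item Being an equivalence of abelian categories, $\Psi_0$ is exact, so its derived functor is an exact autoequivalence of $\mb D^b(X_0)$.
\item This derived functor agrees with $[-n]\circ j_*^{-1}\Phi$ on $\Hom^0$ between sheaves tautologically. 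It agrees on $\Hom^1$ because an extension class is the third morphism of the triangle of a short exact sequence, and $\Phi$ carries that triangle to the triangle of the image short exact sequence (essentially Lemma \ref{lem:0211}).
\item Every morphism of the hereditary category $\mb D^b(X_0)$ is a matrix of such components. Hence $j_*^{-1}\Phi$ is isomorphic to a shift of this derived functor, and is therefore exact.
\end{itemize}
The mixed genus-one case of Proposition \ref{prop:elliptic}, where no uniform shift exists, needs an additional argument.

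Relatedly, in your composition step it is not faithfulness that lets a natural isomorphism $j_*F_1\cong j_*F_2$ descend to $\mb D^b(X_0)$. What makes it descend is that its components on indecomposable objects are isomorphisms between equal shifts of sheaves on $X_0$, and such morphisms lie in $\Im j_*$.
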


%
%

\end{document}